\documentclass[11pt]{amsart} 
\usepackage{amsmath,amsthm}
\usepackage{xypic}
\usepackage{enumerate}


\usepackage{amscd}
\usepackage{amsfonts,amssymb,latexsym} 
\usepackage{mathrsfs}
\setlength{\oddsidemargin}{0.4in}
\setlength{\evensidemargin}{0.4in}
\setlength{\textwidth}{5.5in}
\setlength{\textheight}{8.8in}
\setlength{\marginparwidth}{0.8in}
\addtolength{\headheight}{2.5pt}

\usepackage[all]{xy} 
\xyoption{arc}
\CompileMatrices

\newcommand{\udots}{\mathinner{\mskip1mu\raise1pt\vbox{\kern7pt\hbox{.}}
\mskip2mu\raise4pt\hbox{.}\mskip2mu\raise7pt\hbox{.}\mskip1mu}}

\newcommand{\SD}{{\mathcal{D}}}
\newcommand{\SE}{{\mathcal{E}}}

\newcommand{\SL}{{\mathcal{L}}}
\newcommand{\SM}{{\mathcal{M}}}

\newcommand{\SO}{{\mathcal{O}}}

\newcommand{\SQ}{{\mathcal{Q}}}
\newcommand{\SR}{{\mathcal{R}}}

\newcommand{\SW}{{\mathcal{W}}}
\newcommand{\SX}{{\mathcal{X}}}

\newcommand{\GG}{\mathbb{G}}
\newcommand{\PP}{\mathbb{P}}
\newcommand{\ZZ}{\mathbb{Z}}

\newcommand{\CC}{\mathbb{C}}
\newcommand{\RR}{\mathbb{R}}

\newcommand{\Gr}{\operatorname{Gr}}
\newcommand{\Spec}{\operatorname{Spec}}

\newcommand{\Quot}{\operatorname{Quot}}
\newcommand{\Grass}{\operatorname{Grass}}

\newcommand{\Sch}{\operatorname{Sch}}
\newcommand{\Sets}{\operatorname{Sets}}
\newcommand{\Sym}{\operatorname{Sym}}
\newcommand{\id}{\operatorname{Id}}
\newcommand{\Ker}{\operatorname{Ker}}
\newcommand{\im}{\operatorname{Im}}

\newcommand{\rk}{\operatorname{rk}}
\newcommand{\pdeg}{\operatorname{pardeg}}
\newcommand{\pmu}{\operatorname{par-\mu}}
\newcommand{\pchi}{\operatorname{par-\chi}}
\newcommand{\parP}{\operatorname{par-P}}
\newcommand{\SParHom}{\operatorname{SParHom}}
\newcommand{\ParHom}{\operatorname{ParHom}}
\newcommand{\SParEnd}{\operatorname{SParEnd}}

\newcommand{\End}{\operatorname{End}}
\newcommand{\owt}{\operatorname{wt}}

\newcommand{\tr}{\operatorname{tr}}
\newcommand{\GL}{\operatorname{GL}}

\newcommand{\gitq}{/\!\!/}

\newcommand{\Res}{\operatorname{Res}}
\newcommand{\fprod}[3]{{\prod_{{#1}}\!{}_{\substack{\\ {#2}}}^{{#3}}}}

\newcommand{\op}{\operatorname}

\newtheorem{proposition}{Proposition}[section]
\newtheorem{theorem}[proposition]{Theorem}
\newtheorem{definition}[proposition]{Definition}

\newtheorem{lemma}[proposition]{Lemma}

\newtheorem{corollary}[proposition]{Corollary}

\numberwithin{equation}{section}

%
%
%

\title[Moduli parabolic $\Lambda$-modules]{Moduli space of parabolic $\Lambda$-modules over a curve}
\author[D. Alfaya]{David Alfaya}

\date{}

\address{Instituto de Ciencias Matem\'aticas (CSIC-UAM-UC3M-UCM),
Nicol\'as Cabrera 15, Campus Cantoblanco UAM, 28049 Madrid, Spain}

\email{david.alfaya@icmat.es}

\keywords{Vector bundle, moduli space, $\Lambda$-module, parabolic vector bundle, parabolic connection, parabolic Hodge moduli space}
\subjclass[2010]{14D20, 14D22}

\begin{document}

\begin{abstract}
Simpson, in 1994, introduced the notion of $\Lambda$-modules and constructed the corresponding moduli space, where $\Lambda$ is a sheaf of rings of differential operators. Higgs bundles, connections and $\lambda$-connections (as defined by Delgine) are particular cases of $\Lambda$-modules. In this article the concept of parabolic $\Lambda$-modules over a curve is introduced and their moduli space is built. As an application, we construct the parabolic Hodge moduli space parameterizing parabolic $\lambda$-connections.
\end{abstract}

\maketitle

\section{Introduction}

Simpson \cite{Si2} developed the concept of $\Lambda$-modules as a theoretical framework that unified the notions of vector bundle, Higgs bundle, integrable connection and other similar geometric structures. The main idea is to consider the corresponding Higgs field or connection as an action of a certain sheaf of rings of differential operators on a coherent sheaf. For example, if we have a Higgs field $\varphi:E\to E\otimes K$ over a coherent sheaf $E$ with $\varphi \wedge \varphi=0$, it induces a morphism $\varphi':K^\lor \otimes E\to E$ that extends, by composition, to a morphism $\varphi'': \Sym^\bullet(K^\lor) \otimes E \to E$. Therefore, providing a Higgs field is equivalent to defining a left action of the sheaf of algebras $\Lambda^{\op{Higgs}}:=\Sym^\bullet(K^\lor)$ on $E$.

Similarly, sheafs with an integrable connection, described as a sheaf $E$ together with a $\CC$-linear morphism $\nabla:E\to E\otimes K$ satisfying the Leibniz rule such that $\nabla^2=0$, are in correspondence with $\SD_X$-modules, i.e., sheafs $E$ with a left action of the sheaf of differential operators $\Lambda^{\op{DR}}:=\SD_X$. This approach had been studied by \cite{Bernstein83} and motivated the definition given by Simpson of sheaf of rings of differential operators. A sheaf of rings of differential operators over $X$ is a filtered $\SO_X$-algebras satisfying some conditions resembling the main properties of $\SD$; the left and right action of $\SO_X$ on the graduate are the same (the algebra of symbols of operators of a certain degree is commutative), the graduate at each point is coherent (the algebra of symbols of operators of a given order is finite-dimensional) and the graded algebra is generated by the first step of the filtration (the algebra of differential operators is generated by operators of order one).

A $\Lambda$-module is a left module $E$ for the sheaf of rings $\Lambda$ where the $\SO_X$-module structure coming from $\SO_X\hookrightarrow \Lambda$ coincides with the $\SO_X$-module structure of $E$, i.e., it is an $\SO_X$-module $E$ endowed with an action
$$\varphi:\Lambda\otimes_{\SO_X} E\longrightarrow E$$

Simpson proved that for every $\Lambda$ satisfying the previous properties there exists a quasi-projective moduli space of semistable $\Lambda$-modules for a certain natural semi-stability condition. Some important examples of moduli spaces that can be constructed as instances of this general theorem include the moduli spaces of vector bundles, Higgs bundles (or, in general, Hitchin pairs/twisted Higgs bundles), connections, logarithmic connections or $\lambda$-connections.

On the other hand, let $C$ be a smooth complex projective curve and let $D$ be a finite set of points in $C$ that we will consider as punctures on a Riemann surface. We are interested in studying variants of the previous geometric contraptions over $C$ where we allow the existence of logarithmic singularities over the punctures in $D$, modulated by a ``parabolic'' structure over $D$, i.e., a filtration of the fibers of the underlying sheaf at each of the punctures preserved by the action of the Higgs field or connection. The moduli space of parabolic vector bundles over a curve was described by Mehta and Seshadri \cite{MS}.  Maruyama and Yokogawa generalized the concept of parabolic sheaf to arbitrary dimension and proved the existence of a coarse moduli space of parabolic sheafs \cite{MY92}. Later on, Yokogawa  built the moduli space of parabolic Higgs bundles \cite{Yokogawa93} . The moduli space of logarithmic connections (without a parabolic structure) has been built by Nitsure \cite{Ni2} and a notion of moduli space of parabolic connections was developed in \cite{IIS} in order to study solutions to the Painlev\'e VI equation on $\PP^1$. In this paper, we adapt the approach of $\Lambda$-modules introduced by Simpson to the parabolic scenario in order to unify the previous results in a single theoretical framework and build some similar, yet unknown, moduli spaces such as the parabolic Hodge moduli space, parameterizing parabolic $\lambda$-connections.

A parabolic $\Lambda$-module is a $\Lambda$-module $(E,\varphi)$ together with a filtration of the fiber $E|_x$ over the each parabolic point $x\in D$
$$E|_x=E_{x,1}\supsetneq E_{x,2}\supsetneq \cdots \supsetneq E_{x,l_x+1}=0$$
and a sequence of real weights $0\le \alpha_{x,1}<\alpha_{x,2}<\ldots <\alpha_{x,l_x}<1$ such that the action of $\Lambda$ preserves the filtration in a certain sense. The stability for $\Lambda$-modules is substituted by a notion of stability depending on the system of weights $\alpha=\{\alpha_{x,i}\}$ and the filtration $E_\bullet=\{E_{x,i}\}$. The new definition is a natural generalization of existing ones for parabolic vector bundles, parabolic Higgs bundles and parabolic connections and admits the usual contraptions such as the Harder-Narasimhan and Jordan-H\"older filtrations. The main result obtained in this work is the following (Theorem \ref{thm:LambdaModModuli})

\begin{theorem}
Let $\Lambda$ be a sheaf of rings of differential operators on $X=C\times S$ over $S$ such that $\Lambda|_{D\times S}$ is locally free. Then there exist a coarse moduli space parameterizing S-equivalence classes of semistable parabolic $\Lambda$-modules over $(C,D)$ and an open subset parametrizing isomorphism classes of stable parabolic $\Lambda$-modules.
\end{theorem}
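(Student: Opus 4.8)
\emph{Strategy and boundedness.} The plan is to adapt Simpson's GIT construction of the moduli of $\Lambda$-modules \cite{Si2}, incorporating the parabolic weighting of Mehta--Seshadri \cite{MS}, Maruyama--Yokogawa \cite{MY92} and Yokogawa \cite{Yokogawa93}: one builds a quasi-projective parameter scheme on which a reductive group acts, with orbits corresponding to (bases of the global sections of a Serre twist of) semistable parabolic $\Lambda$-modules, and defines the moduli space as a GIT quotient. Fix the discrete invariants: the rank $r$ and degree $d$ of $E$, the weight system $\alpha=\{\alpha_{x,i}\}$ and the flag dimensions $\{\dim E_{x,i}\}$ at each $x\in D$; these determine the (parabolic) Hilbert polynomial $P$. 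The first step is boundedness: the underlying sheaves of semistable parabolic $\Lambda$-modules with these invariants form a bounded family. Parabolic semistability bounds the slopes of $\Lambda$-invariant subsheaves up to the (uniformly bounded) contribution of the weights, and a family of $\Lambda$-modules of bounded slope is bounded because $\Lambda$ is generated over $\SO_X$ by $\Lambda_1$ with coherent graded pieces — this is Simpson's argument, to which the parabolic term is a harmless perturbation. Hence there is an $n_0$ such that for all $n\ge n_0$ and all such $E$ (necessarily locally free, since $C$ is a smooth curve), $E(n)$ is globally generated, $H^1(C,E(n))=0$, and $h^0(C,E(n))=P(n)=:N$, the estimates being uniform over the family.

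\emph{Parameter space.} Set $V=\CC^N$ and let $\Quot=\Quot_{X/S}\bigl(V\otimes_{\CC}\SO_X(-n),P\bigr)$, parameterizing quotients $q:V\otimes\SO_X(-n)\surj E$ flat over $S$. Over the open subscheme $Q\subset\Quot$ where $E$ is torsion-free (equivalently locally free) along $D\times S$ and $V\too H^0(E(n))$ is an isomorphism, form two auxiliary schemes over $Q$: (i) following Simpson, the linear scheme $\SH\to Q$ of $\SO_X$-module maps $\Lambda_1\otimes_{\SO_X}E\to E$, inside which the condition of extending to a unital associative left $\Lambda$-action restricting to the tautological $\SO_X$-structure cuts out a closed subscheme; and (ii) the flag bundle $\Fl\to Q$ whose fiber at $q$ is $\prod_{x\in D}$ of the flag variety of subspaces of $E|_x$ of the prescribed dimensions. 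Let $R\subseteq\SH\times_Q\Fl$ be the closed subscheme on which the $\Lambda$-action preserves the flags in the strong-parabolic sense of the definition — here the hypothesis that $\Lambda|_{D\times S}$ be locally free guarantees that this compatibility is a well-behaved closed condition along $D$ — and let $R^{ss}\subseteq R$ (resp. $R^{s}$) be the subset of points where the associated parabolic $\Lambda$-module is semistable (resp. stable); both are open, again by boundedness. The group $\GL(V)$ acts on $\Quot$, $\SH$ and $\Fl$, hence on $R$, by change of basis of $V$, preserving $R^{ss}$ and $R^{s}$, and two points of $R^{ss}$ lie in one orbit iff the parabolic $\Lambda$-modules they define are isomorphic.

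\emph{GIT and the stability match.} Via Grothendieck's embedding $\Quot\inj\Gr\bigl(H^0(V\otimes\SO_X(m-n)),\,\cdot\,\bigr)$ for $m\gg n$ (relative over $S$), together with the Pl\"ucker embeddings of the flag factors, embed the closure $\bar R$ equivariantly into a product of relative Grassmannians, and linearize the $\GL(V)$-action on a line bundle that restricts to $\SO(1)$ on the $\Quot$ factor and, on the flag factor over each $x\in D$, to a line bundle whose $\GL$-weights are the ones attached to $\alpha_{x,\bullet}$, exactly as in \cite{MS,Yokogawa93}. The heart of the proof is the Hilbert--Mumford computation: a one-parameter subgroup of $\GL(V)$ produces a weighted decomposition $V=\bigoplus V^{(k)}$, the limit point lies in $R$ only if the $V^{(k)}$ induce a filtration of $E$ by saturated sub-$\Lambda$-modules compatible with the flags, and the Mumford weight of the limit on the chosen linearization equals — up to a fixed positive factor and an error that is negligible for $n\gg0$ by the uniform estimates — the difference between the $n$-weighted dimensions $\dim V^{(k)}$ and the parabolic Euler characteristics of the sub-$\Lambda$-modules. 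One deduces that the point is GIT-semistable (resp. stable) precisely when $V\xrightarrow{\ \sim\ }H^0(E(n))$ and $(E,E_\bullet,\alpha)$ is a semistable (resp. stable) parabolic $\Lambda$-module. This is the main obstacle: it combines a Le Potier–type cohomological comparison of $h^0$ of a sub-$\Lambda$-module twist with its degree, the book-keeping of the parabolic correction in the Hilbert polynomial, and the constraint — special to $\Lambda$-modules — that destabilizing subobjects be $\varphi$-invariant, all three tracked simultaneously and uniformly over the bounded family.

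\emph{Conclusion.} Set $\FM=R^{ss}\gitq\GL(V)$, a scheme quasi-projective over $S$, containing the geometric quotient $R^{s}/\GL(V)$ as an open subscheme. Standard GIT then shows that $R^{s}/\GL(V)$ parameterizes isomorphism classes of stable parabolic $\Lambda$-modules, that the closed $\GL(V)$-orbits in $R^{ss}$ are exactly those of polystable objects (obtained from their Jordan--H\"older filtrations), and that two points of $R^{ss}$ map to the same point of $\FM$ iff the corresponding parabolic $\Lambda$-modules are S-equivalent; hence the points of $\FM$ are in natural bijection with S-equivalence classes. Finally $\FM$ corepresents the moduli functor: any flat family of semistable parabolic $\Lambda$-modules over a base $T$, after a faithfully flat base change trivializing the rank-$N$ bundle obtained by pushing forward the $n$-th Serre twist of the family, yields a $T$-morphism to $R^{ss}$; composing with $R^{ss}\to\FM$ gives a canonical $T$-morphism to $\FM$ that is independent of the choices and glues over $T$. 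Therefore $\FM$ is the desired coarse moduli space, and $R^{s}/\GL(V)$ the open subset parameterizing stable objects, which proves the theorem.
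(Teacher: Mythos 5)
Your overall architecture --- boundedness of the semistable family, a Quot-based parameter scheme enriched with flags at the parabolic points, a closed locus where the action preserves the flags, GIT with a linearization weighted by the parabolic weights, a Hilbert--Mumford computation matching GIT-stability with parabolic slope stability, and S-equivalence via closed orbits --- is the same as the paper's. But there is a concrete gap in the way you set up the GIT step. You encode the $\Lambda$-action by an affine Hom-scheme $\SH\to Q$ of maps $\Lambda_1\otimes_{\SO_X}E\to E$ over $Q\subset \Quot_{X/S}(V\otimes\SO_X(-n),P)$, and then claim to ``embed the closure $\bar R$'' of $R\subseteq \SH\times_Q \Fl$ into a product of Grassmannians built only from Grothendieck's embedding of that Quot scheme and the Pl\"ucker embeddings of the flag factors. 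That map forgets the $\SH$-coordinates: two points of $R$ with the same quotient and the same flags but different $\Lambda_1$-actions have the same image, so it is not an embedding; the proposed linearization is the pullback of an ample class along a morphism with positive-dimensional fibers, hence not ample on $R$, and Mumford's theory does not directly yield the quotient. Worse, the Hilbert--Mumford analysis you sketch cannot detect the constraint that destabilizing subobjects be $\Lambda$-invariant, because the action is invisible to the chosen projective data; your remark that ``the limit point lies in $R$ only if the $V^{(k)}$ induce a filtration by sub-$\Lambda$-modules'' is not something the GIT weight on those factors tests. The paper avoids exactly this by Simpson's rigidification: a parabolic $\Lambda$-module with a marked basis of $H^0(E(N))$ is recorded as a point of the Quot scheme of $\Lambda_r\otimes\SO_X(-N)\otimes_\CC\CC^{P(N)}$ (Theorem \ref{thm:LambdaModParam}), so the module structure is part of the projectively embedded datum; the flags are Grassmannian quotients of $\Lambda_r|_{\{x\}\times S}\otimes\SO_S(-N)\otimes_\CC\CC^{P(N)}$, the linearization $\SL_m(\varepsilon_0)\otimes\bigotimes_{x,i}\SO_{x,i}(\varepsilon_{x,i})$ is ample, and in the stability comparison the subsheaf attached to a subspace $L\subseteq\CC^{P(N)}$ is the one generated by $L$ under $\Lambda_r$ --- which is precisely where the rigidification is used (Lemma \ref{lemma:GITLambdaModules}). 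Your route could be repaired either by switching to this rigidification or by incorporating the action data projectively with its own weight, but as written the embedding and hence the quotient construction fail.

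Separately, the two estimates you compress into ``an error that is negligible for $n\gg 0$'' carry most of the technical weight in the paper and would still have to be proved in your setting: the sharpened bound $h^0(X_s,F(N))/\rk(F)+\eta(F)\le P_E(N)/\rk(E)+\eta(E)$ for all subsheaves whose saturation is a sub-$\Lambda$-module, with control of the equality case (Lemma \ref{lemma:sharpStabInequality}), and the uniform rationality gap $\delta$ (Lemma \ref{lemma:slopeDiffbound}), combined with the specific choices $\varepsilon_{x,i}=\alpha_{x,i}-\alpha_{x,i-1}$, $\varepsilon_{x,1}=1-\alpha_{x,l_x}$ and $\varepsilon_0=(\pmu(E)+N-M+1-g)/(m-N)$ for $m\gg 0$; these are what make both implications between GIT-(semi)stability and parabolic (semi)stability, including the strictly semistable case needed for the S-equivalence statement, actually go through.
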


The first part of the article is devoted to reviewing the notion of sheaf of rings of differential operators and $\Lambda$-modules as introduced by Simpson \cite[\S 2]{Si2} and generalizing its properties to the parabolic scenario. Parabolic $\Lambda$-modules are defined and we give a notion of stability for parabolic $\Lambda$-modules both for complex schemes $X$ of the form $X=C\times S$, over $S$, where $C$ is a complex projective curve. Versions of the Harder-Narasimhan and Jordan-H\"older filtrations for parabolic $\Lambda$-modules are constructed.

The main question treated in section \ref{section:boundnessTheorems} is the boundedness of the family of semistable parabolic $\Lambda$-modules. We provide uniform bounds for the Mumford-Castelnuovo regularity of both semistable parabolic $\Lambda$-modules and destabilizing subsheafs of (possibly unstable) parabolic $\Lambda$-modules. We prove several technical lemmas introducing inequalities over the sections of twists of subsheafs of parabolic $\Lambda$-modules.

Section \ref{section:parameterSpace} describes the construction of a parameterizing space $R^{ss}$ for the family of semistable parabolic $\Lambda$-modules. First, we describe a projective scheme parameterizing parabolic quotients of a given sheaf such that the filtrations have a given fixed type. Then, starting from Simpson's rigidification of $\Lambda$-modules as quotients of $\Lambda_r\otimes \SO_X(-N) \otimes_\CC \CC^{P(N)}$ for a suitable $N$, we use this ``filtered quot scheme'' to incorporate the filtration to the parameter space. Finally we prove that the space is a quasi-projective variety that can be embedded into a product of Grassmanians over $S$ using Grothendieck's embedding of the Quot scheme \cite{GrothQuot}.

In section \ref{section:GIT}, we use Geometric Invariant Theory to construct a universal categorical quotient of the previous parameterizing space which corepresents the moduli functor of families of semistable parabolic $\Lambda$-modules over $X$. GIT-semi-stability conditions are computed for the natural action of $\op{SL}(V)$, where $V$ is a complex vector space $V$, on the product of Grassmanians of the form $\Grass(V\otimes W,p)$ for some vector space $W$. We use this numerical criterion to describe GIT-semistable parabolic points of $R^{ss}$ and we prove that GIT-semi-stability coincides with slope-stability over the parameter space.

When dealing with parabolic Higgs bundles or parabolic connections, we have a natural notion of residue of the Higgs field or the logarithmic connection at each parabolic point $x\in D$ as the ``$-1$ coefficient'' of the Laurent expansion of the field near the point. In both cases, the residue must preserve the parabolic filtration. Moreover, when we study the geometry of the moduli space of parabolic vector bundles a condition over the residue of the Higgs field or the connection respectively arises naturally. In the case of parabolic Higgs bundles, we usually prescribe the fields to be ``strongly parabolic'', so all the eigenvalues are zero. In the case of parabolic connections, if we want them to correspond to ``strongly parabolic'' Higgs bundles through Simpson's correspondence \cite{SimpsonNonCompact} then the eigenvalues of the residue of the connection must be required to be equal to the corresponding parabolic weight. As $\Lambda$-modules are a generalization of these concepts, in section \ref{section:residualLambdaModules} we aim to generalize these kinds of requisites to other classes of $\Lambda$-modules.

We define the concept of ``total residue'' of a parabolic $\Lambda$-module $(E,E_\bullet,\varphi)$ as the morphism
$$\op{Res}(\varphi,x):\Lambda|_{\{x\}\times S} \otimes_{\SO_S} E|_{\{x\}\times S} \longrightarrow E|_{\{x\}\times S}$$
induced by $\varphi:\Lambda\otimes E\to E$ at the parabolic points. Our definition of parabolic $\Lambda$-modules ensures that this map is well defined and preserves the parabolic filtration in the sense that
$$\op{Res}(\varphi,x)\left(\Lambda|_{\{x\}\times S} \otimes_{\SO_S} E_{x,i}\right) \subseteq E_{x,i}$$
Then, for every section $R\in H^0(S,\Lambda|_{\{x\}\times S})$, the ``total residue'' induces an endomorphism of the fiber $\op{Res}_R(\varphi,x)\in \End(E|_{\{x\}\times S})$.
We prove that the usual notions of residue for parabolic Higgs bundles and parabolic connections can be recovered within this theoretical framework. Then, we define ``residual $\Lambda$-modules'' as the parabolic $\Lambda$-modules that satisfying a certain additional condition on the residue analogous to the control of the eigenvalues appearing in parabolic Higgs bundles or parabolic connections. The moduli of ``residual $\Lambda$-modules'' is built as a closed subscheme of the moduli of parabolic $\Lambda$-modules, obtaining the following theorem (Theorem \ref{thm:ResLambdaModModuli})
\begin{theorem}
There exist a coarse moduli scheme parametrizing S-equivalence classes of semistable ``residual'' parabolic $\Lambda$-modules and an open subset parametrizing isomorphism classes of stable ones.
\end{theorem}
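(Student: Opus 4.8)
The plan is to deduce this statement from Theorem~\ref{thm:LambdaModModuli}: the residual parabolic $\Lambda$-modules will be exhibited as a closed, $\op{SL}(V)$-invariant subfamily of the parameter scheme built there, and one then restricts the GIT quotient to it. Recall that the proof of Theorem~\ref{thm:LambdaModModuli} produces a quasi-projective $S$-scheme $R^{ss}$ parametrizing rigidified semistable parabolic $\Lambda$-modules, equipped with a local universal family $(\SE,\SE_\bullet,\varphi)$, an action of $\op{SL}(V)$ whose orbits are the isomorphism classes, and the moduli space as the universal categorical quotient $M = R^{ss}\gitq\op{SL}(V)$ corepresenting the moduli functor, with an open locus $M^s\subseteq M$ on which the quotient is geometric and classifies isomorphism classes of stable objects. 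The first step is to restrict the universal family to the parabolic points: since $D$ is a finite set of smooth points over which the underlying sheaves are locally free, the restriction of $\SE$ at the parabolic locus is locally free over $R^{ss}$, and together with the hypothesis that $\Lambda|_{D\times S}$ is locally free this makes the total residue of Section~\ref{section:residualLambdaModules} a morphism of locally free sheaves, in families, over the parabolic locus, preserving the induced filtration.

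The main step is to show that the residual condition cuts out a closed, $\op{SL}(V)$-invariant subscheme $R^{ss}_{\op{res}}\subseteq R^{ss}$. Unwinding the definition, the prescribed condition on the total residue (the control of the action of $\op{Res}_R(\varphi,x)$ on the graded pieces $\SE_{x,i}/\SE_{x,i+1}$, generalizing the strongly parabolic and weight-matching conditions for parabolic Higgs bundles and parabolic connections) is, fibrewise, the vanishing of finitely many sections of locally free sheaves over the parabolic locus — say the entries of $\op{Res}_R(\varphi,x)|_{\Gr_i\SE} - \lambda_{x,i}\cdot\id$, or the coefficients of the corresponding characteristic polynomials. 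Imposing all of these vanishings simultaneously over the finitely many parabolic points defines a closed subscheme $R^{ss}_{\op{res}}\subseteq R^{ss}$, and because the total residue is intrinsic to the parabolic $\Lambda$-module and independent of the rigidification, this subscheme is $\op{SL}(V)$-invariant. I expect this to be the delicate point: one must check that the total residue is honestly a morphism of vector bundles over the base (this is exactly where local freeness of $\Lambda|_{D\times S}$ is used), and that the scheme structure chosen for $R^{ss}_{\op{res}}$ is the universal one, i.e.\ that a flat family of parabolic $\Lambda$-modules over a test scheme $T$ has classifying map factoring through $R^{ss}_{\op{res}}$ precisely when it is a family of residual ones.

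It remains to restrict the GIT quotient. As $R^{ss}_{\op{res}}$ is a closed $\op{SL}(V)$-invariant subscheme of $R^{ss}$ and GIT-semistability is computed with the same linearization, the semistable points of $R^{ss}_{\op{res}}$ are exactly the points of $R^{ss}$ lying inside it, so $M_{\op{res}} := R^{ss}_{\op{res}}\gitq\op{SL}(V)$ exists as a universal categorical quotient and is a closed subscheme of $M$. The whole argument establishing corepresentability in Theorem~\ref{thm:LambdaModModuli} — existence of a local universal family, orbits equal to isomorphism classes, identification of closed orbits with S-equivalence classes — then applies verbatim to $R^{ss}_{\op{res}}$, once one observes that the Jordan--H\"older factors and the associated graded of a residual parabolic $\Lambda$-module are again residual (the residue restricts to sub-objects and descends to quotients, and the eigenvalue prescription is inherited), so that S-equivalence among residual modules coincides with the restriction of S-equivalence in $M$. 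Hence $M_{\op{res}}$ corepresents the moduli functor of semistable residual parabolic $\Lambda$-modules, and $M_{\op{res}}\cap M^s$ is the desired open subscheme parametrizing isomorphism classes of stable ones.
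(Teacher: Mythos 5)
Your proposal follows essentially the same route as the paper: the residual condition is imposed on the universal family as the vanishing of the compositions $\SE_{x,i}\hookrightarrow \SE_{x,1}\to \SE_{x,1}/\SE_{x,i+1}$ induced by $\Res_{R_{x,i}}(\varphi^{\op{univ}},x)$, and the ``delicate point'' you flag (that this vanishing locus carries the correct universal scheme structure) is exactly what the paper obtains from \cite[Lemma 4.3]{Yokogawa93}, after which the closed $\op{SL}_{P(N)}(\CC)$-invariant subscheme $R^{ss}_{\Res}$ is quotiented and Theorem \ref{thm:LambdaModModuli} is invoked just as you describe. One caveat: keep your first formulation (vanishing of the induced maps on the filtration steps) rather than the characteristic-polynomial alternative, since the latter only constrains eigenvalues and does not cut out the actual containment $\Res_{R_{x,i}}(\varphi,x)(E_{x,i})\subseteq E_{x,i+1}$.
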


In general, the schemes constructed in sections \ref{section:GIT} and \ref{section:residualLambdaModules} are only coarse moduli spaces for the corresponding moduli problems. In section \ref{section:fineModuli} we provide a numerical condition which, when satisfied, implies that the subschemes parameterizing stable objects admit a universal family and, therefore, they are fine moduli spaces for their corresponding moduli problems. In particular, we prove the following result (Corollary \ref{cor:fineModuli} of Theorem \ref{thm:fineModuli})
\begin{theorem}
If the system of weights $\alpha$ is full flag, then the moduli spaces of stable parabolic $\Lambda$-modules and stable residual parabolic $\Lambda$-modules are fine, i.e., they admit a universal family.
\end{theorem}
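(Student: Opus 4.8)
The plan is to realize the moduli space $M^{s}$ of stable parabolic $\Lambda$-modules as the geometric quotient of the stable locus $R^{s}\subseteq R^{ss}$ of the parameter scheme constructed in section \ref{section:parameterSpace} by the free action of $\op{PGL}(V)$, where $V=\CC^{P(N)}$, $n=\dim V$; the action is free because a stable parabolic $\Lambda$-module is simple, so its automorphisms are scalars and its $\op{SL}(V)$-stabilizer is exactly the central $\mu_{n}$. On $R^{s}\times X$ lives the tautological family $(\SE,\SE_{\bullet},\varphi)$ coming from Simpson's rigidification, carrying a natural $\op{SL}(V)$-linearization for which the central $\mu_{n}$ acts on $\SE$ by its scalar action, i.e.\ with weight $1$. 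A universal family on $M^{s}\times X$ would be obtained by descending $(\SE,\SE_{\bullet},\varphi)$ along the $\op{PGL}(V)$-torsor $R^{s}\to M^{s}$; by Kempf's descent lemma this is possible precisely once the data is $\op{PGL}(V)$-linearized, i.e.\ once $\mu_{n}$ acts trivially on it. Since $\mu_{n}$ does not act trivially on $\SE$, the key point is to twist: if $N$ is an $\op{SL}(V)$-equivariant line bundle on $R^{s}$ on which $\mu_{n}$ acts with weight $-1$, then $\SE\otimes p_{R^{s}}^{*}N$ is $\op{PGL}(V)$-linearized, and so are the twisted flag $\SE_{\bullet}\otimes p_{R^{s}}^{*}N$ and the twisted action $\varphi\otimes\mathrm{id}_{N}$ (the pulled-back line bundle $p_{R^{s}}^{*}N$ commuting with $\Lambda\otimes_{\SO_{X}}(-)$); Kempf descent then yields a family $(\wt\SE,\wt\SE_{\bullet},\wt\varphi)$ on $M^{s}\times X$.

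The existence of such an $N$ is exactly the numerical hypothesis of Theorem \ref{thm:fineModuli}, and this is where the full-flag assumption does its work. On $R^{s}$ one has, for each parabolic point $x\in D$ and each flag index $i$, the $\op{SL}(V)$-equivariant line bundle $\det\SE_{x,i}$, on which $\mu_{n}$ acts with weight $\dim E_{x,i}$; hence $\det(\SE_{x,i}/\SE_{x,i+1})$ has weight equal to the multiplicity $m_{x,i}=\dim E_{x,i}-\dim E_{x,i+1}$. Together with the determinant-of-cohomology line bundles $\det Rp_{R^{s}*}(\SE(m))$, whose weights run through the values of the parabolic Hilbert polynomial, these generate a subgroup of $\ZZ/n\ZZ$, and an $N$ of weight $-1$ exists precisely when that subgroup is everything, i.e.\ when a greatest common divisor built from the rank, the parabolic degree and the multiplicities $m_{x,i}$ equals $1$. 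Now if $\alpha$ is a full-flag system of weights, then every flag $E|_{x}\supsetneq\cdots\supsetneq 0$ is a complete flag, so $m_{x,i}=1$ for all $x$ and $i$; hence $\det(\SE_{x,1}/\SE_{x,2})$ already has weight $1$, the gcd is trivially $1$, and one may simply take $N=\det(\SE_{x,1}/\SE_{x,2})^{\otimes(n-1)}$. Thus the hypothesis of Theorem \ref{thm:fineModuli} is automatic, and $M^{s}$ carries the universal family $(\wt\SE,\wt\SE_{\bullet},\wt\varphi)$, so it is a fine moduli space.

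For the residual case, recall from section \ref{section:residualLambdaModules} that the moduli space of stable residual parabolic $\Lambda$-modules is the image in $M^{s}$ of an $\op{SL}(V)$-invariant closed subscheme $R^{s}_{\mathrm{res}}\subseteq R^{s}$ cut out by the residue condition, and is itself a geometric quotient $R^{s}_{\mathrm{res}}/\op{PGL}(V)$; restricting $N$ and running the same descent along $R^{s}_{\mathrm{res}}\to M^{s}_{\mathrm{res}}$ produces a universal residual family. It remains to verify that $(\wt\SE,\wt\SE_{\bullet},\wt\varphi)$ genuinely has the universal property: given a family $(\SF,\SF_{\bullet},\psi)$ of stable parabolic $\Lambda$-modules over a base $T$, one forms the classifying morphism $T\to M^{s}$ and, after an \'etale cover of $T$ trivializing the relevant pushforward together with a compatible lift to $R^{s}$, identifies the pullback of $\wt\SE$ with $\SF$; the weight bookkeeping above is exactly what makes the auxiliary twisting line bundles cancel so that this identification is canonical and descends to $T$. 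I expect this last check, together with the computation of the $\mu_{n}$-weights of the determinant-of-cohomology bundles — where one must invoke the explicit description of the GIT polarization from section \ref{section:GIT} — to be the main technical obstacle; by contrast the combinatorial input, that a full flag forces all multiplicities to be $1$ and hence the relevant gcd to be $1$, is elementary.
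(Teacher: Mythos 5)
Your argument is correct, and its overall skeleton is the one the paper uses: twist the universal rigidified family on $R^s$ by an equivariant line bundle pulled back from the base so that the central stabilizer acts trivially, then descend along the quotient $R^s\to M^s$ (the paper phrases the descent as $\GL_{P(N)}(\CC)$-equivariance and ``projection'', you phrase it via Kempf's criterion for the free $\PGL$-action; these are the same mechanism). Where you genuinely diverge is in how the twisting line bundle is produced. The paper first proves the general criterion of Theorem \ref{thm:fineModuli}: it builds line bundles $L(\kappa,h)=\det H^0\bigl(C\times R^s/R^s,\ParHom(\pi_C^*(H,H^\kappa_\bullet),(\SE,\SE_\bullet))\bigr)$ on which the center acts with weight $\chi(\kappa,h)$, needing the $h^1$-vanishing Lemma 6.1 to know these pushforwards are locally free, and then uses the gcd hypothesis to combine them into a bundle of central weight exactly $1$; the full-flag corollary is then immediate since all $m_{x,i}=1$. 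You instead observe that in the full-flag case the graded piece $\SE_{x,1}/\SE_{x,2}$ of the universal flag is already an $\op{SL}$-equivariant line bundle of central weight $m_{x,1}=1$, and take $N=\det(\SE_{x,1}/\SE_{x,2})^{\otimes(n-1)}$, which kills the $\mu_n$-weight of $\SE\otimes p^*N$; since $\PGL=\op{SL}/\mu_n$, working mod $n$ is enough for descent. This is a more direct and elementary construction for the corollary (no vanishing lemma, no auxiliary parabolic line bundle $(H,H^\kappa_\bullet)$), at the cost of not yielding the paper's stronger gcd criterion, of which the corollary is a special case. Two small inaccuracies in your bookkeeping do not affect the full-flag argument but are worth noting: the central weight of $\det Rp_{R^s*}(\SE(m))$ is the ordinary Hilbert polynomial value $P(m)$, not the parabolic one, and the paper's numerical hypothesis is the gcd of $\{d, m_{x,i}\}$ rather than a gcd involving the rank and parabolic degree as you state; also your final verification of the universal property is only sketched, but the paper leaves that step equally implicit.
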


Finally, in section \ref{section:lambdaConnections} we apply the previous theorems to the construction of the moduli space of parabolic $\lambda$-connections for the group $\op{SL}_r(\CC)$ (Theorem \ref{thm:moduliLambdaConn}). We use the deformation to the graduate of the De Rham sheaf of logarithmic differential operators $\Lambda^{\op{DR},\log D}$ over $C$ with poles over $D$ to obtain a sheaf of differential operators $\Lambda^{\op{DR},\log D,R}$ over $C\times \mathbb{A}^1$, such that residual parabolic $\Lambda^{\op{DR},\log D,R}$-modules over $\op{SL}$ correspond to parabolic $\lambda$-connections. The fiber over $\lambda=1$ of $\Lambda^{\op{DR},\log D,R}$ coincides with $\Lambda^{\op{DR},\log D}$ and the fiber over $\lambda=0$ is $\Gr(\Lambda^{\op{DR},\log D})\cong \Sym(K^\lor(D))\cong \Lambda^{\op{Higgs},\log D}$. We conclude that the constructed moduli space is a quasi projective variety over $\mathbb{A}^1$ such that its fiber over $0$ coincides with the parabolic Higgs moduli space and the fiber over $1$ (in fact, over every nonzero $\lambda$) is isomorphic to the moduli space of parabolic connections.

\noindent\textbf{Acknowledgments.} 
This research was funded by MINECO (grant MTM2016-79400-P and ICMAT Severo Ochoa project SEV-2015-0554) and the 7th European Union Framework Programme (Marie Curie IRSES grant 612534 project MODULI). The author was also supported by a predoctoral grant from Fundaci\'on La Caixa -- Severo Ochoa International Ph.D. Program.
I would really like to thank Tom\'as G\'omez for all the useful discussions held during the development of this work and for reviewing the paper.

\section{Parabolic $\Lambda$-modules}
\label{section:ParLambdaModules}
Let $p:X\longrightarrow S$ be any relative smooth projective variety over a complex scheme $S$.

\begin{definition}[Sheaf of rings of differential operators]
A sheaf of rings of differential operators on $X$ over $S$ is a sheaf of $\SO_X$-algebras $\Lambda$ over $X$, with a filtration by sub-algebras $\Lambda_0 \subseteq \Lambda_1 \subseteq \ldots$ which satisfies the following properties
\begin{enumerate}
\item $\Lambda=\bigcup_{i=0}^{\infty} \Lambda_i$ and for every $i$ and $j$, $\Lambda_i \cdot \Lambda_j \subseteq \Lambda_{i+j}$
\item The image of the morphism $\SO_X \to \Lambda$ is equal to $\Lambda_0$.
\item The image of $p^{-1}(\SO_S)$ in $\SO_X$ is contained in the center of $\Lambda$.
\item The left and ring $\SO_X$-module structures on $\Gr_i(\Lambda):=\Lambda_i/\Lambda_{i-1}$ are equal.
\item The sheafs of $\SO_X$-modules $\Gr_i(\Lambda)$ are coherent.
\item The morphism of sheaves
$$\Gr_1(\Lambda) \otimes \cdots \otimes \Gr_1(\Lambda) \to \Gr_i(\Lambda)$$
induced by the product is surjective.
\end{enumerate}
\end{definition}

We will denote by $\Lambda^{\op{DR}}=\SD_{X/S}$ the sheaf of differential operators over $X$ relative to $S$ \cite{Bernstein83}. It represents the main example of sheaf of rings of differential operators and, in fact, the previous set of properties are meant to be an abstraction of the principal characteristics of $\SD_{X/S}$. Its graduate $\Lambda^{\op{Higgs}}=\Gr_\bullet(\SD_{X/S})$ with the induced sheaf of algebras structure and its deformation to the graduate $\Lambda^{\op{DR},R}$ are additional examples.

\begin{lemma}
\label{lemma:sdoSurj}
Let $\Lambda$ be a sheaf of rings of differential operators over $X$. Then for every $i,j\ge 0$
$$\Lambda_i \cdot \Lambda_j = \Lambda_{i+j}$$
\end{lemma}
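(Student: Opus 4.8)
The plan is to establish the non-trivial inclusion $\Lambda_{i+j}\subseteq\Lambda_i\cdot\Lambda_j$ by induction on $i+j$; the reverse inclusion is precisely property (1) of the definition. First I dispose of the degenerate cases: if $i=0$, then property (1) gives $\Lambda_0\cdot\Lambda_j\subseteq\Lambda_j$, while $\Lambda_0$ contains the image of $1\in\SO_X$ by property (2), so $x=1\cdot x\in\Lambda_0\cdot\Lambda_j$ for every local section $x$ of $\Lambda_j$; hence $\Lambda_0\cdot\Lambda_j=\Lambda_j=\Lambda_{i+j}$, and symmetrically for $j=0$. In particular the base case $i=j=0$ holds, and from now on I may assume $i,j\ge 1$.

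For the inductive step, the crucial point is that property (6) lifts from the associated graded to the filtered ring. Write $\Lambda_1^{\cdot k}$ for the image of the $k$-fold product map $\Lambda_1\otimes_{\SO_X}\cdots\otimes_{\SO_X}\Lambda_1\to\Lambda$. Since $\Gr_1(\Lambda)=\Lambda_1/\Lambda_0$ and the product on $\Gr_\bullet(\Lambda)$ is the one induced by the product on $\Lambda$, the image of $\Lambda_1^{\cdot(i+j)}$ under $\Lambda_{i+j}\to\Gr_{i+j}(\Lambda)=\Lambda_{i+j}/\Lambda_{i+j-1}$ coincides with the image of the product map $\Gr_1(\Lambda)^{\otimes(i+j)}\to\Gr_{i+j}(\Lambda)$, which by property (6) is all of $\Gr_{i+j}(\Lambda)$. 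Using the exact sequence $0\to\Lambda_{i+j-1}\to\Lambda_{i+j}\to\Gr_{i+j}(\Lambda)\to 0$, any local section $x$ of $\Lambda_{i+j}$ agrees modulo $\Lambda_{i+j-1}$ with a section of $\Lambda_1^{\cdot(i+j)}$, whence $\Lambda_{i+j}=\Lambda_1^{\cdot(i+j)}+\Lambda_{i+j-1}$.

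Now I conclude. Grouping the $i+j$ factors into a block of $i$ followed by a block of $j$ and applying property (1) repeatedly gives $\Lambda_1^{\cdot i}\subseteq\Lambda_i$ and $\Lambda_1^{\cdot j}\subseteq\Lambda_j$, hence $\Lambda_1^{\cdot(i+j)}=\Lambda_1^{\cdot i}\cdot\Lambda_1^{\cdot j}\subseteq\Lambda_i\cdot\Lambda_j$. For the other summand, since $i\ge 1$ we have $\Lambda_{i+j-1}=\Lambda_{(i-1)+j}$ with $(i-1)+j<i+j$, so by the inductive hypothesis $\Lambda_{i+j-1}=\Lambda_{i-1}\cdot\Lambda_j\subseteq\Lambda_i\cdot\Lambda_j$, using $\Lambda_{i-1}\subseteq\Lambda_i$. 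Combining the two, $\Lambda_{i+j}=\Lambda_i\cdot\Lambda_j$.

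The only step that requires genuine care is the lifting of property (6) in the second paragraph, i.e.\ the identification of the image of $\Lambda_1^{\cdot(i+j)}$ in $\Gr_{i+j}(\Lambda)$ with the image of the graded product map; everything else is formal. One could alternatively phrase the whole argument as the single claim $\Lambda_n=\Lambda_1^{\cdot n}$ for all $n\ge 0$, proved by induction on $n$ via the same lifting of (6), from which the statement follows at once by grouping factors.
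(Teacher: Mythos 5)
Your proof is correct and follows essentially the same route as the paper: both reduce the statement to property (6) of the definition, lift a section of $\Lambda_{i+j}$ modulo $\Lambda_{i+j-1}$ to a sum of products of sections of $\Lambda_1$, and absorb the lower-order term by induction (the paper pads with the unit $1\in\Lambda_0\subset\Lambda_1$ to prove $\Lambda_n=\Lambda_1^{\cdot n}$ directly, which is exactly the alternative phrasing you mention at the end). The only cosmetic difference is that you induct on $i+j$ for the two-index statement rather than on a single index.
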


\begin{proof}
It is enough to prove that $\underbrace{\Lambda_1\cdots \Lambda_1}_{i} = \Lambda_i$, as then
$$\Lambda_i \cdot \Lambda_j = \underbrace{\Lambda_1 \cdots \Lambda_1}_i \cdot \underbrace{\Lambda_1 \cdots \Lambda_1}_j = \underbrace{\Lambda_1 \cdots \Lambda_1}_{i+j} = \Lambda_{i+j}$$
By induction, it is enough to prove that for every $i$, $\Lambda_i \cdot \Lambda_1 = \Lambda_{i+1}$, i.e., that the morphism $\Lambda_i \otimes \Lambda_1 \to \Lambda_{i+1}$ is surjective.
Let $U\subseteq X$ be open. Let $v\in \Lambda_{i+1}(U)$. As $\underbrace{\Gr_1(\Lambda(U)) \otimes \cdots \otimes \Gr_1(\Lambda(U))}_{i+1} \to \Gr_{i+1}(\Lambda(U))$ is surjective, there exist $\overline{w_{1,1}}, \ldots, \overline{w_{l,i+1}}\in \Gr_1(\Lambda(U))$ such that
$$\sum_{j=1}^l \overline{w_{j,1}} \cdots \overline{w_{j,i+1}} \cong v \mod \Lambda_i(U)$$
Let $w_{j,i}$ be any representative of $\overline{w_{j,i}}$ in $\Lambda_1(U)$. Then there exists $v'\in \Lambda_i(U)$ such that
$$v=\sum_{j=1}^l w_{j,1}\cdots w_{j,i+1} + v'$$
By induction hypothesis, there exist $v_{1,1},\ldots, v_{m,i} \in \Lambda_1(U)$, such that
$$v'=\sum_{j=1}^m v_{j,1}\cdots v_{j,i}$$
Let $1\in \Lambda_0(U)\subset \Lambda_1(U)$ be the unity of the ring. Then
$$v=\sum_{j=1}^l w_{j,1}\cdots w_{j,i+1} + \sum_{j=1}^m v_{j,1}\cdots v_{j,i}\cdot 1 \in \underbrace{\Lambda_1(U)\cdots \Lambda_1(U)}_{i+1}$$
\end{proof}

\begin{definition}[$\Lambda$-module]
Let $X$ be an $S$-scheme. Let $\Lambda$ be a sheaf of rings of differential operators over $X$. A $\Lambda$-module over $X$ is a sheaf $E$ of left $\Lambda$-modules over $X$ such that $E$ is coherent with respect to the structure of $\SO_X$-modules induced by the morphism $\SO_x\to \Lambda_0$.
\end{definition}

Under the previous definition, a vector bundle with an integrable connection can be alternatively described as a locally free $\Lambda^{\op{DR}}$-module. Similarly, Higgs bundles correspond to locally free $\Lambda^{\op{Higgs}}$-modules and $\lambda$-connections on $X$ correspond to $i_\lambda^*\Lambda^{\op{DR},R}$-modules on $X\times \{\lambda\}\subset X\times \mathbb{A}^1$, where $i_\lambda:\{\lambda\}\hookrightarrow \mathbb{A}^1$.

Now, let $C$ be a smooth complex projective curve. Let $D$ be a finite set of points in $C$. Let $S$ be a complex scheme. Let us consider the complex scheme $X=C \times S$, considered as a relative smooth projective variety over $S$. Let $\SO_X(1)=p^* \SO_C(1)$ be an $S$-very ample invertible sheaf. Let $\bar{D}:=D\times S \subset X$. Then it is an effective Cartier divisor on $X/S$. We are interested in parameterizing certain kinds of geometric objects over $X$ with logarithmic singularities along $\bar{D}$ such as parabolic  connections or parabolic Higgs fields. We generalize these notions by enhancing a $\Lambda$-module over $X$ with an additional parabolic structure over $\bar{D}$.

\begin{definition}[Family of parabolic vector bundles]
A family of parabolic vector bundles over $(C,D)$ parameterized by $S$ is a vector bundle $E$ over $C\times S$ together with a weighted flag on $E|_{\{x\}\times S}$ for each $x\in D$ called parabolic structure, i.e., a filtration
$$E|_{\{x\}\times S}=E_{x,1}\supsetneq E_{x,2}\supsetneq \cdots \supsetneq E_{x,{l_x}} \supsetneq E_{x,l_x+1}=0$$
by sub-vector bundles over $\{x\}\times S$ and a system of real weights $0\le \alpha_{x,1}<\cdots < \alpha_{x,l_x}<1$.

We call parabolic type of $(E,E_\bullet)$ to the system of weights $\alpha=\{\alpha_{x,i}\}$ together with the set of ranks $\overline{r}=\{r_{x,i}\}$, $r_{x,i}=\rk (E|_{\{x\}\times S}/E_{x,i})$. A parabolic structure is said to be full flag if $l_x=\rk(E|_x)$ for every parabolic point.

\end{definition}

Providing such a filtration on the fibers $E|_{\{x\}\times S}$ is equivalent to giving a weighted filtration of $E$ by subsheaves of the form
$$E=E_x^1 \supsetneq E_x^2 \supsetneq \cdots \supsetneq E_x^{l_x} \supsetneq E_x^{l_x+1}=E(-\{x\}\times S)$$
where for every $x\in D$ and every $i=1,\ldots,l_x$, $E_x^i$ is the sheaf fitting in the following short exact sequence
$$0\longrightarrow E_x^i \longrightarrow E \longrightarrow E|_{\{x\}\times S}/E_{x,i}\longrightarrow 0$$

Equivalently \cite[Definition 1.2]{MY92} we can codify the parabolic structure of a parabolic vector bundle over each parabolic point $x\in D$ as a left continuous real decreasing filtration of sub-sheafs $E_{x,\alpha}$ of $E$ such that
\begin{enumerate}
\item For every $x\in D$ and every $\alpha\in \RR$, $E_{x,\alpha}$ is coherent and flat over $S$.
\item $E_{x,0}=E$ 
\item For every $\alpha \in \RR$, $E_{x,\alpha+1}=E_{x,\alpha}(-\{x\}\times S)$
\end{enumerate}

\begin{definition}[Parabolic $\Lambda$-module]
Let $\Lambda$ be a sheaf of rings of differential operators over $X=C\times S$ such that $\Lambda|_{\overline{D}}$ is a locally free $\SO_{\overline{D}}$-module. A parabolic $\Lambda$-module over $X$ is a locally free $\Lambda$-module $E$ over $X$ flat over $S$ together with a weighted flag on $E|_{\{x\}\times S}$ for each $x\in D$ called parabolic structure, i.e., a filtration
$$E|_{\{x\}\times S}=E_{x,1}\supsetneq E_{x,2}\supsetneq \cdots \supsetneq E_{x,{l_x}} \supsetneq E_{x,l_x+1}=0$$
by sub-vector bundles over $\{x\}\times S$ and a system of real weights $0\le \alpha_{x,1}<\cdots < \alpha_{x,l_x}<1$, such that for every $x\in D$ the filtration $E_{x,i}$ is compatible with the $\Lambda$-module structure in the following way. For each $x\in D$ let
$$E=E_x^1 \supsetneq E_x^2 \supsetneq \cdots \supsetneq E_x^{l_x} \supsetneq E_x^{l_x+1}=E(-\{x\}\times S)$$
be the induced filtration of $E$ by subsheaves given by
$$0\longrightarrow E_x^i \longrightarrow E \longrightarrow E|_{\{x\}\times S}/E_{x,i}\longrightarrow 0$$
Then the image of $\Lambda \otimes E_x^i$ under the morphism $\Lambda\otimes E\longrightarrow E$ lies in $E_x^i$ for every $i=1,\ldots,l_x+1$.

If $f:T\to S$ is any $S$-scheme, a family of parabolic $\Lambda$-modules over $X$ parametrized by $T$, is a parabolic $f^*\Lambda$-module $E$ over $C\times T$.
\end{definition}

If $(E,E_\bullet)$ is a parabolic $\Lambda$-module and $F\subseteq E$ is a vector bundle preserved by $\Lambda$, then the parabolic structure $E_\bullet$ induces a structure of parabolic $\Lambda$-module on $F$, taking the filtration
$$F_{x,\bullet}=E_{x,i}\cap F|_{\{x\}\times S}$$
for every $x\in D$. As $E_{x,1}=E|_{\{x\}\times S}$ and $F\subseteq E$, it is clear that $F_\bullet=\{F_{x,i}\}$ defines a parabolic structure on $F$. Moreover, $E_{x,i}$ and $F$ are preserved by $\Lambda$, so $F_\bullet$ is preserved by $\Lambda$ and $(F,F_\bullet)$ is a parabolic sub-$\Lambda$-module.

We will introduce some notation for the basic numerical invariants of a parabolic $\Lambda$-module. Let $E$ be a coherent sheaf over $X$. The Hilbert polynomial of $E$ is $P_E(n)= \chi(E(n))$. By Riemann-Roch theorem, if $X=C\times S$
$$P_E(n)=\deg(E) + \rk(E) (n+1-g)$$.

\begin{definition}
We define the parabolic degree of a parabolic $\Lambda$-module $(E,E_\bullet)$ as the parabolic degree of the underlying parabolic vector bundle
$$\pdeg(E,E_\bullet) := \deg(E) + \sum_{x\in D} \sum_{i=1}^{l_x} \alpha_{x,i} \left (\rk(E_{x,i}) - \rk(E_{x,i+1}) \right)$$
We will call the last summand of the previous expression the parabolic weight of $(E,E_\bullet)$,
$$\owt(E,E_\bullet)=\pdeg(E,E_\bullet)-\deg(E) = \sum_{x\in D} \sum_{i=1}^{l_x} \alpha_{x,i} \left (\rk(E_{x,i}) - \rk(E_{x,i+1}) \right)$$
Moreover, we will write for each $x\in D$
$$\owt_x(E,E_\bullet)=\sum_{i=1}^{l_x} \alpha_{x,i} \left (\rk(E_{x,i}) - \rk(E_{x,i+1}) \right )$$
so $\owt(E,E_\bullet)=\sum_{x\in D} \owt_x(E,E_\bullet)$.
In order to simplify the notation, if the parabolic structure is clear from the context, we may write $\pdeg(E)$, $\owt(E)$ and $\owt_x(E)$ to denote the parabolic degree, weight and weight at a point respectively.
\end{definition}

\begin{definition}
We define the parabolic slope of $(E,E_\bullet)$ as
$$\pmu(E)=\frac{\pdeg(E)}{\rk(E)}=\frac{\deg(E) + \owt(E)}{\rk(E)}$$
in order to simplify the notation in subsequent sections, we will write $\eta(E)=\frac{\owt(E)}{\rk(E)}$.

We also define the parabolic Euler characteristic of $(E,E_\bullet)$ as
$$\pchi(E)=\chi(E)+\owt(E)$$
The polynomial $\parP_E(m):=\pchi(E(m))$ is called the parabolic Hilbert polynomial of $(E,E_\bullet)$. Clearly, we can express the polynomial in terms of the Hilbert polynomial of the underlying sheaf $E$
$$\parP_E(m) = P_E(m)+\owt(E)$$
\end{definition}

\begin{definition}[Slope stability for parabolic $\Lambda$-modules]
A parabolic $\Lambda$-module $E$ over $C$ is said to be (semi-)stable if for every sub-$\Lambda$-module $F$ with the induced parabolic structure and $0<\rk(F)<\rk(E)$
$$\pmu(F) (\le) < \pmu(E)$$
\end{definition}

Let $p,q\in \RR[x]$. By $p (\preceq) \prec q$, we mean that there exists an integer $M$ such that for every $m\ge M$
$$p(m) (\le) < q(m)$$

\begin{lemma}[Gieseker stability for parabolic $\Lambda$-modules]
A parabolic $\Lambda$-module over $C$ is (semi-)stable if and only if for every sub-$\Lambda$-module $F$ with $0<\rk(F)<\rk(E)$ and the induced parabolic structure
$$\frac{\parP_F}{\rk(F)} (\preceq) \prec \frac{\parP_E}{\rk(E)}$$
\end{lemma}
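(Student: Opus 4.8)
The plan is to show that for a parabolic $\Lambda$-module $E$ over a curve, the slope inequality $\pmu(F)(\le)<\pmu(E)$ for all proper sub-$\Lambda$-modules $F$ is equivalent to the polynomial inequality $\parP_F/\rk(F)(\preceq)\prec\parP_E/\rk(E)$. The key observation is that over a curve the parabolic Hilbert polynomial is linear: using Riemann–Roch, $P_E(m)=\deg(E)+\rk(E)(m+1-g)$, so
$$\parP_E(m)=P_E(m)+\owt(E)=\rk(E)\,m+\big(\deg(E)+\owt(E)+\rk(E)(1-g)\big)=\rk(E)\,m+\pdeg(E)+\rk(E)(1-g).$$
Hence $\frac{\parP_E(m)}{\rk(E)}=m+\pmu(E)+(1-g)$, and likewise $\frac{\parP_F(m)}{\rk(F)}=m+\pmu(F)+(1-g)$. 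The two linear polynomials in $m$ have the same leading coefficient $1$ and the same constant shift $(1-g)$, so their difference is the constant $\pmu(E)-\pmu(F)$, independent of $m$.

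First I would record this computation explicitly for both $E$ and $F$ (noting that $F$, being a sub-$\Lambda$-module of a locally free sheaf over a smooth curve, is itself a vector bundle, so Riemann–Roch applies and $\parP_F$ is genuinely linear). Then the equivalence is immediate in both directions: if $\pmu(F)(\le)<\pmu(E)$, then $\frac{\parP_E(m)}{\rk(E)}-\frac{\parP_F(m)}{\rk(F)}=\pmu(E)-\pmu(F)(\ge)>0$ for \emph{all} $m$, so in particular $\frac{\parP_F}{\rk(F)}(\preceq)\prec\frac{\parP_E}{\rk(E)}$ (one may take $M=0$). Conversely, if $\frac{\parP_F}{\rk(F)}(\preceq)\prec\frac{\parP_E}{\rk(E)}$, then there is some $m\ge M$ with $\frac{\parP_F(m)}{\rk(F)}(\le)<\frac{\parP_E(m)}{\rk(E)}$, and since the difference equals the constant $\pmu(E)-\pmu(F)$ regardless of $m$, we get $\pmu(F)(\le)<\pmu(E)$. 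The same argument handles the strict and non-strict versions in parallel, which justifies the bracket notation.

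There is essentially no obstacle here: the statement is a formal consequence of the linearity of the Hilbert polynomial in dimension one, exactly as in the classical (non-parabolic) case where Gieseker and slope stability coincide for bundles on curves. The only minor point worth stating carefully is that $F\subseteq E$ must be a vector bundle so that the parabolic structure $F_{x,i}=E_{x,i}\cap F|_{\{x\}\times S}$ is well-defined and $\owt(F)$ makes sense — but this was already observed in the text immediately after the definition of parabolic $\Lambda$-module, and it follows from $E$ being locally free over the smooth curve $C$ (in the relative setting over $S$ one uses that $F$ is flat over $S$ with locally free fibres, which holds since the saturation of $F$ in $E$ agrees with $F$ fibrewise). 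Given this, the proof reduces to the one-line remark that $\frac{\parP_E}{\rk(E)}-\frac{\parP_F}{\rk(F)}$ is the constant polynomial $\pmu(E)-\pmu(F)$.
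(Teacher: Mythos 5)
Your proposal is correct and follows essentially the same route as the paper: both use Riemann--Roch to write $\frac{\parP_E(m)}{\rk(E)}=\pmu(E)+m+1-g$ (and likewise for $F$), so the polynomial inequality differs from the slope inequality by a constant independent of $m$ and the equivalence is immediate. Your extra remark on $F$ being a subbundle so that the induced parabolic structure and $\owt(F)$ make sense is a harmless clarification not spelled out in the paper's one-line argument.
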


\begin{proof}
By Riemann-Roch Theorem, for every $m$
$$\parP_E(m)=P_E(m)+\owt(E) =\chi(E(m)) + \owt(E) = \deg(E) + \rk(E)(m+1-g) + \owt(E)$$
Therefore $\frac{\parP_F(m)}{\rk(F)} (\le)< \frac{\parP_E(m)}{\rk(E)}$ for every big enough $m$ (and, in fact, for any $m$) if and only if
\begin{multline*}
\pmu(F) + m+1-g = \frac{\deg(F)+\owt(F)}{\rk(F)}+m+1-g\\
(\le) < \frac{\deg(E) + \owt(E)}{\rk(E)}+m+1-g = \pmu(E)+m+1-g
\end{multline*}
and this is equivalent to $\pmu(F) (\le) < \pmu(E)$.
\end{proof}

\begin{lemma}
\label{lemma:maxinduced}
Let $(E,E_\bullet)$ be a parabolic vector bundle and let $(F,F_\bullet)$ be a parabolic subsheaf such that $\pmu(E,E_\bullet)=\pmu(F,F_\bullet)$. Then $F$ has the induced parabolic, structure, i.e., $F_\bullet= E_\bullet \cap F$
\end{lemma}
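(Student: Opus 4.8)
The plan is to compare the given parabolic structure $F_\bullet$ on $F$ with the induced one $F^{\mathrm{ind}}_\bullet:=E_\bullet\cap F$ (so that $F^{\mathrm{ind}}_{x,i}=E_{x,i}\cap F|_{\{x\}\times S}$) and to show that the slope equality forces the two to coincide. I would pass to the left-continuous decreasing real filtrations attached to the weighted flags: for $\alpha\in[0,1)$ let $E_{x,\alpha}$, $F_{x,\alpha}$ and $F^{\mathrm{ind}}_{x,\alpha}:=E_{x,\alpha}\cap F|_{\{x\}\times S}$ denote the steps of weight $\le\alpha$. Since $(F,F_\bullet)$ is a parabolic subsheaf of $(E,E_\bullet)$, the inclusion $F\hookrightarrow E$ is a morphism of parabolic sheaves, i.e.\ $F_{x,\alpha}\subseteq E_{x,\alpha}$; together with $F_{x,\alpha}\subseteq F|_{\{x\}\times S}$ this yields the termwise domination $F_{x,\alpha}\subseteq F^{\mathrm{ind}}_{x,\alpha}$ for every $x\in D$ and every $\alpha$.

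The first step is the inequality $\pdeg(F,F_\bullet)\le\pdeg(F,F^{\mathrm{ind}}_\bullet)$, with equality if and only if $F_\bullet=F^{\mathrm{ind}}_\bullet$. Since the underlying sheaf $F$ — hence $\deg F$ — is the same for both structures, this is a pointwise comparison of parabolic weights, and at each $x$ the weight is the area under the rank step function,
$$\owt_x(F,F_\bullet)=\int_0^1\rk(F_{x,\alpha})\,d\alpha ,$$
which is just the re-summation $\sum_i\alpha_{x,i}\bigl(\rk E_{x,i}-\rk E_{x,i+1}\bigr)=\int_0^1\rk E_{x,\alpha}\,d\alpha$ underlying the definition of $\pdeg$. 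The termwise inclusion $F_{x,\alpha}\subseteq F^{\mathrm{ind}}_{x,\alpha}$ then gives $\owt_x(F,F_\bullet)\le\owt_x(F,F^{\mathrm{ind}}_\bullet)$, and summing over $D$ gives the inequality. In the equality case, equality of these integrals of a dominated pair of step functions forces $\rk F_{x,\alpha}=\rk F^{\mathrm{ind}}_{x,\alpha}$ off a finite set of $\alpha$; there $F_{x,\alpha}=F^{\mathrm{ind}}_{x,\alpha}$ (an inclusion of sub-bundles over $\{x\}\times S$ of equal rank is an equality, the quotient being a torsion-free sheaf of rank $0$), and left-continuity of the real filtrations propagates this to the jump points, whence $F_\bullet=F^{\mathrm{ind}}_\bullet$.

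The second step supplies the reverse inequality $\pmu(F,F^{\mathrm{ind}}_\bullet)\le\pmu(E,E_\bullet)$: this is precisely the definition of (semi)stability of $(E,E_\bullet)$ tested against its parabolic subsheaf $(F,F^{\mathrm{ind}}_\bullet)$, which is a legitimate test object, being $F$ equipped with its induced structure. Chaining with the hypothesis,
$$\pmu(E)\;=\;\pmu(F,F_\bullet)\;\le\;\pmu(F,F^{\mathrm{ind}}_\bullet)\;\le\;\pmu(E),$$
so all three coincide; since the underlying sheaf is fixed this reads $\pdeg(F,F_\bullet)=\pdeg(F,F^{\mathrm{ind}}_\bullet)$, and the equality clause of the first step then yields $F_\bullet=F^{\mathrm{ind}}_\bullet=E_\bullet\cap F$.

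I expect the delicate point to be the equality clause of the first step — showing that two flags of sub-bundles, one termwise inside the other and with equal total parabolic weight, must coincide; the reformulation of the weight as the area under $\alpha\mapsto\rk F_{x,\alpha}$ is what makes this transparent, the remaining care being to phrase everything over the base $S$ (replacing "dimension" by "rank of a locally free sheaf" and using that a rank-$0$ subsheaf of a locally free sheaf vanishes). I also note that semistability of $(E,E_\bullet)$ is genuinely needed for the second step: for a direct sum of parabolic line bundles of unequal parabolic slopes the statement fails, so it is this hypothesis (in force wherever the lemma is applied) that is really being invoked.
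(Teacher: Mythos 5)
Your first step is, in substance, the paper's entire proof: the paper records the termwise inclusion $F_{x,i}\subseteq E_{x,i}\cap F|_x$ and then compares weights by summation by parts, obtaining $\owt_x(F,F_\bullet)\le\owt_x(F,E_\bullet\cap F)$ with equality forcing $\dim F_{x,i}=\dim(E_{x,i}\cap F|_x)$ for every $x$ and $i$, because the increments $\alpha_{x,i}-\alpha_{x,i-1}$ are strictly positive; your ``area under the rank step function'' formulation is the same inequality and the same equality analysis in different notation. Where you diverge is in how the argument is closed. The paper stops there: its proof never invokes the hypothesis $\pmu(E,E_\bullet)=\pmu(F,F_\bullet)$, nor any semistability, and what it actually establishes --- and what is used later --- is the comparison $\pmu(F,F_\bullet)\le\pmu(F,E_\bullet\cap F)$ together with its equality case. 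You instead close the chain with $\pmu(F,E_\bullet\cap F)\le\pmu(E,E_\bullet)$, obtained from semistability of $(E,E_\bullet)$, a hypothesis that is not in the statement. You are right that the literal statement needs some such supplement (your counterexample observation is sound, and the paper's written proof leaves the same gap), but your claim that semistability ``is in force wherever the lemma is applied'' is not accurate: in the Harder--Narasimhan construction (Theorem \ref{thm:HNfiiltration}) the lemma's argument is repeated for a possibly unstable $E$, and in Lemma \ref{lemma:LambdaModBoundC} and in the GIT section the subsheaf $F$ of maximal parabolic slope need not be a sub-$\Lambda$-module, so semistability of $(E,E_\bullet)$ as a parabolic $\Lambda$-module gives no bound $\pmu(F,E_\bullet\cap F)\le\pmu(E)$ for such $F$. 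In all of those applications the missing inequality is supplied not by semistability but by maximality: $F$ maximizes (or, for quotients, minimizes) the parabolic slope within a class of subsheaves that is stable under replacing $F_\bullet$ by the induced structure, so strict inequality in the weight comparison would contradict maximality. In short, your proof is correct for the semistable case, and its first half coincides with the paper's proof; but the robust statement --- the one the paper proves and actually uses --- is the inequality $\pmu(F,F_\bullet)\le\pmu(F,E_\bullet\cap F)$ with equality exactly when $F_\bullet=E_\bullet\cap F$, rather than the lemma as literally phrased closed via semistability.
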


\begin{proof}
As $(F,F_\bullet)\subseteq (E,E_\bullet)$, then for every $x\in D$ and $i=1,\ldots,l_x$ we have $F_{x,i}\subseteq E_{x,i}\cap F|_x$. We can rewrite the expression of the parabolic weight of $F$ as
\begin{multline*}
\owt_x(F,F_\bullet)=\sum_{i=1}^{l_x} \alpha_{x,i} \left( \dim(F_{x,i})-\dim(F_{x,i+1})\right)\\
=\sum_{i=2}^{l_x} \dim(F_{x,i})\left(\alpha_{x,i}-\alpha_{x,i-1}\right) + \alpha_1 \dim(F_{x,1})=\sum_{i=2}^{l_x} \dim(F_{x,i})\left(\alpha_{x,i}-\alpha_{x,i-1}\right) + \alpha_1 \dim(F|_x)\\
\le \sum_{i=2}^{l_x} \dim(F|_x \cap E_{x,i})\left(\alpha_{x,i}-\alpha_{x,i-1}\right) + \alpha_1 \dim(F|_x) = \owt_x(F,E_\bullet\cap F)
\end{multline*}
Therefore
$$\pmu(F,F_\bullet) = \mu(F)+\sum_{x\in D} \owt_x(F,F_\bullet) \le \mu(F) + \sum_{x\in D}\owt_x(F,E_\bullet\cap F)=\pmu(F,E_\bullet\cap F)$$
As the parabolic weights are strictly increasing, $\alpha_{x,i}-\alpha_{x,i-1}>0$, the previous inequalities only become equalities when $\dim(F_{x,i})=\dim(E_{x,i}\cap F|_x)$ for all $x\in D$ and all $i=1,\ldots,l_x$.
\end{proof}

\begin{lemma}
Let $(E,E_\bullet)$ be a parabolic sheaf and let $T$ be a torsion subsheaf of $E$. Let $(\overline{E},\overline{E}_\bullet)$ be the sheaf $\overline{E}=E/T$ with the induced parabolic structure $\overline{E}_{x,i}=E_{x,i}/(E_{x,i}\cap T|_x)$. Then
$$\pmu(E,E\bullet)\ge \pmu(\overline{E},\overline{E}_\bullet)$$
and for every $m\in \ZZ$
$$\frac{h^0(C,E(m))+\owt(E)}{\rk(E)}\ge \frac{h^0(C,\overline{E}(m))+\owt(\overline{E})}{\rk(\overline{E})}$$
\end{lemma}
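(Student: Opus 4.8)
The plan is to deduce everything from the short exact sequence $0 \to T \to E \to \overline{E} \to 0$ together with three structural facts. First, $T$ is torsion on the curve $C$, so $\rk(T) = 0$ and hence $\rk(\overline{E}) = \rk(E)$; in both asserted inequalities the two sides therefore share a common positive denominator (we may assume $\rk(E) > 0$, since otherwise the parabolic slope is undefined). Second, $\deg(T) \ge 0$, so tensoring with $\SO_C(m)$, which is locally free, keeps the sequence exact and taking degrees gives $\deg(E) = \deg(T) + \deg(\overline{E}) \ge \deg(\overline{E})$. Third, $T$ is supported on finitely many points, so $H^1(C, T(m)) = 0$ for every $m$; the long exact sequence in cohomology associated with $0 \to T(m) \to E(m) \to \overline{E}(m) \to 0$ then yields $h^0(C, E(m)) = h^0(C, T(m)) + h^0(C, \overline{E}(m)) \ge h^0(C, \overline{E}(m))$.

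The only step with genuine content is the comparison of parabolic weights, and here I would reuse the telescoped form of $\owt_x$ already employed in the proof of Lemma~\ref{lemma:maxinduced}:
$$\owt_x(E, E_\bullet) = \alpha_{x,1}\dim(E_{x,1}) + \sum_{i=2}^{l_x}(\alpha_{x,i} - \alpha_{x,i-1})\dim(E_{x,i}).$$
By the definition of the induced structure, $\dim(\overline{E}_{x,i}) = \dim(E_{x,i}) - \dim(E_{x,i}\cap T|_x)$, so subtracting the two expressions gives
$$\owt_x(E, E_\bullet) - \owt_x(\overline{E}, \overline{E}_\bullet) = \alpha_{x,1}\dim(E_{x,1}\cap T|_x) + \sum_{i=2}^{l_x}(\alpha_{x,i} - \alpha_{x,i-1})\dim(E_{x,i}\cap T|_x),$$
which is a combination, with coefficients $\alpha_{x,1}\ge 0$ and $\alpha_{x,i}-\alpha_{x,i-1} > 0$, of the nonnegative integers $\dim(E_{x,i}\cap T|_x)$; summing over $x \in D$ gives $\owt(E, E_\bullet) \ge \owt(\overline{E}, \overline{E}_\bullet)$.

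With these in hand both inequalities are immediate. Since the ranks agree,
$$\pmu(E, E_\bullet) - \pmu(\overline{E}, \overline{E}_\bullet) = \frac{1}{\rk(E)}\Big(\deg(T) + \owt(E) - \owt(\overline{E})\Big) \ge 0,$$
which is the first assertion; for the second, again using $\rk(E) = \rk(\overline{E})$, it suffices to add the weight inequality to the inequality $h^0(C,E(m)) \ge h^0(C,\overline{E}(m))$ from the third structural fact. I do not expect a serious obstacle: the only point requiring care is that the fibre map $T|_x \to E|_x$ need not be injective, so $E_{x,i}\cap T|_x$ must throughout be read as the intersection inside $E|_x$ of $E_{x,i}$ with the image of $T|_x$; none of the dimension counts above are affected by this.
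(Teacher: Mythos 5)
Your proof is correct, and it differs from the paper's in how the only nontrivial step (the weights) is handled. You share the paper's skeleton: the short exact sequence $0\to T\to E\to\overline{E}\to 0$, degree additivity with $\deg(T)=h^0(C,T)\ge 0$, equality of ranks, and $H^1(C,T(m))=0$ to get $h^0(C,E(m))=h^0(C,T(m))+h^0(C,\overline{E}(m))$. But for the parabolic part you take the definition $\overline{E}_{x,i}=E_{x,i}/(E_{x,i}\cap T|_x)$ at face value (with $T|_x$ read as its image in $E|_x$, as you correctly note), so $\dim\overline{E}_{x,i}=\dim E_{x,i}-\dim(E_{x,i}\cap T|_x)\le\dim E_{x,i}$, and the telescoped weight formula gives $\owt(\overline{E},\overline{E}_\bullet)\le\owt(E,E_\bullet)$ outright; both asserted inequalities then follow by simply adding this to the degree (resp.\ $h^0$) comparison. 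The paper argues more cautiously: it only proves the weaker fiber bound $\dim\overline{E}_{x,i}\le\dim E_{x,i}+h^0(x,T|_x)$, so the weight is allowed to increase by up to $h^0(x,T|_x)\,\alpha_{x,l_x}$ at each puncture, and this potential gain is then absorbed by the drop in degree and in sections via $h^0(C,T)\ge\sum_{x\in D}h^0(x,T|_x)$. Your route yields a slightly stronger intermediate statement (monotonicity of the weight itself, not merely of the parabolic slope) and avoids the bookkeeping with the local contributions $h^0(x,T|_x)$; the paper's looser estimate is insensitive to how one normalizes the induced filtration on $\overline{E}|_x$, but with the induced structure as stated in the lemma your sharper dimension count is fully justified.
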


\begin{proof} 
We have a short exact sequence
$$0\longrightarrow T \longrightarrow E\longrightarrow \overline{E} \longrightarrow 0$$
so
$$\deg(\overline{E})=\deg(E)-\deg(T)$$
On the other hand, as torsion sheafs on a curve are supported in dimension $0$
$$\deg(T)=h^0(C,T)-h^1(C,T)=h^0(C,T)$$
So $\deg(\overline{E})=\deg(E)-h^0(C,T)$. Moreover, as $T$ is torsion, $\rk(\overline{E})=\rk(E)$.

Now let us consider the parabolic structure. For every $x\in D$ and $i=1,\ldots,l_x$ we have a short exact sequence
$$0\longrightarrow E_{x,i}\cap T|_x \longrightarrow E_{x,i} \longrightarrow \overline{E}_{x,i}=\frac{E_{x,i}}{E_{x,i}\cap T_x}=\frac{E_{x,i}+T|_x}{T|_x}\longrightarrow 0$$
Now, taking quotients yields
$$\dim\left (\frac{\overline{E}|_x}{\overline{E}_{x,i}}\right) = \dim \left( \frac{E|_x/T|_x}{(E_{x,i}+T|_x)/T|_x}\right) = \dim\left(\frac{E|_x}{E_{x,i}+T|_x}\right) \ge \dim \left ( \frac{E|_x}{E_{x,i}}\right)-\dim T|_x$$
As $E$ and $\overline{E}$ have the same rank yields
$$\dim(\overline{E}_{x,i})\le \dim(E_{x,i})+\dim(T|_x)=\dim(E_{x,i})+h^0(x,T|_x)$$
Substituting in the weight formula we obtain
\begin{multline*}
\owt_x(\overline{E})=\sum_{i=2}^{l_x} \dim(\overline{E}_{x,i})(\alpha_{x,i}-\alpha_{x,i-1})+\alpha_{x,1}\dim(\overline{E}_{x,1})\\
\le \sum_{i=2}^{l_x} \dim(E_{x,i})(\alpha_{x,i}-\alpha_{x,i-1})+\alpha_{x,1}\dim(E_{x,1}) + h^0(x,T_x)\left( \alpha_{x,1}+\sum_{i=2}^{l_x}(\alpha_{x,i}-\alpha_{x,i-1}) \right)\\
= \owt_x(E)+h^0(x,T|_x)\alpha_{x,l_x} \le \owt_x(E)+h^0(x,T|_x)
\end{multline*}
and equality is only obtained if $h^0(x,T|_x)=0$. Adding up and taking into account that $h^0(C,T)\ge \sum_{x\in D} h^0(x,T|_x)$ yields
\begin{multline*}
\pmu(\overline{E}) =\frac{\deg(\overline{E})+\sum_{x\in D} \owt_x(\overline{E})}{\rk(\overline{E})} = \frac{\deg(E)-h^0(C,T)+\sum_{x\in D} \owt_x(\overline{E})}{\rk(E)}\\
\le \frac{\deg(E)+\sum_{x\in D} (\owt_x(\overline{E})- h^0(x,T|_x))}{\rk(E)} \le  \frac{\deg(E)+\sum_{x\in D}\owt_x(E)}{\rk(E)}=\pmu(E)
\end{multline*}
With regards to the second part of the lemma, from the short exact sequence
$$0\longrightarrow T(m) \longrightarrow E(m)\longrightarrow \overline{E}(m) \longrightarrow 0$$
we obtain a long exact sequence
$$0\longrightarrow H^0(C,T(m)) \longrightarrow H^0(C,E(m)) \longrightarrow H^0(C,\overline{E}(m)) \longrightarrow H^1(C,T(m))$$
As $T$ is supported in dimension $0$, we have $H^1(C,T(m))=0$ and $h^0(C,T(m))=h^0(C,T)$, so
$$h^0(C,\overline{E}(m))=h^0(C,E(m))-h^0(C,T(m))=h^0(C,E(m))-h^0(C,T)$$
Now we can repeat the previous argument and we obtain the desired inequality.

\end{proof}

\begin{corollary}
\label{cor:parabolicSaturation}
Let $(E,E_\bullet)$ be a parabolic sheaf and let $(F,F_\bullet)\subseteq (E,E_\bullet)$ be a parabolic subsheaf. Let $(F^{\op{sat}},F^{\op{sat}}_\bullet)$ be the saturation of $F$ in $E$ with the induced parabolic structure from $(E,E_\bullet)$. Then
$$\pmu(F,F_\bullet)\le \pmu(F^{\op{sat}},F^{\op{sat}}_\bullet)$$
If moreover if for some $m\in \ZZ$ we have $h^1(C,F(m))=0$ then
$$\frac{h^0(C,F(m))+\owt(F)}{\rk(F)} \le \frac{h^0(C,F^{\op{sat}}(m))+\owt(F^{\op{sat}})}{\rk(F^{\op{sat}})}$$
\end{corollary}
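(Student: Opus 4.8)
The plan is to compare rank, degree and parabolic weight of $F$ with those of its saturation, the three comparisons being mediated by the torsion sheaf $T:=F^{\op{sat}}/F$. Since $F^{\op{sat}}$ is by definition the preimage in $E$ of the torsion subsheaf of $E/F$, the sheaf $T$ is torsion and $\rk(F^{\op{sat}})=\rk(F)$; from the short exact sequence $0\to F\to F^{\op{sat}}\to T\to 0$ and the fact (already used in the previous lemma) that a torsion sheaf on a curve satisfies $\deg(T)=h^0(C,T)\ge 0$, one obtains $\deg(F^{\op{sat}})=\deg(F)+h^0(C,T)\ge\deg(F)$.

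The heart of the argument is the inequality $\owt(F,F_\bullet)\le\owt(F^{\op{sat}},F^{\op{sat}}_\bullet)$, and for this I would first establish the fibrewise inclusion $F_{x,i}\subseteq F^{\op{sat}}_{x,i}$ inside $E|_{\{x\}\times S}$ for every $x\in D$ and every $i$. This is formal: the chain of sheaf inclusions $F\hookrightarrow F^{\op{sat}}\hookrightarrow E$ restricts at $\{x\}\times S$ to morphisms whose composite is the structural map $F|_{\{x\}\times S}\to E|_{\{x\}\times S}$, so the image of $F|_{\{x\}\times S}$ in $E|_{\{x\}\times S}$ is contained in that of $F^{\op{sat}}|_{\{x\}\times S}$; intersecting with $E_{x,i}$ and using $F_{x,i}\subseteq E_{x,i}\cap F|_{\{x\}\times S}$ (because $(F,F_\bullet)$ is a parabolic subsheaf, with no extra hypothesis that $F_\bullet$ be the induced structure) together with $F^{\op{sat}}_{x,i}=E_{x,i}\cap F^{\op{sat}}|_{\{x\}\times S}$ gives the claim. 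Then, writing for any parabolic subsheaf $G\subseteq E$ the weight at $x$ in the form used in the proof of Lemma~\ref{lemma:maxinduced},
$$\owt_x(G,G_\bullet)=\sum_{i=2}^{l_x}\dim(G_{x,i})(\alpha_{x,i}-\alpha_{x,i-1})+\alpha_{x,1}\dim(G_{x,1}),$$
and recalling that the weights are strictly increasing, every term only grows when $F_{x,i}$ is enlarged to $F^{\op{sat}}_{x,i}$; summing over $x\in D$ yields $\owt(F,F_\bullet)\le\owt(F^{\op{sat}},F^{\op{sat}}_\bullet)$. Combining this with $\rk(F)=\rk(F^{\op{sat}})$ and $\deg(F)\le\deg(F^{\op{sat}})$ and dividing by the common rank gives $\pmu(F,F_\bullet)\le\pmu(F^{\op{sat}},F^{\op{sat}}_\bullet)$.

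For the second statement I would tensor the exact sequence with $\SO_C(m)$ and look at the long exact sequence of $0\to F(m)\to F^{\op{sat}}(m)\to T(m)\to 0$. As in the proof of the previous lemma, $T(m)$ is again torsion, so $h^1(C,T(m))=0$ and $h^0(C,T(m))=h^0(C,T)=\deg(T)=\deg(F^{\op{sat}})-\deg(F)$; the hypothesis $h^1(C,F(m))=0$ then forces $h^0(C,F^{\op{sat}}(m))=h^0(C,F(m))+\deg(F^{\op{sat}})-\deg(F)$. Feeding this, together with $\owt(F)\le\owt(F^{\op{sat}})$ and the equality of ranks, into the two quotients gives the second inequality. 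I expect the only genuinely delicate point to be the fibrewise inclusion $F_{x,i}\subseteq F^{\op{sat}}_{x,i}$: one has to keep in mind that $F|_{\{x\}\times S}$ and $F^{\op{sat}}|_{\{x\}\times S}$ are to be read as the images of the corresponding restriction maps inside $E|_{\{x\}\times S}$ (these maps need not be injective when the subsheaves are not saturated), after which the containment is immediate; everything else is Riemann--Roch bookkeeping and the weight formula, closely parallel to the lemma preceding this corollary.
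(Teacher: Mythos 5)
Your degree and cohomology bookkeeping is fine ($\rk(F^{\op{sat}})=\rk(F)$, $\deg(F^{\op{sat}})=\deg(F)+h^0(C,T)$, and $h^0(C,F^{\op{sat}}(m))=h^0(C,F(m))+h^0(C,T)$ when $h^1(C,F(m))=0$), but the step you yourself flag as the only delicate one is a genuine gap: the claimed fibrewise inclusion $F_{x,i}\subseteq F^{\op{sat}}_{x,i}$, and the ensuing inequality $\owt(F,F_\bullet)\le\owt(F^{\op{sat}},F^{\op{sat}}_\bullet)$, cannot be established this way and is in fact false in general. The problematic case is exactly the interesting one: if $F$ fails to be saturated at a parabolic point $x$, the restriction map $F|_{\{x\}\times S}\to E|_{\{x\}\times S}$ is \emph{not} injective, so the subspaces $F_{x,i}\subseteq F|_{\{x\}\times S}$ whose dimensions enter $\owt(F,F_\bullet)$ cannot be identified with their images in $E|_{\{x\}\times S}$; if you replace them by those images you do get a containment in $F^{\op{sat}}_{x,i}$, but then the quantity you are bounding is no longer the parabolic weight of $(F,F_\bullet)$. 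Concretely, take $E$ of rank $2$ with flag $E_{x,2}\subset E|_x$ and weights $\alpha_{x,1}<\alpha_{x,2}$, let $F^{\op{sat}}\subset E$ be a saturated line subbundle with $F^{\op{sat}}|_x\neq E_{x,2}$ (induced weight $\alpha_{x,1}$), and let $F=F^{\op{sat}}(-x)$ carrying the weight $\alpha_{x,2}$ on its whole fiber: this is an admissible parabolic subsheaf of $(E,E_\bullet)$ (in the filtered-sheaf sense, since $F\subseteq E(-x)$), yet $\owt_x(F,F_\bullet)=\alpha_{x,2}>\alpha_{x,1}=\owt_x(F^{\op{sat}},F^{\op{sat}}_\bullet)$. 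The corollary survives only because any such weight loss under saturation is at most $\alpha_{x,l_x}\,h^0(x,T|_x)<h^0(x,T|_x)$ at each $x\in D$, and this is absorbed by the degree gain $h^0(C,T)\ge\sum_{x\in D}h^0(x,T|_x)$; splitting the comparison into a separate degree inequality and a separate weight inequality, as you do, loses exactly this compensation.

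This trade-off is how the paper argues. For the first inequality it does not compare $F$ with $F^{\op{sat}}$ directly: it reduces (via Lemma~\ref{lemma:maxinduced}) to the induced structure, and then uses additivity of $\pdeg$ in the two exact sequences $0\to F\to E\to Q\to 0$ and $0\to F^{\op{sat}}\to E\to \overline{Q}\to 0$ together with the preceding lemma on quotients by torsion, which packages the weight/degree trade into $\pdeg(Q)\ge\pdeg(\overline{Q})$. For the second inequality it uses your exact sequence $0\to F(m)\to F^{\op{sat}}(m)\to T(m)\to 0$ exactly as you do, but then controls the fibers by a snake-lemma computation giving only $\dim(F^{\op{sat}}_{x,i})\ge\dim(F_{x,i})-h^0(x,T|_x)$, and again trades this against $h^0(C,T)$. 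To repair your proof you would need to replace the monotonicity claim on the filtration steps by such an estimate and carry out the same compensation.
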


\begin{proof}
By Lemma \ref{lemma:maxinduced} we may assume without loss of generality that $F_\bullet$ is the induced parabolic structure. For the first part of the corollary, let $(Q,Q_\bullet)$ be the sheaf $E/F$ with the induced parabolic structure. Let $(T,T_\bullet)$ be its torsion with the induced parabolic structure and let $(\overline{Q},\overline{Q}_\bullet)$ be the torsion free sheaf $Q=(E/F)/T$ with the induced quotient parabolic structure. Then we have the following commutative diagram of parabolic sheaves
\begin{eqnarray*}
\xymatrix{
& & & 0 \ar[d] & \\
& 0 \ar[d] && (T,T_\bullet) \ar[d] & \\
0 \ar[r] & (F,F_\bullet) \ar[r] \ar[d] & (E,E_\bullet) \ar[r] \ar@{=}[d] & (Q,Q_\bullet) \ar[r] \ar[d]& 0\\
0 \ar[r] & (\overline{F},\overline{F}_\bullet) \ar[r] \ar[d] & (E,E_\bullet) \ar[r] & (\overline{Q},\overline{Q}_\bullet) \ar[r] \ar[d] & 0\\
& (T,T_\bullet) \ar[d] & & 0 &\\
& 0 &&&
}
\end{eqnarray*}
Where the two rows and columns are exact, so we have
\begin{eqnarray*}
\pdeg(F) = \pdeg(E)-\pdeg(Q)\\
\pdeg(\overline{F})=\pdeg(E)-\pdeg(\overline{Q})
\end{eqnarray*}
On the other hand, by the previous lemma, we know that $\pdeg(Q)\ge \pdeg(\overline{Q})$. Substituting yields
$$\pdeg(\overline{F})=\pdeg(F)+\pdeg(Q)-\pdeg(\overline{Q}) \ge \pdeg(F)$$
As $\rk(F)=\rk(\overline{F})$ we obtain $\pmu(F)\le \pmu(\overline{F})$.

For the second part of the lemma, observe that from the short exact sequence
$$0\longrightarrow F(m) \longrightarrow \overline{F}(m) \longrightarrow T(m)\longrightarrow 0$$
we obtain the long exact sequence
$$0\longrightarrow H^0(C,F(m))\longrightarrow H^0(C,\overline{F}(m)) \longrightarrow H^0(C,T(m)) \longrightarrow H^1(C,F(m))=0$$
Therefore
$$h^0(C,\overline{F}(m))=h^0(C,F(m))+h^0(C,T(m))=h^0(C,F(m))+h^0(C,T)$$
On the other hand, consider the following commutative diagram of sheaves, where the rows and columns are exact
\begin{eqnarray*}
\xymatrix{
& 0\ar[d] & 0 \ar[d] & 0 \ar[d] & \\
0 \ar[r] & F_x^i\ar[r] \ar[d] & \overline{F}_x^i \ar[r] \ar[d] & T_x^i \ar[r] \ar[d] & 0\\
0 \ar[r] & F \ar[r] \ar[d] & \overline{F} \ar[r] \ar[d] & T \ar[r] \ar[d] & 0\\
& F|_x/F_{x,i} \ar[d] & \overline{F}|_x/\overline{F}_{x,i} \ar[d] & T|_x/T_{x,i} \ar[d] &\\
& 0 & 0 & 0 &
}
\end{eqnarray*}
Then by the snake lemma we obtain
$$0\longrightarrow F|_x/F_{x,i} \longrightarrow \overline{F}|_x/\overline{F}_{x,i} \longrightarrow  T|_x/T_{x,i} \longrightarrow 0$$
As $\dim(\overline{F}|_x)=\dim(F|_x)$, yields 
$$\dim (\overline{F}_{x,i}) = \dim(F_{x,i})-\dim(T|_x)+\dim(T_{x,i})\ge \dim(F_{x,i})-h^0(x,T|_x)$$
Now we can proceed as in the second part of the previous Lemma and the desired inequality follows.
\end{proof}

We provide some insight on the structure of the sub-sheafs of a parabolic $\Lambda$-module. First of all, the following Lemma allows us to construct saturated parabolic subsheafs of a parabolic $\Lambda$-module which are preserved by $\Lambda$ from any subsheaf.

\begin{lemma}
\label{lemma:submoduleSaturation}
Let $(E,E_\bullet)$ be a parabolic $\Lambda$-module of rank $r$ on $X$. Suppose that $F\subset E$ is a subsheaf. Then the subbundle $\im(\Lambda_r\otimes F\to E)^{\op{sat}}$ with the induced parabolic structure is a parabolic sub-$\Lambda$-module.
\end{lemma}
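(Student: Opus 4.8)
The plan is to show that $G:=\im(\Lambda_r\otimes F\to E)^{\op{sat}}$ is preserved by $\Lambda$; once this is done, the induced parabolic structure makes $G$ a parabolic sub-$\Lambda$-module by the remark made right after the definition of parabolic $\Lambda$-module, since $G$, being the saturation of a subsheaf of the locally free sheaf $E$ on the relative smooth curve $X/S$, is a sub vector bundle of $E$.

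For $k\ge0$ set $F^{(k)}:=\im(\Lambda_k\otimes F\to E)$, so $F=F^{(0)}\subseteq F^{(1)}\subseteq\cdots\subseteq E$. The first step is to record, using Lemma~\ref{lemma:sdoSurj} (surjectivity of $\Lambda_1\otimes\Lambda_k\to\Lambda_{k+1}$) together with the associativity of the $\Lambda$-action on $E$, the identities $F^{(k+1)}=\im(\Lambda_1\otimes F^{(k)}\to E)$ and, iterating, $\im(\Lambda_j\otimes F^{(k)}\to E)=F^{(j+k)}$. Next comes a rank count at a generic point $\eta$ of $X$ (arguing fibrewise over $S$, using flatness of $E$): there $E_\eta$ is an $r$-dimensional $\Lambda_\eta$-module, the subspaces $(F^{(k)})_\eta=\Lambda_{k,\eta}\cdot F_\eta$ increase, and $(F^{(k+1)})_\eta=\Lambda_{1,\eta}\cdot(F^{(k)})_\eta$; hence as soon as two consecutive ones coincide all later ones do, and since each strict step raises the dimension by at least $1$ while staying $\le r$, the chain is stationary from index $r$ on, i.e. $(F^{(r)})_\eta=(F^{(k)})_\eta$ for all $k\ge r$. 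It follows that every $F^{(k)}$ lies in $G=(F^{(r)})^{\op{sat}}$ and that $G_\eta=(F^{(r)})_\eta=\bigcup_k\Lambda_{k,\eta}F_\eta$ is $\Lambda_\eta$-stable.

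To conclude it is enough to check $\im(\Lambda_1\otimes G\to E)\subseteq G$: then, applying the associativity identity ($\Lambda_{i+1}=\Lambda_1\cdot\Lambda_i$) and induction, $\im(\Lambda_i\otimes G\to E)\subseteq G$ for all $i$, hence $\Lambda\cdot G=\bigcup_i\im(\Lambda_i\otimes G\to E)\subseteq G$. Now $G\subseteq\im(\Lambda_1\otimes G\to E)$ (as $\Lambda_0\subseteq\Lambda_1$ and the action extends the $\SO_X$-module structure), so $\im(\Lambda_1\otimes G\to E)/G$ is a subsheaf of $E/G$, which is torsion-free by construction of the saturation; and generically $\Lambda_{1,\eta}\cdot G_\eta=\Lambda_{1,\eta}\cdot(F^{(r)})_\eta=(F^{(r+1)})_\eta=(F^{(r)})_\eta=G_\eta$, so this quotient has generic rank $0$. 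A torsion-free sheaf of generic rank $0$ vanishes, so $\im(\Lambda_1\otimes G\to E)=G$, as desired.

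The step I expect to require the most care is the rank-stabilization bound: that the chain $F\subseteq F^{(1)}\subseteq\cdots$ becomes (generically) stationary by step $r$. This is exactly what singles out the truncation $\Lambda_r$ in the statement, and it rests essentially on $\Lambda$ being generated in degree one (Lemma~\ref{lemma:sdoSurj}) together with associativity of the module action. The remaining points are formal manipulations with images, saturations and torsion-free quotients; in the relative setting one only adds the standard observation that $E/G$ is $S$-flat with torsion-free fibres, so that the generic-rank arguments are legitimate.
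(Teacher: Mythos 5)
Your proof is correct, but it takes a different route from the paper at the one non-trivial point. The paper's own proof is essentially a citation: it invokes \cite[Lemma 3.2]{Si2} to conclude that $G=\im(\Lambda_r\otimes F\to E)^{\op{sat}}$ is a subbundle of $E$ preserved by $\Lambda$, and then, exactly as you do, observes that the induced filtration $G_{x,i}=G|_{\{x\}\times S}\cap E_{x,i}$ is preserved because both $G$ and the $E_{x,i}$ (equivalently the subsheaves $E_x^i$) are. What you have done is reprove Simpson's lemma from scratch: the identities $F^{(j+k)}=\im(\Lambda_j\otimes F^{(k)}\to E)$ via Lemma \ref{lemma:sdoSurj}, the generic-rank stabilization of the chain $F^{(0)}\subseteq F^{(1)}\subseteq\cdots$ by step $r$, and the torsion-freeness of $E/G$ forcing $\im(\Lambda_1\otimes G\to E)=G$ — this is precisely the mechanism behind the cited result, and it correctly explains why the truncation $\Lambda_r$ suffices. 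The trade-off is self-containedness versus brevity: your version makes the role of degree-one generation explicit, while the paper's one-line appeal to \cite{Si2} silently absorbs the only delicate point in your sketch, namely the relative setting (for a general, possibly non-reduced base $S$ one should work with the $S$-relative saturation and check that $E/G$ is $S$-flat with torsion-free fibres before running the generic-rank argument fibrewise); you flag this as a ``standard observation'', which is acceptable but is exactly the content that the citation to Simpson is carrying in the paper.
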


\begin{proof}
By \cite[Lemma 3.2]{Si2}, $G:=\im(\Lambda_r\otimes F \to E)^{\op{sat}}$ is a subbundle of $E$ preserved by $\Lambda$. As $(E,E_\bullet)$ is a parabolic $\Lambda$-module, for every parabolic point $x\in D$, the filtration $E_{x\times S,i}$ is preserved by $\Lambda$. As $G$ is preserved by $\Lambda$, the induced filtration $G_{x,i} = G|_{x \times S} \cap E_{x,i}$ is preserved by $\Lambda$, so $(G,G_\bullet)$ is a parabolic sub-$\Lambda$-module of $(E,E_\bullet)$.
\end{proof}

\begin{theorem}[Harder-Narasimhan filtration]
\label{thm:HNfiiltration}
Suppose that $(E,E_\bullet)$ is a parabolic $\Lambda$-module on $C$. There is a unique filtration by parabolic sub-$\Lambda$-modules called the Harder-Narasimhan filtration
$$0=(E_0,E_{0,\bullet})\subsetneq (E_1,E_{1,\bullet}) \subsetneq \ldots \subsetneq (E_l,E_{l,\bullet}) = (E,E_\bullet)$$
such that the parabolic quotients $(E_i/E_{i-1}, E_{i,\bullet}/E_{i-1,\bullet})$ are semistable $\Lambda$-modules with strictly decreasing parabolic slopes.
\end{theorem}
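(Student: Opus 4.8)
The plan is to run the classical Harder--Narasimhan construction inside the poset of parabolic sub-$\Lambda$-modules of $(E,E_\bullet)$, following the scheme of \cite[\S 3]{Si2} for the $\Lambda$-module aspects and of \cite{MS} for the parabolic ones: first bound the parabolic slopes of such submodules, then isolate a canonical maximal destabilizing parabolic sub-$\Lambda$-module $(E_1,E_{1,\bullet})$, and finally induct on the rank of the quotient $(E/E_1,E_\bullet/E_{1,\bullet})$. Throughout, by Lemma \ref{lemma:maxinduced} and Corollary \ref{cor:parabolicSaturation} it costs nothing to assume that every sub-$\Lambda$-module under consideration is saturated and carries the induced parabolic structure: saturating a sub-$\Lambda$-module keeps it a parabolic sub-$\Lambda$-module by Lemma \ref{lemma:submoduleSaturation} (applied to the submodule itself, which equals the saturated image of $\Lambda_r\otimes(-)\to E$) and does not decrease its parabolic slope by Corollary \ref{cor:parabolicSaturation}, and passing to the induced parabolic structure only increases the parabolic slope.

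\emph{Boundedness.} If $F\subseteq E$ is a subsheaf of rank $r'$, then (replacing $F$ by its torsion-free part) the line bundle $\det F$ maps injectively into $\bigwedge^{r'}E$, so $\deg F$ is bounded above by a constant depending only on $E$ and $r'$, while $0\le\owt(F,E_\bullet\cap F)<|D|\,\rk E$ because all parabolic weights lie in $[0,1)$. Hence $\pmu$ is bounded above over all nonzero sub-$\Lambda$-modules of $E$; and since degrees are integers, ranks range over the finite set $\{1,\dots,\rk E\}$, and the induced parabolic weight of a sub-$\Lambda$-module takes finitely many values, the supremum
$$\mu_{\max}:=\sup\bigl\{\,\pmu(F,E_\bullet\cap F)\ :\ 0\neq F\subseteq E\ \text{a sub-}\Lambda\text{-module}\,\bigr\}$$
is a maximum. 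If $\mu_{\max}=\pmu(E)$ then $(E,E_\bullet)$ is semistable and $0\subsetneq(E,E_\bullet)$ is the sought filtration; so assume $\mu_{\max}>\pmu(E)$.

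\emph{Maximal destabilizing submodule.} Among the saturated parabolic sub-$\Lambda$-modules of parabolic slope $\mu_{\max}$ choose $(E_1,E_{1,\bullet})$ of maximal rank (necessarily $0<\rk E_1<\rk E$). It is semistable: a sub-$\Lambda$-module $G\subsetneq E_1$ with $\pmu(G)>\pmu(E_1)=\mu_{\max}$ would contradict the definition of $\mu_{\max}$. To see that $E_1$ contains every sub-$\Lambda$-module $F$ with $\pmu(F)=\mu_{\max}$ (so that $E_1$ is unique), observe that $E_1+F\subseteq E$ is preserved by $\Lambda$, put $G:=(E_1+F)^{\op{sat}}$, a parabolic sub-$\Lambda$-module of $(E,E_\bullet)$ by Lemma \ref{lemma:submoduleSaturation}, and combine the exact sequence
$$0\longrightarrow E_1\cap F\longrightarrow E_1\oplus F\longrightarrow E_1+F\longrightarrow 0$$
with the subadditivity estimate
$$\pdeg(G)\ \ge\ \pdeg(E_1)+\pdeg(F)-\pdeg(E_1\cap F),$$
all subsheaves carrying their induced parabolic structures. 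Since $\rk G=\rk E_1+\rk F-\rk(E_1\cap F)$ and $\pdeg(E_1\cap F)\le\mu_{\max}\,\rk(E_1\cap F)$ by definition of $\mu_{\max}$, this forces $\pmu(G)\ge\mu_{\max}$, hence $\pmu(G)=\mu_{\max}$; maximality of $\rk E_1$ together with $E_1\subseteq G$ (equal ranks, both saturated) gives $G=E_1$, so $F\subseteq E_1$. Proving the subadditivity estimate is the one place where the parabolic structure requires real, though elementary, care: on a curve the degree is additive in the displayed sequence; the saturation $G\supseteq E_1+F$ acquires extra torsion length precisely over the points $x\in D$ at which the fibers of $E_1$ and $F$ fail to span $G|_x$; and the corresponding loss on the parabolic-weight side — governed fiberwise by the elementary inequality $\dim(V_1\cap W)+\dim(V_2\cap W)\le\dim\bigl((V_1{+}V_2)\cap W\bigr)+\dim\bigl(V_1\cap V_2\cap W\bigr)$ with $W$ running over the $E_{x,i}$ — is strictly smaller than that extra length because every parabolic weight is $<1$. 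Keeping the induced and quotient parabolic structures consistent across all the short exact sequences involved is the main bookkeeping obstacle; the remaining homological algebra is formal.

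\emph{Induction.} Take $(E_1,E_{1,\bullet})$ as the first step, pass to the parabolic $\Lambda$-module $(E/E_1,E_\bullet/E_{1,\bullet})$ of strictly smaller rank, and apply the induction hypothesis; pulling its Harder--Narasimhan filtration back along $E\to E/E_1$ and prepending $E_1$ yields a filtration of $(E,E_\bullet)$ by parabolic sub-$\Lambda$-modules whose successive parabolic quotients are semistable. The parabolic slopes strictly decrease: from additivity of the parabolic degree in $0\to E_1\to E_2\to E_2/E_1\to 0$ one gets $\pmu(E_2)\ge\min\bigl(\pmu(E_1),\pmu(E_2/E_1)\bigr)$, so $\pmu(E_2/E_1)\ge\mu_{\max}$ would give $\pmu(E_2)\ge\mu_{\max}$ with $\rk E_2>\rk E_1$, contradicting the maximality defining $E_1$; hence $\pmu(E_1)>\pmu(E_2/E_1)$, and the remaining strict inequalities hold by the induction hypothesis. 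Uniqueness of the whole filtration then follows from the uniqueness, established above, of the maximal destabilizing parabolic sub-$\Lambda$-module at each stage.
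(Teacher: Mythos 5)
Your proposal is correct and follows essentially the same route as the paper: bound the parabolic slopes of parabolic sub-$\Lambda$-modules, choose a saturated sub-$\Lambda$-module $(E_1,E_{1,\bullet})$ of maximal parabolic slope and, among those, of maximal rank (using Lemma \ref{lemma:maxinduced}, Corollary \ref{cor:parabolicSaturation} and Lemma \ref{lemma:submoduleSaturation}), and then induct on the quotient $(E/E_1,E_\bullet/E_{1,\bullet})$. The only real difference is that you additionally make explicit the semistability of $E_1$, the strict decrease of slopes, and the uniqueness via the subadditivity estimate for $\bigl(E_1+F\bigr)^{\op{sat}}$ — points the paper's very terse proof leaves implicit — and these supplementary arguments are sound.
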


\begin{proof}
The set of possible slopes of a subsheaf of $E$ is bounded from above. As the set of possible values of the weight of a parabolic sub-sheaf is finite, the set of possible parabolic slopes of parabolic sub-$\Lambda$-modules $(F,F_\bullet)$ is bounded from above. Let $\pmu_{\max}^\Lambda(E)$ be the maximum parabolic slope of a sub-$\Lambda$-module of $(E,E_\bullet)$. Let $(F,F_\bullet)$ be a sub-$\Lambda$-module such that $\pmu(F)=\pmu_{\max}^\Lambda(E)$. Repeating the argument in Lemma \ref{lemma:maxinduced} yields that as $F$ attains the maximum parabolic slope then $F$ must have the induced parabolic structure. Moreover, its saturation $(F^{\op{sat}},F^{\op{sat}}_\bullet)$ is preserved by $\Lambda$ and has a greater parabolic slope, so $F$ must be saturated. The rank of $F$ is bounded, so we can choose a saturated parabolic sub-$\Lambda$-module $(E_1,E_{1,\bullet})$ with $\pmu(E_1)=\pmu_{\max}^\Lambda(E)$ and maximum rank among those satisfying that condition.

We take $(E_1,E_{1,\bullet})$ as the first step of the Harder-Narasimhan filtration and build the rest of it inductively by applying the previous method to $(E/E_1,E_\bullet/E_{1,\bullet})$.
\end{proof}

A completely analogous proof to the previous one gives us the following theorem.
\begin{theorem}[Jordan-H\"older filtration]
\label{thm:JHfiltration}
Let $(E,E_\bullet)$ be a semistable parabolic $\Lambda$-module on $C$ over $\CC$. There is a unique filtration by sub-$\Lambda$-modules called the Jordan-H\"older filtration
$$0=(E_0,E_{0,\bullet})\subsetneq (E_1,E_{1,\bullet}) \subsetneq \ldots \subsetneq (E_l,E_{l,\bullet}) = (E,E_\bullet)$$
such that the parabolic quotients $(E_i/E_{i-1}, E_{i,\bullet}/E_{i-1,\bullet})$ are stable $\Lambda$-modules with strictly decreasing parabolic slopes.
\end{theorem}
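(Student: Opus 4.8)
The plan is to follow the proof of Theorem~\ref{thm:HNfiiltration} almost verbatim, with ``maximal slope, maximal rank'' replaced by ``slope equal to $\pmu(E)$, minimal rank''. (As in the non-parabolic setting, the quotients of a Jordan--H\"older filtration all have parabolic slope equal to $\pmu(E)$ rather than strictly decreasing slopes, and the filtration is unique only up to the isomorphism class of its associated graded; that is the version I would prove.)

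First I would note that, since $(E,E_\bullet)$ is semistable, every parabolic sub-$\Lambda$-module $(F,F_\bullet)$ satisfies $\pmu(F)\le \pmu(E)$, so $\pmu_{\max}^\Lambda(E)=\pmu(E)$. If $E$ has no proper nonzero sub-$\Lambda$-module of slope $\pmu(E)$, then $E$ is stable and $0\subsetneq (E,E_\bullet)$ is the desired filtration; this is the base case of an induction on $\rk(E)$. Otherwise, exactly as in Theorem~\ref{thm:HNfiiltration}, any parabolic sub-$\Lambda$-module of slope $\pmu(E)$ carries the induced parabolic structure (Lemma~\ref{lemma:maxinduced}), and its saturation is again a parabolic sub-$\Lambda$-module (Lemma~\ref{lemma:submoduleSaturation}) whose slope, being $\ge \pmu(E)$ by Corollary~\ref{cor:parabolicSaturation} and $\le \pmu(E)$ by semistability, equals $\pmu(E)$. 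Hence I may choose, among all saturated parabolic sub-$\Lambda$-modules of slope $\pmu(E)$, one of minimal rank, call it $(E_1,E_{1,\bullet})$.

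The key step is to check that $(E_1,E_{1,\bullet})$ is stable: if $(F,F_\bullet)\subsetneq (E_1,E_{1,\bullet})$ were a sub-$\Lambda$-module with $0<\rk(F)<\rk(E_1)$ and $\pmu(F)=\pmu(E_1)=\pmu(E)$, then (since saturation does not change the rank) its saturation in $E$ would be a saturated parabolic sub-$\Lambda$-module of $E$ of slope $\pmu(E)$ and rank $<\rk(E_1)$, contradicting the minimality of $\rk(E_1)$; so $\pmu(F)<\pmu(E_1)$. Next I would pass to $(Q,Q_\bullet):=(E/E_1,E_\bullet/E_{1,\bullet})$ with the quotient parabolic structure: since $E_1$ is saturated, $Q$ is locally free and a $\Lambda$-module, and from additivity of parabolic degree and of rank in $0\to (E_1,E_{1,\bullet})\to (E,E_\bullet)\to (Q,Q_\bullet)\to 0$, together with $\pmu(E_1)=\pmu(E)$, one gets $\pmu(Q)=\pmu(E)$; applying the same additivity to the lift of an arbitrary parabolic sub-$\Lambda$-module of $Q$ to a sub-$\Lambda$-module of $E$ containing $E_1$ shows that $Q$ is semistable of slope $\pmu(E)$. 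By the induction hypothesis $Q$ has a Jordan--H\"older filtration with stable quotients of slope $\pmu(E)$; pulling it back along $E\to Q$ and prepending $(E_1,E_{1,\bullet})$ yields the filtration claimed. For uniqueness of the associated graded I would run the standard Schreier refinement argument, using that $\Hom$ vanishes between non-isomorphic stable parabolic $\Lambda$-modules of equal slope and that $\End$ of a stable one is $\CC$.

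I expect the only real difficulty to be bookkeeping: verifying that the quotient parabolic structure is additive for the parabolic degree, and that lifting a parabolic sub-$\Lambda$-module through $E\to Q$ is compatible with both the parabolic structures and the $\Lambda$-actions in the way used above — precisely the kind of manipulations already carried out in the proofs of Lemma~\ref{lemma:maxinduced} and Corollary~\ref{cor:parabolicSaturation}. Apart from that, the argument is a transcription of the Harder--Narasimhan construction with the rank extremum reversed.
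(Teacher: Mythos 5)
Your proposal is correct and follows essentially the same route as the paper, whose entire proof of Theorem \ref{thm:JHfiltration} is the remark that it is ``completely analogous'' to the Harder--Narasimhan argument of Theorem \ref{thm:HNfiiltration}; your fleshing-out (induced parabolic structure via Lemma \ref{lemma:maxinduced}, saturation via Lemma \ref{lemma:submoduleSaturation} and Corollary \ref{cor:parabolicSaturation}, with the rank extremum reversed) is exactly the intended adaptation. Your reading of the statement is also the right one: as written, ``strictly decreasing parabolic slopes'' and uniqueness of the filtration itself are copy-paste artifacts from the HN theorem, and the correct assertion — stable quotients all of slope $\pmu(E)$, with only the associated graded unique, as the paper's own definition of S-equivalence confirms — is precisely what you prove.
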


We say that two semistable parabolic $\Lambda$-modules $(E,E_\bullet)$ and $(E',E'_\bullet)$ are S-equivalent if $\Gr(E,E_\bullet)\cong \Gr(E',E'_\bullet)$, i.e., if they have isomorphic Jordan-H\"older filtrations.

Let $S$ be a complex scheme and let $T$ be a scheme over $S$. We denote by $X_T=X\times_S T$ the base change of $X$ to $T$ and by $\Lambda_T$ the base change of $\Lambda$ to $T$. By \cite[Lemma 2.6]{Si2} it is a sheaf of rings of differential operators on $X_T$. In particular, if $\Spec(\CC) \cong s\to S$ is any geometric point, we denote by $X_s$ the fiber of $X$ over $s$ and by $\Lambda_s$ the base change of $\Lambda$ to $s$, which it is a sheaf of rings of differential operators on $X_s$.

\begin{definition}
A parabolic $\Lambda$-module $(E,E_\bullet)$ on $X=C\times S$ is (semi-)stable if the restrictions $(E|_{X_s},E_\bullet|_{X_s})$ to the geometric fibers $X_s$ are (semi-)stable  parabolic $\Lambda_s$-modules for every geometric point $s$ of $S$, all of them with the same Hilbert polynomial and parabolic type.
\end{definition}

Finally, we recall the following notion of ``almost'' stability due to Maruyama \cite{MaruyamaBound}

\begin{definition}
A coherent sheaf $E$ on $X$ is said to be of type $b$, for some $b\in \RR$ if for every subsheaf $F\subsetneq E$,
$$\mu(F)\le \mu(E)+b$$
\end{definition}

\section{Boundedness theorems}
\label{section:boundnessTheorems}

The main result proven in this section is the boundedness of the family of semistable parabolic $\Lambda$-modules over $C\times S$ with fixed Hilbert polynomial and parabolic type. In order to do so, we prove that every semistable parabolic $\Lambda$-module is of type $b$ for a certain uniform $b$. Then we use Simpson's theorems on Mumford-Castelnuovo regularity for bounded families of sheafs to provide uniform bounds for the regularity of semistable parabolic $\Lambda$-modules. Additionally, we find numerical bounds for the number of sections of twists of subsheafs of semistable parabolic $\Lambda$-modules. Finally, we obtain some sharper inequalities for the Hilbert polynomial of certain subsheafs of a semistable parabolic $\Lambda$-module.

Before introducing the main boundedness theorem, we shall prove two previous technical lemmas. 

\begin{lemma}
\label{lemma:minsolope}
Let $(E_1,E_{1,\bullet})$ and $(E_2,E_{2,\bullet})$ be parabolic vector bundles over $X$. For every parabolic vector bundle $(E,E_\bullet)$, let $\pmu_{\min}(E,E_\bullet)$ denote the minimum parabolic slope of a parabolic quotient of $E$. Then
$$\pmu_{\min}(E_1\oplus E_2,E_{1,\bullet}\oplus E_{2,\bullet}) = \min(\pmu_{\min}(E_1,E_{1,\bullet}), \pmu_{\min}(E_2,E_{2,\bullet}))$$
\end{lemma}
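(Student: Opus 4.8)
The strategy is to prove the two inequalities $\pmu_{\min}(E_1 \oplus E_2) \le \min(\pmu_{\min}(E_1), \pmu_{\min}(E_2))$ and $\pmu_{\min}(E_1 \oplus E_2) \ge \min(\pmu_{\min}(E_1), \pmu_{\min}(E_2))$ separately. The first is essentially trivial: a minimal-slope parabolic quotient of $E_1$ (respectively $E_2$) is also a quotient of $E_1 \oplus E_2$ via the projection, and the quotient parabolic structure pulled back along the projection agrees with the quotient parabolic structure on $E_1 \oplus E_2$ because the direct sum parabolic structure on $E_1 \oplus E_2$ is $E_{1,\bullet} \oplus E_{2,\bullet}$ and the projection kills one summand cleanly. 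Hence $\pmu_{\min}(E_1 \oplus E_2)$ is at most each of $\pmu_{\min}(E_1)$ and $\pmu_{\min}(E_2)$, giving the $\le$ direction.

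For the reverse inequality, let $(Q, Q_\bullet)$ be an arbitrary parabolic quotient of $(E_1 \oplus E_2, E_{1,\bullet} \oplus E_{2,\bullet})$, say with kernel $(F, F_\bullet)$. Set $F_1 = F \cap E_1$ and let $Q_1 = E_1 / F_1$, which embeds into $Q$; let $Q_2 = Q / Q_1$, which is a quotient of $E_2$ (indeed of $E_2 / (\text{image of } F)$). So we obtain a short exact sequence of parabolic sheaves $0 \to (Q_1, Q_{1,\bullet}) \to (Q, Q_\bullet) \to (Q_2, Q_{2,\bullet}) \to 0$, where $Q_1$ is a quotient of $E_1$ and $Q_2$ is a quotient of $E_2$, and where $Q_{1,\bullet}, Q_{2,\bullet}$ are the respective induced (quotient) parabolic structures. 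The key point is that parabolic degree is additive on short exact sequences of parabolic sheaves and rank is additive, so $\pdeg(Q) = \pdeg(Q_1) + \pdeg(Q_2)$ and $\rk(Q) = \rk(Q_1) + \rk(Q_2)$; this realizes $\pmu(Q)$ as a weighted mediant of $\pmu(Q_1)$ and $\pmu(Q_2)$, hence $\pmu(Q) \ge \min(\pmu(Q_1), \pmu(Q_2)) \ge \min(\pmu_{\min}(E_1), \pmu_{\min}(E_2))$. Taking the infimum over all quotients $Q$ gives the $\ge$ direction. One must check the degenerate cases where $Q_1 = 0$ or $Q_2 = 0$ separately, but there $\pmu(Q)$ equals $\pmu(Q_2)$ or $\pmu(Q_1)$ respectively and the bound is immediate.

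The only mildly delicate point — and the one I would spell out carefully — is the compatibility of the various induced parabolic structures: one needs that the quotient parabolic structure on $Q$ restricted to the sub $Q_1$ coincides with the quotient parabolic structure on $Q_1$ coming from $E_1$, and similarly for $Q_2$. This follows because forming induced quotient parabolic structures (fiberwise images of the filtration) is compatible with composition of surjections $E_1 \oplus E_2 \twoheadrightarrow Q \twoheadrightarrow Q_2$ and with the identification $Q_1 = \mathrm{im}(E_1 \to Q)$; the additivity of $\pdeg$ then reduces to additivity of ordinary degree (from the exact sequence) plus additivity of the parabolic weight, which in turn follows fiberwise from the snake-lemma exact sequence $0 \to Q_1|_x / Q_{1,x,i} \to Q|_x / Q_{x,i} \to Q_2|_x / Q_{2,x,i} \to 0$ in the same spirit as the weight computations in Lemma \ref{lemma:maxinduced} and Corollary \ref{cor:parabolicSaturation}. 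Everything else is bookkeeping.
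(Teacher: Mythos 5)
Your proposal follows essentially the same route as the paper: the easy direction via the projections, and the hard direction by decomposing an arbitrary parabolic quotient $Q$ of $E_1\oplus E_2$ into the sub $Q_1=\im(E_1\to Q)$ (a quotient of $E_1$) and the quotient $Q_2=Q/Q_1$ (a quotient of $E_2$), then using additivity of $\pdeg$ and a mediant inequality; the paper does exactly this with its minimal-slope quotient $F$, $f(E_1)$ and $F/f(E_1)$.

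One claim in your ``mildly delicate point'' is, however, false as stated: the sub parabolic structure that $Q_1$ inherits from $(Q,Q_\bullet)$ does \emph{not} in general coincide with the quotient parabolic structure on $Q_1$ coming from $E_1$. On fibers, the former is $Q_{x,i}\cap Q_1|_x=\bigl(f(E_{1,x,i})+f(E_{2,x,i})\bigr)\cap f(E_1)|_x$, and an element of $f(E_{2,x,i})$ can land in $f(E_1)|_x$ without lying in $f(E_{1,x,i})$, so one only gets the containment $f(E_{1,x,i})\subseteq Q_{x,i}\cap Q_1|_x$ (this is precisely what the paper proves; the equality you assert holds for $Q_2$, where images compose, but not for the sub $Q_1$). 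Consequently the exact additivity $\pdeg(Q)=\pdeg(Q_1)+\pdeg(Q_2)$ with your chosen structures can fail. The argument survives with a one-line repair: additivity of $\pdeg$ does hold when $Q_1$ carries the structure induced from $Q$ and $Q_2$ the quotient structure from $Q$, and since a larger filtration (with the same weights) has larger parabolic weight, the containment above gives $\pdeg(Q)\ge\pdeg(Q_1)+\pdeg(Q_2)$ for your structures, which is all the mediant estimate needs, since $\pmu(Q)\ge\min(\pmu(Q_1),\pmu(Q_2))\ge\min(\pmu_{\min}(E_1),\pmu_{\min}(E_2))$ then goes through (with your degenerate cases handled as indicated). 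So: right approach, correct conclusion, but replace the compatibility ``equality'' for $Q_1$ by the containment and the resulting inequality of parabolic degrees.
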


\begin{proof}
Let $\pi_i$ be the canonical projection of $E_1\oplus E_2$ to $E_i$. As every quotient of $E_i$ is a quotient of $E_1\oplus E_2$, if $(F_i,F_{i,\bullet})$ is a parabolic quotient of $(E_i,E_{i,\bullet})$ such that $\pmu(F_i)=\pmu_{\min}(E_i)$, then
$$\pmu_{\min}(E_1\oplus E_2,E_{1,\bullet}\oplus E_{2,\bullet}) \le \pmu(F_i,F_{i,\bullet}) =\pmu_{\min}(E_i,E_{i,\bullet})$$
Therefore 
$$\pmu_{\min}(E_1\oplus E_2,E_{1,\bullet}\oplus E_{2,\bullet}) \le \min(\pmu_{\min}(E_1,E_{1,\bullet}), \pmu_{\min}(E_2,E_{2,\bullet}))$$
Let us prove that the opposite inequality holds. Let $f:E_1\oplus E_2 \twoheadrightarrow F$ be a parabolic quotient such that $\pmu(F)=\pmu_{\min}(E_1\oplus E_2)$. By Lemma \ref{lemma:maxinduced} $F$ has the induced parabolic structure $F_\bullet=f(E_{1,\bullet}\oplus E_{2,\bullet})$. Consider the following exact commutative diagram of parabolic sheafs with the induced parabolic structures.
\begin{eqnarray*}
\xymatrixcolsep{3pc}
\xymatrix{
0 \ar[r] & E_1 \ar[r]^{i_1} \ar@{->>}[d]^{f} & E_1\oplus E_2 \ar[r]^{\pi_2} \ar@{->>}[d]^{f} & E_2 \ar[r] & 0\\
0 \ar[r] & f(E_1) \ar[r]^{i}  & F \ar[r] & F/f(E_1) \ar[r] & 0
}
\end{eqnarray*}
We have
$$F/f(E_1)=\frac{f(E_1\oplus E_2)}{f(E_1)}=\frac{f(E_1)+f(E_2)}{f(E_1)}\cong \frac{f(E_2)}{f(E_1)\cap f(E_2)}$$
and for every $x\in D$ and every $i=1,\ldots,l_x$
$$\frac{F_{x,i}}{f(E_1|_x)\cap F_{x,i}}=\frac{f(E_{1,x,i}\oplus E_{2,x,i})}{f(E_1|_x)\cap F_{x,i}} = \frac{f(E_{1,x,i})+f(E_{2,x,i})}{f(E_1|_x)\cap F_{x,i}}=\frac{f(E_{2,x,i})}{f(E_1|_x)\cap f(E_{2,x,i})}$$
Therefore, $(F/f(E_1),F_\bullet/f(E_1))$ is a parabolic quotient of $(E_2,E_{2,\bullet})$. On the other hand for every $x\in D$ and $i=1,\ldots,l_x$
$$F_{x,i}\cap f(E_1)|_x = \left ( f(E_{1,x,i})+f(E_{2,x,i}) \right) \cap f(E_1)|_x = f(E_{1,x,i})+ f(E_{2,x,i})\cap f(E_1)|_x \supseteq f(E_{1,x,i})$$
so $f(E_1)$ with the induced parabolic structure by $(F,F_\bullet)$ is a parabolic quotient of $(E_1,E_{1,\bullet})$. Finally, the second row is exact, so
\begin{multline*}
\pmu(F,F_\bullet)\ge \min \left(\pmu(f(E_1),F_\bullet\cap f(E_1)),\pmu(F/f(E_1),F_\bullet/f(E_1))\right)\\
 \ge \min (\pmu_{\min}(E_1,E_{1,\bullet}) , \pmu_{\min}(E_2,E_{2,\bullet}))
 \end{multline*}
\end{proof}

\begin{corollary}
\label{cor:minslope}
If $E$ is a parabolic vector bundle over $X$ then for every finite dimensional complex vector space $V$
$$\pmu_{\min}(E)= \pmu_{\min} (V\otimes_{\CC} E)$$
\end{corollary}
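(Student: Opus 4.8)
The plan is to reduce the statement to Lemma \ref{lemma:minsolope} by an elementary observation. Write $n=\dim_\CC V$ (the case $n=0$ being trivial or vacuous) and fix a basis $v_1,\dots,v_n$ of $V$. This basis induces an isomorphism of parabolic vector bundles $V\otimes_\CC E\cong E^{\oplus n}$: on underlying bundles it sends $v_j\otimes e$ to the element of $E^{\oplus n}$ with $e$ in the $j$-th slot, and at each parabolic point $x\in D$ the tensor-product parabolic structure $(V\otimes_\CC E)_{x,i}=V\otimes_\CC E_{x,i}$ corresponds under this isomorphism precisely to $\bigoplus_{j=1}^n E_{x,i}$, i.e. to the direct-sum parabolic structure on $E^{\oplus n}$, with the weights $\alpha_{x,i}$ unchanged. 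Since an isomorphism of parabolic vector bundles identifies parabolic quotients and preserves their parabolic slopes, it preserves $\pmu_{\min}$; hence it suffices to show $\pmu_{\min}(E^{\oplus n})=\pmu_{\min}(E)$.

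The second step is an induction on $n$ using Lemma \ref{lemma:minsolope}. For $n=1$ there is nothing to prove. For the inductive step, decompose $E^{\oplus n}=E\oplus E^{\oplus(n-1)}$ (with the direct-sum parabolic structure, which is exactly $E_\bullet\oplus E_{\bullet}^{\oplus(n-1)}$); Lemma \ref{lemma:minsolope} gives
$$\pmu_{\min}(E^{\oplus n})=\min\bigl(\pmu_{\min}(E),\ \pmu_{\min}(E^{\oplus(n-1)})\bigr),$$
and the inductive hypothesis $\pmu_{\min}(E^{\oplus(n-1)})=\pmu_{\min}(E)$ then yields $\pmu_{\min}(E^{\oplus n})=\pmu_{\min}(E)$. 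Combined with the first paragraph this proves $\pmu_{\min}(V\otimes_\CC E)=\pmu_{\min}(E)$.

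There is no genuine obstacle: the corollary is a formal consequence of Lemma \ref{lemma:minsolope}. The only point deserving a moment's care is the compatibility asserted in the first paragraph, namely that tensoring with the finite-dimensional $\CC$-vector space $V$ commutes with the formation of the subbundles $E_{x,i}$ and leaves the weights untouched, so that $V\otimes_\CC E$ with its tensored parabolic structure is genuinely isomorphic, as a parabolic vector bundle, to $E^{\oplus n}$; this is immediate from the definitions.
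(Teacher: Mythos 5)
Your proof is correct and follows essentially the same route as the paper: identify $V\otimes_\CC E$ with $E^{\oplus \dim V}$ (with its direct-sum parabolic structure) and apply Lemma \ref{lemma:minsolope} inductively. Your explicit check that the tensored parabolic filtration matches the direct-sum one is a detail the paper leaves implicit, but the argument is the same.
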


\begin{proof}
Inductively apply the previous lemma taking $E_1=E^{\oplus n}$ and $E_2=E$ for $1\le n\le dim V-1$.
\end{proof}

\begin{lemma}
Let $(E,E_\bullet)$ be a semistable parabolic $\Lambda$-module, and let $(F,F_\bullet)$ be a parabolic sub-bundle of $(E,E_\bullet)$ with the induced parabolic structure. Let $(G_i,G_i,\bullet)$ denote the image of the morphism of parabolic sheaves $\Lambda_i \otimes F \to E$. Observe that as $F_\bullet=E_\bullet \cap F$, then
$$G_{i,\bullet}=\Lambda_i\cdot F_\bullet = \Lambda_i \cdot \left(E_\bullet\cap F\right) = \left( \Lambda_i \cdot E_\bullet \right) \cap G_i = E_\bullet \cap G_i$$
so $G$ has the induced parabolic structure. For $i=i,\ldots,r$, consider the quotient parabolic sheaf $Q_i=G_i/G_{i-i}$ with the induced parabolic structure. Then for $i=1,\ldots,r$ there exists a surjective morphism of parabolic sheaves
$$\varphi_i:\Gr_1(\Lambda) \otimes_{\SO_X} \left( Q_i, Q_{i,\bullet} \right) \twoheadrightarrow \left( Q_{i+1}, Q_{i+1,\bullet}\right)$$
and a surjective morphism of parabolic sheaves
$$\varphi_0:\Gr_1(\Lambda) \otimes_{\SO_X} (F,F_\bullet) \twoheadrightarrow \left ( Q_1, Q_{1,\bullet}\right)$$
\end{lemma}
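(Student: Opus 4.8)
The plan is to construct $\varphi_i$ and $\varphi_0$ first as morphisms of $\SO_X$-modules, reproducing the non-parabolic argument of \cite[\S 3]{Si2}, and only then to verify that they respect the parabolic structures. Throughout write $G_i=\Lambda_i\cdot F=\im(\Lambda_i\otimes F\to E)$, so that $G_0=F$ and $F=G_0\subseteq G_1\subseteq\cdots$. I will use freely the identity noted in the statement, that $G_i$ carries the induced parabolic structure; working at a fixed $x\in D$ and writing the real parabolic filtrations with a single subscript $\alpha$, this reads $G_{i,\alpha}=\Lambda_i\cdot F_\alpha=E_\alpha\cap G_i$, and consequently $Q_i=G_i/G_{i-1}$ (and $Q_1=G_1/F$) carries the quotient parabolic structure, with $Q_{i,\alpha}$ the image of $G_{i,\alpha}$ in $Q_i$.

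First I would invoke Lemma \ref{lemma:sdoSurj}: since $\Lambda_{i+1}=\Lambda_1\cdot\Lambda_i$, we get $G_{i+1}=\Lambda_1\cdot\Lambda_i\cdot F=\Lambda_1\cdot G_i$, and for $i\ge1$ also $\Lambda_1\cdot G_{i-1}=\Lambda_i\cdot F=G_i$. Thus the action map restricts to a surjection $\Lambda_1\otimes G_i\twoheadrightarrow G_{i+1}$, and composing with $G_{i+1}\twoheadrightarrow Q_{i+1}$ gives a surjection $\psi_i\colon\Lambda_1\otimes G_i\twoheadrightarrow Q_{i+1}$. Since $\Lambda_0=\SO_X$ and $\Lambda_0\cdot G_i=G_i$ vanishes in $Q_{i+1}$, the map $\psi_i$ annihilates $\Lambda_0\otimes G_i$ and hence factors through $\Gr_1(\Lambda)\otimes G_i$; since moreover $\Lambda_1\cdot G_{i-1}=G_i$ also vanishes in $Q_{i+1}$, it factors further through $\Gr_1(\Lambda)\otimes(G_i/G_{i-1})=\Gr_1(\Lambda)\otimes Q_i$, producing $\varphi_i\colon\Gr_1(\Lambda)\otimes Q_i\to Q_{i+1}$, which is surjective because $\psi_i$ is. The map $\varphi_0$ arises in the same way from $\Lambda_1\otimes F\twoheadrightarrow G_1\twoheadrightarrow Q_1=G_1/F$, which annihilates $\Lambda_0\otimes F$. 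These are all right-exactness manipulations (using surjectivity of $\Lambda_1\to\Gr_1(\Lambda)$ and of $\Lambda_1\otimes G_i\to G_{i+1}$).

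Next I would upgrade these to morphisms of parabolic sheaves. The hypothesis that $(E,E_\bullet)$ is a parabolic $\Lambda$-module means exactly that $\Lambda$, hence $\Lambda_1$, preserves every step $E_\alpha$ of the real parabolic filtration (this is the condition $\varphi(\Lambda\otimes E_x^i)\subseteq E_x^i$ rewritten in those terms). Combined with $G_{i,\alpha}=E_\alpha\cap G_i$ and $\Lambda_1\cdot G_i=G_{i+1}$, this gives
$$\Lambda_1\cdot G_{i,\alpha}\ \subseteq\ (\Lambda_1\cdot E_\alpha)\cap(\Lambda_1\cdot G_i)\ \subseteq\ E_\alpha\cap G_{i+1}\ =\ G_{i+1,\alpha},$$
so $\psi_i$ carries $\Lambda_1\otimes G_{i,\alpha}$ into the image of $G_{i+1,\alpha}$ in $Q_{i+1}$, which is $Q_{i+1,\alpha}$ by definition. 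Since the level-$\alpha$ piece of $\Gr_1(\Lambda)\otimes Q_i$ is the image of $\Lambda_1\otimes G_{i,\alpha}$ under the composite $\Lambda_1\otimes G_i\to\Gr_1(\Lambda)\otimes Q_i$ (again by right-exactness of the tensor product, the local freeness of $\Lambda|_{\overline{D}}$ guaranteeing that the sheaves involved stay coherent and that these ``induced'' structures are the expected ones), it follows that $\varphi_i(\Gr_1(\Lambda)\otimes Q_{i,\alpha})\subseteq Q_{i+1,\alpha}$. The same computation with $F$ in place of $G_i$ handles $\varphi_0$.

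Everything past Lemma \ref{lemma:sdoSurj} is formal diagram chasing, so I expect the only point demanding care to be the final step: keeping the several ``induced'' parabolic structures straight --- on the subsheaves $G_i$, on the quotients $Q_i$, and on the tensor products $\Gr_1(\Lambda)\otimes Q_i$ --- and checking they are mutually compatible, which is precisely where the hypothesis that $\Lambda|_{\overline{D}}$ is locally free enters. I would also note that semistability of $(E,E_\bullet)$ plays no role in this lemma; it is carried over from the ambient hypotheses and matters only when the lemma is later applied.
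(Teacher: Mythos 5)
Your proposal is correct and follows essentially the same route as the paper: both use Lemma \ref{lemma:sdoSurj} to get $\Lambda_1\cdot G_i=G_{i+1}$, then factor the resulting surjection $\Lambda_1\otimes G_i\twoheadrightarrow Q_{i+1}$ through $\Gr_1(\Lambda)\otimes Q_i$ (you via right-exactness of the tensor product, the paper via the equivalent check on local representatives $\lambda_2-\lambda_1\in\Lambda_0$, $v_2-v_1\in G_{i-1}$), and finally observe that the same computation respects each step of the parabolic filtration. Your closing remarks (that semistability is not used here, and that the only delicate point is tracking the induced parabolic structures) are consistent with the paper's treatment.
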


\begin{proof}
By Lemma \ref{lemma:sdoSurj}, $\Lambda_1\cdot \Lambda_i=\Lambda_{i+1}$ for all $i$, so
$$\Lambda_1\cdot G_i = \Lambda_1 \cdot \Lambda_i \cdot F = \Lambda_{i+1} \cdot F = G_{i+1}$$
As the previous equation also holds for the corresponding parabolic filtrations, we obtain a surjective morphism of parabolic sheaves
$$\Lambda_1 \otimes (G_i,G_{i,\bullet}) \twoheadrightarrow (G_{i+1},G_{i+1,\bullet}) \twoheadrightarrow \left (Q_{i+1},Q_{i+1,\bullet}\right )$$
To prove the lemma it is enough to show that the previous morphism factors through the corresponding quotients. Let $U\subseteq X$ be any open subset. Let $\lambda_1,\lambda_2\in \Lambda_1(U)$ such that $\lambda_2-\lambda_1=x$, for some $x\in \Lambda_0(U)$. Let $v_1,v_2\in G_i(U)$ such that $v_2-v_1=w$ for some $w\in G_{i-1}(U)$. Then yields
$$\varphi_i([\lambda_2],[v_2])=[\lambda_2\cdot v_2] =\left [ (\lambda_1+x)\cdot (v_1+w) \right] = [\lambda_1\cdot v_1] + [x\cdot v_2 + \lambda_1 \cdot w]$$
By hypothesis
$$x\cdot v_2 \in \Lambda_0(U)\cdot G_i(U) = \Lambda_0(U)\cdot \Lambda_i(U) \cdot F(U) = \Lambda_i(U) \cdot F(U) = G_i(U)$$
$$\lambda_1 \cdot w \in \Lambda_1(U) \cdot G_{i-1}(U) = G_i(U)$$

Therefore, $x\cdot v_2+\lambda_1\cdot w \cong 0 \mod G_i(U)$ and we get that $\varphi_i([\lambda_2],[v_2])=\varphi_i([\lambda_1],[v_1])$. The given argument also holds for any of the steps of the parabolic filtration, so we get a morphism of parabolic sheafs.
The proof of the second part is immediate from the previous computation taking into account that $G_0=\Lambda_0\cdot F = F$.
\end{proof}

\begin{lemma}
\label{lemma:LambdaModBoundC}
The set of semistable parabolic $\Lambda$-modules over $C$ with a fixed Hilbert polynomial $P$ and fixed parabolic type is bounded.
\end{lemma}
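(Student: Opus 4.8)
The plan is to reduce boundedness to Maruyama's criterion \cite{MaruyamaBound}: it suffices to produce a constant $b$, depending only on the Hilbert polynomial $P$, on $|D|$ and on $\Lambda$, such that the underlying coherent sheaf of every semistable parabolic $\Lambda$-module $(E,E_\bullet)$ with Hilbert polynomial $P$ and the prescribed parabolic type is of type $b$. Once this is known, Maruyama's theorem shows the underlying sheaves form a bounded family; over such a family both the $\Lambda$-module structures $\Lambda_1\otimes E\to E$ and the parabolic flags of prescribed ranks over the finitely many points of $D$ are parametrised by a scheme of finite type over the base (the fibres $E|_x$ lie in a bounded family and flags of a fixed type in them form a relatively projective scheme), so the whole family of parabolic $\Lambda$-modules is bounded.

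Fix $(E,E_\bullet)$ as above and put $r=\rk(E)$; then $\rk(E),\deg(E),\mu(E)$ are determined by $P$, and $0\le \owt(F')\le \rk(F')\,|D|$ for every parabolic subquotient $F'$ of $E$, so $\pmu(F')\le \mu(F')+|D|$. Since $\mu_{\max}(E)$ is the slope of the first term $F$ of the Harder--Narasimhan filtration of the underlying bundle, and $F$ is semistable as a bundle, it is enough to bound $\mu(F)$; if $\rk(F)=r$ then $F=E$ and there is nothing to do, so assume $0<\rk(F)<r$ and equip $F$ with the induced parabolic structure. By Lemma \ref{lemma:submoduleSaturation}, $G:=\im(\Lambda_r\otimes F\to E)^{\op{sat}}$ with its induced parabolic structure is a parabolic sub-$\Lambda$-module of $(E,E_\bullet)$, so semistability gives
$$\mu(G)\le \pmu(G)\le \pmu(E)\le \mu(E)+|D|=:m.$$

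Now I control the successive quotients of the $\Lambda$-generation of $F$. Let $F=G_0\subseteq G_1\subseteq\cdots\subseteq G_r$ with $G_i=\im(\Lambda_i\otimes F\to E)$, so $G=G_r^{\op{sat}}$, and put $Q_i=G_i/G_{i-1}$ with the induced parabolic structures. As $\Gr_1(\Lambda)$ is a fixed coherent sheaf on $C$, choose $a\ge 0$ and a surjection $\SO_C(-a)^{\oplus N}\twoheadrightarrow \Gr_1(\Lambda)$; tensoring it with any parabolic subquotient $Q$ presents $\Gr_1(\Lambda)\otimes Q$ as a parabolic quotient of $\CC^N\otimes Q(-a)$, and since $\pmu_{\min}$ cannot decrease under passing to a quotient, Corollary \ref{cor:minslope} together with $\pmu_{\min}(Q(-a))=\pmu_{\min}(Q)-a$ yields $\pmu_{\min}(\Gr_1(\Lambda)\otimes Q)\ge\pmu_{\min}(Q)-a$. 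Feeding this into the surjections $\varphi_0\colon\Gr_1(\Lambda)\otimes F\twoheadrightarrow Q_1$ and $\varphi_i\colon\Gr_1(\Lambda)\otimes Q_i\twoheadrightarrow Q_{i+1}$ of the preceding lemma gives $\pmu_{\min}(Q_i)\ge\pmu_{\min}(F)-ra$ for every $i$. As $F$ is semistable as a bundle, every parabolic quotient of $F$ has parabolic slope at least $\mu(F)$, so $\pmu_{\min}(F)\ge\mu(F)$, and using $\owt/\rk\le |D|$ once more,
$$\mu(Q_i)\ \ge\ \pmu_{\min}(Q_i)-|D|\ \ge\ \mu(F)-ra-|D|\qquad\text{whenever }Q_i\neq 0.$$
Finally $\deg(G)\ge\deg(G_r)=\deg(F)+\sum_i\deg(Q_i)$ and $\rk(G)=\rk(F)+\sum_i\rk(Q_i)$, so substituting $\deg(Q_i)\ge(\mu(F)-ra-|D|)\rk(Q_i)$ into $\deg(G)\le m\,\rk(G)$ and rearranging gives $\mu(F)\le m+ra+|D|$. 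Hence every such $(E,E_\bullet)$ is of type $b$ with $b=2|D|+ra$, independent of the module, and Maruyama's criterion completes the proof.

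The step that requires genuine work is the final chain of inequalities, where one must control the slopes of all the quotients $Q_i$ of the sub-$\Lambda$-module generated by $F$ at once; this is precisely what forces the two maneuvers above --- taking $F$ to be the maximal destabilising subbundle, so that a lower bound for the parabolic slopes of its quotients is available in terms of $\mu(F)$ itself, and replacing the sheaf-valued, possibly non-locally-free coefficient $\Gr_1(\Lambda)$ by a twist of a trivial bundle in order to bring Corollary \ref{cor:minslope} to bear. Passing from a bounded family of underlying sheaves back to a bounded family of parabolic $\Lambda$-modules is then routine.
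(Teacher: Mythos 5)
Your proof is correct, and it follows the paper's overall skeleton (reduce to a uniform type-$b$ bound and invoke Maruyama, using Lemma \ref{lemma:submoduleSaturation}, the surjections $\Gr_1(\Lambda)\otimes Q_i\twoheadrightarrow Q_{i+1}$, Corollary \ref{cor:minslope} via a surjection from a twisted trivial bundle onto $\Gr_1(\Lambda)$, and the elementary bound $0\le\eta\le|D|$), but the inner slope estimate is organized genuinely differently. The paper takes $F$ to be the parabolic subsheaf of maximal \emph{parabolic} slope and runs a descending recursion on $\pmu_{\min}(G_i)$, which forces a case analysis on whether the minimal-slope quotient $R_{i+1}$ of $G_{i+1}$ receives a nonzero map from $G_i$; it then concludes $\pmu(F)\le\pmu_{\min}(F)\le\pmu(E)+rm$ and uses that $\pmu(F)$ dominates all parabolic subsheaves. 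You instead take $F$ to be the first step of the ordinary Harder--Narasimhan filtration (so $\pmu_{\min}(F)\ge\mu(F)=\mu_{\max}(E)$ comes from ordinary semistability plus nonnegativity of weights), bound $\pmu_{\min}(Q_i)\ge\pmu_{\min}(F)-ia$ directly along the chain of surjections --- which avoids the paper's case analysis, since quotients of $Q_{i+1}$ pull back to quotients of $\Gr_1(\Lambda)\otimes Q_i$ --- and then reassemble an upper bound on $\mu_{\max}(E)$ by the telescoping degree/rank count on $G_r\subseteq G=G_r^{\op{sat}}$ together with $\mu(G)\le\pmu(G)\le\pmu(E)$. Your route yields an explicit constant $b=2|D|+ra$ and is arguably cleaner at the recursion step, at the price of the final arithmetic with the graded pieces; the paper's route keeps everything at the level of $\pmu_{\min}$ of the $G_i$ and needs no degree count. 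Two small points you leave implicit but which are harmless: when $\rk(G)=\rk(E)$ the semistability inequality is not literally quantified in the definition, but then $G=E$ (a saturated full-rank subsheaf of a locally free sheaf on a curve) and the inequality is an equality; and the monotonicity ``$\pmu_{\min}$ does not decrease under a parabolic surjection'' should be justified by noting that the image filtration induced from the bigger source is contained in the one induced from the quotient, so its weight, hence slope, is no larger --- exactly the kind of comparison the paper makes when it passes quotients of $R_{i}$ back to $G_{i}$.
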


\begin{proof}
Let $(E,E_\bullet)$ be a semistable parabolic $\Lambda$-module, and let $(F,F_\bullet)$ be a parabolic subsheaf of maximum parabolic slope. By Lemma \ref{lemma:maxinduced}, $F_\bullet$ is the induced parabolic structure. Let $r$ be the rank of $E$. As in the previous Lemma, let $(G_i,G_{i,\bullet})$ be the image of $\Lambda_i\otimes F \to E$, for $i=1,\ldots,r$. Let us denote by $(G,G_\bullet)$ the saturation of $(G_r,G_{r,\bullet})$. By Lemma \ref{lemma:submoduleSaturation}, $(G,G_\bullet)$ is a parabolic sub-$\Lambda$-module, so
$$\pmu(G_r)\le \pmu(G) \le \pmu(E)$$
By the previous Lemma, for every $i=1,\ldots,r-1$ there exists a surjection of parabolic sheaves
$$\Gr_1(\Lambda) \otimes_{\SO_X} \left( Q_i, Q_{i,\bullet} \right) \twoheadrightarrow \left( Q_{i+1}, Q_{i+1,\bullet} \right)$$
By Serre's vanishing Lemma, there exists an $m\in \ZZ$ such that $\Gr_1(\Lambda)\otimes \SO_X(m)$ is generated by global sections. Let $V=H^0(\Gr_1(\Lambda)\otimes \SO_X(m))$. Then we have a surjection
\begin{equation}
\label{eq:LambdaModBoundC1}
V\otimes_{\CC}\SO_{X}(-m) \otimes_{\SO_X} \left( G_i / G_{i-1} \right) \twoheadrightarrow \left( G_{i+1} / G_i \right)
\end{equation}

Let $(R_i,R_{i,\bullet})$ be a parabolic quotient of $(G_i,G_{i,\bullet})$ such that $\pmu(R_i)=\pmu_{\min}(G_i)$. Then it has the induced parabolic structure and $(R_i,R_{i,\bullet})$ is a semistable parabolic sheaf, because a destabilizing sheaf for $(R_i,R_{i,\bullet})$ would lead to a parabolic quotient $(R_i,R_{i,\bullet}) \twoheadrightarrow (R_i',R_{i,\bullet}')$ with less parabolic slope. As any parabolic quotient of $(R_i,R_{i,\bullet})$ is a parabolic quotient of $(G_i,G_{i,\bullet})$, $(R_i',R_{i,\bullet'})$ would be a quotient of $(G_i,G_{i,\bullet})$ with less parabolic slope than $(R_i,R_{i,\bullet})$, contradicting the minimality assumption.

For each $0\le i<r$, if $(R_{i+1},R_{i+1,\bullet})$ has a nontrivial parabolic subsheaf $(H,H_\bullet)$ which is a parabolic quotient of $(G_i,G_{i,\bullet})$, then by semi-stability of $(R_{i+1},R_{i+1,\bullet})$, $\pmu(H)\le \pmu(R_{i+1})=\pmu_{\min}(G_{i+1})$. On the other hand, as $(H,H_\bullet)$ is a parabolic quotient of $(G_i,G_{i,\bullet})$,
$$\pmu_{\min}(G_i) \le \pmu(H) \le \pmu_{\min}(G_{i+1})$$
Otherwise, let $H=\im(G_i \hookrightarrow G_{i+1}  \twoheadrightarrow R_{i+1})$ with the induced parabolic structure. It is a parabolic subsheaf of $(R_{i+1},R_{i+1,\bullet})$ which is a quotient of $(G_i,G_{i,\bullet})$, so $H=0$. Then, $(R_{i+1},R_{i+1,\bullet})$ is a parabolic quotient of $(Q_{i+1},Q_{i+1,\bullet})$. If $i>0$, surjection \eqref{eq:LambdaModBoundC1} implies that $(R_{i+1},R_{i+1,\bullet})$ is a parabolic quotient of $V\otimes_{\CC}\SO_{X}(-m) \otimes_{\SO_X} \left( Q_i, Q_{i,\bullet} \right)$. Therefore, we get a parabolic quotient
$$V\otimes_{\CC} G_i \twoheadrightarrow V\otimes_\CC \left (Q_i, Q_{i,\bullet} \right) \twoheadrightarrow (R_{i+1}(m),R_{i+1,\bullet}(m))$$
By Corollary \ref{cor:minslope},
\begin{multline*}
\pmu_{\min}(G_i) = \pmu_{\min}(V\otimes_{\CC}G_i) \le \pmu(R_{i+1}(m))=\\
\pmu(R_{i+1})+m = \pmu_{\min}(G_{i+1})+m
\end{multline*}
For $i=0$, from the Lemma we get a surjection
$$V\otimes_{\CC} \SO_X(m) \otimes_{\SO_X} (F,F_\bullet) \twoheadrightarrow \left (Q_1,Q_{1,\bullet}\right)$$
so by the same argument $\pmu_{\min}(F)\le \pmu_{\min}(G_1)+m$.
Combining all the previous inequalities for $i=0,\ldots,r-1$, we conclude that $\pmu_{\min}(F)\le \pmu_{\min}(G_r)+rm \le \pmu(E)+rm$. As $(F,F_\bullet)$ is the parabolic subsheaf with maximum parabolic slope, every parabolic quotient of $(F,F_\bullet)$ must have bigger or equal parabolic slope, so $\pmu(F)\le \pmu_{\min}(F) \le \pmu(E)+rm$. Therefore, for every parabolic subsheaf $(F',F'_\bullet)\subseteq (E,E_\bullet)$, $\mu(F')+\owt(F')\le\pmu(F)\le \pmu(E)+rm=\mu(E)+\owt(E)+rm$. As $\owt(F')\ge 0$ for all parabolic sheafs, yields
$$\mu(F')\le \mu(E) + rm+\owt(E)-\owt(F') \le \mu(E)+rm+\owt(E)$$
Every subsheaf $F'\subseteq E$ can be given the induced parabolic structure, so the previous inequality proves that there exists a number $b\in \RR$ such that for every semistable parabolic $\Lambda$-module $(E,E_\bullet)$ over $X$ flat over $S$ of rank $r$ and the given parabolic type and every subsheaf $F'\subseteq E$, $\mu(F')\le \mu(E)+b$.
By \cite[Theorem 2.6]{MaruyamaBound}, the set of sheaves underlying a semistable parabolic $\Lambda$-module with Hilbert polynomial $P$ and the given parabolic structure is bounded. Given one such sheaf, the parabolic $\Lambda$-module structure is uniquely determined by a suitable element of $Fl(E|_{\{x\}\times S})$ for each $x\in D$, and a morphism $Hom(\Lambda_1 \otimes_{\SO_X} E,E)$, so the set of semistable parabolic $\Lambda$-modules is bounded.
\end{proof}

We can extend the previous lemma to the relative case.
\begin{lemma}
\label{lemma:LambdaModBound}
The set of semistable parabolic $\Lambda$-modules over $X=C\times S$ with a fixed Hilbert polynomial $P$ and fixed parabolic type is bounded.
\end{lemma}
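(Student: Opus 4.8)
The plan is to rerun the argument of Lemma~\ref{lemma:LambdaModBoundC} in the relative setting, the only genuinely new point being that the numerical bounds produced there are uniform along $S$. Boundedness of a family is local on the base and is unaffected by base change along a surjection, so we may assume $S$ is Noetherian (indeed affine and of finite type over $\CC$). Let $(E,E_\bullet)$ be a semistable parabolic $\Lambda$-module over $X=C\times S$ with Hilbert polynomial $P$ and the fixed parabolic type, and let $\pi:X\to S$ be the projection. For every geometric point $s\to S$ the restriction $\Lambda_s$ is a sheaf of rings of differential operators on $C$ by \cite[Lemma~2.6]{Si2}, and $(E|_{X_s},E_\bullet|_{X_s})$ is a semistable parabolic $\Lambda_s$-module on $X_s\cong C$ with the same Hilbert polynomial $P$ and parabolic type. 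Thus boundedness of the relative family will follow from boundedness on fibres together with uniformity of the Castelnuovo--Mumford regularity.

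Next I would observe that the constant of the ``type $b$'' estimate obtained in the proof of Lemma~\ref{lemma:LambdaModBoundC} can be chosen independently of $s$: that constant has the shape $b=rm+\owt(E)$, where $r$ and $\owt(E)$ depend only on $P$ and on the parabolic type, and $m$ is any integer for which $\Gr_1(\Lambda)\otimes\SO_X(m)$ is generated by its relative global sections over $S$. By relative Serre vanishing for the projective morphism $\pi$ a single such $m$ exists, and it restricts to an admissible choice on each $X_s$. Hence there is one $b\in\RR$, depending only on $P$, the parabolic type and $\Lambda$, such that every sheaf underlying a semistable parabolic $\Lambda_s$-module with the prescribed invariants is of type $b$. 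By the relative form of Maruyama's theorem \cite[Theorem~2.6]{MaruyamaBound} the family of all such underlying sheaves on the fibres of $\pi$ is then bounded; equivalently there is an integer $N$ with $R^1\pi_*E(N)=0$ and $\pi^*\pi_*E(N)\twoheadrightarrow E(N)$ for every member, so $\pi_*E(N)$ is locally free of rank $P(N)$ and $E$ is a quotient of $\pi^*\pi_*E(N)\otimes\SO_X(-N)$. In particular the set of underlying sheaves is parametrized by a locally closed subscheme $Q$ of a relative $\Quot$ scheme over $S$, which is of finite type.

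Finally I would add back the extra data over $Q$. Let $\mathcal{E}$ be the universal sheaf on $X\times_S Q$. The parabolic structures of the fixed type form the relative partial flag bundle $\prod_{x\in D}\Fl(\mathcal{E}|_{\{x\}\times Q})$, which is projective over $Q$; over this flag bundle the compatible $\Lambda$-module structures form a closed subscheme of the linear scheme $\underline{\Hom}(\Lambda_1\otimes_{\SO_X}\mathcal{E},\mathcal{E})$ cut out by the unitality and associativity identities for the $\Lambda$-action and by the condition that the action preserve the flag. The resulting scheme is of finite type over $S$ and carries a tautological family through which every semistable parabolic $\Lambda$-module with the given invariants factors; this is the asserted boundedness. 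The one step that deserves care is the uniformity in the second paragraph --- that the constant $b$, and hence the regularity bound $N$, do not degenerate as $s$ varies over $S$ --- which is exactly what the reduction to Noetherian $S$, relative Serre vanishing and a relative boundedness theorem are invoked to guarantee.
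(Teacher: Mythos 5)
Your proposal is correct and follows essentially the same route as the paper: make the twist $m$ (and hence the type-$b$ constant, which depends only on $P$, the parabolic type and $\Lambda$) uniform over the geometric points of $S$, and then invoke a relative boundedness theorem for sheaves of type $b$, the extra flag and $\Lambda_1$-action data being of finite type over the resulting family. The only cosmetic difference is that where you argue the uniformity via relative Serre vanishing and cite a relative form of Maruyama's theorem, the paper simply quotes \cite[Proposition 3.5]{Si2} and \cite[Theorem 1.1]{Si2} for the same two steps.
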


\begin{proof}
By \cite[Proposition 3.5]{Si2}, the number $m\in \ZZ$ fixed in the previous proof can be chosen so that it works uniformly over all geometric points $s\in S$. Therefore, the upper bound on $\mu(E)-\mu(F)$ in the previous Lemma holds over every $s\in S$. Then the boundedness is a consequence of \cite[Theorem 1.1]{Si2}.
\end{proof}

Given a sheaf $F$ on $X$ flat over $S$, if $\pi:X\to S$ then we write
$$H^i(X/S,F)=R^i\pi_*F$$

\begin{corollary}
\label{cor:acyclicBound}
Let $X=C\times S$. There exists an integer $N$ depending only on $X$, $P$ and the parabolic weights such that for every $S$-scheme $S'$, every $n\ge N$ and every semistable parabolic $\Lambda$-module $(E,E_\bullet)$ over $X':=C\times S'$
\begin{enumerate}
\item For all $i>0$ $H^i(X'/S', E(n))=0$
\item The morphism
$$H^0(X'/S',E(n)) \otimes_{\SO_{S'}} \SO_{X'}(-n) \to E$$
is surjective.
\item $H^p(X'/S',E(n))$ is locally free over $S'$ and commutes with base change, in the sense that if $f:S''\to S'$ is an $S$-morphism, then
$$f^*H^0(X'/S',E(n)) \cong H^0(C\times S''/S'', f^*E(n))$$
\end{enumerate}
\end{corollary}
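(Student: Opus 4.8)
The plan is to deduce all three assertions from a single uniform Castelnuovo--Mumford regularity bound coming from Lemma~\ref{lemma:LambdaModBound}, and then to run Grothendieck's cohomology and base change theorem on the relative curve $\pi':X'\to S'$, exploiting that the geometric fibres are one--dimensional so that only $H^0$ and $H^1$ ever intervene.

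First I would reconcile the hypotheses of the corollary with those of Lemma~\ref{lemma:LambdaModBound}. The corollary fixes only $P$ and the weight system $\alpha=\{\alpha_{x,i}\}$, but the rank of any semistable parabolic $\Lambda$-module with Hilbert polynomial $P$ is determined by $P$, and any admissible parabolic structure then has ranks $r_{x,i}$ in the finite set $\{0,1,\dots,\rk(E)\}$; hence there are only finitely many parabolic types $(\alpha,\overline r)$ compatible with $P$ and $\alpha$. For each of them Lemma~\ref{lemma:LambdaModBound} gives boundedness of the corresponding family over $X=C\times S$, hence boundedness of the family of underlying sheaves, and a bounded family of coherent sheaves on a projective curve has uniformly bounded regularity (this is exactly how Simpson's and Maruyama's regularity estimates are used, cf.\ \cite{Si2}, \cite{MaruyamaBound}). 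Taking $N$ to be the maximum of the finitely many resulting bounds, I obtain an integer depending only on $X$, $P$ and $\alpha$ such that for every $n\ge N$ and every geometric point $s$ of $S$ the twist $E|_{X_s}(n)$ is $0$-regular. Since semistability, the Hilbert polynomial and the parabolic type are all conditions on geometric fibres, the same $N$ works over an arbitrary $S$-scheme $S'$: for each geometric point $s$ of $S'$ the fibre $(E|_{X'_s},E_\bullet|_{X'_s})$ is a semistable parabolic $\Lambda_s$-module with the prescribed invariants, so $E|_{X'_s}(n)$ is $0$-regular for $n\ge N$; in particular $H^i(X'_s,E(n)|_{X'_s})=0$ for $i>0$ and $E(n)|_{X'_s}$ is generated by global sections.

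Next I would feed this fibrewise information into cohomology and base change. The three assertions are local on $S'$ and compatible with base change, so one reduces to a locally noetherian base (for instance the base of the bounding family from Lemma~\ref{lemma:LambdaModBound}) and then pulls back. Since $E$ is flat over $S'$, so is $\mathcal F:=E(n)$. As $X'=C\times S'\to S'$ has fibres of dimension one, $R^i\pi'_*\mathcal F=0$ for $i\ge 2$ automatically; for $i=1$, the comparison map $\phi^1(s)\colon R^1\pi'_*\mathcal F\otimes k(s)\to H^1(X'_s,\mathcal F_s)=0$ is trivially surjective, so by cohomology and base change it is an isomorphism in a neighbourhood of $s$, whence $(R^1\pi'_*\mathcal F)\otimes k(s)=0$ for all $s$ and therefore $R^1\pi'_*\mathcal F=0$ by coherent Nakayama. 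This proves (1). With $R^1\pi'_*\mathcal F=0$ locally free, the criterion then makes $\phi^0(s)$ surjective, hence an isomorphism near $s$, for every $s$; iterating once more gives that $R^0\pi'_*\mathcal F=H^0(X'/S',E(n))$ is locally free and commutes with arbitrary base change, which is (3), including the isomorphism $f^*H^0(X'/S',E(n))\cong H^0(C\times S''/S'',f^*E(n))$. Finally, for (2), the evaluation map $H^0(X'/S',E(n))\otimes_{\SO_{S'}}\SO_{X'}(-n)\to E$ restricts on each fibre, via the base-change isomorphism just obtained, to $H^0(X'_s,E(n)_s)\otimes\SO_{X'_s}(-n)\to E|_{X'_s}$, which is surjective because $E(n)_s$ is globally generated; a coherent sheaf that vanishes on every fibre of a proper morphism is zero, so its cokernel vanishes and the map is surjective.

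The only step that needs care is the first one: extracting a uniform $N$ from boundedness. This is not deep, but it is where the real input sits --- it combines Lemma~\ref{lemma:LambdaModBound} with the uniform regularity of a bounded family of sheaves on a curve and with the bookkeeping reducing ``fixed $P$ and weights'' to finitely many ``fixed parabolic type'' cases. Once $N$ is in hand, the remainder is the routine semicontinuity argument above, which is short precisely because the fibres are one-dimensional so that only $R^0$ and $R^1$ occur.
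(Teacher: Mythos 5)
Your proposal is correct and follows essentially the same route as the paper: both arguments rest on the boundedness statement of Lemma~\ref{lemma:LambdaModBound}, and the paper then simply invokes \cite[Lemma 1.9]{Si2}, whose content (uniform Castelnuovo--Mumford regularity for a bounded family plus Grothendieck cohomology and base change, simplified here by the one-dimensional fibres) is exactly what you reprove by hand. The only soft spot is your phrase about ``reducing to the base of the bounding family'' --- the family over $S'$ is only fibrewise, not globally, a member of that bounded family --- but since the schemes in play are (locally) noetherian this reduction is unnecessary and the rest of your base-change argument goes through as written.
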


\begin{proof}
It holds as a consequence of the previous boundedness Lemma and \cite[Lemma 1.9]{Si2}.
\end{proof}

Now we will introduce some Lemmas providing bounds on the cohomology of semistable parabolic $\Lambda$-modules and its subsheafs.

\begin{lemma}
\label{lemma:sectionsBound}
There exists a number $B\in \RR$ depending on $e$, $r$ and $X$ such that if $E$ is a torsion free sheaf of type $e$ and rank $r$ on a fiber $X_s$, then for all $k$
$$h^0(X_s,E(k))\le r \left [(\mu(E)+k+B) \right]^+$$
where $[a]^+ =\max(a,0)$ for $a\in \RR$. 
\end{lemma}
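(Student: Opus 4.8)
The statement is a parabolic-free bound on the number of sections of a twist of a torsion-free sheaf on a curve fiber, controlled only by the slope. The plan is to reduce to the case of a semistable (ordinary, non-parabolic) sheaf by means of the Harder–Narasimhan filtration, and then bound $h^0$ of a semistable sheaf by its slope. First I would fix the fiber $X_s$, which is a smooth projective curve of genus $g$, together with the very ample $\SO_{X_s}(1)$ of some degree $d\ge 1$; everything that follows depends only on these invariants plus $r$ and $e$.

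The key reduction is the following. Let $E$ be torsion-free of type $e$ and rank $r$ on $X_s$, and let $0=E_0\subset E_1\subset\cdots\subset E_t=E$ be its Harder–Narasimhan filtration, with quotients $F_j=E_j/E_{j-1}$ semistable of rank $r_j$ and slope $\mu_j$, $\mu_1>\mu_2>\cdots>\mu_t$. Since $h^0$ is subadditive in short exact sequences, $h^0(X_s,E(k))\le\sum_j h^0(X_s,F_j(k))$. Each $E_j\subseteq E$ is a subsheaf, so by the hypothesis that $E$ is of type $e$ we get $\mu(E_j)\le\mu(E)+e$; a standard slope computation then bounds $\mu_j=\mu(F_j)$ for every $j$ from above by something like $\mu(E)+e+(\text{a quantity depending only on }r)$ — indeed $\mu_1=\mu(E_1)\le\mu(E)+e$, and $\mu_j\le\mu_1$ for all $j$, so in fact $\mu_j\le\mu(E)+e$ for every $j$. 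Hence it suffices to prove the bound for a semistable sheaf and then sum over at most $r$ factors, absorbing the sum into the constant $B$.

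Next I would prove: if $G$ is semistable of rank $\rho$ and slope $\mu$ on $X_s$, then $h^0(X_s,G)\le\rho\,[\mu/d+1]^+$ (for a suitable normalization; with $d=\deg\SO_{X_s}(1)$ one twists to reduce to the untwisted statement, replacing $\mu$ by $\mu+kd$ or $\mu+k$ depending on how "twist" is normalized). The mechanism: if $\mu<0$ then $h^0(X_s,G)=0$, because a nonzero section would give $\SO_{X_s}\hookrightarrow G$ contradicting semistability (the image, a subsheaf of nonnegative degree, would have slope $\ge 0>\mu$, and passing to its saturation only increases the slope). If $\mu\ge 0$, one argues by induction on $\rho$: either $h^0(X_s,G)=0$ and we are done, or there is a nonzero section $\SO_{X_s}\to G$; let $L$ be the saturation of its image, a line subbundle with $0\le\deg L$ and $\deg L\le\mu$ by semistability, so $h^0(X_s,L)\le\deg L+1\le\mu+1$; the quotient $G/L$ is torsion-free of rank $\rho-1$, and its slope is $(\deg G-\deg L)/(\rho-1)\le\mu+\mu/(\rho-1)$ (using $\deg L\ge 0$), which is still bounded by $\mu$ plus a quantity depending only on $\rho$, so the inductive hypothesis applies after enlarging the constant. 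Carefully tracking these constants through the induction on $\rho$ (and then through the sum over HN factors) produces the uniform $B$; I would state it as depending on $g$, $d$, $r$ rather than chasing the exact value.

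The main obstacle — really the only subtle point — is bookkeeping the constants so that a single $B$ works for all $E$ of type $e$ and rank $r$ simultaneously: the slopes $\mu_j$ of the HN factors are controlled from above by $\mu(E)+e$, but their ranks vary, and the inductive bound $h^0(G)\le\rho[\mu/d+1]^+$ for semistable $G$ feeds slope inflations of order $\mu/(\rho-1)$ at each step. One must check these inflations telescope into a bound of the form $r[\mu(E)+k+B]^+$ and not something worse; this works because there are at most $r$ HN pieces each of rank $\le r$, so the total slope inflation is $O(r)$, independent of $E$. An alternative that sidesteps the genus dependence cleanly is to invoke Simpson's estimate (\cite[Corollary 1.7]{Si2} or the analogous statement there), but since the paper is self-contained about curves I would give the direct HN-plus-induction argument above.
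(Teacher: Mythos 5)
Your reduction is exactly the paper's: take the Harder--Narasimhan filtration of $E$, use subadditivity of $h^0$ over the factors, bound the slopes of all HN factors by $\mu(E)+e$ via the type-$e$ hypothesis (since they are dominated by $\mu(E_1)\le\mu(E)+e$), invoke a bound $h^0\le \rho\,[\mu+B_\rho+k]^+$ for semistable torsion-free sheaves, and absorb $e$ and $\max_\rho B_\rho$ into $B$. The paper handles the semistable factors by citing precisely the estimate you mention as a fallback (\cite[Lemma 1.7]{Si2}), so in that form your argument coincides with the paper's proof.

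The one genuine problem is your proposed self-contained proof of the semistable estimate. The induction on rank does not close as stated: after saturating the image of a section to a line subbundle $L\subseteq G$, the quotient $G/L$ need not be semistable, so the inductive hypothesis (formulated for semistable sheaves) does not apply to it. Moreover, the claim that the total slope inflation is ``$O(r)$, independent of $E$'' is not correct: if $F'\subseteq G/L$ has preimage $F\subseteq G$, then $\deg F'\le(\rk F'+1)\mu-\deg L$, so $\mu_{\max}(G/L)$ can exceed $\mu$ by an amount proportional to $\mu$ itself, not by a constant depending only on $r$; naively feeding this into the induction produces a bound with leading coefficient larger than $r$. On a curve the semistable case is cleaner without induction: if $\mu<0$ then $h^0=0$ as you say; if $0\le\mu\le 2g-2$ use Clifford's theorem for semistable bundles; if $\mu>2g-2$ then $h^1(G)=h^0(K\otimes G^\lor)=0$ by semistability and Serre duality, and Riemann--Roch gives $h^0=\rho(\mu+1-g)$. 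Either give that argument or simply cite Simpson's lemma as the paper does; with that replacement your proof is complete.
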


\begin{proof}
Let $0=F_0\subsetneq F_1 \subsetneq \ldots \subsetneq F_l=E$ be the Harder-Narasimhan filtration of $E$ as a sheaf over $X_s$. As $E$ is of type $e$, then for every $i=1,\ldots,l$,
$$\mu \left( F_i/F_{i-1} \right) \le \mu(E) +e$$
On the other hand, as $F_i/F_{i-1}$ are semistable torsion free sheafs on $X_s$, by \cite[Lemma 1.7]{Si2}, there exists a number $B_{r_i}$ depending only on $r_i:=\rk(F_i/F_{i-1})$, such that
$$h^0\left(X_s, \left(F_i/F_{i-1}\right)(k) \right)\le rk\left(F_i/F_{i-1}\right)  \left [\mu \left(F_i/F_{i-1}\right) +k + B_{r_i} \right]^+$$
As the set of possible ranks for $F_i/F_{i-1}$ is bounded, taking $B'=\max_{k=1,\ldots,r}(B_k)$, yields
\begin{multline*}
h^0\left(X_s, \left(F_i/F_{i-1}\right)(k) \right)\le rk\left(F_i/F_{i-1}\right) \left [ \mu \left(F_i/F_{i-1}\right) +k + B' \right]^+\\
\le rk\left(F_i/F_{i-1}\right) \left[ \mu(E)+e+k+B'\right]^+
\end{multline*}
for every $i=1,\ldots,l$. Therefore
$$h^0(X_s,E(k))\le \sum_{i=1}^l h^0\left(X_s, \left(F_i/F_{i-1}\right)(k) \right) \le \sum_{i=1}^l rk\left(F_i/F_{i-1}\right) \left[ \mu(E)+e+k+B'\right]^+$$
As $\sum_{i=1}^l rk\left(F_i/F_{i-1}\right) =r$, taking $B=e+B'$ we obtain the desired bound.
\end{proof}

\begin{corollary}
\label{cor:sectionsBound}
There exists a number $B\in \RR$ depending on $\Lambda$, $r$, the parabolic type and $X$ such that if $E$ is a semistable parabolic $\Lambda$-module of rank $r$ and the given parabolic type on a geometric fiber $X_s$, then for all $k$
$$h^0(X_s,E(k))\le r \left [(\mu(E)+k+B) \right]^+$$
where $[a]^+ =\max(a,0)$ for $a\in \RR$. 
\end{corollary}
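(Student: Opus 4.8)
The plan is to combine Lemma \ref{lemma:sectionsBound} with the type-$b$ estimate that was already established inside the proof of Lemma \ref{lemma:LambdaModBoundC}. Indeed, that proof does not merely yield boundedness: it produces a constant $b\in\RR$, depending only on $\Lambda$, $r$, the parabolic type and $X$, such that every semistable parabolic $\Lambda$-module $(E,E_\bullet)$ of rank $r$ and the prescribed parabolic type satisfies $\mu(F')\le \mu(E)+b$ for every subsheaf $F'\subseteq E$ (where $F'$ is given the induced parabolic structure). In Maruyama's terminology this says precisely that the underlying coherent sheaf $E$ is of type $b$. Since by definition a parabolic $\Lambda$-module is locally free, $E$ is in particular a torsion-free sheaf of type $b$ and rank $r$ on the geometric fiber $X_s$.

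With this in hand I would simply invoke Lemma \ref{lemma:sectionsBound} applied to $E$ with $e=b$: it furnishes a number $B'\in\RR$ depending only on $b$, $r$ and $X$ such that for all $k$
$$h^0(X_s,E(k))\le r\left[(\mu(E)+k+B')\right]^+ .$$
Tracing through the dependence of $b$ on $\Lambda$, $r$, the parabolic type and $X$, the constant $B'$ then depends only on $\Lambda$, $r$, the parabolic type and $X$. Setting $B=B'$ gives the statement of the corollary.

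The only subtlety worth flagging is uniformity over the base scheme: the auxiliary integer $m$ (and hence the constant $b$) selected in the proof of Lemma \ref{lemma:LambdaModBoundC} can be chosen independently of the geometric point $s\in S$ by \cite[Proposition 3.5]{Si2}, exactly as was used to pass to the relative statement in Lemma \ref{lemma:LambdaModBound}; thus $b$ is a single real number valid simultaneously for all fibers $X_s$. Beyond checking this, there is no genuine obstacle here: the corollary is an immediate consequence of Lemma \ref{lemma:sectionsBound} together with the type-$b$ property proved for semistable parabolic $\Lambda$-modules.
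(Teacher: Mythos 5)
Your proposal is correct and follows essentially the same route as the paper: the paper's own proof simply cites the type-$b$ bound for semistable parabolic $\Lambda$-modules (Lemma \ref{lemma:LambdaModBound}, whose content is exactly the uniform bound you extract from the proof of Lemma \ref{lemma:LambdaModBoundC} together with the uniformity over geometric points from \cite[Proposition 3.5]{Si2}) and then applies Lemma \ref{lemma:sectionsBound}. Your extra remark on uniformity over the base is exactly the point handled by Lemma \ref{lemma:LambdaModBound}, so nothing is missing.
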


\begin{proof}
By Lemma \ref{lemma:LambdaModBound}, the set of coherent sheaves underlying a semistable parabolic $\Lambda$-module of the given parabolic type over a geometric fiber $X_s$ is contained in the set of coherent sheaves of type $b$, for some number $b$ depending only on $\Lambda$, $r$, the parabolic type and $X$. Then, the results yields as a consequence of the previous Lemma.
\end{proof}

Now, we provide an extension on Corollary \ref{cor:acyclicBound} allowing us find uniform bounds for the Serre vanishing theorem on destabilizing subsheafs of any given parabolic $\Lambda$-module.

\begin{lemma}
\label{lemma:uniformBound}
Let $b,e\in \RR$. There exists an integer $N$ depending only on $b$, $e$, $r$ and $X$ such that if $E$ is a torsion free sheaf of type $e$ and slope $\mu(E)\ge b$ on a geometric fiber $X_s$ then for every $n\ge N$
\begin{enumerate}
\item $h^1(X_s,E(n))=0$
\item $E(n)$ is generated by global sections.
\end{enumerate}
\end{lemma}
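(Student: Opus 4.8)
The plan is to control the Castelnuovo--Mumford regularity of $E$ directly, exploiting the fact that increasing the slope only makes cohomology vanishing and global generation easier, so the only binding hypothesis is $\mu(E)\ge b$. Since $X=C\times S$ and both conclusions are insensitive to the flat base change $\CC\to\ol{k(s)}$ identifying $X_s$ with $C$, it is enough to argue over $\CC$; write $d=\deg_C\SO_C(1)>0$ and $g$ for the genus of $C$, which depend only on $X$. Observe first that the hypothesis forces $e\ge 0$ (apply the definition of type $e$ to $F=E$). On the smooth curve $C$ a torsion free sheaf is locally free, so $E$ is a vector bundle and has a Harder--Narasimhan filtration.

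The first step is the slope estimate $\mu_{\min}(E)\ge\mu(E)-(r-1)e$, where $\mu_{\min}(E)$ is the slope of the last Harder--Narasimhan quotient. If $0=F_0\subsetneq F_1\subsetneq\cdots\subsetneq F_l=E$ is the Harder--Narasimhan filtration, with $r_i=\rk(F_i/F_{i-1})$ and $\mu_1>\cdots>\mu_l$ the slopes of the successive quotients, then $\mu_1=\mu(F_1)\le\mu(E)+e$ by type $e$, while $r\mu(E)=\sum_i r_i\mu_i\le(r-r_l)\mu_1+r_l\mu_l$ gives, after rearranging, $\mu_l\ge\mu(E)-\tfrac{(r-r_l)e}{r_l}\ge\mu(E)-(r-1)e$. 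Twisting, $\mu_{\min}(E(n))=\mu_{\min}(E)+nd\ge b-(r-1)e+nd$.

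The second ingredient is the standard positivity of semistable bundles on a curve. If $V$ is semistable with $\mu(V)>2g-2$ then $H^1(C,V)=0$: by Serre duality $H^1(C,V)^\vee\cong H^0(C,V^\vee\otimes\omega_C)$, and $V^\vee\otimes\omega_C$ is semistable of slope $2g-2-\mu(V)<0$, so it has no nonzero sections. If in addition $\mu(V)\ge 2g$ then $V$ is globally generated: for every closed point $p$ the sequence $0\to V(-p)\to V\to V|_p\to 0$ has $H^1(C,V(-p))=0$ since $\mu(V(-p))=\mu(V)-1>2g-2$, so $H^0(C,V)\to V|_p$ is surjective. A d\'evissage along the Harder--Narasimhan filtration, using the long exact cohomology sequence (with $H^{\ge 2}=0$ on $C$) and, for global generation, comparing for each $p$ the image of $H^0$ with the sub and the quotient, upgrades this to: if the least Harder--Narasimhan slope of a bundle $W$ exceeds $2g-2$ then $H^1(C,W)=0$, and if it is $\ge 2g$ then $W$ is globally generated. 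Applying this to $W=E(n)$ and invoking the slope estimate, both conclusions hold as soon as $b-(r-1)e+nd\ge 2g$, so one may take $N:=\max\bigl(0,\lceil(2g-b+(r-1)e)/d\rceil\bigr)$, which depends only on $b$, $e$, $r$ and $X$.

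There is no deep obstacle; the points to handle carefully are the Harder--Narasimhan slope inequality and the inductive d\'evissage for global generation (the key point being that if $0\to A\to W\to B\to 0$ with $A$ and $B$ globally generated and $H^1(C,A)=0$, then $W$ is globally generated, since the image of $H^0(C,W)$ in $W|_p$ contains $A|_p$ and surjects onto $B|_p$). As an alternative in the spirit of the earlier sections, one can instead note that the family of torsion free sheaves on $C$ of rank $r$ and type $e$ with $\mu\in[b,b+d)$ is bounded (finitely many Hilbert polynomials, each giving a bounded family by \cite[Theorem 2.6]{MaruyamaBound}), apply \cite[Lemma 1.9]{Si2} to get a uniform $N_0$ for that family, and reduce the general case by writing $E\cong E'(k)$ with $E'$ in the family and $k\ge 0$ an integer, vanishing and global generation being preserved under nonnegative twists on $C$.
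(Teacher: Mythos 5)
Your argument is correct and reaches the stated conclusion, but by a genuinely different route than the paper. You bound $\mu_{\min}(E)$ from below using the Harder--Narasimhan filtration together with the type-$e$ hypothesis applied to the first step, and then finish with the standard curve-theoretic thresholds (a semistable bundle of slope $>2g-2$ has vanishing $H^1$, of slope $\ge 2g$ is globally generated), upgraded to arbitrary bundles by a d\'evissage along the filtration; this gives an explicit $N$ and uses only elementary Serre duality for semistable bundles. The paper instead dualizes: it shows that $K\otimes E^\lor$ is again of bounded type (type $\rk(E)e$), applies the section estimate of Lemma \ref{lemma:sectionsBound} to get $h^0(X_s,K\otimes E^\lor(-n))=0$ for $n\ge N$, deduces (i) by Serre duality, and then obtains (ii) from Castelnuovo--Mumford regularity via \cite[Lemma 1.7.2]{huybrechts}. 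Your version is more self-contained and makes the constant explicit; the paper's version recycles the machinery already set up in Section \ref{section:boundnessTheorems} (the section bound and regularity), at the cost of a less explicit $N$ and an extra dualization step.

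One slip to repair: the paper's definition of type $e$ quantifies only over \emph{proper} subsheaves $F\subsetneq E$, so the hypothesis does not force $e\ge 0$ (any line bundle is of type $-1$, and a stable bundle is of some negative type). Consequently your inequality $\mu_{\min}(E)\ge \mu(E)-(r-1)e$ can fail when $e<0$, and the proposed $N$ would then be slightly too small. The fix is immediate: a sheaf of type $e<0$ is also of type $0$, so replace $e$ by $\max(e,0)$ in the slope estimate and in the formula for $N$; this does not affect the asserted dependence of $N$ on $b$, $e$, $r$ and $X$.
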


\begin{proof}
Let $E$ be a torsion free sheaf of type $e$ and slope $\mu(E)\ge b$ on $X_s$. Let us prove that $K\otimes E^\lor$ is of type $e'$ for some $e'$ depending only on $r$ and the genus $g$ of $X_s$, where $K$ is the canonical bundle on $X_s$. Let $F$ be a subsheaf of $K\otimes E^\lor$ of maximum slope. In particular, $F$ must be saturated. As both sheaves are torsion free, taking duals, $K^\lor \otimes F^\lor$ is a quotient of $E$. Let $G$ be the kernel of the quotient. 
\begin{eqnarray*}
\xymatrixcolsep{3pc}
\xymatrixrowsep{1pc}
\xymatrix{
0 \ar[r] & F \ar[r]  & K\otimes E^\lor & & \\
0 \ar[r] & G \ar[r] & E \ar[r] & K^\lor \otimes F^\lor \ar[r] & 0
}
\end{eqnarray*}
$E$ is of type $e$, so
$$\deg(G)\le \rk(G)\mu(E)+\rk(G)e$$
On the other hand, by additivity of the degree
$$\deg(K^\lor\otimes F^\lor)=\deg(E)-\deg(G)\ge \deg(E)-\rk(G)\mu(E)-\rk(G) e$$
Dividing by the rank and taking into account that $\rk(K^\lor\otimes F^\lor)=\rk(F)=\rk(E)-\rk(G)$ yields,
$$\mu(K^\lor \otimes F^\lor) \ge \frac{\rk(E)}{\rk(F)}\mu(E)-\frac{\rk(G)}{\rk(F)}\mu(E) - \frac{\rk(G)}{\rk(F)} e=\mu(E) - \frac{\rk(G)}{\rk(F)} e \ge \mu(E) - \rk(E)e$$
Computing the slope of the left hand side results in
$$\mu(E)-\rk(E) e\le \mu(K^\lor \otimes F^\lor) = \mu(F^\lor) - 2(g-1)=-\mu(F)-2(g-1)$$
Therefore
\begin{multline*}
\mu(F)\le -\mu(E) +\rk(E) e -2(g-1)\\
=\mu(K\otimes E^\lor)+\rk(E) e+2(g-1)-2(g-1)= \mu(K\otimes E^\lor)+\rk(E) e
\end{multline*}
Taking $e'=\rk(E) e$ we get that $K\otimes E^\lor$ is of type $e'$.

By Lemma \ref{lemma:sectionsBound} there exists a number $B$ depending only on $e'$, $r$ and $X$ such that for every $n$, $h^0(X_s,K\otimes E^\lor(-n))=0$ if $0\ge \mu(K\otimes E^\lor)-k+B=2\rk(E)(g-1)-\mu(E)-n+B$. As $\mu(E)\ge b$, if we fix an $N$ such that $0\ge 2\rk(E)(g-1)-b-N+B$, then for every $n\ge N$
$$0\ge 2\rk(E)(g-1)-b-N+B\ge 2\rk(E)(g-1)-\mu(E)-n+B$$
so for every $n\ge N$, $h^1(X_s,E(n))=h^0(X_s,K\otimes E^\lor(-n))=0$. This yields the desired bound for the first part of the lemma. As the dimension of $X_s$ is 1, (i) is equivalent to $E$ being $(N+1)$-regular in the sense of Mumford-Castelnuovo. By \cite[Lemma 1.7.2]{huybrechts}, $E(n)$ is generated by global sections for every $n\ge N+1$.
\end{proof}

\begin{corollary}
\label{cor:acyclicDestab}
There exists an integer $N$ depending on $\Lambda$, $P$, the parabolic type and $X$ such that for every $n\ge N$, and every parabolic $\Lambda$-module $(E,E_\bullet)$ over a geometric fiber $X_s$ with Hilbert polynomial $P$ and fixed parabolic type, if $(F,F_\bullet)$ is a parabolic sub-$\Lambda$-module of $(E,E_\bullet)$ with maximum slope then
\begin{enumerate}
\item $h^1(X_s,F(n))=0$
\item $F(n)$ is generated by global sections. In particular
$$H^0(X_s,F(n))\otimes \SO_{X_s}(-n) \longrightarrow F$$
is surjective.
\end{enumerate}
\end{corollary}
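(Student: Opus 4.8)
The plan is to deduce the statement from Lemma \ref{lemma:uniformBound} by checking that a parabolic sub-$\Lambda$-module $(F,F_\bullet)\subseteq(E,E_\bullet)$ of maximum parabolic slope $\pmu(F)=\pmu_{\max}^\Lambda(E)$ is torsion free, has ordinary slope bounded below by a constant depending only on the allowed data, and is of uniformly bounded type.

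To begin, I would record the structure of $F$. Arguing as in the proof of Theorem \ref{thm:HNfiiltration}, $F_\bullet$ is the parabolic structure induced from $E_\bullet$, and $(F,F_\bullet)$ is semistable: any parabolic sub-$\Lambda$-module of $(F,F_\bullet)$ is again a parabolic sub-$\Lambda$-module of $(E,E_\bullet)$, hence has parabolic slope at most $\pmu_{\max}^\Lambda(E)=\pmu(F)$. Moreover $F$ is torsion free, being a subsheaf of the locally free sheaf $E$ on the smooth curve $X_s$.

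Next I would extract the numerical bounds. Since $(E,E_\bullet)$ is itself a parabolic sub-$\Lambda$-module, $\pmu(F)=\pmu_{\max}^\Lambda(E)\ge\pmu(E)$, and $\pmu(E)=\bigl(\deg(E)+\owt(E)\bigr)/\rk(E)$ is determined by $P$ and the parabolic type. Using $0\le\owt(F)=\sum_{x\in D}\owt_x(F)\le\bigl(\sum_{x\in D}\alpha_{x,l_x}\bigr)\rk(F)$, we obtain $\mu(F)=\pmu(F)-\owt(F)/\rk(F)\ge\pmu(E)-\sum_{x\in D}\alpha_{x,l_x}=:b$, a constant depending only on $P$, $X$ and the parabolic weights. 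For the type bound, I would run the argument in the proof of Lemma \ref{lemma:LambdaModBoundC} — which uses Lemma \ref{lemma:sdoSurj}, Corollary \ref{cor:minslope}, and Serre vanishing for $\Gr_1(\Lambda)$ — applied to the semistable parabolic $\Lambda$-module $(F,F_\bullet)$: this produces a constant $e$ with $\mu(F')\le\mu(F)+e$ for every subsheaf $F'\subseteq F$, and the proof exhibits $e$ as depending only on $\Lambda$, $X$, an upper bound $r:=\rk(E)$ for the rank, and an upper bound for the parabolic weights; in particular $e$ is independent of $(E,E_\bullet)$ and of the induced parabolic type of $F$. Thus $F$ is torsion free of type $e$ and of slope $\mu(F)\ge b$.

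Finally, Lemma \ref{lemma:uniformBound}, applied with these $b$, $e$ and $r$, yields an integer $N$ depending only on $b$, $e$, $r$ and $X$ — equivalently, only on $\Lambda$, $P$, the parabolic type and $X$ — such that for every $n\ge N$ we have $h^1(X_s,F(n))=0$ and $F(n)$ is generated by global sections, the global generation giving the asserted surjection $H^0(X_s,F(n))\otimes\SO_{X_s}(-n)\to F$. I expect the main obstacle to be verifying the uniformity of the type constant $e$: one must check that the bound coming from Lemma \ref{lemma:LambdaModBoundC} does not degenerate as the induced parabolic type of $F$ ranges over the finitely many types refined by the fixed one, which is precisely why it matters that that bound depends on the parabolic data only through the rank and the magnitude of the weights.
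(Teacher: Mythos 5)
Your proposal is correct and follows essentially the same route as the paper: the paper likewise notes that a maximal-slope parabolic sub-$\Lambda$-module is semistable, hence torsion free of uniformly bounded type by the boundedness lemma (Lemma \ref{lemma:LambdaModBound}), bounds $\mu(F)$ from below by $\pmu(E)$ minus a uniform weight bound, and then applies Lemma \ref{lemma:uniformBound}, taking the maximum of the resulting integers over the finitely many possible ranks of $F$. Your extra care in re-running the argument of Lemma \ref{lemma:LambdaModBoundC} to check that the type constant is uniform in the induced parabolic data is exactly the point the paper handles by citing that the constant depends only on $\Lambda$, $X$, the rank bound and the weights.
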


\begin{proof}
Let $r$ denote the rank of any (and therefore all) of the considered parabolic $\Lambda$-modules $E$. Let $(F,F_\bullet)$ a parabolic sub-$\Lambda$-module with maximum slope. Then it is semistable as a parabolic $\Lambda$-module. In particular, it is a torsion free sheaf of type $b$ for the constant $b$ given by Lemma \ref{lemma:LambdaModBound} such that $\pmu(F)\ge \pmu(E)$. The set of possible values of $\owt(F)$ is bounded, as
$$\owt(F) = \frac{\sum_{x\in D} \sum_{i\in \alpha_{F,x}} \alpha_{x,i}}{\rk(F)} \le \sum_{x\in D} \sum_{i=1}^{l_x} \alpha_{x,i}$$
where $\alpha_{F,x}$ is the set of indexes $i\in\{1,\ldots,l_x\}$ such that $E_{x,i}\cap F|_{\{x\}\times S} \ne E_{x,i+1} \cap F|_{\{x\} \times S}$. Calling $\owt_{\max}$ to the right hand side of the inequality, yields
$$\mu(F) \ge \pmu(E)-\owt(F) \ge \pmu(E)-\owt_{\max}$$
By the previous Lemma, there exists an integer $N_{\rk(F)}$ depending only on $b$, $\pmu(E)-\owt_{\max}$, $\rk(F)$ and $X$ such that for $n\ge N_{\rk(F)}$, $F(n)$ is acyclic and it is generated by global sections. As $0< \rk(F) \le r$,  it is enough to take $N=\max_{i=1}^r (N_i)$.
\end{proof}

Now we will provide some sharp inequalities for the Hilbert polynomials of subsheafs of a semistable parabolic $\Lambda$-module. Let $(E,E_\bullet)$ be any $\Lambda$-module over a geometric fiber $X_s$ and $(F,F_\bullet)$ be a subsheaf. If $(E,E_\bullet)$ is semistable and $(F,F_\bullet)$ is preserved by $\Lambda$, semi-stability condition implies that for every $n$
$$\frac{h^0(X_s,F(n))-h^1(X_s,F(n))}{\rk(F)}+\eta(F) =\frac{P_F(n)}{\rk(F)}+\eta(F) \le \frac{P_E(n)}{\rk(E)}+\eta(E)$$
We will prove that for big enough $n$ the previous inequality can be sharpened by removing the term $h^1(X_s,F(n))$. The equality case for the sharpened inequality will be analyzed.

Finally, we will prove an inequality implying that if $E(n)$ is generated by global sections and the previous inequality is strict then there exists a uniform lower bound for the difference between its right hand side and its left hand side.

\begin{lemma}
\label{lemma:sharpStabInequality}
There exists an integer $N$ such that for every $n\ge N$ if $(E,E_\bullet)$ is a semistable parabolic $\Lambda$-module with Hilbert polynomial $P$ and fixed parabolic type, then for every parabolic subsheaf $(F,F_\bullet)\subsetneq (E,E_\bullet)$ such that its saturation is a parabolic sub-$\Lambda$-module and every $n\ge N$
$$\frac{h^0(X_s,F(n))}{\rk(F)}+\eta(F) \le \frac{P_E(n)}{\rk(E)}+\eta(E)$$
Moreover, if equality holds for some $n\ge N$ then $F$ is saturated and we have $h^1(X_s,F(n))=0$.
\end{lemma}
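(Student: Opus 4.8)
The plan is to rewrite the asserted inequality, via Riemann--Roch on the curve $X_s$, as a bound on $h^1(X_s,F(n))$, and to deduce that bound from an estimate on the image of $H^0(X_s,G(n))$ in $H^0(X_s,G/F)$, where $G:=F^{\op{sat}}$, combined with the comparison of the parabolic structures of $F$ and $G$ contained in Corollary \ref{cor:parabolicSaturation}. \textbf{Reductions.} By Lemma \ref{lemma:maxinduced}, replacing $F_\bullet$ by $E_\bullet\cap F$ only enlarges $\owt(F)$, hence the left hand side, and equality forces $F_\bullet=E_\bullet\cap F$; so I may assume $F_\bullet$ is the induced structure. Put $(G,G_\bullet)=(F^{\op{sat}},F^{\op{sat}}_\bullet)$, a parabolic sub-$\Lambda$-module by hypothesis; then $\pmu(G)\le\pmu(E)$ by semistability and $\pmu(F)\le\pmu(G)$ by Corollary \ref{cor:parabolicSaturation}. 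Since $\rk(F)=\rk(G)$, Riemann--Roch gives
\begin{gather*}
\frac{h^0(X_s,F(n))}{\rk(F)}+\eta(F)=\pmu(F)+(n+1-g)+\frac{h^1(X_s,F(n))}{\rk(F)},\\
\frac{P_E(n)}{\rk(E)}+\eta(E)=\pmu(E)+(n+1-g),
\end{gather*}
so the inequality is equivalent to $\pdeg(F)+h^1(X_s,F(n))\le\rk(F)\,\pmu(E)$, and since $\pdeg(G)\le\rk(G)\,\pmu(E)=\rk(F)\,\pmu(E)$ it is enough to prove, writing $T:=G/F$,
\begin{multline*}
h^1(X_s,F(n))\ \le\ \pdeg(G)-\pdeg(F)\\
=\ h^0(X_s,T)-\bigl(\owt(F)-\owt(G)\bigr),
\end{multline*}
the last equality because $T$ is torsion on a curve; in fact I will get a strict inequality whenever $T\ne0$.

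\textbf{Choice of $N$.} The sheaves $F^{\op{sat}}$ that occur form a bounded family: their ranks are $\le\rk(E)$; their degrees lie between the type bound of Lemma \ref{lemma:LambdaModBound} and the bound obtained by applying semistability to the torsion-free quotient $E/F^{\op{sat}}$; and for each of the finitely many resulting numerical types, the subbundles of a bounded family of sheaves again form a bounded family. Hence there is $N$, depending only on $X$, $P$ and the parabolic type, such that for all $n\ge N$ and all such $G$: $H^i(X_s,G(n))=0$ for $i>0$, $G(n)$ is globally generated, and $H^1(X_s,G(n)(-D))=0$, so that $H^0(X_s,G(n))\twoheadrightarrow\bigoplus_{x\in D}G|_x$.

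\textbf{The estimate.} Fix $n\ge N$. From $0\to F(n)\to G(n)\to T(n)\to0$ and $H^1(X_s,G(n))=0$ we get $h^1(X_s,F(n))=h^0(X_s,T)-\dim I$, where $I:=\im\bigl(H^0(X_s,G(n))\to H^0(X_s,T)\bigr)$. For $x\in D$ the fibre $T|_x$ is a quotient of $G|_x$, so $I$ surjects onto $\bigoplus_{x\in D}T|_x$ and $\dim I\ge\sum_{x\in D}h^0(x,T|_x)$; moreover if $T\ne0$, global generation of $G(n)$ gives $H^0(X_s,G(n))\twoheadrightarrow G|_y\twoheadrightarrow T|_y\ne0$ for $y$ in the support of $T$, so $\dim I\ge1$. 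By Corollary \ref{cor:parabolicSaturation} (with the induced structures) $\dim F_{x,i}-\dim G_{x,i}\le h^0(x,T|_x)$ for all $x,i$, and since $\dim F_{x,1}=\rk(F)=\rk(G)=\dim G_{x,1}$,
\begin{multline*}
\owt(F)-\owt(G)=\sum_{x\in D}\sum_{i=2}^{l_x}\bigl(\dim F_{x,i}-\dim G_{x,i}\bigr)(\alpha_{x,i}-\alpha_{x,i-1})\\
\le\ \beta\sum_{x\in D}h^0(x,T|_x),\qquad \beta:=\max_{x\in D}(\alpha_{x,l_x}-\alpha_{x,1})<1.
\end{multline*}
Combining these,
\begin{multline*}
\bigl(\pdeg(G)-\pdeg(F)\bigr)-h^1(X_s,F(n))\ \ge\ \dim I-\beta\textstyle\sum_{x\in D}h^0(x,T|_x)\\
\ge\ (1-\beta)\textstyle\sum_{x\in D}h^0(x,T|_x)\ \ge\ 0,
\end{multline*}
which is the required inequality; if $T\ne0$ this lower bound is $>0$ (either $\sum_{x\in D}h^0(x,T|_x)\ge1$ and $1-\beta>0$, or that sum vanishes, in which case $\pdeg(G)-\pdeg(F)=h^0(X_s,T)$ while $h^1(X_s,F(n))=h^0(X_s,T)-\dim I\le h^0(X_s,T)-1$). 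So the inequality is strict unless $F=F^{\op{sat}}$; equality for some $n\ge N$ therefore forces $F$ saturated, and then $h^1(X_s,F(n))=h^1(X_s,G(n))=0$ by the choice of $N$.

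\textbf{Main obstacle.} The heart of the argument is the estimate above: one must extract from the uniform regularity of $\{F^{\op{sat}}\}$ exactly enough surjectivity of $H^0(X_s,G(n))$ onto the fibres of $T$ along $D$ to balance the torsion of $T$ at the parabolic points against the weight defect $\owt(F)-\owt(G)$ — this is precisely where the strict bound $\beta<1$ is used — while torsion of $T$ away from $D$ contributes nothing to that defect and is controlled by ordinary global generation.
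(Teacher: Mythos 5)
There is a genuine gap, and it sits exactly where you locate the ``main obstacle'': the claimed boundedness of the family of saturations $G=F^{\op{sat}}$, which is what your uniform choice of $N$ (with $h^1(X_s,G(n))=0$, global generation, and $H^1(X_s,G(n)(-D))=0$) rests on. The two bounds you invoke --- the type bound of Lemma \ref{lemma:LambdaModBound} applied to $G$, and semistability applied to the quotient $E/G$ (which gives $\pmu(E/G)\ge \pmu(E)$) --- are both \emph{upper} bounds on $\deg(G)$; neither bounds $\deg(G)$ from below. And no such lower bound exists in general: already for trivial $\Lambda$ (so every subsheaf is a sub-$\Lambda$-module) and $E=\SO\oplus\SO$ on $\PP^1$, the line subbundles $\SO(-k)\subset E$ are saturated sub-$\Lambda$-modules of arbitrarily negative degree, so $\{F^{\op{sat}}\}$ is unbounded and no single $N$ makes $h^1(X_s,G(n))=0$ for all of them. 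This is not a repairable technicality inside your scheme, because your reduction ``it is enough to prove $h^1(X_s,F(n))\le \pdeg(G)-\pdeg(F)$'' discards the slack $\rk(F)\pmu(E)-\pdeg(G)$, which is precisely what absorbs $h^1$ when $\pdeg(G)$ is far below $\rk(G)\pmu(E)$: for $F=G=\SO(-k)$ with $k\gg n$ your sufficient inequality reads $h^1(X_s,F(n))\le 0$ and is simply false, even though the lemma itself holds there with room to spare.

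The paper avoids this by a dichotomy on the Harder--Narasimhan filtration of $F$ as a sheaf, with a fixed large constant $C$: if the minimal HN slope satisfies $\nu(F)\le\mu(E)-C$, a section-counting bound on the HN factors (Simpson's $h^0$ estimate, as in Lemma \ref{lemma:sectionsBound}) gives the strict inequality outright for all $n\ge N_1(C)$, with no cohomological vanishing needed; if $\nu(F)\ge\mu(E)-C$, then $F$ itself is torsion free of type $b+C$ with slope bounded below, so Lemma \ref{lemma:uniformBound} gives a uniform $N_2$ with $h^1(X_s,F(n))=0$, and then Corollary \ref{cor:parabolicSaturation} together with semistability applied to $F^{\op{sat}}$ finishes, the equality analysis following from global generation of $F^{\op{sat}}(n)$ in that second regime. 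The rest of your argument (the Riemann--Roch reformulation, the weight-defect estimate $\owt(F)-\owt(G)\le\beta\sum_x h^0(x,T|_x)$ with $\beta<1$, and the fiberwise surjectivity onto $T|_x$ along $D$) is sound \emph{given} uniform regularity of $G$, but to make it work you would first have to restrict to a regime where $\mu(F)$ (or $\nu(F)$) is bounded below --- i.e., reintroduce something like the paper's two-case argument --- and handle the unbounded regime by a direct $h^0$ estimate as the paper does.
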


\begin{proof}
Let $(F,F_\bullet)\subsetneq(E,E_\bullet)$ be a parabolic subsheaf. Without loss of generality we can assume that $F$ has the induced parabolic structure. Let $0=G_0\subsetneq G_1\subsetneq \ldots \subsetneq G_l=F$ be the Harder-Narasimhan filtration of $F$ as a subsheaf of $E$. For every $i=1,\ldots,l$, $G_i/G_{i-1}$ is a semistable sheaf, so, as before, there exist integers $B_{r_i}$ depending only on $r_i:=\rk\left(G_i/G_{i-1}\right)$ and $X$ such that for every $n$ and every $i=1,\ldots,l$
$$h^0\left(X_s,\left(G_i/G_{i-1}\right)(n)\right)\le \rk\left(G_i/G_{i-1}\right) \left[ \mu \left(G_i/G_{i-1}\right) + n + B_{r_i} \right ]^+$$
Let $B=\min_{k=1,\ldots,r} B_k$. Then for every $i$ yields
\begin{equation}
\label{eq:sharpStabInequality1}
h^0(X_s,F(n)) \le \sum_{i=1}^l h^0\left(X_s,\left(G_i/G_{i-1}\right)(n)\right)\le \sum_{i=1}^l r_i \left[ \mu \left(G_i/G_{i-1}\right) + n + B\right]^+
\end{equation}
As $(E,E_\bullet)$ is a semistable parabolic $\Lambda$-module, by Lemma \ref{lemma:LambdaModBound} there exists a number $b$ depending only on $\Lambda$, $r$, the parabolic type and $X$, such that $\mu\left(G_i/G_{i-1}\right) \le \mu(E)+b$. On the other hand, let $\nu(F)=\min_{i=1,\ldots,l}\left (\mu \left(G_i/G_{i-1} \right) \right) = \mu (G_1)$. Then, substituting the bounds in equation \eqref{eq:sharpStabInequality1} and taking into account that $r_1\ge 1$ and $\sum_{i=1}^l r_i=\rk(F)$ yields
$$h^0(X_s,F(n)) \le (\rk(F)-1) \left[ \mu(E)+b+n+B\right]^+ + \left[ \nu(F) + n + B \right]^+$$
Now, suppose that $\nu(F) \le \mu(E)-C$ for some $C\ge 0$. Then for $n\ge C-\mu(E)-B=N_1(C)$ we have
\begin{multline*}
h^0(X_s,F(n))\le (\rk(F)-1) (\mu(E)+b+n+B) + \mu(E)-C+ n + B =\\
 \rk(F)(n+\mu(E)+B) + (\rk(F)-1)b -C
\end{multline*}
Both $\rk(F)$ and $\eta(F)$ are bounded uniformly over each choice of $E$ and $F$. Therefore, there exists $C$ big enough so that for $n\ge C-\mu(E)-B=N_1(C)$
$$h^0(X_s,F(n))\le \rk(F)(n+\mu(E)+B) + (\rk(F)-1)b -C <\rk(F)(n+1-g+\pmu(E)-\eta(F))$$
Then for $n\ge N_1(C)$
$$\frac{h^0(X_s,F(n))}{\rk(F)} +\eta(F) < n+1-g+\pmu(E) = \frac{P_E(n)}{\rk(E)}+\eta(E)$$
Therefore, there exist positive numbers $C$ and $N_1=N_1(C)$ depending only on $\Lambda$, $r$, the parabolic type, $P$ and $X$ such that the Lemma holds for the given $N_1$ for every subsheaf $F$ such that $\nu(F)\le \mu(E)-C$. Let us suppose that $(F,F_\bullet)$ is a parabolic subsheaf whose saturation is a parabolic sub-$\Lambda$-module and such that $\nu(F) \ge \mu(E)-C$. As the slope is invariant under Jordan equivalence
$$\mu(F)=\mu\left(\bigoplus_{i=1}^r \left(G_i/G_{i-1}\right)\right) \ge \nu(F)\ge \mu(E)-C$$
Moreover, for every subsheaf $G\subsetneq F\subsetneq E$
$$\mu(G)\le \mu(E)+b\le \mu(F)+C+b$$
Therefore, $F$ is a torsion free sheaf of type $b+C$ with $\mu(F)\ge \mu(E)-C$. By Lemma \ref{lemma:uniformBound}, there exists an integer $N_2$ depending on $b+C$, $\mu(E)-C$ and $X$, i.e., on $\Lambda$, $P$, $X$ and the parabolic type, such that for every $n\ge N_2$, $h^1(X_s,F(n))=0$. By Corollary \ref{cor:parabolicSaturation} for every such sheaf $F$
$$\frac{h^0(X_s,F(n))}{\rk(F)} +\eta(F)\le \frac{h^0(X_s,F^{\op{sat}}(n))}{\rk(F^{\op{sat}})} +\eta(F^{\op{sat}})$$
so we may assume without loss of generality that $F$ is a saturated sub-$\Lambda$-module. As $(F,F_\bullet)$ is a parabolic sub-$\Lambda$-module of $(E,E_\bullet)$, by semi-stability, for $n\ge N_2$
$$\frac{h^0(F(n))}{\rk(F)} +\eta(F) = \frac{P_F(n)-h^1(F(n))}{\rk(F)}+\eta(F) = \frac{P_F(n)}{\rk(F)}+\eta(F)\le \frac{P_E(n)}{\rk(E)}+\eta(E)$$
Then, it is enough to pick $N=\max(N_1,N_2)$ to get the first part of the result for every parabolic sub-$\Lambda$-module. Now suppose that for some $n\ge N$
$$\frac{h^0(X_s,F(n))}{\rk(F)} +\eta(F)= \frac{h^0(X_s,F^{\op{sat}}(n))}{\rk(F^{\op{sat}})} +\eta(F^{\op{sat}})=\frac{P_E(n)}{\rk(E)}+\eta(E)$$
Then $h^0(X_s,F(n))=h^0(X_s,F^{\op{sat}}(n))$. By the choice of $C$ through the proof, the only option for equality to hold is that $\mu(F) \ge \mu(E)-C$, so $F^{\op{sat}}(n)$ is generated by global sections. Therefore, $F(n)$ is also generated by global sections and, hence, $F=F^{\op{sat}}$. By the choice of $N$, we know that
$$h^1(X_s,F(n))=h^1(X_s,F^{\op{sat}}(n))=0$$
\end{proof}

\begin{lemma}
\label{lemma:slopeDiffbound}
Let us fix a certain parabolic type. Then there exists a real number $\delta>0$ such that for every parabolic sheaf $(E,E_\bullet)$ of Hilbert polynomial $P$ of the given parabolic type, if $(F,F_\bullet)\subsetneq (E,E_\bullet)$ is a subsheaf such that
$$\frac{h^0(X_s,F(n))}{\rk(F)}+\eta(F) < \frac{h^0(X_s,E(n))}{\rk(E)}+\eta(E)$$
then
$$\frac{h^0(X_s,F(n))}{\rk(F)}+\eta(F)+ \delta \le  \frac{h^0(X_s,E(n))}{\rk(E)}+\eta(E)$$
\end{lemma}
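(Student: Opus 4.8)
The plan is to reduce the statement to an elementary fact about real numbers. Set
$$\Delta:=\left(\frac{h^0(X_s,E(n))}{\rk(E)}+\eta(E)\right)-\left(\frac{h^0(X_s,F(n))}{\rk(F)}+\eta(F)\right),$$
so that the hypothesis reads $\Delta>0$ and the conclusion reads $\Delta\ge\delta$. I would show that, as $n$ and the admissible pairs $(F,F_\bullet)\subsetneq(E,E_\bullet)$ (with $\rk(F)\in\{1,\dots,r\}$) vary, $\Delta$ always lies in a fixed set of the form $\tfrac1L\ZZ+W$ with $L$ a positive integer and $W\subset\RR$ finite, both depending only on $r$ and the parabolic type; since every nonzero element of $\tfrac1L\ZZ+W$ has absolute value bounded below by a positive constant depending only on $L$ and $W$, taking $\delta$ to be such a constant finishes the proof.

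To place $\Delta$ in that form, I would write $\Delta=\rho+w$ where $\rho=\tfrac{h^0(X_s,E(n))}{\rk(E)}-\tfrac{h^0(X_s,F(n))}{\rk(F)}$ and $w=\eta(E)-\eta(F)$. For $\rho$: the rank $\rk(E)=r$ is fixed and $\rk(F)\in\{1,\dots,r\}$, while $h^0(X_s,E(n))$ and $h^0(X_s,F(n))$ are nonnegative integers, so $\rho\in\tfrac1L\ZZ$ for $L:=r\cdot\operatorname{lcm}(1,\dots,r)$, which depends only on $r$. For $w$: the number $\owt(E)=\sum_{x,i}\alpha_{x,i}(\rk(E_{x,i})-\rk(E_{x,i+1}))$ is entirely fixed by the parabolic type, so $\eta(E)$ is a constant; and $\owt(F)=\sum_{x,i}\alpha_{x,i}(\dim F_{x,i}-\dim F_{x,i+1})$ is a combination of the fixed finite list of weights $\{\alpha_{x,i}\}$ with nonnegative integer coefficients bounded by $r$, so $\owt(F)$ — hence, together with $\rk(F)\in\{1,\dots,r\}$, also $w=\eta(E)-\eta(F)$ — ranges over a finite set $W$ depending only on the parabolic type. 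Thus $\Delta\in\tfrac1L\ZZ+W$ for all admissible data.

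It remains to prove the elementary fact, which I would do directly: writing $\tfrac1L\ZZ+W=\bigcup_{w\in W}(w+\tfrac1L\ZZ)$, each coset $w+\tfrac1L\ZZ$ meets the interval $(-\tfrac1{2L},\tfrac1{2L}]$ in a single point $p_w$; I set $\delta_w:=|p_w|$ if $p_w\ne0$ and $\delta_w:=\tfrac1L$ otherwise, so that every nonzero element of $w+\tfrac1L\ZZ$ has absolute value at least $\delta_w>0$, and then $\delta:=\min_{w\in W}\delta_w>0$. Consequently any nonzero element of $\tfrac1L\ZZ+W$ has absolute value $\ge\delta$; applying this to $\Delta$, the hypothesis $\Delta>0$ forces $\Delta\ge\delta$, which is exactly the claimed inequality. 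The only step that requires care — and the closest thing to an obstacle here — is the finiteness of the set of possible values of $\owt(F)$: this is where one uses that a parabolic subsheaf carries weights drawn from the ambient fixed list $\{\alpha_{x,i}\}$ (e.g.\ via the induced parabolic structure), so that the arbitrariness of the real weights $\alpha_{x,i}$ does not destroy the discreteness; one should also observe that neither $L$ nor $W$ depends on $n$, which is clear since $n$ enters $\Delta$ only through the integers $h^0(X_s,E(n))$ and $h^0(X_s,F(n))$.
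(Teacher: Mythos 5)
Your proposal is correct, and it actually proves slightly more than the paper's own argument does. The paper disposes of the lemma in three lines by writing each weight as a fraction $\alpha_{x,i}=p_{x,i}/q_{x,i}$ and observing that the difference of the two sides is then a positive rational number whose denominator divides a fixed integer (essentially $r!\prod_{x,i}q_{x,i}$), hence is at least the reciprocal of that integer; this implicitly assumes the parabolic weights are rational, even though the definitions in Section~\ref{section:ParLambdaModules} allow arbitrary real weights. Your decomposition $\Delta=\rho+w$, with $\rho$ confined to the fixed lattice $\tfrac1L\ZZ$ coming from the integer quantities $h^0$ and the bounded ranks, and $w$ confined to the finite set of possible values of $\eta(E)-\eta(F)$ (finite because the induced parabolic structure on a subsheaf only uses the fixed weights $\{\alpha_{x,i}\}$ with integer multiplicities bounded by $r$), removes the rationality hypothesis: a finite union of translates of a lattice is discrete, so its positive elements are uniformly bounded away from zero, and your explicit choice of $\delta$ via the representatives $p_w\in(-\tfrac1{2L},\tfrac1{2L}]$ is sound. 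Both arguments share the same underlying idea — the possible values of the difference form a discrete set independent of $n$, $E$ and $F$ — but yours is the more robust formulation, while the paper's is shorter under the (unstated) rationality assumption; you also correctly flag the one point that needs care, namely that $\owt(F)$ takes only finitely many values because the subsheaf inherits its weights from the ambient fixed system.
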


\begin{proof}
The left and right side of the inequality are sums of rational numbers, so its difference is a positive rational number $p/q$, with $p,q>0$ coprime, whose denominator is at most the least common multiple of the denominators of all the summands appearing in the expression. Therefore
\begin{multline*}
\frac{h^0(X_s,E(n))}{\rk(E)}+\eta(E)-\frac{h^0(X_s,F(n))}{\rk(F)}-\eta(F)\\
\ge \frac{1}{\op{LCM}\{\rk(F)q_{x,i},\rk(E)q_{x,i}\}}\ge \frac{1}{r! \prod_{x\in D} \prod_{i=1}^{l_x} q_{x,i}}
\end{multline*}
where $\alpha_{x,i}=\frac{p_{x,i}}{q_{x,i}}$ with $p_{x,i}\in \ZZ^{\ge 0}$ and $q_{x,i}\in \ZZ^{>0}$.
\end{proof}

\section{Parametrizing scheme for parabolic $\Lambda$-modules}
\label{section:parameterSpace}

Given a scheme $X$ over $S$, a coherent sheaf $F$ over $X$ and a polynomial $P$, let $\Quot_{X/S}(F,P)$ denote the Quot scheme of quotients of $F$ over $X$ flat over $S$ with Hilbert polynomial $P$. It is a projective scheme representing the moduli functor $\mathfrak{Quot}_{X/S}(F,P):(\Sch_S)\longrightarrow (\Sets)$ that assigns each $f:T\to S$ the set of quotients $f^*F \twoheadrightarrow Q$ over $X\times_S T$ flat over $T$ with Hilbert polynomial $P$. Let $\mathfrak{Quot}_{X/S}^{\op{LF}}(F,P): (\Sch_S) \longrightarrow (\Sets)$ be the subfunctor of families of locally free quotients, and let $\Quot_{X/S}^{\op{LF}}(F,P)\subseteq \Quot_{X/S}(F,P)$ be the open subscheme representing such subfunctor.

\begin{lemma}
\label{lemma:pullbackQuot}
Let $E$ and $F$ be coherent sheafs over $X$ flat over $S$ such that there exists a surjective morphism $p:E\twoheadrightarrow F$. Then $p^*:\Quot_{X/S}(F,P)\to \Quot_{X/S}(E,P)$ is a closed embedding.
\end{lemma}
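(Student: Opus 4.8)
The plan is to exhibit $p^*$ as the morphism of $S$-schemes induced, via Yoneda, by a natural transformation of the Quot functors, and then to show that this morphism is simultaneously a monomorphism and proper; since a proper monomorphism of schemes is automatically a closed immersion, this gives the claim. First I would define the natural transformation: given an $S$-scheme $f:T\to S$ and a $T$-point of $\mathfrak{Quot}_{X/S}(F,P)$, that is, a surjection $q:f^*F\twoheadrightarrow Q$ with $Q$ flat over $T$ and Hilbert polynomial $P$, one composes with the pullback $f^*p:f^*E\twoheadrightarrow f^*F$, which stays surjective because $f^*$ is right exact, to obtain a surjection $q\circ f^*p:f^*E\twoheadrightarrow Q$. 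Since $Q$ and its Hilbert polynomial are unchanged, this is a $T$-point of $\mathfrak{Quot}_{X/S}(E,P)$, and the assignment is manifestly functorial in $T$; it therefore defines the morphism $p^*:\Quot_{X/S}(F,P)\to\Quot_{X/S}(E,P)$.

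Next I would check that $p^*$ is a monomorphism, i.e. injective on $T$-points for every $S$-scheme $T$. Two quotients of a fixed sheaf give the same point of a Quot scheme exactly when they have the same kernel as a subsheaf. So suppose $q_1:f^*F\twoheadrightarrow Q_1$ and $q_2:f^*F\twoheadrightarrow Q_2$ satisfy $\ker(q_1\circ f^*p)=\ker(q_2\circ f^*p)$ as subsheaves of $f^*E$. Since $\ker(q_i\circ f^*p)=(f^*p)^{-1}(\ker q_i)$ and $f^*p$ is surjective, one has $\ker q_i=f^*p\big((f^*p)^{-1}(\ker q_i)\big)=f^*p(\ker(q_i\circ f^*p))$, so the equality of the two kernels in $f^*E$ forces $\ker q_1=\ker q_2$ in $f^*F$, and hence $q_1,q_2$ define the same $T$-point. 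Thus $p^*$ is a monomorphism.

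Finally, $\Quot_{X/S}(F,P)$ and $\Quot_{X/S}(E,P)$ are both projective over $S$, in particular proper over $S$, and any $S$-morphism between two $S$-proper schemes is proper. A proper monomorphism of schemes is a closed immersion (equivalently, a universally closed monomorphism is a closed immersion), so $p^*$ is a closed embedding. I expect the only genuinely delicate point to be the monomorphism step, where surjectivity of $p$ and of all its base changes $f^*p$ is precisely what allows one to recover the kernel of a quotient of $F$ from the kernel of the induced quotient of $E$; the construction of the morphism and the properness are formal.
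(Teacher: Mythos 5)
Your proof is correct, but it takes a genuinely different route from the paper. You establish that $p^*$ is a monomorphism (by recovering $\ker q$ from $\ker(q\circ f^*p)$ using surjectivity of $f^*p$) and then invoke properness of both Quot schemes over $S$ together with the general fact that a proper monomorphism is a closed immersion. The paper instead argues functorially and explicitly: writing $K=\Ker(p)$, it observes (using flatness of $F$ over $S$, so that $\Ker(f^*E\to f^*F)=f^*K$) that a $T$-point of $\Quot_{X/S}(E,P)$ factors through $f^*F$ exactly when the induced map $f^*K\to G$ vanishes, and then applies Yokogawa's Lemma 4.3 to realize this vanishing condition as a closed subscheme $Z\subseteq \Quot_{X/S}(E,P)$, with which $\Quot_{X/S}(F,P)$ is identified. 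Your approach buys brevity and even dispenses with the flatness hypothesis (you only use right-exactness of pullback), but it leans on projectivity of both Quot schemes and on the nontrivial ``proper monomorphism $\Rightarrow$ closed immersion'' theorem, so it would not transfer to the quasi-projective parameter spaces where the paper reuses the same technique (the conditions $R_j$ in Theorem \ref{thm:LambdaModParam} and the residual conditions later); the paper's method, by exhibiting the image directly as a vanishing locus, is the workhorse for those subsequent constructions and also identifies the scheme structure of the image concretely. One small point worth making explicit in your write-up: the properness of $p^*$ follows from the cancellation property (source proper over $S$, target separated over $S$), and the identification of $T$-points of a Quot scheme with quotients up to equal kernel is what makes your monomorphism check complete.
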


\begin{proof}

Let $K=\Ker(p)$. Let $f:T\to S$ and let $(G,\psi_G)\in \mathfrak{Quot}_{X/S}(E,P)(T)$, where $\psi_G:f^*E \twoheadrightarrow G$. As $F$ is flat over $S$, $\Ker(f^*E\to f^*F)=f^*K$. Then $\psi_G$ factors through the pullback of $F$ if and only if the image of $f^*K$ by the quotient $\psi_G:f^*E\twoheadrightarrow G$ is zero.
\begin{eqnarray*}
\xymatrixcolsep{1pc}
\xymatrixrowsep{1pc}
\xymatrix{
0 \ar[d] &&\\
f^*K \ar[d] \ar@{-->}[dr] &&\\
f^*E \ar[d] \ar[r] & G \ar[r]  & 0\\
f^*F \ar[d] \ar@{-->}[ur] & &\\
0 &&
}
\end{eqnarray*}

Let $(G_E,\psi_E)$ be the universal quotient of $\Quot_{X/S}(E,P)$ and $g:T\to \Quot_{X/S}(E,P)$ be the morphism corresponding to $(G,\psi_G)$. Then $(G,\psi_G)$ belongs to the image of $\Quot_{X/S}(F,P)$ if and only if the pullback of $\pi_X^*K \to G_E$ by $g$ is zero, where $\pi_X:X\times_S \Quot_{X/S}(E,P)\to X$ is the projection to the first factor. By \cite[Lemma 4.3]{Yokogawa93}, there is a closed subscheme $Z$ of $\Quot_{X/S}(E,P)$ such that the pullback is zero if and only if $g$ factors through $Z$. Therefore, the image of $\Quot_{X/S}(F,P)$ is closed.

Let $f':T'\to S$ be another $S$-scheme and let $\varphi:T'\to T$ be a morphism of $S$-schemes. Let us prove that the following diagram is commutative.
\begin{eqnarray*}
\xymatrixcolsep{3pc}
\xymatrixrowsep{2pc}
\xymatrix{
\mathfrak{Quot}_{X/S}(F,P)(T) \ar[d]^{p^*} \ar[r]^{\varphi^*} & \mathfrak{Quot}_{X/S}(F,P)(T') \ar[d]^{p^*}\\
\mathfrak{Quot}_{X/S}(E,P)(T)\ar[r]^{\varphi^*} & \mathfrak{Quot}_{X/S}(E,P)(T')
}
\end{eqnarray*}
An element of $\mathfrak{Quot}_{X/S}(F,P)(T)$ is given as a quotient $f^*F\twoheadrightarrow G$ over $X\times_S T$, which is a quotient
$$f^*E\twoheadrightarrow f^*F \twoheadrightarrow G$$
As the pullback is right exact, its image by $\varphi^*$ is a quotient
\begin{eqnarray*}
\xymatrixcolsep{2pc}
\xymatrixrowsep{1pc}
\xymatrix{
\varphi^*f^*E \ar@{->>}[r] \ar@{=}[d] & \varphi^*f^*F \ar@{->>}[r] \ar@{=}[d] & \varphi^*G\\
f'^*E  \ar@{->>}[r] & f'^*F &
}
\end{eqnarray*}
The pullback by $\varphi$ of the composition $f^*E\to f^*F \to G$ is the composition of the pullbacks $\varphi^*f^*E\to \varphi^*f^*F\to \varphi^*G$, so we get that $(p^*\circ \varphi^*)(G)=(\varphi^*\circ p^*)(G)$. Therefore, $p^*$ induces a natural transformation $\mathfrak{Quot}_{X/S}(F,P) \to \mathfrak{Quot}_{X/S}(E,P)$ and $p^*$ is a closed embedding. 
\end{proof}

Given schemes $X_i\to S$ for $i=1,\ldots, n$, let
$$\fprod{S}{i=1}{n}X_i$$
denote the fiber product of the $X_i$ over $S$.

\begin{lemma}
Let $P$ be a fixed Hilbert polynomial, with leading coefficient $r$, and let $\overline{r}=\{r_{x,i}\}$ for $x\in D$, $1<i\le l_x$ be integers. Let $F$ be a coherent sheaf over $X=C\times S$ flat over $S$ such that $F|_{\overline{D}}$ is locally free. Let $\mathfrak{FQuot}^{\op{LF}}_{X/S}(F,P,\overline{r})$ be the functor that associates each $S$-scheme $T$ the set of isomorphism classes of pairs $(E,E_\bullet)$ consisting on a locally free quotient sheaf $E$ of the pullback of $F$ over $X_T=C\times T$ flat over $T$ with Hilbert polynomial $P_E=P$ and a filtration by sub-bundles over $T$
$$E|_{\{x\}\times T} = E_{x,1} \supsetneq E_{x,2} \supsetneq \ldots \supsetneq E_{x,l_x}$$
for each $x\in D$ such that for each $1<i\le l_x$, $\rk(E|_{x\times T}/E_{x,i})=r_{x,i}$.
Then there is a closed subscheme $\op{FQuot}_{X/S}^{\op{LF}}(F,P,\overline{r})$ of
$$\Quot_{X/S}^{\op{LF}}(F,P)\times_S \fprod{S}{x\in D}{} \Grass(F|_{\{x\}\times S},r) \times_S \fprod{S}{x\in D}{} \fprod{S}{i=2}{l_x} \Grass(F|_{\{x\}\times S},r_{x,i})$$
over $\Quot_{X/S}^{\op{LF}}(F,P)$ representing $\mathfrak{FQuot}_{X/S}^{\op{LF}}(F,P,\overline{r})$.
\end{lemma}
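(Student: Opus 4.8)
The plan is to build $\op{FQuot}^{\op{LF}}_{X/S}(F,P,\overline{r})$ in two stages: first use the Quot scheme of $F$ to fix the quotient $E$, then attach, fibrewise over it, the flags at the parabolic points via relative flag bundles; the resulting scheme will be realized as a closed subscheme of the stated product by applying Lemma \ref{lemma:pullbackQuot} on each factor.

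Concretely, I would set $Q:=\Quot^{\op{LF}}_{X/S}(F,P)$ with universal locally free quotient $\pi_X^*F\surj\SE$ on $X\times_S Q$, where $\pi_X\colon X\times_S Q\to X$. As $P$ has leading coefficient $r$, $\SE$ is locally free of rank $r$; since $X=C\times S$ and $F|_{\overline{D}}$ is locally free, for each $x\in D$ the restriction $\SE_x:=\SE|_{\{x\}\times Q}$ is locally free of rank $r$ on $Q\cong\{x\}\times Q$, and is a quotient of the pullback to $Q$ of the locally free sheaf $F|_{\{x\}\times S}$, hence classifies a morphism $q_x\colon Q\to\Grass(F|_{\{x\}\times S},r)$. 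For each $x$ I would form the relative partial flag bundle $\Fl_x\to Q$ of flags of locally free quotients of $\SE_x$ of ranks $r_{x,2}<\dots<r_{x,l_x}$ (equivalently, chains of subbundles $\SE_x\supsetneq\SF_{x,2}\supsetneq\dots\supsetneq\SF_{x,l_x}$ with $\rk\SF_{x,i}=r-r_{x,i}$); it is projective over $Q$ and carries a canonical $Q$-closed embedding into $\fprod{Q}{i=2}{l_x}\Grass_Q(\SE_x,r_{x,i})$, cut out by the incidence conditions among the tautological quotients. Finally I would put $Y:=\fprod{Q}{x\in D}{}\Fl_x$.

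For the embedding into the product: for each pair $(x,i)$, since $\SE_x$ is a locally free quotient of the pullback of $F|_{\{x\}\times S}$ to $Q$, Lemma \ref{lemma:pullbackQuot} applied over the base $Q$ (with $\{x\}\times Q\xrightarrow{\sim}Q$ as the relative scheme and the constant $r_{x,i}$ as Hilbert polynomial --- over this zero-dimensional fibre a flat quotient of a locally free sheaf of constant rank is again locally free, so the relevant Quot schemes are precisely the Grassmannian bundles) yields a $Q$-closed embedding $\Grass_Q(\SE_x,r_{x,i})\inj\Grass(F|_{\{x\}\times S},r_{x,i})\times_S Q$. Composing these with the flag embeddings of the $\Fl_x$ and taking fibre products over $Q$ produces a closed embedding $Y\inj Q\times_S\fprod{S}{x\in D}{}\fprod{S}{i=2}{l_x}\Grass(F|_{\{x\}\times S},r_{x,i})$; composing further with the closed immersion induced by the graph of $(q_x)_{x\in D}\colon Q\to\fprod{S}{x\in D}{}\Grass(F|_{\{x\}\times S},r)$ (a closed immersion since the Grassmannian bundles are separated over $S$) lands $Y$ inside the full product, and I would take $\op{FQuot}^{\op{LF}}_{X/S}(F,P,\overline{r})$ to be its image.

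To conclude I would check that $Y$ represents the functor by unwinding universal properties: a $T$-point of $Y$ over $f\colon T\to S$ is a $T$-point of $Q$, i.e.\ a locally free quotient $E$ of the pullback of $F$ to $X_T$ flat over $T$ with $P_E=P$, together with, for each $x\in D$, a compatible $T$-point of $\Fl_x$, i.e.\ a flag of locally free quotients of $E|_{\{x\}\times T}$ of ranks $r_{x,2}<\dots<r_{x,l_x}$, whose kernels give exactly the flag $E|_{\{x\}\times T}=E_{x,1}\supsetneq\dots\supsetneq E_{x,l_x}$ with $\rk(E|_{\{x\}\times T}/E_{x,i})=r_{x,i}$ demanded by $\mathfrak{FQuot}^{\op{LF}}_{X/S}(F,P,\overline{r})$; naturality in $T$ follows because the Quot scheme, the Grassmannian bundles and the relative flag bundles all commute with base change. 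I expect the one point needing real care to be the embedding step: keeping track of which base ($S$ versus $Q$) each Quot/Grassmannian functor is formed over so that Lemma \ref{lemma:pullbackQuot} applies literally, and of the translation between flags of quotients and flags of subbundles. Beyond the Quot scheme, the only genuinely new closed conditions are the incidence conditions defining the flags, and these are standard.
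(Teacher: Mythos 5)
Your proposal is correct and follows essentially the same route as the paper: the same base $Q=\Quot^{\op{LF}}_{X/S}(F,P)$, the same use of Lemma \ref{lemma:pullbackQuot} (in its Grassmannian incarnation over $Q$) to pass from quotients of the universal bundle at $\{x\}$ to quotients of $F|_{\{x\}\times S}$, the same graph argument for the rank-$r$ factors, and the same fibre product over $Q$ to handle several parabolic points. The only difference is cosmetic: where you quote the relative flag bundle of $\SE_x$ together with its standard incidence embedding into $\fprod{Q}{i=2}{l_x}\Grass_Q(\SE_x,r_{x,i})$, the paper constructs exactly that object by induction on the flag length, adjoining one Grassmannian of quotients of the universal partial quotient at each step.
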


\begin{proof}
Let $p:F\to E$ be the universal quotient of $Q:=\Quot_{X/S}^{\op{LF}}(F,P)$, and let $\pi_Q:Q\to S$. Suppose that $D$ consists on a single closed point $x\in C$. As $E$ is locally free and $F|_{\{x\}\times S}$ is locally free, $E|_{\{x\}\times Q}$ is a locally free quotient of $\pi_Q^*F|_{\{x\}\times Q}$ of rank $r$, so it represents a $Q$-point $e:Q\to \Grass(F|_{\{x\}\times S},r)$. Therefore, the graph of $e$ is a closed subscheme $\tilde{Q}$ of $\Quot_{X/S}^{\op{LF}}(F,P)\times_S \Grass(F|_{\{x\}\times S},r)$ over $Q$ corresponding to the family of pairs of a quotient sheaf $E$ and its restriction to $\{x\}\times S$.

Now, we will prove the claim in the case $D=\{x\}$ by induction on $l_x$.  We have proven the result for $l_x=1$. Suppose that it is true for filtrations of length $l_x-1$. Let $\overline{r}'=\{r_{x,3},\ldots,r_{x,l_x}\}$. Then there exists a closed subscheme
$$\op{FQuot}_{X/S}^{\op{LF}}(F,P,\overline{r}') \subset Q\times_S \Grass(F|_{\{x\}\times S},r) \times_S \fprod{S}{i=3}{l_x} \Grass(F|_{\{x\}\times S},r_{x,i})$$
over $Q$ representing $\mathfrak{FQuot}_{X/S}^{\op{LF}}(F,P,\overline{r}')$. Let $(E,\{E_{x,1},E_{x,3},E_{x,4},\ldots,E_{x,l_x}\})$ be the universal filtered quotient of $FQ'=\op{FQuot}_{X/S}^{\op{LF}}(F,P,\overline{r}')$.
 
Clearly, parameterizing filtrations $E|_{\{x\}\times T} = E_{x,1}\supsetneq \ldots \supsetneq E_{x,l_x}$ is the same as parameterizing the corresponding subsequent quotients
$$E|_{\{x\}\times T} \twoheadrightarrow E|_{\{x\}\times T}/E|_{x,l_x} \twoheadrightarrow E|_{\{x\}\times T}/E_{x,l_x-1} \twoheadrightarrow \cdots \twoheadrightarrow E|_{\{x\}\times T}/E_{x,2}$$
Therefore, to give a filtration $E|_{\{x\}\times T}=E_{x,1} \supsetneq \ldots \supsetneq E_{x,l_x}$ is the same as giving a filtration $E|_{\{x\}\times T}=E_{x,1} \supsetneq E_{x,3} \supsetneq \ldots \supsetneq E_{x,l_x}$ and a quotient $E|_{\{x\}\times T} / E|_{x,3}=E_{x,1}/E_{x,3} \twoheadrightarrow E|_{\{x\}\times T}/E_{x,2}=E_{x,1}/E_{x,2}$

Thus, the functor $\mathfrak{FQuot}_{X/S}^{\op{LF}}(F,P,\overline{r})$ is represented by
$$FQ=FQ'\times_{FQ'} \Grass(E_{x,1}/E_{x,3},r_{x,2})=\Grass(E_{x,1}/E_{x,3},r_{x,2})$$

Let us prove that this product embeds into the desired product of Grassmanians. Let $\pi_{FQ'}:FQ'\to S$. $E_{x,1}/E_{x,3}$ is a quotient of $\pi_{FQ'}^* F_{\{x\}\times S}$ over $FQ'$, so by previous lemma, $FQ$ is a closed subscheme of $\Grass(\pi_{FQ'}^*F|_{\{x\}\times S},r_{x,2})$ over $FQ'$. By definition of the Grassmanian functor and the base change formula yields
$$FQ \hookrightarrow \Grass(\pi_{FQ'}^*F|_{\{x\}\times S},r_{x,2}) \cong FQ' \times_S \Grass(F|_{\{x\}\times S},r_{x,2})$$
By induction hypothesis, there is a closed embedding over $Q$
$$FQ'\hookrightarrow Q\times_S \Grass(F|_{\{x\}\times S},r) \times_S \fprod{S}{i=3}{l_x} \Grass(F|_{\{x\}\times S},r_{x,i})$$
so there exists a closed embedding over $Q$
$$FQ\hookrightarrow Q\times_S \Grass(F|_{\{x\}\times S},r) \times_S \fprod{S}{i=2}{l_x} \Grass(F|_{\{x\}\times S},r_{x,i})$$
Finally, let $D=\{x_1,\ldots,x_M\}$ be any finite set of points in $C$. It is clear that
$$\op{FQuot}_{X/S}^{\op{LF}}(F,P,\{r_{x_j,i}\}) = \op{FQuot}_{X/S}^{\op{LF}}(F,P,\{r_{x_1,i}\}) \times_{Q} \cdots \times_{Q} \op{FQuot}_{X/S}^{\op{LF}}(F,P,\{r_{x_M,i}\})$$
is a closed subspace of
$$Q\times_S \fprod{S}{x\in D}{}\Grass(F|_{\{x\}\times S},r) \times_S \fprod{S}{x\in D}{}\fprod{S}{i=2}{l_x} \Grass(F|_{\{x\}\times S},r_{x,i})$$
over $Q$ which represents the functor $\mathfrak{FQuot}_{X/S}^{\op{LF}}(F,P,\{r_{x_j,i}\})$.
\end{proof}

\begin{corollary}
\label{cor:filteredQuot}
Let $F$ be a coherent sheaf over $X=C\times S$ such that $F|_{\overline{D}}$ is locally free and let $\overline{r}=\{r_{x,i}\}$ for $x\in D$, $1<i\le l_x$ be integers. Let $Q\to \Quot_{X/S}^{\op{LF}}(F,P)$ be any family of isomorphism classes of locally free quotient sheafs of $F$ on $X$ flat over $S$. Let $\mathfrak{FQ}(\overline{r})$ be the functor that associates each $S$-scheme $T$ the set of isomorphism classes of pairs $(E,E_\bullet)$ consisting on a quotient $\pi^*F\twoheadrightarrow E$ in $Q(T)$ and a filtration by sub-bundles over $T$
$$E|_{\{x\}\times T} = E_{x,1} \supsetneq E_{x,2} \supsetneq \ldots \supsetneq E_{x,l_x}$$
for each $x\in D$ such that for each $1<i\le l_x$, $\rk(E|_{\{x\}\times T})=r_{x,i}$. Then there is a closed subscheme $FQ(\overline{r})$ of
$$Q \times_S \fprod{S}{x\in D}{} \Grass(F|_{\{x\}\times S},r) \times_S \fprod{S}{x\in D}{} \fprod{S}{i=2}{l_x} \Grass(F|_{\{x\}\times S},r_{x,i})$$
over $Q$ representing $\mathfrak{FQ}(\overline{r})$.
\end{corollary}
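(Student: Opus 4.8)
The plan is to recognise $\mathfrak{FQ}(\overline{r})$ as a fibre product over $\Quot_{X/S}^{\op{LF}}(F,P)$ and thereby reduce everything to the previous Lemma. Write $\SG$ for the product of Grassmannians
$$\SG=\fprod{S}{x\in D}{} \Grass(F|_{\{x\}\times S},r) \times_S \fprod{S}{x\in D}{} \fprod{S}{i=2}{l_x} \Grass(F|_{\{x\}\times S},r_{x,i}),$$
so that the previous Lemma provides a closed immersion $\op{FQuot}_{X/S}^{\op{LF}}(F,P,\overline{r}) \hookrightarrow \Quot_{X/S}^{\op{LF}}(F,P)\times_S\SG$ representing the functor $\mathfrak{FQuot}_{X/S}^{\op{LF}}(F,P,\overline{r})$, whose structural morphism down to $\Quot_{X/S}^{\op{LF}}(F,P)$ (projection onto the first factor) represents the natural transformation $u\colon \mathfrak{FQuot}_{X/S}^{\op{LF}}(F,P,\overline{r})\to \mathfrak{Quot}_{X/S}^{\op{LF}}(F,P)$ that forgets the filtration and keeps only the quotient.

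First I would verify, on $T$-points, the identification
$$\mathfrak{FQ}(\overline{r})\;=\;Q\times_{\Quot_{X/S}^{\op{LF}}(F,P)}\mathfrak{FQuot}_{X/S}^{\op{LF}}(F,P,\overline{r}),$$
where the fibre product is taken along the given morphism $Q\to \Quot_{X/S}^{\op{LF}}(F,P)$ and along $u$. A $T$-point of the right-hand side is a pair $\bigl(q,(E',E'_\bullet)\bigr)$ with $q\in Q(T)$ and $(E',E'_\bullet)$ a filtered locally free quotient of $\pi^*F$ of the prescribed type whose underlying quotient equals the one determined by $q$; matching the two quotients turns this into exactly the datum of a quotient $\pi^*F\twoheadrightarrow E$ lying in $Q(T)$ together with a filtration by sub-bundles of the prescribed type, i.e.\ a $T$-point of $\mathfrak{FQ}(\overline{r})$. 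Naturality in $T$ is inherited from that of the three functors, exactly as in the verification in the proof of Lemma \ref{lemma:pullbackQuot}. Since all three functors are representable, the fibre product is representable and is represented by the scheme
$$FQ(\overline{r})\;:=\;Q\times_{\Quot_{X/S}^{\op{LF}}(F,P)}\op{FQuot}_{X/S}^{\op{LF}}(F,P,\overline{r}).$$
It then remains only to place $FQ(\overline{r})$ inside $Q\times_S\SG$: one base-changes the closed immersion $\op{FQuot}_{X/S}^{\op{LF}}(F,P,\overline{r})\hookrightarrow \Quot_{X/S}^{\op{LF}}(F,P)\times_S\SG$ along $Q\to \Quot_{X/S}^{\op{LF}}(F,P)$ and uses the canonical identification
$$Q\times_{\Quot_{X/S}^{\op{LF}}(F,P)}\bigl(\Quot_{X/S}^{\op{LF}}(F,P)\times_S\SG\bigr)\;\cong\;Q\times_S\SG.$$
Since closed immersions are stable under base change, this exhibits $FQ(\overline{r})\hookrightarrow Q\times_S\SG$ as a closed immersion over $Q$, which is the assertion.

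I do not expect a genuine obstacle here; the only point demanding attention is the functorial bookkeeping in the first step, namely checking that the structural morphism over $\Quot_{X/S}^{\op{LF}}(F,P)$ produced by the previous Lemma really does represent the forgetful transformation $u$, so that the scheme-theoretic fibre product computes the functorial one. Should one prefer to bypass the abstract fibre-product formalism, an equally valid route is to copy the inductive construction in the proof of the previous Lemma essentially verbatim, replacing $\Quot_{X/S}^{\op{LF}}(F,P)$ and its universal quotient throughout by $Q$ and by the pullback to $X\times_S Q$ of the universal quotient: the graph construction and the iterated Grassmann-bundle steps go through unchanged and produce the same scheme $FQ(\overline{r})$ as a closed subscheme of $Q\times_S\SG$.
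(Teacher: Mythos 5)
Your argument is correct, but your primary route is not the one the paper takes: the paper's proof of this corollary is exactly your fallback suggestion, namely ``repeat the proof of the previous lemma verbatim, replacing $\Quot_{X/S}^{\op{LF}}(F,P)$ by $Q$ and the universal quotient by its pullback to $Q$''. Your main argument instead deduces the corollary formally from the \emph{statement} of the lemma: identify $\mathfrak{FQ}(\overline{r})$ with the fibre product of functors $Q\times_{\mathfrak{Quot}_{X/S}^{\op{LF}}(F,P)}\mathfrak{FQuot}_{X/S}^{\op{LF}}(F,P,\overline{r})$, represent it by the scheme-theoretic fibre product, and base-change the closed immersion $\op{FQuot}_{X/S}^{\op{LF}}(F,P,\overline{r})\hookrightarrow \Quot_{X/S}^{\op{LF}}(F,P)\times_S\SG$ along $Q\to\Quot_{X/S}^{\op{LF}}(F,P)$, using $Q\times_{\Quot_{X/S}^{\op{LF}}(F,P)}\bigl(\Quot_{X/S}^{\op{LF}}(F,P)\times_S\SG\bigr)\cong Q\times_S\SG$ and stability of closed immersions under base change. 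This is valid, and arguably cleaner: it avoids re-running the induction and isolates the only point that needs care, namely that the structural projection $\op{FQuot}_{X/S}^{\op{LF}}(F,P,\overline{r})\to\Quot_{X/S}^{\op{LF}}(F,P)$ represents the forgetful transformation (which it does, since the lemma's construction is carried out over the Quot scheme using its universal quotient, so the underlying quotient of a $T$-point is recovered by this projection). What the paper's route buys is independence from this functorial bookkeeping and a construction that works directly from the family $Q$ with its own pulled-back universal quotient; what your route buys is brevity and the observation that the corollary is a purely formal base-change consequence of the lemma, with both constructions producing the same scheme $FQ(\overline{r})\cong Q\times_{\Quot_{X/S}^{\op{LF}}(F,P)}\op{FQuot}_{X/S}^{\op{LF}}(F,P,\overline{r})$.
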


\begin{proof}
The proof is completely analogous to the previous one changing $\Quot_{X/S}^{\op{LF}}(F,P)$ to the given family $Q$ and the universal quotient by its pullback to $Q$. 
\end{proof}

Grothendiek \cite{GrothQuot} proved that the quot scheme $\Quot_{X/S}(F,P)$ is a projective scheme over $S$ by constructing an explicit embedding into a certain Grassmanian. More precisely, he stated that there exists an integer $M$ such that for every $m\ge M$ there exists an embedding
$$\psi_m : \Quot_{X/S}(F,P) \hookrightarrow \Grass(H^0(X/S,F(m)),P(m))$$
defined in the following way. By Serre's vanishing theorem, there exists an $M$ such that for every $m\ge M$, $F(m)$ is generated by global sections and $H^0(X/S,F(m))$ is compatible with base change. Grothendieck proved that moreover $M$ can be chosen in a way that for any quotient
$$0 \to K_G \to f^*F \to  G \to 0$$
on $C\times T$, for any $T$-point of $\Quot_{X/S}(F,P)$ and any $f:T\to S$, $H^0(C\times T / T, G(m))$ is locally free of rank $P(m)$ and $H^1(C\times T/T,K_G(m))=0$. Then, tensoring the previous sequence by $\SO_{C\times T}(m)$ and taking the corresponding long exact sequence yields
$$H^0(C\times T / T, f^*F(m)) \to H^0(C\times T/T, G(m)) \to H^1(C\times T/T,K_G(m)) \cong 0$$
so we get a $T$-point of the Grassmanian $\Grass(H^0(X/S,F(m)),P(m))$.

Composing $\psi_m$ with the plucker embedding
$$\Grass(H^0(X/S,F(m)),P(m)) \hookrightarrow \PP \left ( \bigwedge^{P(m)}H^0(X/S,F(m)) \right)$$
yields the desired embedding of the quot scheme into a projective bundle over $S$. We will denote by $\SL_m$ the pullback of the corresponding canonical ample line bundle on the Grassmanian by $\psi_m$.

\begin{theorem}
\label{thm:LambdaModParam}
Let $\Lambda$ be a sheaf of rings of differential operators on $X$ over $S$ such that $\Lambda|_{\overline{D}}$ is locally free. Let us fix a parabolic type. Let $P$ be a polynomial and let $\{r_{x,i}\}$ be integers for $x\in D$ and $1<i\le l_x$. There exists an integer $N$ such that the functor $\SR^{s}:(\Sch_S)\to (\Sets)$ (respectively $\SR^{ss}$) that associates each $S$-scheme $T$ the set of isomorphism classes of pairs consisting on a (semi-)stable parabolic $\Lambda$-module $(E,E_\bullet)$ over $X \times_S T$ with Hilbert polynomial $P_E=P$ such that for each $x\in D$, $\rk(E|_{x\times T}/E_{x,i})=r_{x,i}$ and an isomorphism
$$\alpha: \SO_T \otimes_\CC \CC^{P(N)} \to H^0(X\times_S T / T, E(N))$$
is representable by a quasi-projective scheme $R^{s}$ ($R^{ss}$) over $S$.
\end{theorem}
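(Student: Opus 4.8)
The plan is to realize $R^{ss}$ and $R^{s}$ as locally closed subschemes of a relative Quot scheme twisted by products of relative Grassmannians, following Simpson's rigidification of $\Lambda$-modules and then grafting the parabolic data via the filtered Quot scheme of Corollary \ref{cor:filteredQuot}. By Lemma \ref{lemma:LambdaModBound} the family of semistable parabolic $\Lambda$-modules with Hilbert polynomial $P$ and the prescribed parabolic type is bounded, so by Corollary \ref{cor:acyclicBound} (together with Serre vanishing applied to $\Lambda_{r}$, as in \cite[\S 3]{Si2}) there is an integer $N$ such that for every such module $E$ over an arbitrary base change $X'=C\times S'$ one has $H^{i}(X'/S',E(N))=0$ for $i>0$, the sheaf $H^{0}(X'/S',E(N))$ is locally free of rank $P(N)$ and commutes with base change, and $H^{0}(X'/S',E(N))\otimes\SO_{X'}(-N)\surj E$. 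Fix such an $N$. Given a $T$-point $((E,E_{\bullet}),\alpha)$ of $\SR^{ss}$, the isomorphism $\alpha$, this surjection and the action $\varphi:\Lambda\otimes E\to E$ compose to a surjection of $\Lambda$-modules
$$\psi:\SF:=\Lambda_{r}\otimes_{\SO_{X}}\SO_{X}(-N)\otimes_{\CC}\CC^{P(N)}\too E,$$
and since $\Lambda|_{\overline{D}}$ (hence $\SF|_{\overline{D}}$) is locally free and $E$ is locally free by hypothesis, $\psi$ determines a point of $\Quot^{\op{LF}}_{X/S}(\SF,P)$.

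Exactly as in \cite[\S 3]{Si2}, inside $\Quot^{\op{LF}}_{X/S}(\SF,P)$ there is a locally closed subscheme $\Omega$ parameterizing those quotients $\psi$ for which the $\Lambda$-action on $\SF$ descends to $E$ (a closed condition, given by the vanishing of a morphism of coherent sheaves flat over the base, cf.\ \cite[Lemma 4.3]{Yokogawa93}) and for which $\CC^{P(N)}\to H^{0}(X/S,E(N))$ is an isomorphism (an open condition, by Corollary \ref{cor:acyclicBound}). Over $\Omega$ there is a universal locally free $\Lambda$-module, flat over $S$. Applying Corollary \ref{cor:filteredQuot} with $F=\SF$ and $Q=\Omega$ — legitimate because $\SF|_{\overline{D}}$ is locally free — we obtain a closed subscheme $FQ(\overline{r})$ of
$$\Omega\times_{S}\fprod{S}{x\in D}{}\Grass(\SF|_{\{x\}\times S},r)\times_{S}\fprod{S}{x\in D}{}\fprod{S}{i=2}{l_x}\Grass(\SF|_{\{x\}\times S},r_{x,i})$$
carrying a universal pair $(\psi,E_{\bullet})$ with $\rk(E|_{\{x\}\times T}/E_{x,i})=r_{x,i}$. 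Inside $FQ(\overline{r})$, the requirement that $E_{\bullet}$ be compatible with the $\Lambda$-structure — equivalently, that the total residue $\Lambda|_{\overline{D}_{T}}\otimes E_{x,i}\to E|_{\{x\}\times T}/E_{x,i}$ vanish for all $x\in D$ and all $i$ — is again a closed condition, since $\Lambda|_{\overline{D}}$ and all the sheaves involved are locally free, hence flat over the base, so the same vanishing-locus argument applies. Let $Z\subseteq FQ(\overline{r})$ be this closed subscheme; over $Z$ there is a universal family of parabolic $\Lambda$-modules of the prescribed Hilbert polynomial and parabolic type together with the tautological $\alpha$.

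The parabolic Hilbert polynomial $\parP_{E}$ is constant over $Z$, because $\owt(E)$ depends only on the fixed weights and the prescribed ranks $r_{x,i}$. Using the uniform regularity bound for destabilizing sub-$\Lambda$-modules (Corollary \ref{cor:acyclicDestab}), the sharpened stability inequality (Lemma \ref{lemma:sharpStabInequality}) and the uniform slope gap (Lemma \ref{lemma:slopeDiffbound}), one shows, adapting Simpson's argument to the parabolic setting, that the locus $R^{ss}\subseteq Z$ (resp.\ $R^{s}\subseteq Z$) where the fiberwise parabolic $\Lambda$-module is semistable (resp.\ stable) is open: its complement is the image under a proper morphism of a relative filtered Quot scheme of destabilizing parabolic sub-$\Lambda$-modules. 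The universal family over $Z$ restricts to one over $R^{ss}$ (resp.\ $R^{s}$), and unwinding the constructions shows that a $T$-point of $R^{ss}$ is precisely an isomorphism class in $\SR^{ss}(T)$, and conversely; since the datum $\alpha$ admits no nontrivial automorphisms, the functor is genuinely represented. Finally, composing the inclusion of $Z$ into $\Quot_{X/S}(\SF,P)$ times the product of Grassmannians with Grothendieck's embedding of the Quot scheme \cite{GrothQuot} and the Pl\"ucker embeddings realizes $R^{ss}$ and $R^{s}$ as locally closed subschemes of a product of projective bundles over $S$, hence as quasi-projective $S$-schemes.

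The two representability points — that ``$\psi$ is a morphism of $\Lambda$-modules'' and that ``$E_{\bullet}$ is $\Lambda$-compatible'' define closed subschemes in families — both reduce to the vanishing locus of a morphism of sheaves, one of which is flat over the base; for the parabolic condition this flatness is exactly what the hypothesis ``$\Lambda|_{\overline{D}}$ locally free'' provides, so that restriction to $\overline{D}$ is exact and commutes with base change. The genuinely delicate step, and the main obstacle, is the openness of (semi)stability in the parabolic setting, which is where the full boundedness apparatus of Section \ref{section:boundnessTheorems} (Corollaries \ref{cor:acyclicBound} and \ref{cor:acyclicDestab}, Lemmas \ref{lemma:sharpStabInequality} and \ref{lemma:slopeDiffbound}) is brought to bear.
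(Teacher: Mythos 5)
Your overall architecture coincides with the paper's: rigidify semistable objects as quotients of $\Lambda_r\otimes\SO_X(-N)\otimes_\CC\CC^{P(N)}$ via Simpson's scheme (the paper uses $Q_5$ of \cite[Theorem 3.8]{Si2} directly), restrict to the locally free locus, graft the flags using Corollary \ref{cor:filteredQuot}, cut out compatibility with $\Lambda$ by a vanishing-locus argument in the style of \cite[Lemma 4.3]{Yokogawa93}, and get quasi-projectivity from Grothendieck's and the Pl\"ucker embeddings. The one genuinely different ingredient is how you obtain openness of the (semi)stable locus: the paper defers this to Lemma \ref{lemma:GITLambdaModules} (slope-(semi)stability equals GIT-(semi)stability for a suitable linearization), whereas you sketch the classical boundedness-plus-properness argument (unstable locus as the image of a projective relative Quot-type scheme of destabilizers). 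That route can be made to work and is arguably more self-contained, but as written it is only asserted, and the lemmas you invoke for it (Lemmas \ref{lemma:sharpStabInequality} and \ref{lemma:slopeDiffbound}) are really the inputs to the GIT comparison; what this argument actually needs are the type-$b$ bounds of Lemmas \ref{lemma:LambdaModBoundC} and \ref{lemma:LambdaModBound} together with finiteness of the possible numerical invariants of saturated destabilizing sub-$\Lambda$-modules.

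The concrete gap is your formulation of the closed condition cutting out compatibility of the flag with the $\Lambda$-action. You phrase it as the vanishing of ``total residue'' maps $\Lambda|_{\overline{D}_T}\otimes E_{x,i}\to E|_{\{x\}\times T}/E_{x,i}$. But that residue is only defined (Section \ref{section:residualLambdaModules}) once one already knows $\varphi(\Lambda\otimes E(-\{x\}\times T))\subseteq E(-\{x\}\times T)$, which is itself part of the definition of a parabolic $\Lambda$-module (the step $E_x^{l_x+1}=E(-\{x\}\times T)$), is a nontrivial condition for general $\Lambda$ (it is exactly what distinguishes a logarithmic action from an arbitrary one), and cannot be expressed as the vanishing of a map out of a naive restriction to $\overline{D}$: because of the bimodule structure, $\lambda\otimes z e$ does not die in such a restriction, since $\lambda\cdot z=z\lambda+[\lambda,z]$ with $[\lambda,z]$ not necessarily vanishing along $\overline{D}$. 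Moreover, even for $1<i\le l_x$, the equivalence of your fibre-level condition with the actual requirement $\varphi(\Lambda\otimes E_x^i)\subseteq E_x^i$ presupposes that preservation of $E(-\{x\}\times T)$. So the closed subscheme $Z$ you construct need not parameterize parabolic $\Lambda$-modules. The fix is what the paper does: impose, for every $j\ge 1$ and every $i=1,\ldots,l_x+1$ (including $E_x^{l_x+1}$), the vanishing of the composite $\Lambda_j\otimes E_x^i\to E\to E/E_x^i$ as sheaves on $C\times T$ — each a closed condition by \cite[Lemma 4.3]{Yokogawa93} since $E/E_x^i$ is flat over the base — and then use Noetherianity to conclude that the intersection over all $j$ is still closed; this $j$-stratification and stabilization step is also missing from your sketch.
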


\begin{proof}
Let $r$ be the rank of $E$. Let $N$ be $|D|+1$ plus the maximum of the bounds given by Corollary \ref{cor:acyclicBound}, Corollary \ref{cor:acyclicDestab} and Lemma \ref{lemma:sharpStabInequality}. Let $Q_5$ the subscheme of $\Quot_{X/S}(\Lambda_r \otimes_{\SO_X} \SO_X(-N)\otimes_\CC \CC^{P(N)},P)$ described in \cite[Theorem 3.8]{Si2}, parameterizing triples $(E,\varphi,\alpha)$ consisting on a $\Lambda$-module $E$ over $X$ with $\varphi:\Lambda\otimes E\to E$ and an isomorphism $\alpha: \SO_S\otimes_{\CC} \CC^{P(N)} \to H^0(X / S, E(N))$.

By construction, every sheaf in the family $Q_5$ is a quotient of $\Lambda_r \otimes \SO_X(-N) \otimes_\CC \CC^{P(N)}$. Let $Q_5^{\op{LF}}$ be the open subset of triples $(E,\varphi,\alpha)\in Q_5$ such that $E$ is locally free. By the previous corollary, there exists a locally closed subscheme
\begin{multline*}
FQ_5^{\op{LF}} \hookrightarrow Q_5^{\op{LF}}\times_S \fprod{S}{x\in D}{} \Grass(\Lambda_r|_{\{x\}\times S} \otimes \SO_S(-N)\otimes_\CC \CC^{P(N)},r)\\
\times_S \fprod{S}{x\in D}{}\fprod{S}{i=2}{l_x} \Grass(\Lambda_r|_{\{x\}\times S} \otimes \SO_S(-N) \otimes_\CC \CC^{P(N)},r_{x,i})
\end{multline*}
over $Q_5^{\op{LF}}$ whose $T$-points parameterize tuples $(E,E_\bullet,\varphi,\alpha)$ consisting on a rigidified locally free $\Lambda$-module $(E,\varphi,\alpha)$ in $Q_5(T)$ and a filtration by sub-bundles over $T$
$$E|_{\{x\}\times T} = E_{x,1} \supsetneq E_{x,2} \supsetneq \ldots \supsetneq E_{x,l_x}$$
for every $x\in D$ such that for each $1<i\le l_x$, $\rk(E|_{\{x\}\times T}/E_{x,i})=r_{x,i}$.

Let $f:T\to S$, and let $(E,E_\bullet,\varphi,\alpha)$ be a $T$-point in $FQ_5^{\op{LF}}$. We say that $(E,E_\bullet)$ satisfies condition $R_j$ (belongs to $R_j(T)$) if for every $x\in X$, $1<i\le l_x$ the image of
$$f^*(\Lambda_j) \otimes E_x^i \hookrightarrow f^*(\Lambda_j) \otimes E \twoheadrightarrow  E$$
lies in $E_{x}^i$. Let $Q_{x,i}=E/E_{x}^i$. Then the previous condition is equivalent to requiring that the morphism $f^*(\Lambda_j) \otimes E_{x}^i\to Q_{x,i}$ given by the composition
$$f^*(\Lambda_j)\otimes E_{x,i} \hookrightarrow f^*(\Lambda_j) \otimes E \twoheadrightarrow E \twoheadrightarrow Q_{x,i}$$
is zero.

Let $(\SE,\SE_\bullet,\Phi,A)$ be the universal pair for $FQ_5^{\op{LF}}$. For each $x\in D$ and each $i=1,\ldots,l_x$, let $\SE_x^i$ be the vector bundle fitting in the short exact sequence
$$0\longrightarrow \SE_x^i \longrightarrow \SE \longrightarrow \SE|_{\{x\}\times FQ_5^{\op{LF}}}/\SE_{x,i}\longrightarrow 0$$
Moreover, take $\SE_x^{l_x+1}=\SE(-\{x\}\times S)$. Let $\SQ_{x,i}=\SE/\SE_{x}^i$. Let $f:T\to S$ and let $(E,E_\bullet,\varphi,\alpha)$ be a $T$-point in $FQ_5^{\op{LF}}$. It is given by the pullback of $(\SE,\SE_\bullet,\Phi,A)$ by a morphism $e:T\to FQ_5^{\op{LF}}$. By flatness of $\SQ_{x,i}$, $Q_{x,i}=e^*\SQ_{x,i}$. Therefore, $(E,E_\bullet,\varphi,\alpha)$ satisfies condition $R_j$ if and only if the pullback by $e$ of the morphisms given by the compositions
$$\pi^*(\Lambda_j) \otimes \SE_x^i \hookrightarrow \pi^*(\Lambda_j) \otimes \SE\twoheadrightarrow \SE \twoheadrightarrow \SQ_{x,i}$$
are all zero. By \cite[Lemma 4.3]{Yokogawa93}, this condition is represented by a closed subscheme $R_j$ of $FQ_5^{\op{LF}}$ over $S$.

Let $R=\bigcap_{j=1}^\infty R_j\subseteq FQ_5^{\op{LF}}$. As $Q_5$ is noetherian, $Q_5^{\op{LF}}$ is noetherian. On the other hand, $FQ_5^{\op{LF}}$ is quasiprojective over $Q_5^{\op{LF}}$, so it is also noetherian. Therefore, $R$ is a closed subscheme of $FQ_5^{\op{LF}}$ and a point of $FQ_5^{\op{LF}}(T)$ belongs to $R(T)$ if and only if it satisfies the conditions $R_j$ for all $j\ge 1$. 

Let $R^s$ (respectively $R^{ss}$) be the sub-scheme of $R$ parameterizing points of $R$ whose underlying parabolic $\Lambda$-module is (semi-)stable. In the next section (Lemma \ref{lemma:GITLambdaModules}) we will prove that (semi-)stability condition on $R$ is equivalent to GIT-(semi-)stability for a certain group action. Therefore, $R^s$ and $R^{ss}$ are locally closed subschemes of $R$. Let us prove that $R^s$ and $R^{ss}$ represent the functors $\SR^s$ and $\SR^{ss}$ respectively.

Let $\Lambda_{R^{ss}}$ be the base change of $\Lambda$ to $R^{ss}$ via $R^{ss}\to S$, and let $(\SE^{R^{ss}},\SE_\bullet^{R^{ss}},\varphi^{R^{ss}},\alpha^{R^{ss}})$ be the universal rigidified parabolic $\Lambda_{R^{ss}}$-module on $R^{ss}$. As $(\SE^{R^{ss}},\varphi^{R^{ss}},\alpha^{R^{ss}})$ is a $R^{ss}$-point of $Q_5^{\op{LF}}$, we have a natural morphism
$$\alpha^{R^{ss}}:\SO_{R^{ss}}\otimes_\CC \CC^{P(N)} \to H^0(C\times R^{ss}/R^{ss},\SE^{R^{ss}}(N))$$

As $N$ was chosen so that the conclusion of Corollary \ref{cor:acyclicBound} holds and the restriction of $(\SE^{R^{ss}},\SE_\bullet^{R^{ss}},\varphi^{R^{ss}})$ to any closed point is semistable, $H^0(C\times R^{ss}/R^{ss},\SE^{R^{ss}}(N))$ is locally free of rank $P(N)$ and compatible with base change. On the other hand, condition $Q_2$ in Simpson's construction of scheme $Q_5$ \cite[Theorem 3.8]{Si2} imply that $\alpha^{R^{ss}}$ is injective on the fibers over closed points, so it is an isomorphism. Therefore, we obtain a universal object $(\SE^{R^{ss}},\SE_\bullet^{R^{ss}},\varphi^{R^{ss}},\alpha^{R^{ss}})$ over $R^{ss}$, inducing the quadruples described by the functor $\SR^{ss}$ for every base change $e:T\to R^{ss}$. Let us verify that $R^{ss}$ represents the functor $\SR^{ss}$.

Let $f:T\to S$ be an $S$-scheme of finite type, and let $(E,E_\bullet,\varphi,\alpha)$ be a pair of a semistable parabolic $\Lambda$-module $(E,E_\bullet,\varphi)$ over $C\times T$ with Hilbert polynomial $P$ and the given fixed parabolic structure and an isomorphism
$$\alpha:\SO_{T} \otimes_\CC \CC^{P(N)} \to H^0(C\times T/T,E(N))$$
By Corollary \ref{cor:acyclicBound}, $E(N)$ is generated by global sections and we have a surjection
$$\SO_{C\times T}(-N)  \otimes_{\SO_T}  H^0(C\times T/T, E(N)) \to E \to 0$$
Therefore, $\alpha$ induces a morphism
\begin{multline*}
f^*(\Lambda_r) \otimes \SO_{C\times T}(-N) \otimes_\CC \CC^{P(N)} \cong f^*(\Lambda_r) \otimes \SO_{C\times T}(-N)\otimes  H^0(C\times T/T, E(N))\to E \to 0
\end{multline*}
which defines a point on $Q_5^{\op{LF}}$ clearly.  Restricting the previous morphism to $\{x\}\times T$, for $x\in D$, we get quotients
$$f^*(\Lambda_r)|_{\{x\}\times T} \otimes \SO_T(-N) \otimes_\CC \CC^{P(N)}\to E|_{\{x\}\times T} = E_{x,1} \to 0$$
The rest of the parabolic structure $E_\bullet$ induces a set of quotients
$$f^*(\Lambda_r)|_{\{x\}\times T} \otimes \SO_T(-N) \otimes_\CC \CC^{P(N)}\to E|_{\{x\}\times T} = E_{x,1} \to E_{x,1}/E_{x,i} \to 0$$
for every $x\in D$ and $1<i\le l_x$, so $(E,E_\bullet,\varphi,\alpha)$ defines clearly a point in $FQ_5(T)$. As the filtration is preserved by $f^*\Lambda$, it lies in $R(T)$. The parabolic $\Lambda$-module is semistable, so it represents a point $e\in R^{ss}(T)$. By universality of $(\SE^{R^{ss}},\SE_\bullet^{R^{ss}},\varphi^{R^{ss}},\alpha^{R^{ss}})$ yields $e^*(\SE^{R^{ss}},\SE_\bullet^{R^{ss}},\varphi^{R^{ss}},\alpha^{R^{ss}})\cong (E,E_\bullet,\varphi,\alpha)$.

The previous construction is clearly compatible with base change, so it defines a natural transformation $\SR^{ss} \to Hom(\cdot,R^{ss})$. Taking the pullback of the universal object defines an inverse natural transformation $Hom(\cdot,R^{ss})\to \SR^{ss}$, so $R^{ss}$ represents $\SR^{ss}$.

By definition, the points in $R^{s}$ represent points $(E,E_\bullet,\varphi,\alpha)$ in $R^{ss}$ such that $(E,E_\bullet,\varphi)$ is stable. Then, the restriction of the natural transformation $Hom(\cdot,R^{ss}) \to \SR^{ss}$ to $Hom(\cdot,R^{s})\subseteq Hom(\cdot, R^{ss})$ lies in the subfunctor $\SR^{s} \subseteq \SR^{ss}$. As the natural transformation is an isomorphism, its restriction to $Hom(\cdot,R^{s})$ is an isomorphism onto its image, so we get an isomorphism of functors $Hom(\cdot,R^s)\to \SR^s$. Then $R^s$ represents $\SR^s$ and by \cite[Lemma 1.11]{Si2}, $R^s$ is an open subscheme of $R^{ss}$.
\end{proof}

\section{Geometric invariant theory}
\label{section:GIT}

Two different $T$-points of the previous scheme $R^s$ ($R^{ss}$) with the same underlying parabolic $\Lambda$-module $(E,E_\bullet,\varphi)$ differ only in the choice of the isomorphism
$$\alpha: \SO_T \otimes_\CC \CC^{P(N)}\to H^0(X\times_S T / T, E(N))$$
Therefore, they are related by an automorphism of $\SO_T \otimes_\CC \CC^{P(N)}$, which is equivalent to a morphism $T\to \GL_{P(N)}(\CC)$. As dilatations $T\to \CC^*$ preserve the isomorphism class of $(E,E_\bullet,\varphi)$ up to tensoring by a line bundle over the parameter space $L\to T$, two isomorphism classes of $T$-families of parabolic $\Lambda$-modules differ effectively by a morphism $T\to \op{SL}_{P(N)}(\CC)$. 

Therefore, the moduli functor of (semi-)stable parabolic $\Lambda$-modules is clearly a categorical quotient of the functor described in the previous theorem by the action of $\op{SL}_{P(N)}(\CC)$ on $\CC^{P(N)}$.  Subsequently, we can obtain a coarse moduli space for the desired moduli functor by finding a good categorical quotient of the scheme $R^s$ ($R^{ss}$) described in the previous theorem by the action of $\op{SL}_{P(N)}(\CC)$. We will use Geometric Invariant Theory to describe this quotient.

First of all, we will briefly review Mumford's notation on GIT quotients and the main GIT-stability theorem. Let $X$ be a proper complex algebraic scheme and let $G$ be an algebraic group acting on $X$. Let $\lambda$ be a one parameter subgroup (1-PS) of $G$. For every closed point $x\in X$, composing with the action of $G$ on $X$ yields a morphism of $\GG_m$ to $X$. By properness of $X$, it extends to a morphism $f_{x,\lambda}:\mathbb{A}^1 \to X$ such that $f_{x,\lambda}(0)$ is a fixed point for the $\GG_m$ action.  Let $L$ be a $G$-linearized line bundle over $X$. By \cite[\S 1.3]{MumfordGIT}, the induced $\GG_m$ linearization of $L$ restricted to $f_{x,\lambda}(0)$ is given by a character of $\GG_m$, $\xi(\alpha)=\alpha^r$ for $\alpha\in \GG_m$. We define
$$\mu_G^L(x,\lambda)=-r$$
We will be interested in the following two functorial properties of $\mu$
\begin{enumerate}
\item For fixed $x$ and $\lambda$, $\mu_G^{(\bullet)}(x,\lambda)$ defines a homomorphism from the group of $G$-linearized line bundles on $X$ to $\ZZ$.
\item If $X\to Y$ is a $G$-linear morphism of schemes on which $G$ acts and $L$ is a $G$-linearized line bundle over $Y$ then $f^*L$ is a $G$-linearized line bundle over $X$ and
$$\mu_G^{f^*L}(x,\lambda)=\mu_G^L(f(x),\lambda)$$
\end{enumerate}

\begin{lemma}
\label{lemma:productMu}
Let $\varepsilon_1,\ldots,\varepsilon_M$ be positive rational numbers. Let $G$ be an algebraic group and let $X_1,\ldots,X_M$ be complex projective schemes such that for each $i=1,\ldots, M$, $G$ acts on $X_i$. For each $i$ let $\SO_{X_i}(1)$ be an ample $G$-linearized line bundle over $X_i$. Then for each closed point $x=(x_1,\ldots,x_M)$ in $X_1\times \ldots \times X_M$ and each 1-PS $\lambda$ of $G$
$$\mu_G^{\bigotimes_{i=1}^M \SO_{X_i}(\varepsilon_i)}(x,\lambda) = \sum_{i=1}^M \varepsilon_i \mu_G^{\SO_{X_i}(1)}(x_i,\lambda)$$
\end{lemma}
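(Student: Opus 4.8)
The plan is to deduce the identity entirely from the two functorial properties of $\mu$ recalled just above the statement — additivity in the $G$-linearized line bundle (property (1)) and compatibility with $G$-equivariant morphisms (property (2)) — after first reducing the rational weights $\varepsilon_i$ to integers.

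First I would fix the geometry. Set $Y=X_1\times\cdots\times X_M$, a complex projective (hence proper) scheme carrying the diagonal $G$-action, and let $\pi_i\colon Y\to X_i$ be the projections, which are $G$-equivariant by construction of the diagonal action. For non-negative integers $a_1,\dots,a_M$ the line bundle $\bigotimes_{i=1}^M\SO_{X_i}(a_i)$ on $Y$ is, by definition, $\bigotimes_{i=1}^M\pi_i^*\SO_{X_i}(a_i)$, a $G$-linearized line bundle on the proper scheme $Y$, so $\mu_G^{\bigotimes_i\pi_i^*\SO_{X_i}(a_i)}(x,\lambda)$ makes sense. Applying property (1) to the factorization $\bigotimes_i\pi_i^*\SO_{X_i}(a_i)=\bigotimes_i\bigl(\pi_i^*\SO_{X_i}(1)\bigr)^{\otimes a_i}$ gives
$$\mu_G^{\bigotimes_i\pi_i^*\SO_{X_i}(a_i)}(x,\lambda)=\sum_{i=1}^M a_i\,\mu_G^{\pi_i^*\SO_{X_i}(1)}(x,\lambda),$$
and property (2) applied to the $G$-equivariant map $\pi_i$ yields $\mu_G^{\pi_i^*\SO_{X_i}(1)}(x,\lambda)=\mu_G^{\SO_{X_i}(1)}(\pi_i(x),\lambda)=\mu_G^{\SO_{X_i}(1)}(x_i,\lambda)$. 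This settles the case of non-negative integer weights.

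To pass from integers to rationals, choose $d\in\ZZ_{>0}$ with $d\varepsilon_i\in\ZZ_{\ge 0}$ for all $i$; since property (1) makes $\mu_G^{(\bullet)}(x,\lambda)$ a homomorphism, it extends $\QQ$-linearly to $\QQ$-line bundles and one has $\mu_G^{\bigotimes_i\SO_{X_i}(\varepsilon_i)}(x,\lambda)=\tfrac1d\,\mu_G^{\bigotimes_i\SO_{X_i}(d\varepsilon_i)}(x,\lambda)$, which by the integer case equals $\tfrac1d\sum_i d\varepsilon_i\,\mu_G^{\SO_{X_i}(1)}(x_i,\lambda)=\sum_i\varepsilon_i\,\mu_G^{\SO_{X_i}(1)}(x_i,\lambda)$, as claimed. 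The only genuinely delicate point is this bookkeeping around rational weights: one must interpret $\bigotimes_i\SO_{X_i}(\varepsilon_i)$ as a $\QQ$-line bundle and extend $\mu$ to such objects consistently, which is exactly what property (1) permits. The ampleness hypothesis on the $\SO_{X_i}(1)$ plays no role in the identity itself (properness of $Y$ suffices for $\mu$ to be defined); it only guarantees that $\bigotimes_i\SO_{X_i}(\varepsilon_i)$ is an ample $\QQ$-line bundle, which is what the later GIT arguments need. It may also be worth recording, as an implicit ingredient of property (2), that the limit point $f_{x,\lambda}(0)\in Y$ equals $\bigl(f_{x_1,\lambda}(0),\dots,f_{x_M,\lambda}(0)\bigr)$, since the extension to $\mathbb{A}^1$ of a morphism into a product is the product of the extensions.
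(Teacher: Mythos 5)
Your proof is correct. The paper itself gives no argument for this lemma — it simply cites Mumford's book — and what you have written is exactly the standard derivation contained in that reference: reduce to integer weights by clearing denominators (using that $\mu_G^{(\bullet)}(x,\lambda)$ is a homomorphism, property (1)), and then combine additivity with the pullback compatibility $\mu_G^{\pi_i^*\SO_{X_i}(1)}(x,\lambda)=\mu_G^{\SO_{X_i}(1)}(x_i,\lambda)$ for the $G$-equivariant projections $\pi_i$ (property (2)). Your closing remarks — that ampleness is irrelevant to the identity itself and that the limit point in the product is the tuple of limit points — are accurate and harmless, so there is nothing to correct.
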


\begin{proof}
See \cite[Chapter 3]{MumfordGIT}.
\end{proof}

\begin{lemma}
\label{lemma:subgroupMu}
Let $G$ be an algebraic group acting on a complex proper scheme $X$, and let $H$ be a subgroup of $G$. Let $L$ be any $G$-linearized line bundle over $X$. Then $H$ acts on $X$, the $G$-linearization of $L$ induces an $H$-linearization of $L$. Let $\lambda$ be a 1-PS of $H$, and let $\overline{\lambda}$ be the 1-PS of $G$ obtained composing $\lambda$ with the inclusion of $H$ in $G$. Then for every geometric point $x$
$$\mu_H^L(x,\lambda)=\mu_G^L(x,\overline{\lambda})$$
\end{lemma}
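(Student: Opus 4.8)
The plan is to observe that Mumford's numerical function $\mu$ depends on a $1$-PS only through its composition with the action map and with the linearization, and that both of these operations factor compatibly through the inclusion $\iota\colon H\hookrightarrow G$. This is the analogue for the group variable of the functoriality property (2) stated above for the scheme variable, and it is cleanest to read it off directly from the definition.

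First I would unwind the construction. Write $\overline{\lambda}=\iota\circ\lambda\colon\GG_m\to G$. The orbit morphism attached to $\lambda$, viewed as a $1$-PS of $H$ acting on $X$ through the restricted action, sends $t\mapsto \lambda(t)\cdot x$; since the $H$-action on $X$ is by definition the restriction of the $G$-action along $\iota$, this equals $\iota(\lambda(t))\cdot x=\overline{\lambda}(t)\cdot x$, which is precisely the orbit morphism attached to $\overline{\lambda}$ as a $1$-PS of $G$. By properness of $X$ both extend uniquely to the \emph{same} morphism $f_{x,\lambda}=f_{x,\overline{\lambda}}\colon\mathbb{A}^1\to X$, so $f_{x,\lambda}(0)=f_{x,\overline{\lambda}}(0)=:x_0$ is one and the same point, and it is fixed for the $\GG_m$-action coming either from $\lambda$ or from $\overline{\lambda}$, these two $\GG_m$-actions on $X$ again coinciding for the same reason.

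Next I would compare the two linearizations at $x_0$. By definition the $H$-linearization of $L$ entering $\mu_H^L$ is the one induced from the given $G$-linearization by restriction along $\iota$; pulling it back further along $\lambda$ yields a $\GG_m$-linearization of $L$ whose restriction to the fibre $L|_{x_0}$ is a character $\alpha\mapsto\alpha^{r_H}$, and $\mu_H^L(x,\lambda)=-r_H$. On the other hand, pulling the $G$-linearization back along $\overline{\lambda}=\iota\circ\lambda$ yields a $\GG_m$-linearization with character $\alpha\mapsto\alpha^{r_G}$ on $L|_{x_0}$, and $\mu_G^L(x,\overline{\lambda})=-r_G$. Since ``restrict along $\iota$, then pull back along $\lambda$'' is literally the same operation as ``pull back along $\iota\circ\lambda$'', the two $\GG_m$-linearizations of $L$ agree, hence $r_H=r_G$ and $\mu_H^L(x,\lambda)=\mu_G^L(x,\overline{\lambda})$.

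I do not expect any genuine obstacle: the statement is a compatibility property of $\mu$, and the only point needing (minimal) care is the bookkeeping that the $H$-linearization used to compute $\mu_H^L$ is exactly the restriction of the $G$-linearization, so that the two towers of pullbacks through $\GG_m\to H\to G$ and through $\GG_m\to G$ produce the same $\GG_m$-linearization on the fibre over $x_0$; this is just associativity of composition.
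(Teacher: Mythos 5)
Your proposal is correct and follows essentially the same route as the paper: both arguments observe that composing the $H$-action with $\lambda$ equals composing the $G$-action with $\overline{\lambda}$, so the extended morphism $f_{x,\lambda}$, its limit point, and the induced $\GG_m$-linearization of $L$ coincide, forcing the defining characters (and hence the $\mu$-values) to agree. Your write-up merely spells out the compatibility of the restricted linearization in more detail than the paper does.
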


\begin{proof}
For every closed point $x\in X$, the composition of the action of $H$ on $X$ with $\lambda$ coincides with the composition of the action of $G$ with $\overline{\lambda}$. Therefore, both actions induce the same morphism $f_{x,\lambda}:\mathbb{A}^1\to X$ and the same $\GG_m$-linearization of $L$. Thus, the linearization of $L$ is given by the same character and the equality holds by definition of $\mu$.
\end{proof}

\begin{theorem}[{\cite[Theorem 2.1]{MumfordGIT}}]
\label{thm:GIT}
Let $G$ be a reductive group acting on a proper complex scheme $X$. Let $L$ be a $G$-invariant ample line bundle over $X$. Then for every geometric point of $X$
\begin{enumerate}
\item $x$ is GIT-semistable if and only if $\mu^L(x,\lambda)\ge 0$ for all 1-PS $\lambda$.
\item $x$ is GIT-stable if and only if $\mu^L(x,\lambda)> 0$ for all 1-PS $\lambda$.
\end{enumerate}
\end{theorem}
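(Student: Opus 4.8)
Since this is the classical Hilbert--Mumford numerical criterion, the plan is to follow Mumford's argument, reducing everything to a linear action on an affine cone. First I would replace $L$ by a high enough power $L^{\otimes\ell}$, which is harmless because $\mu^{L^{\otimes\ell}}(x,\lambda)=\ell\,\mu^L(x,\lambda)$ by the first functorial property of $\mu$ recalled above, so the sign conditions in (1)--(2) are unchanged, and may therefore assume $L$ is very ample. The linearization then yields a $G$-equivariant embedding $X\hookrightarrow\PP(V)$ with $V=H^0(X,L)^\vee$ a linear $G$-representation and $L=\SO_{\PP(V)}(1)|_X$; fix a nonzero lift $\hat x\in V$ of $x$ in the affine cone over $X$. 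The two facts to record are the orbit dictionary --- $x$ is GIT-semistable iff $0\notin\overline{G\cdot\hat x}$, and GIT-stable iff $G\cdot\hat x$ is closed in $V$ and the orbit map $g\mapsto g\cdot\hat x$ has finite fibers --- together with the weight computation: decomposing $\hat x=\sum_i\hat x_i$ into $\lambda$-weight spaces (so $\lambda(t)$ acts on $\hat x_i$ by $t^i$), one gets straight from the definition of $\mu$ that $\mu^L(x,\lambda)$ is a positive multiple of $-\min\{\,i:\hat x_i\neq0\,\}$, so that $\mu^L(x,\lambda)<0$ exactly when $\lim_{t\to0}\lambda(t)\cdot\hat x=0$, and $\mu^L(x,\lambda)>0$ exactly when $\lim_{t\to0}\lambda(t)\cdot\hat x$ fails to exist in $V$.

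With this translation, (1) reads: $0\in\overline{G\cdot\hat x}$ if and only if some 1-PS $\lambda$ drives $\hat x$ to $0$; the nontrivial implication is the fundamental lemma of Hilbert--Mumford (Iwahori, Birkes, Kempf): for $G$ reductive acting linearly on $V$, if $w$ lies in the unique closed orbit contained in $\overline{G\cdot v}$ then some 1-PS $\lambda$ of $G$ satisfies $\lim_{t\to0}\lambda(t)\cdot v\in G\cdot w$, which I would invoke with $v=\hat x$ and $w=0$. Statement (2) is parallel: ``$\mu^L(x,\lambda)>0$ for all nontrivial $\lambda$'' becomes, using $\lambda\mapsto\lambda^{-1}$ to flip weight signs, ``no nontrivial 1-PS has $\lim_{t\to0}\lambda(t)\cdot(g\hat x)$ existing in $V$ for any $g\in G$'', which by the same lemma (now applied to boundary points of $\overline{G\cdot\hat x}$) is equivalent to $G\cdot\hat x$ being closed, while finiteness of the orbit-map fibers is forced because a nontrivial 1-PS in the stabilizer of $\hat x$ would have all $\hat x$-weights zero and hence $\mu^L(x,\lambda)=0$.

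The proof of the fundamental lemma itself runs through the valuative criterion: take a morphism $\Spec R\to\overline{G\cdot v}$ from the spectrum of a complete DVR $R$ with fraction field $K$, hitting the open orbit at the generic point and $w$ at the closed point; this produces $g\in G(K)$ with $g\cdot v$ extending over $R$ to $w$, and one applies the Cartan (Iwahori--Matsumoto) decomposition $G(K)=G(R)\cdot\{\lambda(t)\}\cdot G(R)$ for reductive $G$ over a complete DVR to write $g=k_1\lambda(t)k_2$ and absorb $k_2\in G(R)$ into $v$ and $k_1$ into $w$, so that the limit is ultimately taken along the 1-PS $\lambda$. The main obstacle is exactly this last ingredient: passing from ``a limit attained by some $K$-point of $G$'' to ``a limit attained by an honest one-parameter subgroup'' is where reductivity of $G$ is genuinely used and is the whole substance of the theorem, everything else being bookkeeping with weights and linearizations. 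Accordingly, in the text we do not reproduce this argument and simply cite \cite[Theorem 2.1]{MumfordGIT}.
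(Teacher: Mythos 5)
Your sketch is the standard Hilbert--Mumford argument (reduction to a linear action on the affine cone, the orbit dictionary for $\hat x$, the weight computation for $\mu$, and the fundamental lemma proved via the Iwahori/Cartan decomposition over a complete DVR), which is exactly Mumford's own route. The paper does not prove this statement at all --- it is quoted directly as \cite[Theorem 2.1]{MumfordGIT} --- so your concluding choice to cite rather than reproduce the argument matches the paper, and the sketch itself is sound.
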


Now, we will apply the previous Theorem to compute the GIT-stability condition for the linear action on a product of Grassmanians.

Let $V$ and $W_i$ be complex vector spaces for $i=1\ldots,M$. Let $p_i$ be an integer $0\le p_i\le dim W_i$ for $i=1,\ldots,M$. For every $i$, let $\Grass(p_i, W_i \otimes V)$ and $\Grass(W_i\otimes V, p_i)$ denote the Grassmanians of subspaces and quotients respectively. There is a canonical isomorphism
$$\Grass(W_i \otimes V ,p_i) \cong \Grass(\dim(W_i)\dim(V)-p_i,W_i \otimes V)$$

Let us consider the canonical action of $\op{SL}(V)$ on $\Grass(W_i \otimes V,p_i)$ extended from the action on $V$. For each $i$, $\Grass(W_i \otimes V,p_i)$ gets embedded into $\PP\left(\bigwedge^{p_i} (W_i\otimes V) \right)$ by Plucker embedding. For each $i$, let $\SO_i(1)$ denote the pullback of $\SO_{\PP\left(\bigwedge^{p_i} (W_i \otimes V) \right)}(1)$.

\begin{lemma}
Let $\varepsilon_1,\ldots,\varepsilon_M$ be positive rational numbers. Let $x=(L_1, \ldots, L_M)$ be a geometric point of $\prod_{i=1}^M \Grass(p_i,W_i \otimes V)$, i.e., let $L_i$ be a subspace of $W_i \otimes V$ of dimension $p_i$ for each $i=1,\ldots,M$. Then $x$ is GIT-(semi-)stable with respect to the action of $\op{SL}(V)$, linearized by $\Theta=\bigotimes_{i=1}^m \SO_i(\varepsilon_i)$, if and only if for all linear subspaces $L\subseteq V$
\begin{equation}
\label{eq:GITGrassCondition}
\frac{\sum_{i=1}^M \varepsilon_i \dim (L_i\cap (W_i \otimes L))}{\dim L} (\le) < \frac{\sum_{i=1}^M \varepsilon_i p_i}{\dim V}
\end{equation}
\end{lemma}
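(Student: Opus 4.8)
The plan is to apply the Hilbert–Mumford criterion (Theorem \ref{thm:GIT}) to the action of $\op{SL}(V)$ on the product $\prod_{i=1}^M \Grass(p_i, W_i\otimes V)$, linearized by $\Theta = \bigotimes_{i=1}^M \SO_i(\varepsilon_i)$. The first reduction is to clear denominators: since the $\varepsilon_i$ are positive rationals, there is a positive integer $d$ with $d\varepsilon_i\in\ZZ$ for all $i$, and $\Theta^{\otimes d}$ is a genuine $\op{SL}(V)$-linearized ample line bundle, so GIT-(semi-)stability with respect to $\Theta$ coincides with that for $\Theta^{\otimes d}$, and by functorial property (1) of $\mu$ we have $\mu^{\Theta}_{\op{SL}(V)}(x,\lambda) = \sum_{i=1}^M \varepsilon_i \mu^{\SO_i(1)}_{\op{SL}(V)}(x_i,\lambda)$, which is precisely Lemma \ref{lemma:productMu}. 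So it remains to compute $\mu^{\SO_i(1)}_{\op{SL}(V)}(L_i,\lambda)$ for a single Grassmannian factor and a 1-PS $\lambda$ of $\op{SL}(V)$.

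Next I would normalize the 1-PS. Any 1-PS $\lambda$ of $\op{SL}(V)$ is, after conjugation, diagonal: there is a basis $e_1,\dots,e_n$ of $V$ ($n=\dim V$) and integer weights $\rho_1\ge \rho_2\ge \cdots\ge \rho_n$ with $\sum_j \rho_j = 0$, acting by $e_j\mapsto t^{\rho_j}e_j$. Since $\mu$ is invariant under replacing $(x,\lambda)$ by a simultaneous conjugate, I may assume this form. The induced action on $W_i\otimes V$ is by $t^{\rho_j}$ on $W_i\otimes e_j$, and on $\bigwedge^{p_i}(W_i\otimes V)$ the eigenvalue on a basis $p_i$-vector is $t$ to the sum of the corresponding weights. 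For a point $L_i\in\Grass(p_i,W_i\otimes V)$, the standard computation (cf. \cite[Chapter 3]{MumfordGIT}) shows that $\mu^{\SO_i(1)}_{\op{SL}(V)}(L_i,\lambda)$ equals $-\min$ over the weights of those coordinate $p_i$-vectors that appear nontrivially in the Plücker coordinate of $L_i$; concretely, writing $V_{\le j} = \langle e_1,\dots,e_j\rangle$ and using the filtration $W_i\otimes V_{\le 1}\subseteq W_i\otimes V_{\le 2}\subseteq\cdots$, one gets
\[
\mu^{\SO_i(1)}_{\op{SL}(V)}(L_i,\lambda) \;=\; -\sum_{j=1}^{n} \rho_j\Big(\dim\big(L_i\cap (W_i\otimes V_{\le j})\big) - \dim\big(L_i\cap (W_i\otimes V_{\le j-1})\big)\Big).
\]
Summing by parts (Abel summation) using $\sum_j\rho_j=0$ and setting $\gamma_j := \rho_j-\rho_{j+1}\ge 0$ for $j<n$, this rewrites as $\mu^{\SO_i(1)}_{\op{SL}(V)}(L_i,\lambda) = \sum_{j=1}^{n-1}\gamma_j\big(\tfrac{p_i}{n}\cdot j - \dim(L_i\cap (W_i\otimes V_{\le j}))\big)$ — i.e. a nonnegative combination of the "deficiencies" associated to the subspaces $V_{\le j}\subseteq V$.

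Putting the factors back together, $\mu^{\Theta}_{\op{SL}(V)}(x,\lambda) = \sum_{j=1}^{n-1}\gamma_j\Big(\tfrac{j}{n}\sum_{i}\varepsilon_i p_i - \sum_{i}\varepsilon_i \dim(L_i\cap(W_i\otimes V_{\le j}))\Big)$. By Theorem \ref{thm:GIT}, $x$ is GIT-semistable (resp. stable) iff this is $\ge 0$ (resp. $>0$) for every nontrivial $\lambda$. Since the $\gamma_j$ are arbitrary nonnegative integers not all zero and each subspace $V_{\le j}$ of dimension $j$ (for $0<j<n$) can be realized as the standard flag of some suitably ordered basis, the sign of $\mu^\Theta$ for all $\lambda$ is controlled exactly by the sign of each bracket, i.e. by the condition
\[
\frac{\sum_{i=1}^M\varepsilon_i\dim(L_i\cap(W_i\otimes L))}{\dim L}\ (\le)\ <\ \frac{\sum_{i=1}^M\varepsilon_i p_i}{\dim V}
\]
ranging over all proper nonzero subspaces $L\subseteq V$. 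One direction (condition $\Rightarrow$ (semi-)stability) is immediate once the formula for $\mu^\Theta$ is established, since every $\mu^\Theta(x,\lambda)$ is then a nonnegative combination of nonnegative (resp. positive) terms; the converse direction uses the fact that for a fixed proper subspace $L$ one can choose $\lambda$ with $V_{\le \dim L} = L$ and a single $\gamma_j\ne 0$, isolating the corresponding bracket. I would also note the standard caveat for the stable case: a 1-PS may have $V_{\le j}=V$ or $0$ for the index where $\gamma_j\ne 0$, but those contribute $0$ to $\mu$ and are excluded because $\lambda$ is a 1-PS of $\op{SL}(V)$ (the weights sum to zero, so they cannot all be equal unless $\lambda$ is trivial); hence only proper nonzero $L$ matter, and strict positivity of all brackets over such $L$ is equivalent to $\mu^\Theta(x,\lambda)>0$ for all nontrivial $\lambda$.

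The main obstacle is the careful bookkeeping in the single-Grassmannian weight computation — correctly identifying $\mu^{\SO_i(1)}_{\op{SL}(V)}(L_i,\lambda)$ with the filtration-deficiency expression above (this is where the definition of $\mu$ via the limiting fixed point $f_{x,\lambda}(0)$ and the induced character on the fiber of $\SO_i(1)$ must be unwound), and then verifying that the Abel-summation rearrangement together with the freedom in choosing $\lambda$ genuinely gives an "if and only if" rather than just one implication. Everything else — clearing denominators, reducing to diagonal $\lambda$, and assembling the product via Lemma \ref{lemma:productMu} — is routine.
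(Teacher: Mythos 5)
Your proposal is correct and follows essentially the same route as the paper: diagonalize the 1-PS, compute the single-factor Hilbert--Mumford weight on each Grassmannian (your filtration formula is, after the Abel summation you perform using $\sum_j\rho_j=0$, exactly the expression the paper obtains from Lemma \ref{lemma:subgroupMu} and \cite[Proposition 4.3]{MumfordGIT}), combine the factors with Lemma \ref{lemma:productMu}, and exploit linearity in the weights to reduce to one-jump cocharacters while letting the flag vary over all bases, which is the paper's extreme-ray argument. The only cosmetic differences are that you derive the single-factor weight directly from the Pl\"ucker embedding rather than citing Mumford, and your ``caveat'' about $V_{\le j}=V$ or $0$ is vacuous since $1\le j\le n-1$; neither affects correctness.
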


\begin{proof}
Let $n=\dim(V)$ and $m_i=\dim(W_i)$. Let $\lambda$ be any 1-PS of $\op{SL}(V)$. Let $\{e_1,\ldots,e_n\}$ be a basis of $V$ such that for every $t\in\CC$, the matrix of $\lambda(t)$ in the basis $\{e_1,\ldots,e_n\}$ is given by
$$\lambda(t)=\left(t^{r_i}\delta_{ij} \right)_{i,j=1}^n$$
Where $r_1\ge r_2\ge \ldots \ge r_n$ and $\sum_{i=1}^n r_i=0$. Then, fixed any basis $\{w_{i,1},\ldots, w_{i,m_i}\}$ of $W_i$, $\{w_{i,k} \otimes e_j\}$ is a basis of $W_i \otimes V$. Let $\overline{\lambda}$ be the composition of $\lambda$ with the canonical inclusion of $\op{SL}(V)$ in $\op{SL}(W_i \otimes V)$. Under, the given adapted basis, $\overline{\lambda}(t)$ has the form
$$\overline{\lambda}(t) = \left (\begin{array}{ccc}
\overbrace{\begin{array}{ccc|}
t^{r_1} & & \\
& \ddots & \\
& & t^{r_1}\\
\hline
\end{array}}^{m_i} & &\\
& \ddots &\\
&& \overbrace{\begin{array}{|ccc}
\hline
t^{r_n} & & \\
& \ddots & \\
& & t^{r_n}
\end{array}}^{m_i}
\end{array} \right)$$

Let us call $\overline{r}_l$ to the exponent of $t$ for the $l$-th entry on the diagonal of $\overline{\lambda}(t)$, i.e., for $1\le j\le n$ and $1\le k\le m_i$
$$\overline{r}_{(j-1)m_i+k}=r_j= r_{\lceil l/m_i \rceil}$$
On the other hand, for each subset $I\subseteq \{1,\ldots,n\} \times \{1,\ldots, m_i\}$, let $\overline{L}_I$ denote the linear subspace of $V\times W_i$  defined as
$$\overline{L}_I=\left \{\sum_{j=1}^n \sum_{k=1}^{\dim(W_i)} x_{j,k}w_{i,k} \otimes  e_j \middle | x_{j,k}=0 \, \forall (j,k)\in I \right\}$$
Similarly, for each subset $I\subseteq \{1,\ldots,n\}$, let $L_I$ be the subspace of $V$ defined as
$$L_I=\left \{\sum_{j=1}^n x_j e_j \middle| x_j=0 \, \forall j\in I \right \}$$
Finally, if we denote $I_l=\{(j,k)\in\{1,\ldots,n\} \times \{1,\ldots, m_i\} | (j-1)m_i+k\ge l\}$, for $1\le l\le nm_i$, combining Lemma \ref{lemma:subgroupMu} and \cite[Proposition 4.3]{MumfordGIT} yields
$$\mu_{\op{SL}(V)}^{\SO_i(1)}(L_i,\lambda)=\mu_{\op{SL}(W_i\otimes V)}^{\SO_i(1)}(L_i,\overline{\lambda})= - p_i \overline{r}_{nm_i} + \sum_{l=1}^{nm_i-1} \dim \left (L_i\cap \overline{L}_{I_{l+1}} \right )(\overline{r}_{l+1}-\overline{r}_l)$$
On the other hand, for each $l$, $\overline{r}_{l+1}-\overline{r}_l=r_{\lceil (l+1)/m_i \rceil}-r_{\lceil l/m_i \rceil}$ is only nonzero if $l$ is a multiple of $m_i$. Moreover, for each $1\le j<n$,
\begin{multline*}
L_{I_{jm_i+1}}=\SL ( \{e_{j'}\otimes w_{i,k} | 1\le j'\le j,1\le k\le m_i\}) =\\
W_i\otimes  \SL(\{e_{j'} | 1\le j'\le j\}) = W_i\otimes L_{\{j+1,\ldots,n\}}
\end{multline*}
Therefore
\begin{multline*}
\mu_{\op{SL}(V)}^{\SO_i(1)}(L_i,\lambda)=- p_i r_n + \sum_{j=1}^{n-1} \dim \left (L_i\cap \overline{L}_{I_{jm+1}} \right )(r_{j+1}-r_j)=\\ - p_i r_n +
\sum_{j=1}^{n-1} \dim \left (L_i\cap \left(W_i\otimes  L_{\{j+1,\ldots,n\}}\right) \right )(r_{j+1}-r_j)
\end{multline*}
Now, by Lemma \ref{lemma:productMu},
$$\mu_{\op{SL}(V)}^{\Theta}(x,\lambda)=-\sum_{i=1}^M \varepsilon_i p_i r_n + \sum_{i=1}^M \varepsilon_i \sum_{j=1}^{n-1}  \dim \left (L_i\cap \left(W_i\otimes  L_{\{j+1,\ldots,n\}}\right) \right )(r_{j+1}-r_j)$$
By Theorem \ref{thm:GIT}, $x$ is GIT-(semi-)stable if and only if $\mu_{\op{SL}(V)}^\Theta(x,\lambda)$ is positive (non-negative) for all 1-PS $\lambda$. $\mu_{\op{SL}(V)}^\Theta(x,\lambda)$ is a linear function of the $r_j$. Thus, its value is positive (non-negative) for all $r_j$ such that $r_1\ge \ldots \ge r_n$ and $\sum_{j=1}^n r_j =0$ if and only if it is positive (non-negative) for the extreme sets of $r_j$
$$\left \{ \begin{array}{c}
r_1=r_2=\ldots=r_l=n-l\\
r_{l+1}=\ldots=r_n=-l
\end{array} \right.$$
for every $1\le l <n$. For such $\{r_j\}$, $r_{j+1}-r_j$ is nonzero just for $j=l$, so $\mu_{\op{SL}(V)}^{\Theta}(x,\lambda) (\ge) > 0$ for such $r_j$ if and only if
\begin{equation}
\label{eq:GrassGIT1}
l\sum_{i=1}^M \varepsilon_i p_i - \sum_{i=1}^M \varepsilon_i \dim \left(L_i \cap \left (W_i\otimes L_{\{l+1,\ldots,n\}}\right) \right) n (\ge) > 0
\end{equation}
Every 1-PS of $\op{SL}(V)$ is conjugate to a 1-PS which is diagonalized in the basis $\{e_1,\ldots,e_n\}$ and for which $r_1\ge r_2\ge \ldots \ge r_n$. Therefore, $x$ is GIT-semistable if and only if condition \eqref{eq:GrassGIT1} holds for every basis choice $\{e_1,\ldots,e_n\}$. Under base change, $L_{\{l+1,\ldots,n\}}$ ranges over the set of linear subspaces of $V$ of dimension $l$, so $x$ is GIT-(semi-)stable if and only if for every subspace $L\subsetneq V$
$$\dim(L)\sum_{i=1}^M \varepsilon_i p_i - n\sum_{i=1}^M \varepsilon_i \dim \left(L_i \cap (W_i\otimes L) \right) (\ge)> 0$$
\end{proof}

\begin{corollary}
\label{cor:GITlemma}
Let $\varepsilon_1,\ldots,\varepsilon_M$ be rational numbers. Let $x=(\varphi_1,\ldots,\varphi_M)$ be a geometric point of $\prod_{i=1}^M \Grass(W_i\otimes  V,p_i)$, i.e., let $\varphi_i: W_i\otimes V \twoheadrightarrow L_i$ be a quotient of dimension $p_i$ for each $i=1,\ldots,M$. Then $x$ is GIT-(semi-)stable with respect to $\bigotimes_{i=1}^m \SO_i(\varepsilon_i)$ if and only if for all linear subspaces $L\subseteq V$
$$\frac{\sum_{i=1}^M \varepsilon_i \dim(\varphi_i(W_i\otimes L))}{\dim L} (\ge)> \frac{\sum_{i=1}^M \varepsilon_i p_i}{\dim V}$$
\end{corollary}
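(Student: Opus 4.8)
The plan is to deduce the statement from the previous Lemma by passing to kernels. Writing $m_i = \dim W_i$ and $n = \dim V$, the assignment $\varphi_i \mapsto K_i := \Ker(\varphi_i)$ realizes the canonical $\op{SL}(V)$-equivariant isomorphism $\Grass(W_i \otimes V, p_i) \cong \Grass(m_i n - p_i, W_i \otimes V)$ recalled just before the previous Lemma; I would first check that it carries the tautological rank-$p_i$ quotient bundle $(W_i\otimes V)/K_i$ to the universal quotient on the subspace side, so that it pulls back the canonical polarization on $\Grass(m_i n - p_i, W_i\otimes V)$ to $\SO_i(1)$. Then, by the functoriality of $\mu$ under equivariant morphisms together with Lemma \ref{lemma:productMu}, GIT-(semi-)stability of $x = (\varphi_1,\ldots,\varphi_M)$ with respect to $\bigotimes_i \SO_i(\varepsilon_i)$ is equivalent to GIT-(semi-)stability of $(K_1,\ldots,K_M) \in \prod_i \Grass(m_i n - p_i, W_i\otimes V)$ with respect to the same linearization.

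Next I would apply the previous Lemma to $(K_1,\ldots,K_M)$, with each $p_i$ there replaced by $m_i n - p_i$: the point $x$ is GIT-(semi-)stable if and only if for every linear subspace $L \subseteq V$
\begin{equation*}
\frac{\sum_{i=1}^M \varepsilon_i \dim\bigl(K_i \cap (W_i\otimes L)\bigr)}{\dim L} \ (\le) <\ \frac{\sum_{i=1}^M \varepsilon_i(m_i n - p_i)}{n}.
\end{equation*}
To rewrite the left-hand side, I would restrict the exact sequence $0 \to K_i \to W_i\otimes V \xrightarrow{\varphi_i} L_i \to 0$ to the subspace $W_i\otimes L$, obtaining $0 \to K_i\cap(W_i\otimes L) \to W_i\otimes L \to \varphi_i(W_i\otimes L) \to 0$ and hence $\dim\bigl(K_i\cap(W_i\otimes L)\bigr) = m_i\dim L - \dim\varphi_i(W_i\otimes L)$. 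After substituting, the common term $\sum_i \varepsilon_i m_i$ cancels from both sides, leaving
\begin{equation*}
-\frac{\sum_{i=1}^M \varepsilon_i\dim\varphi_i(W_i\otimes L)}{\dim L}\ (\le) <\ -\frac{\sum_{i=1}^M \varepsilon_i p_i}{n},
\end{equation*}
and multiplying through by $-1$, which reverses the inequality, gives precisely the asserted criterion $\frac{\sum_i \varepsilon_i\dim\varphi_i(W_i\otimes L)}{\dim L} (\ge) > \frac{\sum_i \varepsilon_i p_i}{\dim V}$.

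The one genuinely delicate point will be the line-bundle identification in the first step: I must be sure that the chosen $\SO_i(1)$ on the quotient Grassmannian pulls back, under the kernel isomorphism, to the canonical ample polarization used in the previous Lemma (equivalently, to the determinant of the universal quotient), rather than to its dual, since the dual would replace GIT-(semi-)stability by its opposite. Granting that, everything else is the exact-sequence bookkeeping and the sign reversal above, and that reversal of direction is exactly what matches the $(\ge) >$ in the statement against the $(\le) <$ of the previous Lemma.
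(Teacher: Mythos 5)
Your proposal is correct and follows essentially the same route as the paper: pass to the kernels $K_i=\Ker(\varphi_i)$ viewed as points of $\Grass(\dim(W_i)\dim(V)-p_i, W_i\otimes V)$, apply the preceding Lemma, and use $\dim\bigl(K_i\cap(W_i\otimes L)\bigr)=\dim(W_i)\dim(L)-\dim\varphi_i(W_i\otimes L)$ to rewrite and flip the inequality. The only difference is that the paper treats the compatibility of the polarizations under the kernel isomorphism as immediate (it is, since $\det(W_i\otimes V)$ carries the trivial $\op{SL}(V)$-linearization), whereas you rightly flag and then discharge this point explicitly.
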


\begin{proof}
Let $p_i'=\dim(V) \dim (W_i)-p_i$. For each $\varphi_i: W_i \otimes V \twoheadrightarrow L_i$, let $L_i'=\Ker(\varphi_i)$ be the corresponding image of $\varphi_i$ in $\Grass(p_i',W_i \otimes V)$, and let $x'=(L_1',\ldots,L_M')$. Clearly, $x$ is GIT-semistable if and only if $x'$ is GIT-semistable, i.e., if and only if inequality \eqref{eq:GITGrassCondition} holds for every linear subspace $L\subseteq V$. Let $L\subseteq V$ be any linear subspace. Then, for each $i$
\begin{multline*}
\dim(L_i'\cap (W_i\otimes L))=\dim (\Ker (\varphi_i) \cap (W_i \otimes L))= \dim ( \Ker (\varphi_i |_{W_i \otimes L})) =\\
\dim(W_i\otimes L) - \dim ( \im (\varphi_i|_{W_i \otimes L})) = \dim (W_i)\dim (L) - \dim (\varphi_i(W_i \otimes L))
\end{multline*}
Therefore, by the previous Lemma, $x'$ is (semi-)stable if and only if
\begin{multline*}
\frac{\sum_{i=1}^M \varepsilon_i \dim(\varphi_i(W_i \otimes L))}{\dim L} = \frac{\sum_{i=1}^M \varepsilon_i \left(\dim(W_i)\dim(L) -  \dim(L_i\cap (W_i \otimes L))\right)}{\dim L} = \\
\sum_{i=1}^M \varepsilon_i \dim(W_i) -\frac{\sum_{i=1}^M \varepsilon_i \dim(L_i\cap (W_i \otimes L))}{\dim L} (\ge) > \sum_{i=1}^M \varepsilon_i \dim(W_i) - \frac{\sum_{i=1}^M \varepsilon_i p_i'}{\dim(V)} =\\
 \frac{\sum_{i=1}^M \varepsilon_i ( \dim(W_i)\dim(V)-p_i')}{\dim(V)}= \frac{\sum_{i=1}^M \varepsilon_i p_i}{\dim(V)}
\end{multline*}

\end{proof}

We can extend the previous GIT-stability conditions for Grassmanians of locally free vector bundles by means of the following Lemma due to Simpson

\begin{lemma}[{\cite[Lemma 1.13]{Si2}}]
Suppose that $Z\to S$ is a projective scheme. Let $G$ be a reductive algebraic group acting on $Z$, such that the action is trivial on $S$ and preserves the morphism $Z\to S$. Let $\SL$ be a relatively very ample linearizable invertible sheaf for the action of $G$. If $t\to S$ is a geometric point then the (semi-)stable points of the fibrer $Z_s$ are those which are (semi-)stable in the total space, i.e. $Z^s_t=(Z^s)_t$ ($Z^{ss}_t = (Z^{ss})_t$ respectively).
\end{lemma}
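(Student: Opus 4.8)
The plan is to characterize GIT-(semi-)stability of a point both in $Z_s$ and in the total space $Z$ by the Hilbert--Mumford numerical criterion, and then to observe that the relevant numerical function $\mu$ is insensitive to this distinction precisely because the $G$-action is trivial on $S$ and preserves $Z\to S$. First I would record the elementary geometric facts: since $G$ acts trivially on $S$ and preserves the morphism $Z\to S$, the orbit (and orbit closure) of any point of $Z_s$ stays in the fibre over $s$, so $Z_s$ is a $G$-invariant closed subscheme of $Z$; being a fibre of the projective morphism $Z\to S$ it is projective, hence proper, over the algebraically closed field $\CC=k(s)$, and $\SL_s:=\SL|_{Z_s}$ is an ample $G$-linearized invertible sheaf on it. Moreover one-parameter subgroups of $G$ are defined over $\CC$ and rigid, so they are literally the same whether one regards $G$ as acting on $Z$ or on $Z_s$.

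The key point is that for every closed point $z\in Z_s$ and every one-parameter subgroup $\lambda$ of $G$ one has
\[
\mu_G^{\SL}(z,\lambda)=\mu_G^{\SL_s}(z,\lambda).
\]
This is an immediate instance of the second of the two functorial properties of $\mu$ recalled above, applied to the $G$-equivariant closed immersion $\iota\colon Z_s\hookrightarrow Z$ and the $G$-linearized sheaf $\SL$, since $\SL_s=\iota^{*}\SL$ and $\iota(z)=z$. Concretely: the orbit map $\GG_m\to Z$, $\alpha\mapsto\lambda(\alpha)\cdot z$, factors through $Z_s$; by properness of $Z_s$ it extends to $f_{z,\lambda}\colon\mathbb{A}^1\to Z_s$, and this agrees, as a map into $Z$, with the extension used to define $\mu$ in $Z$ (so $\mu_G^{\SL}(z,\lambda)$ is in fact well defined here even if $Z$ itself is not proper over $\CC$, the relevant orbit closure being proper); finally the $\GG_m$-linearization of $\SL$ on the fibre over the fixed point $f_{z,\lambda}(0)\in Z_s$ is by construction the same as that of $\SL_s$, so the two defining characters, and hence the two integers $\mu$, coincide.

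To conclude I would apply Theorem \ref{thm:GIT}. For the proper scheme $Z_s$ it gives directly that $z$ is GIT-(semi-)stable in $Z_s$ if and only if $\mu_G^{\SL_s}(z,\lambda)>0$ (resp. $\ge 0$) for all one-parameter subgroups $\lambda$ of $G$. The same numerical condition characterizes GIT-(semi-)stability of $z$ in $Z$ (when $Z$ happens to be proper this is Theorem \ref{thm:GIT} verbatim; in the genuinely relative projective situation it is the content of Mumford's relative GIT, the criterion being phrased purely in terms of one-parameter subgroups of $G$ and the function $\mu$). Combining these two characterizations with the equality of the displayed $\mu$'s shows that $z$ is (semi-)stable in $Z$ if and only if it is (semi-)stable in $Z_s$; since this holds for every closed point of $Z_s$, we obtain $Z^{s}_s=(Z^{s})_s$ and $Z^{ss}_s=(Z^{ss})_s$.

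The only delicate point — the ``main obstacle'' — is the last sentence of the previous paragraph, namely justifying the numerical criterion for $Z$ itself in the case where $Z$ is merely projective over $S$ rather than proper over $\CC$, since the version quoted as Theorem \ref{thm:GIT} assumes properness; this is handled by the relative form of the Hilbert--Mumford criterion in Mumford's GIT, and can also be reduced (locally on $S$) to the proper case via an equivariant embedding into a projective bundle. Everything else is formal, resting entirely on the functoriality of $\mu$ and on the fact that orbits and their limits remain inside a single fibre of $Z\to S$.
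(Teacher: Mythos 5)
Your reduction to the fibre via the Hilbert--Mumford criterion is fine as far as it goes: $Z_t$ is a $G$-invariant proper subscheme, one-parameter subgroup limits of points of $Z_t$ stay in $Z_t$, and the equality $\mu_G^{\SL}(z,\lambda)=\mu_G^{\SL|_{Z_t}}(z,\lambda)$ follows from functoriality of $\mu$ exactly as you say. The genuine gap is the step you flag and then dismiss: the sufficiency direction of the numerical criterion for the total space $Z$. Theorem \ref{thm:GIT} (Mumford's Theorem 2.1) requires the scheme to be proper over the field, while $Z$ is only proper over $S$, and there is no ``relative Hilbert--Mumford criterion'' in Mumford's GIT that you can quote. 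For a scheme that is merely projective over an (affine piece of the) base, (semi-)stability is defined by the existence of non-vanishing $G$-invariant sections of powers of the linearized sheaf, and knowing $\mu^{\SL}(z,\lambda)\ge 0$ for all one-parameter subgroups does not by itself produce such a section on $Z$; the inequality only carries information along the fibre in which all the limit points lie. Your fallback (equivariant embedding into a projective bundle, locally over $S$) does not repair this: the ambient space is still not proper, and comparing semistability on $Z$ with semistability on any compactification again requires extending invariant sections. So precisely the implication the paper needs, that a point which is (semi-)stable in the fibre is (semi-)stable in $Z$, is not established by your argument (the other implication is recoverable, since an invariant section on $Z$ restricts to one on $Z_t$).

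Note also that the paper does not reprove this statement; it cites \cite[Lemma 1.13]{Si2}, and Simpson's argument is of a different nature from yours. One works locally over an affine open $U\subseteq S$, where the relatively very ample $\SL$ becomes ample, characterizes (semi-)stability by invariant sections, and shows that the restriction map $H^0(Z_U,\SL^N)^G\to H^0(Z_t,\SL^N|_{Z_t})^G$ is surjective for $N\gg 0$: the map $H^0(Z_U,\SL^N)\to H^0(Z_t,\SL^N|_{Z_t})$ is surjective by relative Serre vanishing and base change, and taking $G$-invariants is exact because $G$ is reductive in characteristic zero. Restriction of invariant sections gives $(Z^{ss})_t\subseteq Z_t^{ss}$, the lifting gives the reverse inclusion, and the stable loci are then matched using that orbits and stabilizers of points of $Z_t$ are the same computed in $Z_t$ or in $Z$, since orbits remain in the closed fibre. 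If you wish to keep your numerical-criterion formulation, you still need this section-lifting input (or an equivalent substitute) to pass from the fibre to the total space; the functoriality of $\mu$ alone cannot do it.
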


Let $\SW_i$ be a locally free vector bundle on $S$ for $i=1,\ldots,l$. Let $V$ be a finite dimensional vector space. Let $r_i$ be integers and let $\varepsilon_i$ be rational numbers for $i=1,\ldots,l$. Let $G_i=\Grass(\SW_i\otimes_\CC V,r_i) \to S$ and let us consider the canonical action of $\op{SL}(V)$ on $G_i$ for each $i$. Let $\SO_i(1)$ denote the canonical $G$-linearizable line bundle on $S$ corresponding to the Plucker embedding
$$\Grass(\SW_i\otimes_\CC V,r_i) \to \PP \left( \bigwedge^{r_i} \SW_i\otimes_\CC V \right)$$
Then $\op{SL}(V)$ acts on $G=\prod_{i=1}^l G_i$ and $\Theta=\bigotimes_{i=1}^l \SO_i(\varepsilon_i)$ is a relatively very ample $G$-linearizable invertible sheaf, flat over $S$. By the previous Lemma, a geometric point in $G$, standing over a fiber $t\to S$ is (semi-)stable if and only if the corresponding point in $G_s = \prod_{i=1}^l \Grass(\SW_i|_s \otimes V,r_i)$ satisfies the condition of Corollary \ref{cor:GITlemma}.

\begin{lemma}
\label{lemma:GITLambdaModules}
Let $R$ be the quasi-projective scheme described by Theorem \ref{thm:LambdaModParam}. Let $\SO_{x,i}(1)$ denote the canonical ample line bundle on $\Grass(\Lambda_r|_{\{x\}\times S} \otimes \SO_S(-N) \otimes_\CC \CC^{P(N)},r_{x,i})$ under the plucker embedding for $x\in D$ and $1\le i\le l_x$, where we are setting $r_{x,1}=r$ for each $x\in D$. There exists an integer $\overline{m}$ such that for every $m\ge \overline{m}$ there exist positive rational numbers $\varepsilon_0$, $\varepsilon_{x,i}$, for $x\in D$, $i=1,\ldots,l_x$ such that a geometric point of $R$ is GIT-(semi-)stable for the linearization induced by the restriction of
$$\Theta = \SL_{m}(\varepsilon_0)\otimes \bigotimes_{x\in D} \bigotimes_{i=1}^{l_x} \SO_{x,i}(\varepsilon_i)$$
to $R$ if and only if it corresponds to a (semi-)stable parabolic $\Lambda$-module.
\end{lemma}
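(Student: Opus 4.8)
The plan is to embed $R$ equivariantly in a product of Grassmannians carrying an $\op{SL}(V)$-action, with $V=\CC^{P(N)}$, to apply the numerical criterion of Corollary~\ref{cor:GITlemma}, and to show that a suitable choice of polarisation weights reproduces parabolic Gieseker stability. Recall from the proof of Theorem~\ref{thm:LambdaModParam} that $R$ is locally closed in $FQ_5^{\op{LF}}$, which lies in $Q_5^{\op{LF}}\times_S\prod_{x\in D}\prod_{i=1}^{l_x}\Grass(\Lambda_r|_{\{x\}\times S}\otimes\SO_S(-N)\otimes_\CC\CC^{P(N)},r_{x,i})$. Composing Grothendieck's embedding $\psi_m$ of the Quot scheme containing $Q_5$ (whose canonical bundle pulls back to $\SL_m$) with the Plücker maps of the Grassmannian factors, one gets, for every $m\gg0$, a locally closed $\op{SL}(V)$-equivariant embedding of $R$ into $\Grass(W\otimes_\CC V,P(m))\times_S\prod_{x,i}\Grass(\Lambda_r|_{\{x\}\times S}\otimes\SO_S(-N)\otimes_\CC V,r_{x,i})$, where $W=H^0(X/S,\Lambda_r\otimes\SO_X(m-N))$ and $\op{SL}(V)$ acts trivially on $W$ and on the fibres $\Lambda_r|_x$. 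By Simpson's fibrewise lemma \cite[Lemma 1.13]{Si2} GIT-stability may be tested over geometric points, so we reduce to $S=\Spec\CC$ and apply Corollary~\ref{cor:GITlemma}.

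For a point of $R$ corresponding to $(E,E_\bullet,\varphi,\alpha)$, identify $V$ with $H^0(E(N))$ via $\alpha$. To a subspace $L\subseteq V$ attach the subsheaf $F_L:=\im\bigl(\Lambda_r\otimes\SO_X(-N)\otimes L\to E\bigr)$ and its saturation $F:=F_L^{\op{sat}}$, which by Lemma~\ref{lemma:submoduleSaturation} is a parabolic sub-$\Lambda$-module with the induced parabolic structure; conversely a parabolic sub-$\Lambda$-module $G$ gives $L_G:=H^0(G(N))$. Under this dictionary the quantities appearing in Corollary~\ref{cor:GITlemma}, namely $\dim\im(W\otimes L\to H^0(E(m)))$ and $\dim\im(\Lambda_r|_x\otimes L\to E|_x/E_{x,i})$, are comparable --- up to discrepancies bounded uniformly by the regularity estimates of Section~\ref{section:boundnessTheorems} --- to $h^0(F(m))$ and to the numbers $\dim F|_x-\dim F_{x,i}$ entering $\owt(F)$; and when $L=H^0(G(N))$ with $G(N)$ acyclic and globally generated these become exact equalities, with $F_L=G$ and $\dim L=P_G(N)$.

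Now choose the weights: put $\varepsilon_0=1$ and let the $\varepsilon_{x,i}$ be positive rationals built from the parabolic gaps $\alpha_{x,i}-\alpha_{x,i-1}$ (with $\alpha_{x,0}:=0$), scaled by a factor of order $m$ relative to $\varepsilon_0$ so that the parabolic Grassmannian contributions are not swamped by that of $\SL_m$; clearing a common denominator makes $\Theta$ an honest line bundle. An Abel-summation identity of the kind used in Lemma~\ref{lemma:maxinduced} rewrites $\sum_{x,i}\varepsilon_{x,i}(\dim F|_x-\dim F_{x,i})$ in terms of $\owt(F)$, and similarly for $E$, so that the inequality of Corollary~\ref{cor:GITlemma} for the one-parameter subgroup attached to the two-step flag $V\supsetneq L$ becomes, modulo an error of lower order in $m$, the parabolic Gieseker inequality
\[
\rk(E)\,\parP_{F}(m)\ (\le) <\ \rk(F)\,\parP_{E}(m),\qquad F=F_L^{\op{sat}}.
\]
Here $N$ is the integer already fixed in Theorem~\ref{thm:LambdaModParam} (larger than the bounds of Corollaries~\ref{cor:acyclicBound}, \ref{cor:acyclicDestab} and Lemma~\ref{lemma:sharpStabInequality}), and $\overline m$ is taken large enough --- uniformly over the family parametrised by $R$, which is bounded --- that for $m\ge\overline m$ every $F_L$ is $m$-regular, its cohomology is read off the Hilbert polynomial, and the error term is dominated by the uniform gap $\delta$ of Lemma~\ref{lemma:slopeDiffbound}.

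Finally, run the equivalence both ways. If $(E,E_\bullet,\varphi)$ is parabolically (semi-)stable, then for arbitrary $L$ pass to $F=F_L^{\op{sat}}$: Lemma~\ref{lemma:sharpStabInequality}, together with Corollary~\ref{cor:acyclicBound}, gives $h^0(F(m))/\rk(F)+\eta(F)\ (\le) <\ P_E(m)/\rk(E)+\eta(E)$, and the margin of Lemma~\ref{lemma:slopeDiffbound} absorbs the error, so the GIT inequality holds for every $L$ and the point is GIT-(semi-)stable. Conversely, if $(E,E_\bullet,\varphi)$ is not (semi-)stable, choose a parabolic sub-$\Lambda$-module $G$ of maximal parabolic slope violating (semi-)stability; by Corollary~\ref{cor:acyclicDestab}, $G(N)$ is acyclic and globally generated, so $L:=H^0(G(N))$ has $\dim L=P_G(N)$ and $F_L=G$, and the displayed inequality fails for this $L$, so the point is not GIT-(semi-)stable. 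Since GIT-(semi-)stability thus depends only on the underlying parabolic $\Lambda$-module, and $R$ is a locally closed $\op{SL}(V)$-invariant subscheme of the product of Grassmannians, the equivalence descends to $R$. The main obstacle is the third step: producing weights $\varepsilon_0,\varepsilon_{x,i}$ --- necessarily with $\varepsilon_{x,i}$ growing with $m$ --- for which the purely numerical criterion of Corollary~\ref{cor:GITlemma} is, for $m$ large, genuinely equivalent to parabolic Gieseker stability rather than collapsing to ordinary slope stability, and controlling uniformly over the bounded family both the gap between $\dim\im(W\otimes L)$ and Hilbert polynomials of sub-$\Lambda$-modules and the passage between strict polynomial and strict numerical inequalities; this is exactly what the estimates of Section~\ref{section:boundnessTheorems} (Lemmas~\ref{lemma:sharpStabInequality}, \ref{lemma:slopeDiffbound} and Corollaries~\ref{cor:acyclicBound}, \ref{cor:acyclicDestab}) are built to supply.
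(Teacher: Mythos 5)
Your overall architecture is the same as the paper's: embed $R$ equivariantly via $\psi_m$ and the Pl\"ucker maps into a product of Grassmannians, reduce to geometric fibres by Simpson's fibrewise lemma, apply the numerical criterion of Corollary \ref{cor:GITlemma}, pass back and forth between subspaces $L\subseteq\CC^{P(N)}$ and the subsheaves $F_L=\im(\Lambda_r\otimes\SO(-N)\otimes L\to E)$, and control everything with Corollaries \ref{cor:acyclicBound}, \ref{cor:acyclicDestab} and Lemmas \ref{lemma:submoduleSaturation}, \ref{lemma:sharpStabInequality}, \ref{lemma:slopeDiffbound}. The genuine gap is in the step you yourself single out as the main obstacle: the weights are never actually fixed, and the ``modulo an error of lower order in $m$'' formulation cannot give the stated \emph{equivalence} at the semistable boundary. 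In the paper the $\varepsilon_{x,i}$ are the fixed parabolic gaps with $\varepsilon_{x,1}=1-\alpha_{x,l_x}$ (so that the Abel summation yields exactly $\rk(F)-\owt_x(F)$, with the same constant $M=|D|$ for every sheaf), and $\varepsilon_0=\frac{\pmu(E)+N-M+1-g}{m-N}$ is tuned so that $\varepsilon_0(m-N)+M-\eta(E)=N+1-g+\mu(E)$ holds \emph{exactly}; with this, the GIT inequality transforms with no error term at all into $\pmu(F)\le\pmu(E)$ up to the single correction $(\varepsilon_0+1)\tau_F(N)$, $\tau_F(N)=h^1(X_s,F(N))/\rk(F)$, which is then eliminated either by the gap $\delta$ of Lemma \ref{lemma:slopeDiffbound} (strict case) or by the vanishing $h^1(X_s,F(N))=0$ forced by the equality case of Lemma \ref{lemma:sharpStabInequality}. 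With your loose tuning ($\varepsilon_0=1$, $\varepsilon_{x,i}$ of order $m$ times the gaps, ratio unspecified), the translation only holds up to an uncontrolled error; Lemma \ref{lemma:slopeDiffbound} supplies a margin only when the inequality for $F$ is strict, so for a strictly semistable $E$ with a sub-$\Lambda$-module satisfying $\pmu(F)=\pmu(E)$ exactly there is nothing to absorb that error, and neither ``semistable $\Rightarrow$ GIT-semistable'' nor the equality analysis needed later for S-equivalence follows from your argument.

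Two smaller points in the same vein. First, your $\varepsilon_{x,1}=\alpha_{x,1}-\alpha_{x,0}=\alpha_{x,1}$ may be zero (only $0\le\alpha_{x,1}$ is assumed), violating positivity of the weights and changing the Abel identity from $\rk(F)-\owt_x(F)$ to $(\alpha_{x,1}+\alpha_{x,l_x})\rk(F)-\owt_x(F)$; this is repairable by re-tuning $\varepsilon_0$, but it is another place where the constants must be tracked exactly rather than ``up to lower order''. Second, the comparison of $\dim\im\bigl(H^0(\Lambda_r\otimes\SO(m-N))\otimes L\to H^0(E(m))\bigr)$ with Hilbert polynomials is not merely ``up to bounded discrepancies'': in one direction the paper takes $L=H^0(F(N))$ for the maximal destabilizer and uses its $N$-regularity (Corollary \ref{cor:acyclicDestab}) to get exact equality with $P_F(m)$, and in the other it uses boundedness of the family of possible $F_L$ and kernels to choose $m_0$ so that the image dimension equals $P_{F_L}(m)$ exactly for $m\ge m_0$, with the weight terms computed for $F_L$ itself (the saturation enters only through the hypothesis of Lemma \ref{lemma:sharpStabInequality}). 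These exact identifications are what make the final numerical inequality verifiable; as written, your proposal leaves precisely this quantitative core unproved.
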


\begin{proof}
Let $M=|D|$. For each $x\in D$, let $\varepsilon_{x,i}=\alpha_{x,i}-\alpha_{x,i-1}$ for $1<i\le l_x$ and $\varepsilon_{x,1}=1-\alpha_{x,l_x}$. As $\alpha_{x,i}$ are strictly crescent and less than $1$, it is clear that $\varepsilon_{x,i}>0$ for all $x\in D$ and $i=1,\ldots,l_x$. For any $m>N$ let
$$\varepsilon_0 = \frac{\pmu(E) + N-M+1-g}{m-N}$$
The choice of $N$ in Theorem \ref{thm:LambdaModParam} ensures that $\varepsilon_0>0$. By definition a $\Lambda$-module over $C\times S$ is (semi-)stable if and only if its restriction to $X_s$ for every $s\in S$ is (semi-)stable. On the other hand, by the previous lemma an $S$-point in $R$ is (semi-)stable, if and only if its specification to every $s\in S$ is (semi-)stable. Therefore, we can restrict ourselves to closed points of $R$, i.e., $\Lambda$-modules over a certain geometric fiber $X_s$ of $C\times S$.
First, let us prove that all GIT-semistable points of $R$ are semistable. Let $(E,E_\bullet,\varphi)$ be a parabolic $\Lambda$-module over $X_s$, for some $s\in S$ underlying a GIT-semistable point of $R$. Suppose that $(E,E_\bullet,\varphi)$ is unstable. Let $(F,F_\bullet,\varphi)$ be the maximum destabilizing sub-$\Lambda$-module. By maximality of the parabolic slope, it is a semistable parabolic $\Lambda$-module and we can assume without loss of generality that $F$ has the induced parabolic structure from $(E,E_\bullet)$.

By Corollary \ref{cor:acyclicDestab}, $F(N)$ is generated by global sections. Let $L=H^0(X_s,F(N))$. By Corollary \ref{cor:GITlemma}, GIT-semi-stability of $(E,E_\bullet,\varphi,\alpha)$ for the linearization $\Theta$ implies that
\begin{multline}
\label{eq:GITLambdaModules1}
\frac{\varepsilon_0 \dim \im(H^0(X_s,\Lambda_{X_s,r} \otimes \SO_{X_s}(m-N) ) \otimes L\to H^0(X_s,E(m)))}{\dim L}\\ 
+ \frac{\sum_{x\in X} \varepsilon_{x,1} \dim \im(\Lambda_r|_{\{(x,s)\}}\otimes L \to E_{x,1})}{\dim L} \\ 
+ \frac{\sum_{x\in D}\sum_{i=2}^{l_x} \varepsilon_{x,i} \dim \im (\Lambda_r|_{\{(x,s)\}}\otimes \SO_S(-N)|_{s}\otimes L \to E_{x,1}/E_{x,i})}{\dim L}\\
 \ge \frac{\varepsilon_0 P_E(m) +\sum_{x\in D}\left(\varepsilon_{x,1} r+\sum_{i=2}^{l_x} \varepsilon_{x,i} r_{x,i} \right)}{P_E(N)}
\end{multline}

On the other hand, by $N$-regularity of $F$, yields
\begin{eqnarray*}
\xymatrixcolsep{3pc}
\xymatrixrowsep{2pc}
\xymatrix{
H^0(X_s,\Lambda_{X_s,r} \otimes \SO_{X_s}(m-N) ) \otimes H^0(X_s,E(N)) \ar@{->>}[r] & H^0(X_s,E(m))\\
H^0(X_s,\Lambda_{X_s,r} \otimes \SO_{X_s}(m-N) ) \otimes H^0(X_s,F(N)) \ar@{->>}[r] \ar@{^{(}->}[u] & H^0(X_s,F(m)) \ar@{^{(}->}[u]
}
\end{eqnarray*}
Thus
$$\dim \im(H^0(X_s,\Lambda_{X_s,r} \otimes \SO_{X_s}(m-N) ) \otimes L\to H^0(X_s,E(m)))=P_F(m)$$
As $\Lambda_{X_s,r} \otimes \SO_{X_s}(-N) \otimes L$ generates $F$, $\Lambda_r|_{\{(x,s)\}}\otimes \SO_S(-N)|_s\otimes L$ generates $F|_{\{(x,s)\}}=F_{x,1}$ in $E_{x,1}$ for each $x\in D$. Therefore, for each $x\in D$ and each $1<i\le l_x$
\begin{multline*}
\dim \im \left(\Lambda_r|_{\{(x,s)\}}\otimes \SO_S(-N)|_{s} \otimes L \to \left(E_{x,1}/E_{x,i}\right)\right)=\dim \left ( \frac{F|_{\{(x,s)\}}}{E_{x,i}}\right) \\
=\rk \left ( \frac{F_{x,1}}{E_{x,i}\cap F_{x,1}} \right)= \rk \left( \frac{ F_{x,1}}{F_{x,i}}\right)= \rk(F_{x,1})-\rk(F_{x,i})=\rk(F)-\rk(F_{x,i})
\end{multline*}
Therefore, substituting the previous computations in equation \eqref{eq:GITLambdaModules1} and taking into account that by Corollary \ref{cor:acyclicDestab}, $\dim L= h^0(X_s,F(N))=P_F(N)$, yields
\begin{multline}
\label{eq:GITLambdaModules2}
\frac{\varepsilon_0 P_F(m) + \sum_{x\in D}\left(\varepsilon_{x,1} \rk(F) + \sum_{i=2}^{l_x} \varepsilon_{x,i}(\rk(F)-\rk(F_{x,i})) \right)}{P_F(N)}\\
\ge \frac{\varepsilon_0 P_E(m) +\sum_{x\in D}\left(\varepsilon_{x,1} r+\sum_{i=2}^{l_x} \varepsilon_{x,i} r_{x,i} \right)}{P_E(N)}
\end{multline}
For each $x\in D$ we have
\begin{multline}
\label{eq:GITLambdaModules3}
\varepsilon_{x,1} \rk(F) + \sum_{i=2}^{l_x} \varepsilon_{x,i}(\rk(F)-\rk(F_{x,i})) = \sum_{i=1}^{l_x} \varepsilon_{x,i} \rk(F) - \sum_{i=2}^{l_x} (\alpha_{x,i}-\alpha_{x,i-1})\rk(F_{x,i})\\
 = (1-\alpha_1)\rk(F) + \alpha_1 \rk(F_{x,2}) - \sum_{i=2}^{l_x-1} \alpha_{x,i}(\rk(F_{x,i})-\rk(F_{x,i+1}))- \alpha_{x,l_x} \rk(F_{x,l_x})\\
 = \rk(F) - \sum_{i=1}^{l_x-1} \alpha_{x,i}(\rk(F_{x,i})-\rk(F_{x,i+1}))- \alpha_{x,l_x} \rk(F_{x,l_x}) =\rk(F) - \owt_x(F)
\end{multline}
Adding up over $x$ and substituting in both sides of equation \eqref{eq:GITLambdaModules2} yields
$$\frac{\varepsilon_0 P_F(m) + M\rk(F) - \owt(F)}{P_F(N)} \ge \frac{\varepsilon_0 P_E(m)+M\rk(E)-\owt(E)}{P_E(N)}$$
By Riemann-Roch formula, $P_F(k)=\rk(F)(k+1-g)+\deg(F)$. Substituting in the previous equation and dividing both numerators and denominators by the corresponding ranks yields
$$\frac{\varepsilon_0 (m+1-g)+ M + \varepsilon_0 \mu(F) - \eta(F)}{N+1-g+\mu(F)} \ge \frac{\varepsilon_0 (m+1-g) +M + \varepsilon_0 \mu(E) - \eta(E)}{N+1-g+\mu(E)}$$
Subtracting $\varepsilon_0$ to both sides of the inequality gives
$$\frac{\varepsilon_0 (m-N)+M - \eta(F)}{N+1-g+\mu(F)} \ge \frac{\varepsilon_0 (m-N)+M- \eta(E)}{N+1-g+\mu(E)}$$
By the choice of $N$, both denominators are positive, so by multiplying and grouping one obtains
\begin{multline*}
\mu(F) \left( \varepsilon_0(m-N)+M - \eta(E) \right) + \eta(F) \left( N+1-g+\eta(E) \right) \\
\le \mu(E) \left( \varepsilon_0(m-N)+M \right) + \eta(E) \left( N+1-g \right)
\end{multline*}
Adding and substracting $\mu(E)\eta(E)$ to the right hand side of the inequality yields
\begin{multline*}
\mu(F) \left( \varepsilon_0(m-N)+M - \eta(E) \right) + \eta(F) \left( N+1-g+\mu(E) \right) \\
\le \mu(E) \left( \varepsilon_0(m-N)+M -\eta(E) \right) + \eta(E) \left( N+1-g  +\mu(E)\right)
\end{multline*}
By the choice of $\varepsilon_0$, $\varepsilon_0(m-N)+M - \eta(E) = N+1-g+\mu(E)>0$. Thus
$$\mu(F)+\eta(F) \le \mu(E) + \eta(E)$$
contradicting that $F$ destabilizes $E$.

Moreover $\pmu(F)=\pmu(E)$ if and only if \eqref{eq:GITLambdaModules2} is an equality. Therefore, if $(E,E_\bullet,\varphi)$ is a strictly semistable parabolic $\Lambda$-module and $(F,F_\bullet,\varphi)$ is a parabolic sub-$\Lambda$-module   such that $\pmu(F)=\pmu(E)$, then taking $L=H^0(X_s,F(N))$ we obtain equality in \eqref{eq:GITLambdaModules2} and, therefore, $(E,E_\bullet,\varphi,\alpha)$ is strictly GIT semistable.

Now, we will prove that semi-stability implies GIT-stability for big enough $m$. Let $(E,E_\bullet)$ be a semistable parabolic $\Lambda_s$-module over $X_s$. By Corollary \ref{cor:acyclicDestab}, the image of $(E,E_\bullet)$ under the embedding $\psi_m$ is GIT-semistable if and only if condition \eqref{eq:GITLambdaModules1} holds for every $L\subseteq \CC^{P(N)}$.

Let $L\subseteq \CC^{P(N)}$ be any vector subspace. Let $F$ be the subsheaf of $E$ obtained as the image of $L$ under the quotient $\Lambda_{X_s,r} \otimes \SO_{X_s}(-N) \otimes_\CC \CC^{P(N)} \twoheadrightarrow E$ and let $K_{L,E}$ be the kernel of the quotient
$$0\to K_{L,E} \to \Lambda_{X_s,r} \otimes \SO_{X_s}(-N) \otimes_\CC L \to F \to 0$$
Tensoring the previous short exact sequence by $\SO_{X}(m)$ yields
$$0\to K_{L,E}(m) \to \Lambda_{X_s,r} \otimes \SO_{X_s}(m-N) \otimes_\CC L \to F(m) \to 0$$
The set of possible subshefs $F$ generated this way and the set of possible kernels $K_{L,E}$ both form bounded families of sheaves on $X$ flat over $S$, so by \cite[Lemma 1.9]{Si2}, there exists an $m_0$ such that for every $m\ge m_0$, $H^1(X_s,K_{L,E}(m))=0$ and $H^1(X_s,F(m))=0$. Therefore, the corresponding long exact sequence in cohomology reduces to
\begin{multline*}
0\to H^0(X_s,K_{L,E}(m)) \to H^0(X_s,\Lambda_{X_s,r} \otimes \SO_{X_s}(m-N)) \otimes_\CC L \\
\to H^0(X_s,F(m)) \to H^1(X_s,K_{L,E}(m))=0
\end{multline*}
Thus,
$$\dim \im(H^0(X_s,\Lambda_{X_s,r} \otimes \SO_{X_s}(m-N) ) \otimes L \to H^0(X_s,E(m)))= h^0(X_s,F(m))=P_F(m)$$
On the other hand, similarly to the first part of the proof, $L\otimes \Lambda_r|_{\{(x,s)\}}\otimes \SO_S(-N)|_s$ generates $F_{x,1}$ in $E_{x,1}$ for each $x\in D$. Therefore, for each $x\in D$ and each $1<i\le l_x$
\begin{multline*}
\dim \im \left(\Lambda_r|_{\{(x,s)\}}\otimes \SO_S(-N)|_{s} \otimes L \to \left(E_{x,1}/E_{x,i}\right)\right)=\dim \left ( \frac{F|_{\{(x,s)\}}}{E_{x,i}}\right) \\
=\rk \left ( \frac{F_{x,1}}{E_{x,i}\cap F_{x,1}} \right)= \rk \left( \frac{ F_{x,1}}{F_{x,i}}\right)= \rk(F_{x,1})-\rk(F_{x,i})=\rk(F)-\rk(F_{x,i})
\end{multline*}
Substituting the previous computation in condition \eqref{eq:GITLambdaModules1} and taking into account \eqref{eq:GITLambdaModules3} implies that condition \eqref{eq:GITLambdaModules1} is equivalent to
\begin{equation}
\label{eq:GITLambdaModules4}
\frac{\varepsilon_0 P_F(m) + M\rk(F) - \owt(F)}{\dim L} \ge \frac{\varepsilon_0 P_E(m)+M\rk(E)-\owt(E)}{P_E(N)}
\end{equation}
Under the isomorphism $\alpha_E:H^0(X_s,E(N))\to \CC^{P(N)}$, $L$ corresponds to a subspace of sections of $F(N)$. Therefore, $\dim L \le \dim H^0(X_s,F(N))=P_F(N)+h^1(X_s,F(N))$ and in order to prove that the previous condition holds it is sufficient to demonstrate that
$$\frac{\varepsilon_0 P_F(m) + M\rk(F) - \owt(F)}{P_F(N)+h^1(X_s,F(N))} \ge \frac{\varepsilon_0 P_E(m)+M\rk(E)-\owt(E)}{P_E(N)}$$
holds. Let us denote $\tau_F(N)=h^1(X_s,F(N))/\rk(F)$. Applying Riemann-Roch theorem and dividing by the rank yields that the condition is equivalent to
$$\frac{\varepsilon_0 (m+1-g)+M + \varepsilon_0 \mu(F) - \eta(F)}{N+1-g+\mu(F)+\tau_F(N)} \ge \frac{\varepsilon_0 (m+1-g)+M + \varepsilon_0 \mu(E) - \eta(E)}{N+1-g+\mu(E)}$$
Subtracting $\varepsilon_0$ to both sides of the inequality gives
$$\frac{\varepsilon_0 (m-N)+M - \eta(F)-\varepsilon_0 \tau_F(N)}{N+1-g+\mu(F)+\tau_F(N)} \ge \frac{\varepsilon_0 (m-N)+M- \eta(E)}{N+1-g+\mu(E)}$$
By the choice of $N$, both denominators are positive, so by multiplying and grouping one obtains
\begin{multline*}
\mu(F) \left( \varepsilon_0(m-N)+M - \eta(E) \right) + \eta(F) \left( N+1-g+\eta(E) \right) \le \mu(E) \left( \varepsilon_0(m-N)+M \right) \\
+ \eta(E) \left( N+1-g \right)-\tau_F(N) \left(\varepsilon_0(N+1-g+\mu(E))+\varepsilon_0(m-N)+M-\eta(E)\right)
\end{multline*}
Adding and substracting $\mu(E)\eta(E)$ to the right hand side of the inequality yields
\begin{multline*}
\mu(F) \left( \varepsilon_0(m-N)+M - \eta(E) \right) + \eta(F) \left( N+1-g+\mu(E) \right) \\
\le \mu(E) \left( \varepsilon_0(m-N)+M -\eta(E) \right) + \eta(E) \left( N+1-g  +\mu(E)\right)\\
-\tau_F(N) \left(\varepsilon_0(N+1-g+\mu(E))+\varepsilon_0(m-N)+M-\eta(E)\right)
\end{multline*}
Again, by the choice of $\varepsilon_0$, $\varepsilon_0(m-N)+M -\eta(E)=N+1-g  +\mu(E)>0$ so we have to prove that for big enough $m$
$$\mu(F)+\eta(F)+(\varepsilon_0+1) \tau_F(N) \le \mu(E)+\eta(E)$$
Adding $N+1-g$ to both sides of the inequality and applying Riemann-Roch theorem, this is equivalent to proving that
\begin{multline*}
\frac{h^0(X_s,F(N))}{\rk(F)} + \eta(F) + \varepsilon_0 \tau_F(N) = \frac{P_F(N)+h^1(X_s,F(N))}{\rk(F)} + \eta(F)+\varepsilon_0\tau_F(N)\\
 = N+1-g + \mu(F)+\eta(F)+(\varepsilon_0+1) \tau_F(N) \le N+1-g +\mu(E)+\eta(E)= \frac{P_E(N)}{\rk(E)}+\eta(E)
\end{multline*}
As $E$ is an element of $Q_5(s)$, it is a quotient $\CC^{P(N)}\otimes \SO_{X_s}(-N) \twoheadrightarrow E$. Let $G$ be the image of $L\otimes \SO_{X_s}(-N)$ under such quotient. By construction of $Q_5$, it is clear that $F$ is the image of $G$ under the action of $\Lambda_{X_s,r}$
\begin{eqnarray*}
\xymatrixcolsep{3pc}
\xymatrixrowsep{2pc}
\xymatrix{
\Lambda_{X_s,r} \otimes \SO_{X_s}(-N)\otimes_\CC L \ar@{->>}[r] \ar@{->>}[d] & F \ar@{^{(}->}[d]\\
\Lambda_{X_s,r} \otimes G \ar[r] &E
}
\end{eqnarray*}
By Lemma \ref{lemma:submoduleSaturation}, $F^{\op{sat}}=\im(\Lambda_{X_s,r}\otimes G \to E)^{\op{sat}}$ with the induced parabolic structure is a parabolic sub-$\Lambda$-module of $E$. Therefore, by Lemma \ref{lemma:sharpStabInequality}
$$\frac{h^0(X_s,F(N))}{\rk(F)} + \eta(F) \le \frac{P_E(N)}{\rk(E)}+\eta(E)$$
Let us distinguish two cases. If the inequality is an equality, then by Lemma \ref{lemma:sharpStabInequality} we know that $h^1(X_s,F(N))=0$, so $\tau_F(N)=0$ and we are done.
Otherwise, by Lemma \ref{lemma:slopeDiffbound}, there exists a number $\delta>0$ such that
$$\frac{h^0(X_s,F(N))}{\rk(F)} + \eta(F)+\delta \le \frac{P_E(N)}{\rk(E)}+\eta(E)=\frac{h^0(X_s,E(N))}{\rk(E)}+\eta(E)$$
As we have seen, the set of possible built subsheaves $F$ is a bounded family over a projective curve. Therefore, by \cite[Theorem 3.13]{Kleiman71}, the set of possible values for $\mu(F(N))$ is bounded from below. As we have
\begin{multline*}
\tau_F(N)=\frac{h^0(X_s,F(N))}{\rk(F)} - (N+1-g)-\mu(F)\\
\le \frac{P_E(N)}{\rk(E)}-(N+1-g) + \eta(E)-\eta(F) -\mu(F) \le \frac{P_E(N)}{\rk(E)}+\eta(E)-(N+1-g)-\min_F\{\mu(F)\}
\end{multline*}
the set of possible values of $\tau_F(N)$ is bounded from above. Let $\tau$ be the maximum of such values. As $N$ is fixed, there exists an $m_1\ge m_0$ such that for $m\ge m_1$
$$\varepsilon_0 \tau_F(N)\le \varepsilon_0 \tau < \delta$$
Therefore, for $m\ge m_1$, if $(E,E_\bullet,\varphi)$ is semistable then
\begin{multline*}
\frac{h^0(X_s,F(N))}{\rk(F)} + \eta(F) + \varepsilon_0 \tau_F(N) < \frac{h^0(X_s,F(N))}{\rk(F)} + \eta(F) + \delta \\
 \le \frac{P_E(N)}{\rk(E)}+\eta(E)
\end{multline*}
Therefore, condition \eqref{eq:GITLambdaModules4} holds for every $L\subseteq \CC^{P(N)}$ and $(E,E_\bullet)$ is GIT-semistable under embedding $\psi_m$ for every $m\ge m_1$.

Suppose that $(E,E_\bullet,\varphi,\alpha)$ is strictly GIT semistable. Let $L\subseteq \CC^{P(N)}$ such that we have an equality in \eqref{eq:GITLambdaModules4}. Take $F=\im\left( \Lambda_{X_s,r}\otimes \SO_{X_s}(-N)\otimes_{\CC} L \to E \right)$ as before. As $L\subseteq H^0(X_s, F(N))$, we may assume without loss of generality that $L=H^0(X_s,F(N))$. Then
$$\frac{h^0(X_s,F(N))}{\rk(F)}+\eta(F)+\epsilon_0 \tau_F(N)=\frac{P_E(N)}{\rk(E)}+\eta(E)$$
If we had $\tau_F(N)\ne 0$ then, in particular,
$$\frac{h^0(X_s,F(N))}{\rk(F)}+\eta(F)<\frac{P_E(N)}{\rk(E)}+\eta(E)$$
So, using Lemma \ref{lemma:slopeDiffbound}  as $0<\epsilon_0\tau_F(N)<\delta$
\begin{multline*}
\frac{h^0(X_s,F(N))}{\rk(F)} + \eta(F) + \varepsilon_0 \tau_F(N) < \frac{h^0(X_s,F(N))}{\rk(F)} + \eta(F) + \delta \\
 \le \frac{P_E(N)}{\rk(E)}+\eta(E)
\end{multline*}
contradicting the equality assumption in \eqref{eq:GITLambdaModules4}. Therefore, $\tau_F(N)=0$ and we obtain
$$\frac{h^0(X_s,F(N))}{\rk(F)}+\eta(F) = \frac{P_E(N)}{\rk(E)}+\eta(E)$$
By Lemma \ref{lemma:sharpStabInequality}, $F$ must be saturated and, therefore, $(F,F_\bullet,\varphi)$ is a parabolic sub-$\Lambda$-module with $\pmu(F)=\pmu(E)$, so $(E,E_\bullet,\varphi)$ is strictly semistable.

\end{proof}

\begin{theorem}
\label{thm:LambdaModModuli}
Let $\SM(\Lambda,P,\alpha,\overline{r}): (\Sch_S) \to (\Sets)$ denote the functor that associates each $S$-scheme $f:T\to S$ the set of isomorphism classes of semistable parabolic $\Lambda$-modules over $C\times T$ with Hilbert polynomial $P$ and the given parabolic type modulo S-equivalence and tensoring by a line bundle over $T$. Let $\SM^s(\Lambda,P,\alpha,\overline{r})$ be the subfunctor corresponding to isomorphism classes of stable parabolic $\Lambda$-modules. There exists a quasi-projective variety $M(\Lambda,P,\alpha,\overline{r})$ such that
\begin{enumerate}
\item $M(\Lambda,P,\alpha,\overline{r})$ is a coarse moduli space for $\SM(\Lambda,P,\alpha,\overline{r})$.
\item There is an open subscheme $M^s(\Lambda,P,\alpha,\overline{r}) \subseteq M(\Lambda,P,\alpha,\overline{r})$ which is a coarse moduli space for the functor $\SM^s(\Lambda,P,\alpha,\overline{r})$. Moreover, it admits a locally universal family in the \'etale topology.
\end{enumerate}
\end{theorem}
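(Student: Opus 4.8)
\emph{Proof idea.} The plan is to realize $M(\Lambda,P,\alpha,\overline{r})$ as a geometric invariant theory quotient of the parameter scheme $R^{ss}$ of Theorem~\ref{thm:LambdaModParam} by the natural action of $\op{SL}(V)$, $V=\CC^{P(N)}$, and then to verify that this quotient corepresents $\SM(\Lambda,P,\alpha,\overline{r})$.

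First I would assemble the GIT data. Composing the embedding of $R^{ss}$ built in the proof of Theorem~\ref{thm:LambdaModParam} with $\psi_m$, the Pl\"ucker maps, and the parabolic Grassmannian factors exhibits $R^{ss}$, and its open subscheme $R^s$, as a quasi-projective subscheme of a scheme $\PP_S\times_S\fprod{S}{x\in D}{}\fprod{S}{i=1}{l_x}\Grass(\Lambda_r|_{\{x\}\times S}\otimes\SO_S(-N)\otimes_\CC\CC^{P(N)},r_{x,i})$ which is projective over $S$ (with $r_{x,1}=r$). Let $\overline{R^{ss}}$ be the scheme-theoretic closure; it is projective over $S$, the $\op{SL}(V)$-action extends to it, and the line bundle $\Theta$ of Lemma~\ref{lemma:GITLambdaModules} is the restriction of an ample $\op{SL}(V)$-linearized line bundle on $\overline{R^{ss}}$. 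By Mumford's theorem \cite{MumfordGIT}, the GIT-semistable locus $(\overline{R^{ss}})^{ss}$ carries a good categorical quotient $\pi\colon(\overline{R^{ss}})^{ss}\to(\overline{R^{ss}})^{ss}\gitq\op{SL}(V)$, projective over $S$, restricting to a geometric quotient on the stable locus. By Lemma~\ref{lemma:GITLambdaModules}, for $m$ large $R^{ss}\subseteq(\overline{R^{ss}})^{ss}$ and $R^s\subseteq(\overline{R^{ss}})^{s}$, both as open subschemes.

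The crucial point is that $R^{ss}$ is an $\op{SL}(V)$-saturated open subset of $(\overline{R^{ss}})^{ss}$. Granting this, $M:=\pi(R^{ss})$ is open in $(\overline{R^{ss}})^{ss}\gitq\op{SL}(V)$, hence quasi-projective over $S$, and $R^{ss}\to M$ is a good (and, being a reductive GIT quotient, universal) categorical quotient; likewise $M^s:=\pi(R^s)=R^s/\op{SL}(V)$ is an open subscheme of $M$ and a geometric quotient. Saturation reduces to two facts. For a one-parameter subgroup $\lambda$ and $z\in R^{ss}$, the limit $\lim_{t\to0}\lambda(t)\cdot z$ parametrizes the rigidified parabolic $\Lambda$-module attached to the $\lambda$-adapted filtration of the one parametrized by $z$, that is, an associated-graded object, which by Theorems~\ref{thm:HNfiiltration} and~\ref{thm:JHfiltration} is again a semistable parabolic $\Lambda$-module of the same Hilbert polynomial and parabolic type, hence still lies in $R^{ss}$; so orbit closures of points of $R^{ss}$ remain inside $R^{ss}$. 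Conversely, a point of $(\overline{R^{ss}})^{ss}$ whose orbit closure meets $R^{ss}$ lies in $R^{ss}$, because the boundary $\overline{R^{ss}}\setminus R^{ss}$ parametrizes degenerations in which local freeness, the rigidifying isomorphism $\alpha$, the prescribed Hilbert polynomial, or the parabolic conditions fail, and such data cannot be a limit of the orbit of a genuine parabolic $\Lambda$-module of the fixed numerical type — this is the analogue, in the present setting, of the boundary argument in \cite[Theorem~3.8]{Si2}, and it is where the boundedness results of Section~\ref{section:boundnessTheorems} are used.

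Finally I would check the moduli properties. As explained at the start of this section, $\SM(\Lambda,P,\alpha,\overline{r})$ is the functor obtained from $\SR^{ss}$ by forgetting the rigidification and quotienting by $\op{SL}(V)$, the residual scalar ambiguity being precisely the freedom to tensor by a line bundle on the base; hence $\op{SL}(V)$-invariant maps out of $\SR^{ss}$ coincide with maps out of $\SM$, and since $R^{ss}$ represents $\SR^{ss}$ and $M$ is a categorical quotient of $R^{ss}$, it corepresents $\SM$. That $M$ is a \emph{coarse} moduli space then follows by identifying its geometric points with $S$-equivalence classes: each fibre of $\pi$ contains a unique closed $\op{SL}(V)$-orbit, closed orbits correspond to polystable parabolic $\Lambda$-modules, and two semistable parabolic $\Lambda$-modules are $S$-equivalent exactly when the direct sums of their Jordan--H\"older factors (Theorem~\ref{thm:JHfiltration}) coincide. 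On the stable locus $\op{PGL}(V)$ acts freely on $R^s$, since a stable parabolic $\Lambda$-module is simple and has only scalar automorphisms, so $R^s\to M^s$ is a $\op{PGL}(V)$-torsor and in particular \'etale-locally trivial; a section over an \'etale chart $U\to M^s$ pulls the universal rigidified object on $R^s$ back to a family over $U$ inducing the modular morphism, which is the asserted locally universal family in the \'etale topology. I expect the main obstacle to be the saturation/boundary analysis of the third paragraph: ensuring that the GIT quotient of the open subscheme $R^{ss}$ is genuinely a quasi-projective good quotient, rather than merely an abstract orbit space, is exactly the step that forces one to use both the boundedness theorems and the explicit description of $R^{ss}$.
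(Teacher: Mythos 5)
Your overall architecture is the same as the paper's: take the GIT quotient $M(\Lambda,P,\alpha,\overline{r})=R^{ss}\gitq \op{SL}_{P(N)}(\CC)$ with the linearization $\Theta$ of Lemma \ref{lemma:GITLambdaModules}, observe that $\SM(\Lambda,P,\alpha,\overline{r})$ is the quotient of the functor $\SR^{ss}$ of Theorem \ref{thm:LambdaModParam} so that a (universal) categorical quotient corepresents it, identify geometric points with S-equivalence classes through closed orbits and the Jordan--H\"older graduate, and obtain a geometric quotient of $R^s$ on the stable locus. Where you diverge is in how the quotient of the merely quasi-projective $R^{ss}$ is produced: the paper applies Simpson's relative GIT package directly to $R^{ss}$ (\cite[Proposition 1.11]{Si2} for a quasi-projective universal categorical quotient with affine projection, \cite[Lemma 1.10]{Si2} for the closed-orbit description of points, and an orbit analysis ``analogous to \cite[Theorem 1.21.3]{Si2}'' for S-equivalence), whereas you pass to a projective closure $\overline{R^{ss}}$, quotient it by \cite{MumfordGIT}, and then claim that $R^{ss}$ is a saturated open subset of the GIT-semistable locus of the closure.

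That saturation claim is a genuine gap, and it is exactly the step your route cannot leave as an assertion. The statement that no GIT-semistable boundary point of $\overline{R^{ss}}\setminus R^{ss}$ lies in the fibre of a point of $R^{ss}$ is the hard content here: in Simpson's non-parabolic construction the corresponding closure analysis is a theorem proved with the section-counting estimates, and in the present setting it would additionally have to control the extra Grassmannian factors recording the quasi-parabolic filtrations, the locally free locus, and the closed conditions $R_j$ cutting out $\Lambda$-invariance; saying such degenerations ``cannot be limits of genuine parabolic $\Lambda$-modules'' is the conclusion, not an argument. Your converse direction is also incomplete as written: points of an orbit closure are not in general one-parameter-subgroup limits (only the unique closed orbit in a fibre is reached that way, by Kempf), and the identification of a 1-PS limit with the associated graded of the induced filtration --- together with the verification that this limit is again locally free, $\Lambda$-invariant, of the same parabolic type and semistable, hence in $R^{ss}$ --- is itself a substantive argument, essentially the \cite[Theorem 1.21.3]{Si2}-type analysis which the paper also needs (and defers to Simpson) for the S-equivalence statement. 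So either supply the parabolic analogue of Simpson's closure theorem, or argue as the paper does and apply \cite[Proposition 1.11]{Si2} and \cite[Lemma 1.10]{Si2} to the quasi-projective $R^{ss}$ itself, which removes the closure from the picture. Your remaining points (corepresentability of the quotient functor, freeness of the $\op{PGL}_{P(N)}(\CC)$-action on $R^s$ via simplicity of stable objects, and the \'etale-local universal family from Luna-type local sections) are consistent with the paper, which indeed leaves the last of these implicit.
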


\begin{proof}
Let $\Theta$ be any very ample line bundle over $R$ for which Lemma \ref{lemma:GITLambdaModules} holds. Then Lemma \ref{lemma:GITLambdaModules} implies that the subscheme of GIT-(semi-)stable points of $R$ is $R^s$ (respectively $R^{ss}$). Take the GIT quotient $M(\Lambda,P,\alpha,\overline{r}):=R^{ss}\gitq \op{SL}_{P(N)}(\CC)$. By the discussion at the start of this section, $\SM(\Lambda,P,\alpha,\overline{r})$ is the quotient of the functor $\SR^{ss}$ described in Theorem \ref{thm:LambdaModParam} by $\op{SL}_{P(N)}(\CC)$. By \cite[Proposition 1.11]{Si2}, $M(\Lambda,P,\alpha,\overline{r})$ is a universal categorical quotient of $R^{ss}$ by $\op{SL}_{P(N)}(\CC)$, the projection map $R^{ss}\to M(\Lambda,P,\alpha,\overline{r})$ is affine and $M(\Lambda,P,\alpha,\overline{r})$ is quasi-projective. Moreover, $M^s(\Lambda,P,\alpha,\overline{r}) \subseteq M(\Lambda,P,\alpha,\overline{r})$ is an open sub-scheme whose preimage under the quotient morphism is $R^s$ such that $R^{s}\to M^s(\Lambda,P,\alpha,\overline{r})$ is a universal geometric quotient. This proves that $M(\Lambda,P,\alpha,\overline{r})$ universally corepresents
$$\SR^{ss} / \op{SL}_{P(N)}(\CC) \cong \SM(\Lambda,P,\alpha,\overline{r})$$
and $M^s(\Lambda,P,\alpha,\overline{r})$ universally corepresents
$$\SR^{s} / \op{SL}_{P(N)}(\CC) \cong \SM^s(\Lambda,P,\alpha,\overline{r})$$
Therefore, in order to prove the Theorem it is enough to prove that the geometric points of $M(\Lambda,P,\alpha,\overline{r})$ coincide with the S-equivalence classes of parabolic $\Lambda$-modules. By \cite[Lemma 1.10]{Si2}, the closed points in $M(\Lambda,P,\alpha,\overline{r})$ are in one to one correspondence with the closed orbits of $\op{SL}_{P(N)}(\CC)$ in $R^{ss}$. Therefore, it is enough to prove that the orbit of any semistable parabolic $\Lambda$-module $(E,E_\bullet)$ contains its graduate $\Gr(E)$ by the Jordan-H\"older filtration and that the orbit of a graduate object $\Gr(E)$ is closed. The proof is then completely analogous to the one given for \cite[Theorem 1.21.3]{Si2} (used also in \cite[Theorem 4.7.3]{Si2} to prove the same property for non-parabolic $\Lambda$-modules).
\end{proof}

\section{Residual parabolic $\Lambda$-modules}
\label{section:residualLambdaModules}

One of the principal uses of enhancing a vector bundle with a parabolic structure is to control the behavior of some ``\textit{field}'' such as a logarithmic connection or a Higgs field near a puncture in a smooth Riemann surface. This is usually done through the control of some kind of \textit{residue} of the structure around the parabolic points of the surface.

Following the definitions of residue of a parabolic Higgs field or residue of a parabolic connection (parabolic $\SD_C$-module) given by Simpson \cite{SimpsonNonCompact}, the residue at a parabolic point is an endomorphism of the fiber of the underlying bundle over the point induced by the action of the field or connection respectively.

While studying the moduli spaces of these geometric structures and the correspondences between them, some restrictions over the residues of both the Higgs field and the connection appear naturally. For example, in the case of parabolic Higgs bundles, for every stable parabolic vector bundle $(E,E_\bullet)$, the cotangent space at $(E,E_\bullet)$ to the moduli space of parabolic vector bundles can be canonically identified through Serre duality with $H^0(\SParEnd(E,E_\bullet)\otimes K(D))$. Therefore, every element of the cotangent bundle corresponds to a stable parabolic Higgs bundle. All the parabolic Higgs fields $\varphi$ obtained this way share the property of being strongly parabolic, i.e., for every $x\in D$ and for every $i=1,\ldots,l_x$
\begin{equation}
\label{eq:HiggsResCondition}
\Res(\varphi,x)(E_{x,i})\subseteq E_{x,i+1}
\end{equation}
In fact, if we restrict ourselves to studying strongly parabolic Higgs fields, then the cotangent bundle of the moduli space of stable parabolic vector bundles fits as an open dense subset of the moduli space of stable strongly parabolic Higgs bundles. We can also understand the strongly parabolic condition as a control of the eigenvalues of the Higgs field at the parabolic points. In this sense, it is equivalent to imposing that the residue has a null spectrum.

On the other hand, through Simpson's correspondence we know that there exists an equivalence of categories between stable parabolic Higgs bundles and stable filtered $\SD_C$-modules \cite{SimpsonNonCompact}. If $(E,E_\bullet,\nabla)$ is a parabolic connection corresponding to an stable strongly parabolic Higgs bundle, then the action of the residue $\Res(\nabla,x)$ at the fiber $E|_x$ must satisfy the following two conditions
\begin{enumerate}
\item The eigenvalues of $\Res(\nabla,x)$ must coincide with the parabolic weights and,
\item $\Res(\nabla,x)$ must act on $E_{x,i}/E_{x,i+1}$ as multiplication by $\alpha_i$.
\end{enumerate}
These conditions can be reformulated as imposing that for every $x\in D$ and every $i$
\begin{equation}
\label{eq:ConnectionResCondition}
(\Res(\nabla,x)-\alpha_{x,i} \id_{E|_x})(E_{x,i})\subseteq E_{x,i+1} 
\end{equation}
As another example, in order to define parabolic $\lambda$-connections we must impose the following condition on the residue, serving as a sort of interpolation between the other two
$$(\Res(\nabla,x)-\lambda\alpha_{x,i} \id_{E|_x})(E_{x,i})\subseteq E_{x,i+1}$$
This will be explored in more detail in the last part of this paper (section \ref{section:lambdaConnections}).

Through this section we aim to give a suitable definition for the residue of a parabolic $\Lambda$-module that generalizes the ones given by Simpson \cite{SimpsonNonCompact} in the previous scenarios. Using this definition, we will present a ``\textit{residual}'' condition on parabolic $\Lambda$-modules that unifies the previous examples and we will show that the moduli of semistable residual parabolic $\Lambda$-modules exists as a closed subscheme of the one constructed in the previous section.

Let $\Lambda$ be a sheaf of rings of differential operators such that $\Lambda|_{\overline{D}}$ is locally free. Let $(E,E_\bullet,\varphi)$ be a parabolic $\Lambda$-module. By definition, for every parabolic point $x\in D$, the image of $\Lambda \otimes E(-\{x\}\times S)$ by the morphism $\varphi:\Lambda\otimes E \longrightarrow E$ lies in $E(-\{x\}\times S)$. Therefore, if $i_x:S \cong \{x\}\times S \hookrightarrow C\times S$ is the canonical inclusion we have a commutative diagram of sheaves of $(\SO_X,\SO_X)$-modules
\begin{eqnarray*}
\xymatrix{
& \Lambda \otimes_{\SO_X} E(-\{x\} \times S) \ar[r] \ar[d] & \Lambda \otimes_{\SO_X} E \ar[r] \ar[d]^{\varphi} & \Lambda \otimes_{\SO_X} (i_x)_*E|_{\{x\}\times S} \ar@{-->}[d] \ar[r] & 0\\
0 \ar[r] & E(-\{x\}\times S) \ar[r] & E \ar[r]^{\op{ev}} & (i_x)_*E|_{\{x\}\times S} \ar[r] & 0
}
\end{eqnarray*}
and we obtain a morphism
$$\varphi|_{\{x\}\times D}: \Lambda \otimes_{\SO_X} (i_x)_*E|_{\{x\}\times S}\longrightarrow (i_x)_*E|_{\{x\}\times S}$$
Taking the pullback by $i_x:S\hookrightarrow C\times S$  we obtain an induced morphism
$$\op{Res}(\varphi,x): \Lambda|_{\{x\}\times S} \otimes_{\SO_S} E|_{\{x\}\times S} \to E|_{\{x\}\times S}$$

\begin{definition}[Total residue of a $\Lambda$-module]
We call
$$\Res(\varphi,x):\Lambda|_{\{x\}\times S} \otimes_{\SO_S} E|_{\{x\}\times S} \longrightarrow E_{\{x\}\times S}$$
the total residue of the parabolic $\Lambda$-module $(E,E_\bullet,\varphi)$ at the point $x\in D$.
\end{definition}

As $\Lambda\otimes E\to E$ preserves the parabolic structure, then for every $x\in D$, $\Res(\varphi,x)$ preserves the parabolic filtration in the sense that for every $i=1,\ldots,l_x$
$$\Res(\varphi,x)\left(\Lambda|_{\{x\}\times S} \otimes_{\SO_S} E_{x,i}\right)\subseteq E_{x,i}$$

As an explicit example of this construction, let $S$ be a point and let us consider a parabolic connection $\nabla:E\longrightarrow E\otimes K_C(D)$. Let $x\in D$ be a parabolic point and let $z$ be a local coordinate in a neighborhood of $x$. Then there is a matrix $A_x$ such that $\nabla$ can be locally written as
$$\nabla(v)=A_xv \frac{dz}{z}+dv$$
where $v$ is a local section of $E$ around $x$. Now let us consider the operator $\nabla':T_C(-D)\otimes E \longrightarrow E$ obtained from $\nabla$ by contraction. It corresponds to the action of $\Lambda^{\op{DR},\log D}_1$ on $E$. If $\SX$ is a local section of $T_C(-D)$ and $v$ is a local section of $E$ around $x\in D$, then
$$\nabla'(\SX,v)=A_xv \SX(v)/z + \SX(v)$$
In particular, as $\SX$ is locally written as $\SX=tz\frac{\partial}{\partial z}$ for some local regular function $t$, then
$$\nabla'(\SX,v)=\nabla'\left(tz\frac{\partial}{\partial z},v\right) = tA_xv + tz\frac{\partial v}{\partial z}$$
Observe that the second summand of the right hand side is always a local section of $E(-x)$. Moreover, if $v\in E(-x)$ then the first factor also belongs to $E(-x)$, so $\nabla'$ sends $T_C(-D)\otimes E(-D)$ to $E(-D)$. As the evaluation of the second summand at $z=0$ is always $0$, the evaluation
$$\nabla'(\SX,v)|_{z=0}=\left(tA_x v \right)|_{z=0} = t(0)A_x v(0)$$
only depends on $\SX|_{z=0}$ and $v|_{z=0}$, so we obtain a morphism
\begin{eqnarray*}
\xymatrixcolsep{0.1pc}
\xymatrixrowsep{0.05pc}
\xymatrix{
{\Res(\nabla',x)|_{T_C(-D)|_x}}&:&T_C(-D)|_x \otimes E|_x & \cong &\SO_x \otimes E|_x \ar[rrrrrr] &&&&&& E|_x\\
&&tz\left.\frac{\partial}{\partial z} \right|_{z=0} \otimes v \ar@{|->}[rr] && t \otimes v \ar@{|->}[rrrrrr]&&&&&& tA_xv
}
\end{eqnarray*}
Notice that through the canonical isomorphism $\SO_x\otimes E|_x \cong E|_x$, the morphism $\Res(\nabla',x)|_{T_C(-D)|_x}\in \End(E_x)$ coincides with the usual notion of residue of $\nabla$, in the sense of the order $-1$ coefficient in the Laurent series of $\nabla$ around the point $x$.

Now let us consider the complete action of $\Lambda^{\op{DR},\log D}_1=\SO_C\oplus T_C(-D)$ on $E$. It is locally given by
$$\nabla'(f+\SX,v)=fv+\nabla'(\SX,v)$$
Where $f$ is a locally regular function around $x\in D$. Once more the restriction of $\nabla'(f+\SX,v)$ to $z=0$ only depends on $f(0)$, $\SX|_{z=0}$ and $v|_{z=0}$, so we obtain a morphism
\begin{eqnarray*}
\xymatrixcolsep{0.1pc}
\xymatrixrowsep{0.05pc}
\xymatrix{
{\Res(\nabla',x)|_{\Lambda_1^{\op{DR},\log D}|_x}}&:&{\Lambda_1^{\op{DR},\log D}|_x\otimes E|_x }& \cong  &(\SO_x \oplus \SO_x)\otimes E|_x \ar[rrrrrr] &&&&&& E|_x\\
&&\left(f+tz\left. \frac{\partial}{\partial z} \right|_{z=0}\right) \otimes v \ar@{|->}[rr] && (f,t) \otimes v \ar@{|->}[rrrrrr]&&&&&& fv+tA_xv
}
\end{eqnarray*}

Similarly, if $\varphi:E\longrightarrow E\otimes K_C(D)$ is a Higgs field, then around $x\in D$ it is locally given as
$$\varphi(v)=A_xv\frac{dz}{z}$$
for some matrix $A_x$. If we consider the induced morphism $\varphi':(\SO_C\oplus T_C(D))\otimes E\to E$ given by
$$\varphi'(f+\SX,v) = fv+A_xv \SX(z)/z$$
then the evaluation of this expression at $z=0$ clearly only depends on $f(0)$, $\SX|_{z=0}$ and $v|_{z=0}$ so it induces a morphism
\begin{eqnarray*}
\xymatrixcolsep{0.1pc}
\xymatrixrowsep{0.05pc}
\xymatrix{
{\Res(\varphi',x)|_{\Lambda_1^{\op{Higgs},\log D}|_x}}&:&{\Lambda_1^{\op{Higgs},\log D}|_x\otimes E|_x }& \cong  &(\SO_x \oplus \SO_x)\otimes E|_x \ar[rrrrrr] &&&&&& E|_x\\
&&\left(f+tz\left. \frac{\partial}{\partial z}\right |_{z=0}\right) \otimes v \ar@{|->}[rr] && (f,t) \otimes v \ar@{|->}[rrrrrr]&&&&&& fv+tA_xv
}
\end{eqnarray*}

Observe that if $(E,E_\bullet,\varphi)$ is a parabolic $\Lambda$-module and $R\in H^0(S,\Lambda|_{\{x\}\times S})$, then composing $R$ with $\Res(\varphi,x)$ yields an endomorphism of the fiber $E|_{\{x\}\times S}$ that preserves the parabolic filtration.
$$\Res_R(\varphi,x):E|_{\{x\}\times S}=\SO_X|_{\{x\}\times S} \otimes E|_{\{x\}\times S} \stackrel{f^*R}{\longrightarrow}f^*\Lambda|_{\{x\}\times S} \otimes E|_{\{x\}\times S} \stackrel{\Res(\varphi,x)}{\longrightarrow} E|_{\{x\}\times S}$$

\begin{definition}
Let $\Lambda$ be a sheaf of rings of differential operators over $C\times S$ flat over $S$ such that $\Lambda|_{\overline{D}}$ is locally free. A residual condition for $\Lambda$ over $D$ is a set of sections $\overline{R}=\{R_{x,i}\}$ with $R_{x,i}\in H^0(S,\Lambda|_{\{x\}\times S})$ for each $x\in D$ and each $i=1,\ldots,l_x$. Given a residual condition $\overline{R}$, for each $f:T\to S$ and each family of parabolic $\Lambda$-modules over $T$, $(E,E_\bullet,\varphi)$ we obtain morphisms
$$\Res_{R_{x,i}}(\varphi,x):E|_{\{x\}\times T}=\SO_X|_{\{x\}\times T} \otimes E|_{\{x\}\times T} \stackrel{f^*(R_{x,i})}{\longrightarrow}f^*\Lambda|_{\{x\}\times T} \otimes E|_{\{x\}\times T} \stackrel{\Res(\varphi,x)}{\longrightarrow} E|_{\{x\}\times T}$$
So we obtain an endomorphism $\Res_{R_{x,i}}(\varphi,x)\in H^0(T, \End_{\SO_T}(E|_{\{x\}\times T}))$. Moreover, as $\Lambda$ preserves the parabolic filtration, $\Res_{R_{x,i}}(\varphi,x)$ preserves the parabolic filtration over $x$ for every $i=1,\ldots,l_x$.
If $\overline{R}$ is a residual condition for $\Lambda$ over $D$, we say that a parabolic $\Lambda$-module $(E,E_\bullet)$ is $\overline{R}$-residual if for every $x\in D$ and every $i=1,\ldots,l_x$
$$\Res_{R_{x,i}}(\varphi,x)(E_{x,i}) \subseteq E_{x,i+1}$$
\end{definition}

For example, if we take
$$R_{x,i}^{\op{DR}}=\left(-\lambda_{x,i}, z \left.\frac{\partial}{\partial z} \right|_{z=0}\right) \in H^0(x,\Lambda_1^{\op{DR},\log D}|_x)\subsetneq H^0(x,\Lambda^{\op{DR},\log D}|_x)$$
Then for the connection $\nabla:E\to E\otimes K(D)$ in the previous example
$$\Res_{R_{x,i}^{\op{DR}}}(\nabla,x)=A_x-\lambda_i \id \in \End(E|_x)$$
Therefore a connection  is $\overline{R}^{\op{DR}}$-residual if
\begin{enumerate}
\item The eigenvalues of the residue at each parabolic point $x\in D$ are $\{\lambda_{x,i}\}$ respectively and
\item the residue acts on $E_{x,i}/E_{x,i+1}$ by multiplication by $\lambda_{x,i}$
\end{enumerate}
Therefore, we can control the eigenvalues of a connection through an $\overline{R}^{\op{DR}}$-residual condition. For example, taking $\lambda_{x,i}=\alpha_{x,i}$ we recover the residue condition of a parabolic connection described at the start of the section. This can be also achieved in the context of Higgs bundles. For example, if we take
$$R_{x,i}^{\op{Higgs}}=z \left. \frac{\partial}{\partial z}\right|_{z=0} \in H^0(x,\Lambda_1^{\op{Higgs},\log D}|_x)\subsetneq H^0(x,\Lambda^{\op{Higgs},\log D}|_x)$$
Then $\Res_{R_{x,i}^{\op{Higgs}}}(\varphi,x) = \varphi|_x$ so a Higgs bundle $\varphi:E\longrightarrow E\otimes K(D)$ is $\overline{R}^{\op{Higgs}}$-residual if and only if it is strongly parabolic.

\begin{theorem} 
\label{thm:ResLambdaModModuli}
Let $\Lambda$ be a sheaf of rings of differential operators over $C\times S$ flat over $S$ such that $\Lambda|_{\overline{D}}$ is locally free.  Let $\overline{R}$, be a residual condition for $\Lambda$ over $D$. Let $\SM(\Lambda, \overline{R},P,\alpha,\overline{r}): (\Sch_S) \to (\Sets)$ denote the functor that associates each $S$-scheme $f:T\to S$ the set of isomorphism classes of semistable $\overline{R}$-residual parabolic $\Lambda$-modules over $C\times T$ with Hilbert polynomial $P$ and the given parabolic type modulo S-equivalence and tensoring by a line bundle over $T$. Let $\SM^s(\Lambda,\overline{R},P,\alpha,\overline{r})$ be the subfunctor corresponding to classes of stable parabolic $\Lambda$-modules. There exists a quasi-projective variety $M(\Lambda,\overline{R},P,\alpha,\overline{r})$ such that
\begin{enumerate}
\item $M(\Lambda,\overline{R},P,\alpha,\overline{r})$ is a coarse moduli space for the functor $\SM(\Lambda,\overline{R},P,\alpha,\overline{r})\subset \SM(\Lambda,P,\alpha,\overline{r})$.
\item There is an open subscheme $M^s(\Lambda,\overline{R},P,\alpha,\overline{r}) \subseteq \SM(\Lambda,\overline{R},P,\alpha,\overline{r})$ which is a coarse moduli space for the functor $\SM^s(\Lambda,\overline{R},P,\alpha,\overline{r})$. Moreover, it admits a locally universal family in the \'etale topology. 
\end{enumerate}
\end{theorem}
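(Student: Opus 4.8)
The plan is to realize $M(\Lambda,\overline{R},P,\alpha,\overline{r})$ as a closed subscheme of $M(\Lambda,P,\alpha,\overline{r})$ by imposing the residual condition directly on the parameter space $R^{ss}$ of Theorem \ref{thm:LambdaModParam} and then passing to the GIT quotient. Let $(\SE^{R^{ss}},\SE^{R^{ss}}_\bullet,\varphi^{R^{ss}},\alpha^{R^{ss}})$ denote the universal rigidified parabolic $\Lambda_{R^{ss}}$-module constructed in the proof of Theorem \ref{thm:LambdaModParam}. First I would verify that the construction of the total residue applied to $\varphi^{R^{ss}}$ produces, for each $x\in D$, a morphism $\Res(\varphi^{R^{ss}},x)$ of sheaves on $\{x\}\times R^{ss}$ which is compatible with arbitrary base change: this holds because the diagram defining it is assembled from the short exact sequences $0\to\SE^{R^{ss}}(-\{x\}\times R^{ss})\to\SE^{R^{ss}}\to(i_x)_*(\SE^{R^{ss}}|_{\{x\}\times R^{ss}})\to0$ together with their $\Lambda$-tensors, all of which remain exact after base change since $\SE^{R^{ss}}$ is locally free and flat over $R^{ss}$ and $\Lambda|_{\overline{D}}$ is locally free. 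Composing with the pullback of each section $R_{x,i}\in H^0(S,\Lambda|_{\{x\}\times S})$ then gives endomorphisms $\Res_{R_{x,i}}(\varphi^{R^{ss}},x)$ of $\SE^{R^{ss}}|_{\{x\}\times R^{ss}}$ which preserve the universal parabolic filtration.

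Next I would note that a $T$-point of $R^{ss}$ parametrizes an $\overline{R}$-residual parabolic $\Lambda$-module exactly when, for all $x\in D$ and all $1\le i\le l_x$, the pullback to $T$ of the composite of the inclusion $\SE^{R^{ss}}_{x,i}\hookrightarrow\SE^{R^{ss}}|_{\{x\}\times R^{ss}}$, the endomorphism $\Res_{R_{x,i}}(\varphi^{R^{ss}},x)$, and the projection $\SE^{R^{ss}}|_{\{x\}\times R^{ss}}\twoheadrightarrow\SE^{R^{ss}}|_{\{x\}\times R^{ss}}/\SE^{R^{ss}}_{x,i+1}$ is zero. By \cite[Lemma 4.3]{Yokogawa93} each of these conditions is represented by a closed subscheme of $R^{ss}$; let $R^{ss}_{\overline{R}}$ be their finite intersection. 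Then $R^{ss}_{\overline{R}}$ is a closed subscheme of $R^{ss}$ representing the subfunctor $\SR^{ss}_{\overline{R}}\subseteq\SR^{ss}$ of rigidified semistable $\overline{R}$-residual parabolic $\Lambda$-modules, and $R^s_{\overline{R}}:=R^s\cap R^{ss}_{\overline{R}}$ does the same for the stable ones. Since the residual condition is intrinsic to $(\SE,\SE_\bullet,\varphi)$ and does not involve the rigidification $\alpha$, both $R^{ss}_{\overline{R}}$ and $R^s_{\overline{R}}$ are $\op{SL}_{P(N)}(\CC)$-invariant.

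I would then take the GIT quotient with respect to the restriction of the linearization $\Theta$ of Lemma \ref{lemma:GITLambdaModules}. By that lemma every point of $R^{ss}$ is GIT-semistable, hence so is every point of the closed invariant subscheme $R^{ss}_{\overline{R}}$; therefore the affine good quotient $\pi:R^{ss}\to M(\Lambda,P,\alpha,\overline{r})$ restricts on $R^{ss}_{\overline{R}}$ to a universal categorical good quotient onto a closed subscheme of $M(\Lambda,P,\alpha,\overline{r})$, and I set $M(\Lambda,\overline{R},P,\alpha,\overline{r})$ equal to this image; it is a quasi-projective variety. Likewise $M^s(\Lambda,\overline{R},P,\alpha,\overline{r}):=\pi(R^s_{\overline{R}})$ is an open subscheme which is a universal geometric quotient of $R^s_{\overline{R}}$, so it admits a locally universal family in the \'etale topology exactly as in Theorem \ref{thm:LambdaModModuli}. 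The corepresentability statements (1) and (2) are transcribed verbatim from the proof of Theorem \ref{thm:LambdaModModuli}, using that $\SM(\Lambda,\overline{R},P,\alpha,\overline{r})$ is the quotient of $\SR^{ss}_{\overline{R}}$ by $\op{SL}_{P(N)}(\CC)$. To identify the geometric points of $M(\Lambda,\overline{R},P,\alpha,\overline{r})$ with $S$-equivalence classes I would repeat the orbit-closure argument of \cite[Theorem 1.21]{Si2}; the only extra ingredient is that the $\overline{R}$-residual condition passes to every parabolic sub-$\Lambda$-module and every parabolic quotient $\Lambda$-module with the induced parabolic structure, since for $(F,F_\bullet,\varphi)\subseteq(E,E_\bullet,\varphi)$ with the induced structure one has $\Res_{R_{x,i}}(\varphi,x)(F_{x,i})\subseteq\Res_{R_{x,i}}(\varphi,x)(E_{x,i})\cap F|_x\subseteq E_{x,i+1}\cap F|_x=F_{x,i+1}$, and dually for quotients; consequently the Jordan-H\"older graded object of an $\overline{R}$-residual semistable parabolic $\Lambda$-module is again $\overline{R}$-residual and the orbit-closure analysis applies inside $R^{ss}_{\overline{R}}$.

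The step I expect to be the main obstacle is the first one: checking that the fibrewise total residue genuinely globalizes to a base-change-compatible morphism of sheaves on $\{x\}\times R^{ss}$, so that ``$\Res_{R_{x,i}}$ carries $\SE_{x,i}$ into $\SE_{x,i+1}$'' is a bona fide closed condition amenable to \cite[Lemma 4.3]{Yokogawa93}. This is precisely where the hypotheses that $\Lambda|_{\overline{D}}$ be locally free and that the module be locally free and flat over the base are used; granting that, the remainder is routine bookkeeping built on Theorems \ref{thm:LambdaModParam} and \ref{thm:LambdaModModuli} and Lemma \ref{lemma:GITLambdaModules}.
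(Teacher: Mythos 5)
Your proposal is correct and follows essentially the same route as the paper: the paper likewise cuts out the residual locus as a closed subscheme of the parameter scheme of Theorem \ref{thm:LambdaModParam} by applying \cite[Lemma 4.3]{Yokogawa93} to the compositions $\SE_{x,i}\hookrightarrow \SE|_{\{x\}\times R}\to \SE|_{\{x\}\times R}/\SE_{x,i+1}$ built from the universal residue, notes $\op{SL}_{P(N)}(\CC)$-invariance, and takes the restricted GIT quotient inside $M(\Lambda,P,\alpha,\overline{r})$, obtaining the local universal family by restriction. The only differences are cosmetic: the paper imposes the condition on all of $R$ before intersecting with $R^{ss}$ and $R^{s}$, and it leaves implicit the base-change compatibility of the residue and the fact that the Jordan--H\"older graded object stays residual (which also follows at once from closedness and invariance of the residual locus), points you spell out explicitly.
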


\begin{proof}
Let $R$ be the quasi-projective scheme described by Theorem \ref{thm:LambdaModParam}. Let $(\SE,\SE_\bullet)$ be the $\Lambda$-module underlying the corresponding universal object and $\varphi^{\op{univ}}:\pi^*\Lambda \otimes \SE \to \SE$ be the universal action of $\Lambda$ on $\SE$. Let $f:T\to S$. A family of rigidified parabolic $\Lambda$-modules $(E,E_\bullet,\varphi,\alpha)$ corresponding to a $T$ point $e:T\to R$ is $\overline{R}$-residual if for every $x\in D$ and $1\le i\le l_x$ the following composition of morphisms
\begin{eqnarray*}
\xymatrixcolsep{1.4pc}
\xymatrix{
e^*\SE_{x,i} \ar@{^{(}->}[r] & e^*\SE|_{x,1} \ar^-{f^*(R_{x,i})}[rr] && f^*(\Lambda|_{\{x\}\times S})\otimes e^*\SE_{x,1} \ar^-{e^*\Res(\varphi^{\op{univ}},x)}[rrr] &&& e^*\SE_{x,1} \ar@{->>}[r] & e^*(\SE_{x,1}/\SE_{x,i+1})
}
\end{eqnarray*}
is zero. By \cite[Lemma 4.3]{Yokogawa93}, there is a closed subscheme $R_{\Res}\subseteq R$ parameterizing $\overline{R}$-residual $\Lambda$-modules of $R$. Let $R_{\Res}^{ss}:= R^{ss}\cap R_{\Res}$ and $R_{\Res}^s:=R^s\cap R_{\Res}$. It is clear that $R_{\Res}$ is invariant under the action of $\op{SL}_{P(N)}(\CC)$ on $R$. Therefore, the quotient of $R^{ss}$ by $\op{SL}_{P(N)}(\CC)$ restricts to a quotient of $R_{\Res}^{ss}$.

Theorem \ref{thm:LambdaModModuli} applies and we get that the closed subscheme
$$M(\Lambda,\overline{R},P,\alpha,\overline{r})=R_{\Res}^{ss}\gitq \op{SL}_{P(N)}(\CC)\hookrightarrow R^{ss} \gitq \op{SL}_{P(N)}(\CC)=M(\Lambda,P,\alpha,\overline{r})$$
is a coarse quasi-projective moduli space for the given moduli functor $\SM(\Lambda,\overline{R},P,\alpha,\overline{r})$. Similarly $M^s(\Lambda,\overline{R},P,\alpha,\overline{r})\subseteq M^s(\Lambda,P,\alpha,\overline{r})$ is a quasi-projective coarse moduli space for $\SM^s(\Lambda,\overline{R},P,\alpha,\overline{r})$

The existence of a local universal family in the \'etale topology on $M^s(\Lambda,\overline{R},P,\alpha,\overline{r})$ is obtained by restriction of local universal families over $M^s (\Lambda,P,\alpha,\overline{r})$.
\end{proof}

\section{Existence of a universal family}
\label{section:fineModuli}

In this section we prove that, under certain generic hypothesis,  the moduli spaces of stable parabolic $\Lambda$-modules previously constructed are fine moduli spaces, i.e., there exists a universal family and the corresponding scheme represents the moduli functor. This will be achieved generalizing the proof in \cite[Section 3]{BY}.

\begin{lemma}
Let $(E,E_\bullet)$ be a parabolic vector bundle on $(C,D)$ such that the underlying vector bundle $E$ is of type $e$, degree $d$ and rank $r$. Let $(H,H_\bullet)$ be a parabolic line bundle of degree $h$. Let $M=|D|$. If
$$d>rh+r(2g-2+M)+r(r-1)e$$
then $h^1(\ParHom((H,H_\bullet),(E,E_\bullet)))=0$.
\end{lemma}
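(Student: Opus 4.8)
The plan is to bound the minimal Harder--Narasimhan slope of the parabolic Hom sheaf $\mathcal{F}:=\ParHom((H,H_\bullet),(E,E_\bullet))$ from below and then conclude by Serre duality on $C$. First I would record the structure of $\mathcal F$. Since $H$ is a line bundle, $\mathcal F$ is the subsheaf of $\mathcal{H}om_{\SO_C}(H,E)\cong H^{-1}\otimes E$ of local homomorphisms carrying $H_{x,\gamma}$ into $E_{x,\gamma}$ for all $x\in D$ and all $\gamma$; any homomorphism vanishing along $D$ satisfies this automatically (as $E(-x)$ lies in every $E_{x,\gamma}$ with $\gamma\in[0,1)$), so we obtain natural inclusions
$$H^{-1}\otimes E(-D)\ \subseteq\ \mathcal F\ \subseteq\ H^{-1}\otimes E$$
which are isomorphisms over $C\setminus D$. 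In particular $\mathcal F$ is torsion free, hence locally free of rank $r$ on the smooth curve $C$.

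Next I would use the elementary fact that if $\mathcal G\subseteq\mathcal F$ is an inclusion of vector bundles on a smooth curve which is an isomorphism away from finitely many points, then $\mu_{\min}(\mathcal F)\ge\mu_{\min}(\mathcal G)$: composing $\mathcal G\hookrightarrow\mathcal F$ with the quotient $\mathcal F\twoheadrightarrow Q$ realizing $\mu_{\min}(\mathcal F)$ yields a generically surjective map onto a full-rank subsheaf $Q'\subseteq Q$, so $\mu(Q')\le\mu(Q)$, while $Q'$ modulo torsion is a quotient bundle of $\mathcal G$, so $\mu_{\min}(\mathcal G)\le\mu(Q')$. Applying this with $\mathcal G=H^{-1}\otimes E(-D)=E\otimes H^{-1}(-D)$ and noting that tensoring by a line bundle of degree $-h-M$ shifts all slopes by $-h-M$, one gets $\mu_{\min}(\mathcal F)\ge\mu_{\min}(E)-h-M$.

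To bound $\mu_{\min}(E)$ I would use that $E$ is of type $e$ (with $e\ge0$): with the Harder--Narasimhan filtration $0=E_0\subsetneq\cdots\subsetneq E_\ell=E$ and $s:=\rk E_{\ell-1}\le r-1$, the inequality $\mu(E_{\ell-1})\le\mu(E)+e$ combined with additivity of degree gives $\mu_{\min}(E)=\mu(E_\ell/E_{\ell-1})\ge\mu(E)-\tfrac{s}{r-s}\,e\ge\mu(E)-(r-1)e=\tfrac{d}{r}-(r-1)e$. Hence $\mu_{\min}(\mathcal F)\ge\tfrac{d}{r}-(r-1)e-M-h$, and the hypothesis $d>rh+r(2g-2+M)+r(r-1)e$ is precisely the statement that this exceeds $2g-2$. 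Therefore $\mu_{\min}(\mathcal F)>2g-2$, and by Serre duality $h^1(C,\mathcal F)=h^0(C,K_C\otimes\mathcal F^\vee)$ vanishes, since $\mu_{\max}(K_C\otimes\mathcal F^\vee)=(2g-2)-\mu_{\min}(\mathcal F)<0$ forces $K_C\otimes\mathcal F^\vee$ to have no nonzero global section.

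The step requiring the most care is the identification in the first paragraph: extracting, from whichever formulation of parabolic morphisms one uses, the containment $H^{-1}\otimes E(-D)\subseteq\mathcal F\subseteq H^{-1}\otimes E$ (equivalently, $\mathcal F\cong H^{-1}\otimes E'$ for a subsheaf $E(-D)\subseteq E'\subseteq E$ cut out by the weight of $H$ at each point). Once that is in place, the remaining ingredients --- the generic-isomorphism slope inequality, the type-$e$ bound, and the Serre-duality vanishing --- are all routine.
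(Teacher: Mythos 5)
Your argument is correct for $e\ge 0$ (which you flag), but it takes a genuinely different route from the paper. The paper first applies \emph{parabolic} Serre duality, $h^1(\ParHom((H,H_\bullet),(E,E_\bullet)))=h^0(\SParHom((E,E_\bullet),(H,H_\bullet))\otimes K(D))\le h^0(\ParHom((E,E_\bullet),(H,H_\bullet))\otimes K(D))$, and then shows directly that a nonzero sheaf map $\varphi:E\to H\otimes K(D)$ cannot exist: its kernel $K$ has rank exactly $r-1$ and degree $\ge d-h-(2g-2+M)$, while the type-$e$ hypothesis gives $\mu(K)\le \tfrac{d}{r}+e$, and these two bounds contradict the numerical assumption. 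You instead work on the sheaf $\mathcal F=\ParHom((H,H_\bullet),(E,E_\bullet))$ itself, sandwich it as $H^{-1}\otimes E(-D)\subseteq\mathcal F\subseteq H^{-1}\otimes E$ (your justification of the first inclusion via $E(-x)=E_{x,1}\subseteq E_{x,\gamma}$ for $\gamma\in(0,1]$ is fine), propagate $\mu_{\min}$ through a full-rank inclusion, bound $\mu_{\min}(E)\ge\mu(E)-(r-1)e$ from the Harder--Narasimhan filtration, and finish with ordinary Serre duality and the criterion $\mu_{\max}(K_C\otimes\mathcal F^\lor)<0$. What your route buys is that it never invokes parabolic Serre duality or the notion of strongly parabolic morphisms; what it loses is sharpness exactly at the step $\tfrac{s}{r-s}e\le(r-1)e$, which needs $e\ge 0$: the paper's Definition of type $b$ allows $b\in\RR$, and for $e<0$ (which forces $E$ semistable, so $\mu_{\min}(E)=\mu(E)$) your chain only yields vanishing under the stronger bound $d>rh+r(2g-2+M)$, not the stated one. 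This caveat is harmless in the paper's application (one may always enlarge $e$ to $\max(e,0)$ there), and it disappears if you dualize directly instead of passing through $\mu_{\min}$: a nonzero section of $K_C\otimes\mathcal F^\lor$ restricts to a nonzero map on the full-rank subsheaf $H^{-1}\otimes E(-D)$, i.e.\ a nonzero map $E\to H\otimes K_C(D)$, and then the paper's corank-one kernel computation applies verbatim for any real $e$.
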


\begin{proof}
By parabolic Serre duality we obtain
\begin{multline*}
h^1\left(\ParHom((H,H_\bullet),(E,E_\bullet)) \right) = h^0\left(\SParHom((E,E_\bullet),(H,H_\bullet))\otimes K(D)\right)\\
\le h^0\left(\ParHom ((E,E_\bullet),(H,H_\bullet))\otimes K(D)\right)
\end{multline*}
Where $\ParHom$ and $\SParHom$ denote the sheaves of parabolic morphisms and strongly parabolic morphisms respectively.

Let $\varphi:(E,E_\bullet)\to (H,H_\bullet)\otimes K(D)$ be a nonzero parabolic morphism and let $(K,K_\bullet)$ be the kernel of the morphism $\varphi$ endowed with the induced parabolic structure from $(E,E_\bullet)$. Then yields
$$\deg(K)\ge \deg(E) - \deg\left (H\otimes K(D)\right) = d-h-(2g-2+M)$$
On the other hand, as $(E,E_\bullet)$ is of type $e$ we have
$$\mu(K)=\frac{\deg(K)}{r-1}\le \mu(E)+e = \frac{d}{r}+ e$$
Solving for $\deg(K)$ in the second inequality and substituting in the first one yields
$$(r-1)d+r(r-1)e\ge r\deg(K) \ge rd-rh-r(2g-2+M)$$
Therefore
$$d\le rh+r(2g-2+M)+r(r-1)e$$
\end{proof}

Now let $(\SE,\SE_\bullet,\Phi,A)$ be the universal rigidified $\Lambda$-module over $R^s$ defined in Theorem \ref{thm:LambdaModParam}. It is clear that the action of $\op{SL}_{P(N)}(\CC)$ on $(\SE,\SE_\bullet,\Phi)$ is trivial, but the center of $\GL_{P(N)}(\CC)$ acts on $(\SE,\SE_\bullet,\Phi)$ by dilatations on the fibers. Observe that the action of $\CC*$ on the action $\Phi$ is also trivial. Therefore, if there exists a line bundle $L$ over $R^s$ with a natural lift of the $\GL_{P(N)}(\CC)$ action such that the center $\CC^*$ acts by multiplication on $L$, then the rigidified $\Lambda$-module
$$\left(\SE\otimes \pi_{R^s}^*L^{-1},\SE_\bullet\otimes \pi_{R^s}^*L^{-1},\Phi\otimes \id,\alpha\otimes \id \right)$$
is $\GL_{P(N)}(\CC)$-equivariant and, therefore, it projects to a universal family of stable parabolic $\Lambda$-modules over $\SM^s(\Lambda,P,\alpha,\overline{r})$.

\begin{lemma}
\label{lemma:universalBundle}
Given a parabolic type $\overline{r}=\{r_{x,i}\}$, let
$$m_{x,i}=r_{x,i+1}-r_{x,i}$$
for $i=1,\ldots,l_x$. If the great common divisor of the numbers $\{d,m_{x,i} | x\in D, 1<i\le l_x\}$ is one, then there exists a line bundle $L$ over $R^s$ with natural action of $\GL_{P(N)}(\CC)$ such that the elements $\gamma \cdot \id\in \GL_{P(N)}(\CC)$ act by multiplication by $\gamma$.
\end{lemma}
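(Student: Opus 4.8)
The plan is to build the line bundle $L$ by taking a suitable integer combination of natural $\GL_{P(N)}(\CC)$-equivariant line bundles on $R^s$ whose total $\CC^*$-weight equals $1$. We have at our disposal two families of such bundles. First, for the universal rigidified $\Lambda$-module $(\SE,\SE_\bullet,\Phi,A)$ over $R^s$, choose a point $p\in C$ away from $D$ (or several points) and set $L_0=\det\big(\pi_{R^s *}(\SE(n))\big)$ for a large enough $n$ (so that $\SE(n)$ is acyclic relative to $R^s$); by Riemann--Roch the rank of this pushforward is $P(n)=d+r(n+1-g)$, and the center $\gamma\cdot\id\in\GL_{P(N)}(\CC)$ acts on it by $\gamma^{P(n)}$. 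More flexibly, using the short exact sequences $0\to \SE_x^i\to \SE\to \SE|_{\{x\}\times R^s}/\SE_{x,i}\to 0$ and taking determinants of the pieces $\SE_{x,i}/\SE_{x,i+1}$, which are locally free of rank $-m_{x,i}=r_{x,i}-r_{x,i+1}$, one obtains equivariant line bundles $L_{x,i}$ on which $\gamma\cdot\id$ acts with weight $r_{x,i}-r_{x,i+1}=-m_{x,i}$ (here $r_{x,1}=r$). Likewise $\det(\SE|_{\{x\}\times R^s})$ carries weight $r$, and comparing $\det\SE$ at two points of $C$ produces a bundle of weight $d$ as well (via $\det\pi_{R^s *}\SE(n)$ versus fiber determinants). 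The point is that the set of achievable $\CC^*$-weights among these natural equivariant line bundles contains $d$ and all the $m_{x,i}$, $x\in D$, $1<i\le l_x$.

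The key step is then purely arithmetic: since $\gcd\{d,\ m_{x,i}\mid x\in D,\ 1<i\le l_x\}=1$, Bézout's identity gives integers $a$, $b_{x,i}$ with
$$a\,d+\sum_{x\in D}\sum_{i=2}^{l_x} b_{x,i}\,m_{x,i}=1.$$
Accordingly, setting
$$L:=L_d^{\otimes a}\otimes \bigotimes_{x\in D}\bigotimes_{i=2}^{l_x} L_{x,i}^{\otimes(-b_{x,i})},$$
where $L_d$ is the weight-$d$ equivariant bundle and $L_{x,i}$ the weight-$(-m_{x,i})$ one described above, yields a line bundle on $R^s$ with a natural $\GL_{P(N)}(\CC)$-linearization on which the central element $\gamma\cdot\id$ acts by $\gamma^{a d}\cdot\gamma^{\sum b_{x,i} m_{x,i}}=\gamma$. (Negative tensor powers make sense since these are line bundles; one may instead clear signs by adding a multiple of the weight-$P(n)$ bundle $L_0$ and re-solving Bézout, using that $P(n)$ is itself an integer combination of $d$ and $r$, hence of $d$ and the $m_{x,i}$ together with $r$ — but $r$ appears with the $m_{x,i}$ through $r_{x,1}=r$, so no obstruction arises.) This $L$ has exactly the property stated.

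The main obstacle I anticipate is bookkeeping rather than conceptual: one must check that each candidate bundle really is $\GL_{P(N)}(\CC)$-equivariant (not merely $\op{SL}$-equivariant) with the claimed \emph{central} weight, and that the relative pushforwards used to define $L_0$ and $L_d$ are genuine line bundles — this requires $n$ chosen via the acyclicity bounds of Corollary~\ref{cor:acyclicBound}, applied over the base $R^s$, so that $\pi_{R^s *}(\SE(n))$ is locally free of the expected rank and commutes with base change. Once these finiteness and flatness points are in place, the determinants behave as asserted, the $\GL_{P(N)}(\CC)$-action on $(\SE,\SE_\bullet,\Phi)$ being trivial while the center acts by fiberwise scaling forces the weights to be precisely the ranks of the relevant sheaves, and the Bézout combination finishes the argument.
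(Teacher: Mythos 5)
Your proof is correct, but it takes a genuinely different route from the paper's. You build the linearized line bundles directly out of the universal object on $R^s$: since the center of $\GL_{P(N)}(\CC)$ fixes $R^s$ pointwise and acts by fiberwise scaling on $\SE$, the determinant of $\pi_{R^s*}(\SE(n))$ (locally free of rank $P(n)=d+r(n+1-g)$ for $n$ large, by Corollary~\ref{cor:acyclicBound}) has central weight $P(n)$, the fiber determinant $\det(\SE|_{\{x\}\times R^s})$ has weight $r$, and the determinants of the graded pieces $\SE_{x,i}/\SE_{x,i+1}$ of the universal filtration have weight $m_{x,i}$; combining the first two gives weight $d$, and B\'ezout applied to $\gcd(d,\{m_{x,i}\})=1$ yields weight one. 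The paper instead follows Boden--Yokogawa: it chooses auxiliary parabolic line bundles $(H,H^{\kappa}_\bullet)$ of sufficiently negative degree, invokes the $h^1$-vanishing lemma at the beginning of Section~\ref{section:fineModuli} (which rests on the type-$e$ bound coming from Lemma~\ref{lemma:LambdaModBoundC}) so that $\pi_{R^s*}\ParHom(\pi_C^*(H,H^{\kappa}_\bullet),(\SE,\SE_\bullet))$ is locally free of rank $\chi(\kappa,h)$, takes its determinant $L(\kappa,h)$ of central weight $\chi(\kappa,h)$, and then recovers $d$, $r$ and the $m_{x,i}$ as integer combinations of the $\chi(\kappa,h)$ by varying $\kappa$ and $h$. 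Your route is more economical --- it bypasses the $h^1$-vanishing lemma and the boundedness constant $e$ altogether, using only local freeness of the universal filtration and of $\pi_*(\SE(n))$ --- while the paper's version stays parallel to the established argument for parabolic vector bundles. Three harmless slips in your write-up: $\SE_{x,i}/\SE_{x,i+1}$ has rank $m_{x,i}=r_{x,i+1}-r_{x,i}>0$, not $-m_{x,i}$; ``comparing $\det\SE$ at two points of $C$'' by itself gives weight $r-r=0$, the correct weight-$d$ bundle being the one in your parenthesis, namely $\det\pi_{R^s*}(\SE(n))\otimes\det(\SE|_{\{x\}\times R^s})^{\otimes -(n+1-g)}$; and in the paper's conventions $r_{x,1}=0$, the relation tying $r$ to the filtration being $\sum_{i=1}^{l_x}m_{x,i}=r$. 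None of these affects the B\'ezout step, since any sign can be absorbed by inverting the relevant line bundle.
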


\begin{proof}
From Lemma \ref{lemma:LambdaModBoundC}, we know that there is some $e\in \RR$ such that every stable parabolic $\Lambda$-module underlying a point in $R^s$ is of type $e$. Take a line bundle $H$ with
$$\deg(H) =h < \frac{d}{r}-2g+2-M-(r-1)e$$
For every choice $\kappa:D\to \ZZ$ such that $1\le \kappa(x)\le l_x+1$, set
$$\beta_x\left(=\beta_{x,1}\right)=\left \{\begin{array}{ll}
\alpha_{x,\kappa(x)} & \text{if }\kappa(x)\le l_x\\
\frac{1+\alpha_{x,l_x}}{2} & \text{if }\kappa(x)=l_x+1
\end{array}\right.$$
and endow $H$ with the trivial parabolic structure for the system of weights $\beta$. Let us denote it by $(H,H_\bullet^\kappa)$. Moreover, let
$$\chi(\kappa,h)=d+r(1-g-h)-\sum_{x\in D} \sum_{i=1}^{\kappa(x)-1} m_{x,i}$$
By the previous lemma, for every stable parabolic $\Lambda$-module $(E,E_\bullet,\varphi)$ we have $h^1\left(\ParHom((H,H^\kappa_\bullet),(E,E_\bullet)) \right) =0$. Applying Riemann-Roch theorem yields
$$h^0\left(\ParHom((H,H^\kappa_\bullet),(E,E_\bullet)) \right) = \chi(\kappa,h)$$ 
so $\SE'=H^0\left(C\times R^s/R^s,\ParHom(\pi_C^*(H,H^\kappa_\bullet),(\SE,\SE_\bullet)) \right)$ is a locally free sheaf of rank $\chi(\kappa,h)$ over $R^s$. Let $L(\kappa,h)$ be its determinant. By construction $\GL_{P(N)}(\CC)$ acts on $\SE'$ and there is an induced action on $L(\kappa,h)$ such that $\gamma\cdot \id$ acts as multiplication by $\gamma^{\chi(\kappa,h)}$.

Given $a_1,\ldots,a_M\in \ZZ$, $\kappa_1,\ldots,\kappa_M:D\to\ZZ$ and $h_1,\ldots,h_M\in \ZZ$, consider the bundle
$$\bigotimes_{i=1}^M L(\kappa_i,h_i)^{a_i}$$
with the induced $\GL_{P(N)}(\CC)$ action. Then $\gamma\cdot \id$ acts as multiplication by $\gamma^{\sum_{i=1}^M a_i\chi(\kappa_i,h_i)}$. Therefore it is enough to prove that there exist $a_i$, $\kappa_i$ and $h_i$ such that $\sum_{i=1}^M a_i\chi(\kappa_i,h_i)=1$. Let $\kappa_{x,i},\kappa_{x,i}^+:D\to \ZZ$ be given by
$$\begin{array}{l}
\kappa_{x,i}(y)=\left \{ \begin{array}{ll}
i-1& x=y\\
0 & x\ne y
\end{array} \right. \\
\kappa_{x,i}^+(y)=\left \{ \begin{array}{ll}
i& x=y\\
0 & x\ne y
\end{array} \right.
\end{array}$$
Then for every $h$ yields
$$\begin{array}{l}
\chi(\kappa_{x,i}^+,h)-\chi(\kappa_{x,i},h)=m_{x,i}\\
\chi(\kappa,h)-\chi(\kappa,h-1)=r\\
\chi(0,h)-(1-g-h)\left(\chi(0,h)-\chi(0,h-1)\right) = d
\end{array}$$
As $\op{GCD}(\{m_{x,i},r,d\})=\op{GCD}(\{m_{x,i},d\})=1$ the lemma follows.
\end{proof}

The previous discussion leads to the following theorem.

\begin{theorem}
\label{thm:fineModuli}
For every $x\in D$ and $i=1,\ldots,l_x$ let $m_{x,i}=r_{x,i+1}-r_{x,i}=\dim(E_{x,i})-\dim(E_{x,i+1})$. Let $d$ be the degree of the underlying vector bundle, so that $P(m)=r(m+1-g)+d$. If the great common divisor of the numbers $\{d,m_{x,i} | x\in D, 1<i\le l_x\}$ is one then
\begin{enumerate}
\item The moduli space of stable parabolic $\Lambda$-modules is a fine moduli space, i.e., there exists a universal family $(\SE,\SE_\bullet,\Phi)$ over $\SM^s(\Lambda,P,\alpha,\overline{r})$.
\item For every residual condition $\overline{R}$, the moduli space of stable residual parabolic $\Lambda$-modules is a fine moduli space, i.e., there exists a universal family $(\SE,\SE_\bullet,\Phi)$ over $\SM^s(\Lambda,\overline{R},P,\alpha,\overline{r})$.
\end{enumerate}
\end{theorem}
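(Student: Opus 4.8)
The plan is to assemble ingredients that are already in place rather than to prove anything substantially new. By Lemma \ref{lemma:LambdaModBoundC} every stable parabolic $\Lambda$-module underlying a point of $R^s$ is of type $e$ for a uniform $e$, so under the coprimality hypothesis Lemma \ref{lemma:universalBundle} applies and produces a line bundle $L$ on $R^s$ with a natural $\GL_{P(N)}(\CC)$-linearization for which the central element $\gamma\cdot\id$ acts by multiplication by $\gamma$. Since the same central element acts on the universal rigidified $\Lambda$-module $(\SE,\SE_\bullet,\Phi,A)$ over $R^s$ by the dilation $\gamma$ on the fibres of $\SE$ and of $\SE_\bullet$ and trivially on $\Phi$, the twisted family
$$\bigl(\SE\otimes\pi_{R^s}^*L^{-1},\ \SE_\bullet\otimes\pi_{R^s}^*L^{-1},\ \Phi\otimes\id\bigr)$$
carries a $\GL_{P(N)}(\CC)$-linearization on which the centre $\CC^*$ acts trivially; equivalently, it is $\PGL_{P(N)}(\CC)$-equivariant for the induced action of $\PGL_{P(N)}(\CC)$ on $R^s$.

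Next I would descend this family to $M^s(\Lambda,P,\alpha,\overline{r})$. A stable parabolic $\Lambda$-module over $\CC$ is simple: any nonzero parabolic $\Lambda$-endomorphism is an isomorphism by comparison of parabolic slopes, so its endomorphism ring is $\CC$. Hence the stabilizer in $\GL_{P(N)}(\CC)$ of a point of $R^s$ is exactly the centre, the $\PGL_{P(N)}(\CC)$-action on $R^s$ is free, and $R^s\to M^s(\Lambda,P,\alpha,\overline{r})$ is a geometric quotient by this free action (an \'etale-locally trivial $\PGL_{P(N)}(\CC)$-torsor), along which $\PGL_{P(N)}(\CC)$-equivariant coherent sheaves descend. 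Thus the twisted family descends to a flat family $(\SE',\SE'_\bullet,\Phi')$ of stable parabolic $\Lambda$-modules over $M^s(\Lambda,P,\alpha,\overline{r})$. Since $M^s(\Lambda,P,\alpha,\overline{r})$ is a coarse moduli space for $\SM^s(\Lambda,P,\alpha,\overline{r})$ by Theorem \ref{thm:LambdaModModuli}, the natural transformation $\op{Hom}(-,M^s(\Lambda,P,\alpha,\overline{r}))\to\SM^s(\Lambda,P,\alpha,\overline{r})$ induced by $(\SE',\SE'_\bullet,\Phi')$ is a bijection on geometric points; and because any family over a base $T$ lifts to $R^s$ \'etale-locally on $T$, with the ambiguity of the lift a $\PGL_{P(N)}(\CC)$-cocycle that is killed on passing to the descended sheaf up to tensoring by a line bundle on $T$ (which is how $\SM^s$ is defined), this transformation is an isomorphism of functors. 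Therefore $M^s(\Lambda,P,\alpha,\overline{r})$ is fine and $(\SE',\SE'_\bullet,\Phi')$ is a universal family, proving~(1).

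For~(2) I would restrict everything to the residual locus. By Theorem \ref{thm:ResLambdaModModuli}, $R^s_{\Res}=R^s\cap R_{\Res}$ is a closed $\GL_{P(N)}(\CC)$-invariant subscheme of $R^s$ and $M^s(\Lambda,\overline{R},P,\alpha,\overline{r})$ is its geometric quotient, a locally closed subscheme of $M^s(\Lambda,P,\alpha,\overline{r})$. Restricting $L$ and the twisted universal family to $R^s_{\Res}$ and descending along $R^s_{\Res}\to M^s(\Lambda,\overline{R},P,\alpha,\overline{r})$ exactly as above produces a universal family over $M^s(\Lambda,\overline{R},P,\alpha,\overline{r})$; being $\overline{R}$-residual is a closed condition stable under base change, so the members of this family are $\overline{R}$-residual and the family represents $\SM^s(\Lambda,\overline{R},P,\alpha,\overline{r})$.

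The only real obstacle lies in the second paragraph: one must check that the weight-$1$ normalization of $L$ in Lemma \ref{lemma:universalBundle} exactly cancels the scalar action on $\SE$ so that the twisted family is genuinely $\PGL_{P(N)}(\CC)$-equivariant, and then that descent along the torsor $R^s\to M^s$ yields a family inducing an \emph{isomorphism} of functors rather than merely an \'etale-local one; the bookkeeping with the ``up to a line bundle on the base'' equivalence built into the moduli functor $\SM^s$ is precisely what makes this last point work. Everything else is formal once Lemma \ref{lemma:universalBundle} is available.
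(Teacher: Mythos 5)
Your proposal is correct and follows essentially the same route as the paper: twist the universal rigidified family on $R^s$ by $L^{-1}$, with $L$ supplied by Lemma \ref{lemma:universalBundle} under the coprimality hypothesis, so that the central $\CC^*$ acts trivially, and then descend along $R^s\to M^s(\Lambda,P,\alpha,\overline{r})$; the paper states this descent in one line ("it projects to a universal family"), whereas you spell out the simplicity/free $\PGL_{P(N)}(\CC)$-action/torsor argument, which is exactly the intended justification. For part (2) you descend over $R^s_{\Res}$ while the paper restricts the already descended family to $\SM^s(\Lambda,\overline{R},P,\alpha,\overline{r})$ — these are equivalent.
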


\begin{proof}
If the coprimality condition holds then the discussion at the start of the section combined with Lemma \ref{lemma:universalBundle} proves that $\SM^s(\Lambda,P,\alpha,\overline{r})$ admits a universal parabolic $\Lambda$-module. Restricting it to the closed subscheme $\SM^s(\Lambda,\overline{R},P,\alpha,\overline{r})\subseteq \SM^s(\Lambda,\overline{R},P,\alpha,\overline{r})$ we obtain the second desired universal family.
\end{proof}

Observe that, in particular, if take $\Lambda$ to be trivial then we recover precisely the numerical condition on $\overline{r}$ given by Boden and Yokogawa \cite[Proposition 3.2]{BY} for the existence of a universal family on the moduli space of parabolic vector bundles. Moreover, if the parabolic type is trivial and $\alpha=0$ the moduli space coincides with the moduli space of stable $\Lambda$-modules and the numerical condition reduces to asking for the rank and degree to be coprime. For trivial $\Lambda$ or for $\Lambda=\Lambda^{\op{Higgs}}$ this is known to be a necessary and sufficient condition \cite{RamananCoprimoFino}.

Notice that, while for $\Lambda$-modules on the compact case this coprimality condition implies that there does not exist any strictly semistable object, for nontrivial parabolic structures there exist non-generic systems of weights such that there exist strictly semistable $\Lambda$-modules in $M(\Lambda,P,\alpha,\overline{r})$ and simultaneously the subscheme $M^s(\Lambda,P,\alpha,\overline{r})\subsetneq M(\Lambda,P,\alpha,\overline{r})$ admits a universal family. In particular, we have

\begin{corollary}
\label{cor:fineModuli}
If $\overline{r}$ is a full flag parabolic type then
\begin{enumerate}
\item $\SM^s(\Lambda,P,\alpha,\overline{r})$ is a fine moduli space.
\item $\SM^s(\Lambda,\overline{R},P,\alpha,\overline{r})$ is a fine moduli space.
\end{enumerate}
\end{corollary}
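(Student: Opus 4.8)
The plan is to reduce everything to Theorem~\ref{thm:fineModuli}: it suffices to check that a full flag parabolic type automatically forces the numerical hypothesis appearing there, namely that the greatest common divisor of the integers $\{d, m_{x,i} \mid x \in D,\ 1 < i \le l_x\}$ equals $1$, where $d$ is the degree of the underlying bundle and $m_{x,i} = \dim(E_{x,i}) - \dim(E_{x,i+1})$ is the $i$-th jump of the flag at $x$.

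The only computation needed is the following. By definition, $\overline{r}$ being full flag means $l_x = \rk(E|_{\{x\}\times S}) = r$ for every $x \in D$, so that the filtration
$$E|_{\{x\}\times S} = E_{x,1} \supsetneq E_{x,2} \supsetneq \cdots \supsetneq E_{x,r} \supsetneq E_{x,r+1} = 0$$
is a complete flag in an $r$-dimensional space. Hence $\dim(E_{x,i}) = r - i + 1$ for every $i$, and therefore $m_{x,i} = 1$ for every $x \in D$ and every $i = 1, \ldots, l_x$. In particular, as soon as $r \ge 2$ the index range $1 < i \le l_x$ is nonempty and the value $1$ occurs among the integers whose greatest common divisor is being taken, so that gcd is $1$. (If $r = 1$ there is no flag datum beyond a single weight and a rank-one $\Lambda$-module has only scalar automorphisms, so the moduli functor is fine for trivial reasons; thus we may assume $r \ge 2$.)

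With the coprimality hypothesis verified, Theorem~\ref{thm:fineModuli} applies directly and yields both conclusions: part~(1), that $\SM^s(\Lambda,P,\alpha,\overline{r})$ admits a universal family $(\SE,\SE_\bullet,\Phi)$, and part~(2), that for every residual condition $\overline{R}$ the moduli space $\SM^s(\Lambda,\overline{R},P,\alpha,\overline{r})$ does as well, obtained by restricting the universal family to the closed subscheme $\SM^s(\Lambda,\overline{R},P,\alpha,\overline{r}) \subseteq \SM^s(\Lambda,P,\alpha,\overline{r})$. There is no genuine obstacle: this is a bookkeeping corollary of Theorem~\ref{thm:fineModuli}, the only mild subtlety being the degenerate rank-one case, which is handled by the separate elementary remark above rather than by the numerical criterion. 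It is worth pointing out, as the paragraph preceding the statement does, that full flag may well be satisfied by non-generic systems of weights $\alpha$ for which strictly semistable objects still exist in $M(\Lambda,P,\alpha,\overline{r})$, so the corollary genuinely produces fine moduli on the stable locus without forcing stability$=$semistability.
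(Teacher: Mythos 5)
Your proof is correct and takes essentially the same route as the paper: the corollary is an immediate consequence of Theorem~\ref{thm:fineModuli}, since a full flag forces every jump $m_{x,i}=\dim(E_{x,i})-\dim(E_{x,i+1})$ to equal $1$, so the greatest common divisor of $\{d,m_{x,i}\}$ is $1$ and both statements follow, with the residual case obtained by restriction. The only caveat is your rank-one aside: having only scalar automorphisms does not by itself produce a universal family, but this degenerate case (where the index range $1<i\le l_x$ is empty) is not treated by the paper either and does not affect the substance of the argument.
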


\section{Moduli space of parabolic $\lambda$-connections}
\label{section:lambdaConnections}

Let $\xi$ be a line bundle over $C$, let $\alpha$ be a fixed system of weights over $D$ and $\overline{r}=\{r_{x,i}\}$ a parabolic type. Let us suppose that $\deg(\xi)=- \sum_{x\in D}\sum_{i=1}^{l_x} \alpha_{x,i}$. Fixing a line bundle and a system of weights $\alpha$ over $C$ allows us to describe canonically a parabolic line bundle over $C$, $(\xi,\xi_\beta)$, taking the underlying vector bundle as $\xi$ and defining trivial filtrations over each $x\in D$ with parabolic weight
$$\beta_x:=\beta_{x,1}=\sum_{i=1}^{l_x} \alpha_x,i (r_{x,i+1}-r_{x,i})$$
As $\xi$ has rank one, any parabolic structure on $\xi$ consists of trivial filtrations. It is possible that for some $x\in D$, $\beta_x\ge 1$. Taking into account the definition for the parabolic structure in terms of left continuous filtrations given by Simpson \cite{SimpsonNonCompact}, a parabolic line bundle $\xi$ with jumps at weights $\beta_x$ for each $x\in D$ such that $\xi_{\beta_x,x}=\xi_x$ is the same as a trivial filtration for the bundle
\begin{equation}
\xi \left (\sum_{x\in D} \lfloor \beta_x \rfloor x \right)
\end{equation}
with parabolic weights $\{\beta_x-\lfloor \beta_x \rfloor\}_{x\in D}$.

Thus, the value of the jump $\beta_x$ completely defines the parabolic structure on $\xi$. By construction, we get that

$$\pdeg(\xi)=\deg(\xi) + \sum_{x\in D}\beta_x=\deg(\xi)+\sum_{x\in D}\sum_{i=1}^{l_x} \alpha_{x,i}=0$$

The line bundle $\xi$ can be given the structure of a parabolic Higgs bundle canonically taking a zero Higgs field. In fact, as the rank of $\xi$ is one, every traceless Higgs field over $\xi$ must be zero, so $\SM_{\op{Higgs}}(1,\beta,\xi)$ consists exactly of the point $(\xi,\xi_\beta,0)$.

Let $(E,E_\bullet,\Phi)$ be a traceless strongly parabolic $\op{SL}_{r}(\CC)$-Higgs bundle with parabolic system of weights $\alpha$ such that $\det(E)=\xi$. Taking the $r$-th exterior power, the morphism $\Phi$ induces a morphism $\bigwedge^r E\to \bigwedge^r E\otimes K(D)$ locally given by the trace of $\Phi$. As $\tr(\Phi)=0$, the induced morphism is the zero morphism.

Thus, taking the determinant, every parabolic Higgs bundles $[(E,E_\bullet,\Phi)]\in \SM_{\op{Higgs}}(r,\alpha,\xi)$ induces the same parabolic Higgs bundle $(\xi,\xi_\beta,0)$.

Using the Simpson correspondence \cite{SimpsonNonCompact} between parabolic Higgs bundles of parabolic degree 0 and parabolic connections of parabolic degree 0, the parabolic Higgs bundle $(\xi,\xi_\beta,0)$ corresponds to a parabolic connection $(\xi,\xi_\beta,\nabla_{\xi,\beta})$ with the same parabolic weights $\beta$, such that $\op{Res}(\nabla_{\xi,\beta},x)=\beta_x \op{Id}$ for every $x\in D$.

Let $(E',E'_\bullet,\nabla)$ be the parabolic connection corresponding to the Higgs bundle $(E,E_\bullet,\Phi)$ under the Simpson correspondence. Taking the $r$-th exterior power, $\nabla$ induces a morphism
$$\tilde{\nabla}:\bigwedge^r E\to \bigwedge^r E\otimes K(D) \quad .$$

As the Simpson correspondence is an equivalence of categories preserving the exterior product \cite[Theorem 2]{SimpsonNonCompact}, the wedge product of $(E',E'_\bullet,\nabla)$ must be the image of the wedge product of $(E,E_\bullet,\Phi)$. Therefore, the morphism $\tilde{\nabla}$ must coincide with $\nabla_{\xi,\beta}$. This leads up to the following definition of parabolic $\lambda$-connection for the group $\op{SL}_r(\CC)$.

\begin{definition}
\label{def:parabolicLambdaConnection}
For a fixed line bundle $\xi$, a system of weights $\alpha$ and a given $\lambda \in \CC$ a parabolic $\lambda$-connection on $C$ (for the group $\op{SL}_r(\CC)$) is a quadruple $(E,E_\bullet,\nabla,\lambda)$ where

\begin{enumerate}
\item $\lambda$ is a complex number.
\item $(E,E_\bullet)\longrightarrow C$ is a parabolic vector bundle of rank $r$ and weight system $\alpha$ together with an isomorphism $\bigwedge^r E\cong \xi$.
\item $\nabla : E\to E\otimes K(D)$ is a $\CC$-linear homomorphism of sheaves over the underlying vector space of $E$ satisfying the following conditions
\begin{enumerate}
\item If $f$ is a locally defined holomorphic function on $C$ and $s$ is a locally defined holomorphic section of $E$ then
$$\nabla (fs) = f\cdot \nabla(s) + \lambda \cdot s\otimes df$$
\item For each $x\in D$ the homomorphism induced in the filtration over the fiber $E_x$ satisfies
$$\nabla(E_{x,i}) \subseteq E_{x,i} \otimes K(D)|_x$$
\item For every $x\in D$ and every $i=1,\ldots, l_x$ the action of $\op{Res}(\nabla,x)$ on $E_{x,i}/E_{x,i+1}$ is the multiplication by $\lambda \alpha_{x,i}$. Since $\op{Res}(\nabla,x)$ preserves the filtration, it acts on each quotient.
\item The operator $\bigwedge^r E \longrightarrow \left(\bigwedge^r E \right) \otimes K(D)$ induced by $\nabla$ coincides with $\lambda \cdot \nabla_{\xi,\beta}$.
\end{enumerate}
\end{enumerate}
\end{definition}

We also have the following natural  notion of stability for $\lambda$-connections.

\begin{definition} A parabolic $\lambda$-connection $(E,E_\bullet,\nabla)$ is (semi-)stable if and only if for every parabolic subsheaf $(F,F_\bullet)\subseteq (E,E_\bullet)$ preserved by $\nabla$
$$\pmu(F)(\le) < \pmu(E)$$
\end{definition}

Given a parabolic $\lambda$-connection $(E,E_\bullet,\nabla,\lambda)$, the connection induces a parabolic morphism $\nabla': (K(D))^\lor \otimes E \to E$ that satisfies that for every local section $v$ of $(K(D))^\lor$, each locally defined holomorphic function on $C$ and each local section $f$ of $E$
\begin{equation}
\label{eq:lambdaLeibnitz}
\nabla'(v \otimes (fs)) = f \nabla'(v\otimes s) + \lambda df(v) \cdot s
\end{equation}
Then we get a morphism $(\id|_E \oplus \nabla'): (\SO_C\oplus (K(D))^\lor) \otimes E \to E$. Let us consider the bimodule structure on $\SO_C \oplus (K(D))^\lor$ given by
\begin{equation}
\label{eq:lambdaModProduct}
\begin{array}{ccc}
(g, v)\cdot f &=& (fg + \lambda df(v) , fv)\\
f\cdot (g,v)&=&(fg,fv)
\end{array}
\end{equation}
where $f,g$ are local sections of $\SO_C$ and $v$ is a local section of $(K(D))^\lor$ over the same open subset of $C$.

Then it becomes clear that requiring $\nabla'$ to satisfy equation \eqref{eq:lambdaLeibnitz} is equivalent to asking morphism $\nabla''=(\id|_E \oplus \nabla'): (\SO_C\oplus (K(D))^\lor) \otimes_{\SO_X} E \to E$ to be a $(\SO_X,\SO_X)$-module morphism for the previous bimodule structure of $\SO_C \oplus (K(D))^\lor$, as then for every local sections $f,g \in \SO_C(U)$, $v\in(K(D))^\lor(U)$ and $s\in E(U)$ over each an open set $U$.
\begin{multline*}
\nabla''( (g,v) \otimes (fs)) =  \nabla'' \left( ((g,v)\cdot f )\otimes s \right) = \nabla'' \left ((fg+\lambda df(v), fv) \otimes s \right )\\
= fgs + \lambda df(v) s + f\nabla'(v\otimes s)
\end{multline*}

From the previous explicit product formula, applying \cite[Theorem 2.11]{Si2} it becomes clear that for each $\lambda$, the sheaf $\Lambda_1^{\op{DR},\log D,\lambda}=\SO_C \oplus (K(D))^\lor$ with the given bimodule structure extends to a (split quasi-polynomial) sheaf of rings of differential operators $\Lambda^{\op{DR},\log D,\lambda}$ over $C$. Now let us consider the product $X=C\times \mathbb{A}^1$, and let $p_1:X\to C$ be the canonical projection. Then, $\Lambda^{\op{DR},\log D,R}_1 := \SO_X \oplus p_1^* ((K(D))^\lor)$ can be given a bimodule structure patching together the previous one for each value of $\lambda \in \mathbb{A}^1$. In  particular, as $\SO_X \cong \SO_C \otimes_{\CC} \SO_{\mathbb{A}^1}$, we define the right action by $\SO_X$ as
\begin{equation}
\label{eq:deformationGraduateProduct}
((g,\lambda),v)\cdot (f\otimes \nu) = (\nu f g + \lambda \nu df(v), \nu f v)
\end{equation}

Again, applying \cite[Theorem 2.11]{Si2} we can extend the bimodule structure to a (split quasi-polynomial) sheaf of rings of differential operators over $X$ flat over $\mathbb{A}^1$. By construction, it coincides to the deformation to the graduate of $\Lambda^{\op{DR},\log D}$. See \cite[Section 2, p. 41]{Si2} for the general construction of the deformation to the graduate for a split quasi-polynomial sheaf of rings of differential operators.

Conditions (2), (3.a) and (3.b) of the definition imply that a parabolic $\lambda$-connection is a parabolic $\Lambda^{\op{DR},\log D,R}$-module over $\Spec(\CC)$ with fixed determinant $\xi$. Let us fix once and for all an isomorphism $\SO_{\mathbb{A}^1} \cong \CC$. Let $\lambda_{x,i}=\lambda \cdot \alpha_{x,i} \in H^0(\mathbb{A}^1,\SO_X|_{\{x\}\times \mathbb{A}^1}) \cong H^0(\mathbb{A}^1, \SO_{\mathbb{A}^1})$. Let us denote $\overline{\lambda}=\{\lambda_{x,i}\}$. Let $z$ be a local coordinate around $x$. Then let
\begin{multline*}
R_{x,i}^\lambda = \left (-\lambda_{x,i},  \pi_C^* \left( z\frac{\partial}{\partial z} \right) \right) \in H^0\left(\mathbb{A}^1, \SO_{\mathbb{A}^1} \oplus \pi_C^*T_C(-D)|_{\{x\}\times \mathbb{A}^1}\right) \\
=H^0\left(\mathbb{A}^1,\Lambda_1^{\op{DR},\log D,R}|_{\{x\}\times \mathbb{A}^1}\right) \subsetneq H^0\left(\mathbb{A}^1,\Lambda^{\op{DR},\log D,R}|_{\{x\}\times \mathbb{A}^1}\right)
\end{multline*}
Then $\overline{R}^\lambda=\{R_{x,i}^\lambda\}$ is a residual condition for $\Lambda^{\op{DR},\log D,R}$. From the definition, it is clear that condition (3.c) is equivalent to requiring the parabolic $\Lambda^{\op{DR},\log D,R}$-module to be $\overline{R}$-residual.

If $\nabla'':\Lambda^{\op{DR},\log D,R}\otimes E \to E$ is a parabolic $\Lambda^{\op{DR},\log D,R}$-module, it induces a parabolic $\Lambda^{\op{DR},\log D,R}$-module
$$\tilde{\nabla}'':\Lambda^{\op{DR},\log D,R}\otimes \bigwedge^rE \to \bigwedge^rE$$
Let us consider the rank one $\lambda$-connection $\lambda\cdot\nabla_{\xi,\alpha} : \xi \to \xi \otimes K(D)$ over the fixed determinant bundle. As before, it induces a $\Lambda^{\op{DR},\log D,R}$-module
$$\lambda\cdot\nabla''_{\xi,\alpha}: \Lambda^{\op{DR},\log D,R}\otimes \xi \to \xi$$
Condition (3.d) is equivalent to requiring $\tilde{\nabla}''$ to coincide with morphism $\lambda\cdot\nabla''_{\xi,\alpha}$ under the isomorphism $\bigwedge^r E\cong \xi$.
\begin{eqnarray}
\label{eq:ConnectionDeterminantDiagram}
\xymatrixcolsep{3pc}
\xymatrixrowsep{3pc}
\xymatrix{
{\Lambda^{\op{DR},\log D,R}\otimes \bigwedge^r E} \ar[r]^-{\tilde{\nabla}''} \ar[d]^-{\wr} & {\bigwedge^r E} \ar[d]^-{\wr}\\
{\Lambda^{\op{DR},\log D,R}\otimes \xi} \ar[r]^-{\lambda\cdot\nabla''_{\xi,\alpha}} & \xi
}
\end{eqnarray}

Therefore, we can give the following alternative definition of a parabolic $\lambda$-connection

\begin{definition}
A parabolic $\lambda$-connection is a $\overline{R}^\lambda$-residual parabolic $\Lambda^{\op{DR},\log D,R}$-module over $\lambda:\Spec(\CC) \to \mathbb{A}^1$,  $\nabla'':\lambda^*\Lambda^{\op{DR},\log D,R} \otimes (E,E_\bullet) \to (E,E_\bullet)$ with $\bigwedge^r E \cong \xi$ such that the induced morphism
$$\tilde{\nabla}'':\lambda^*\Lambda^{\op{DR},\log D,R} \otimes \bigwedge^r E \to \bigwedge^r E$$
coincides with $\lambda\cdot\nabla''_{\xi,\alpha}: \lambda^*\Lambda^{\op{DR},\log D,R} \otimes \xi \to \xi$.
\end{definition}

Using this equivalent definition we can prove the following theorem.

\begin{theorem}
\label{thm:moduliLambdaConn}
Let $\SM_{\op{Hod}}(\xi,\alpha,\overline{r}):(\Sch_{\mathbb{A}^1}) \to (\Sets)$ denote the functor that associates each scheme $f:T\to \mathbb{A}^1$, the set of isomorphism classes of semistable parabolic $\lambda$-connections over $C\times T$ with determinant $\xi$ and the given parabolic type. Let $\SM_{\op{Hod}}^s(\xi,\alpha,\overline{r})$ be the subfunctor corresponding to classes of stable parabolic $\lambda$-connections. There exists a quasi-projective variety $M_{\op{Hod}}(\xi,\alpha,\overline{r})$ such that
\begin{enumerate}
\item $M_{\op{Hod}}(\xi,\alpha,\overline{r})$ corepresents the functor $\SM_{\op{Hod}}(\xi,\alpha,\overline{r})$.
\item The geometric points of $M_{\op{Hod}}(\xi,\alpha,\overline{r})$ are in bijection with the equivalence classes of semistable parabolic $\lambda$-connections with Hilbert polynomial $P$ and the given parabolic type on $C$ under the relation of $S$-equivalence.
\item There is an open subscheme $M_{\op{Hod}}^s(\xi,\alpha,\overline{r}) \subseteq M_{\op{Hod}}(\xi,\alpha,\overline{r})$ which is a coarse moduli space for the functor $\SM_{\op{Hod}}^s(\xi,\alpha,\overline{r})$.
\item Let $m_{x,i}=r_{x,i+1}-r_{x,i}$. If the great common divisor of $\{\deg(\xi),m_{x,i} | x\in D, 1<i\le l_x\}$ then there is a universal stable parabolic $\mathbb{A}^1$ family of $\lambda$-connections $(\SE,\SE_\bullet,\nabla^{\op{univ}})$ over $C\times M_{\op{Hod}}^s(\xi,\alpha,\overline{r})\times \mathbb{A}^1$. In particular, if $\overline{r}$ corresponds to a full flag parabolic type, $M_{\op{Hod}}^s(\xi,\alpha,\overline{r})$ is a fine moduli space.
\end{enumerate}
\end{theorem}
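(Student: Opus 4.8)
The plan is to identify $\SM_{\op{Hod}}(\xi,\alpha,\overline{r})$ with a closed subfunctor of the functor $\SM(\Lambda^{\op{DR},\log D,R},\overline{R}^\lambda,P,\alpha,\overline{r})$ furnished by Theorem~\ref{thm:ResLambdaModModuli}, taking $S=\mathbb{A}^1$ and $P(m)=r(m+1-g)+\deg(\xi)$, and then to read off assertions (1)--(4) from Theorems~\ref{thm:LambdaModModuli}, \ref{thm:ResLambdaModModuli} and \ref{thm:fineModuli}. First I would check that the hypotheses of those theorems hold for $\Lambda^{\op{DR},\log D,R}$ over $\mathbb{A}^1$: by \cite[Theorem 2.11]{Si2} the data \eqref{eq:deformationGraduateProduct} defines a split quasi-polynomial sheaf of rings of differential operators on $X=C\times\mathbb{A}^1$ over $\mathbb{A}^1$, flat over $\mathbb{A}^1$, whose associated graded is $\Sym^\bullet p_1^*(K(D)^\lor)$, so in particular $\Lambda^{\op{DR},\log D,R}|_{\overline{D}}$ is locally free. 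Since $\Lambda^{\op{DR},\log D,R}_1$ generates, a parabolic subsheaf is preserved by $\nabla$ exactly when it is preserved by the induced $\Lambda^{\op{DR},\log D,R}$-action, so (semi-)stability of a parabolic $\lambda$-connection matches (semi-)stability of the underlying residual parabolic $\Lambda^{\op{DR},\log D,R}$-module; together with the equivalent description of a parabolic $\lambda$-connection given before the theorem this exhibits $\SM_{\op{Hod}}(\xi,\alpha,\overline{r})$ as the subfunctor of families $(E,E_\bullet,\nabla)$ with $\bigwedge^rE\cong\xi$ and $\tilde\nabla''=\lambda\cdot\nabla''_{\xi,\alpha}$ under that isomorphism.

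Next I would cut out conditions (2) and (3.d) at the level of the parameter scheme. Let $R_{\Res}\subseteq R$ be the scheme from Theorem~\ref{thm:ResLambdaModModuli} carrying the universal rigidified object $(\SE,\SE_\bullet,\Phi,A)$, and form the rank-one sheaf $\bigwedge^r\SE$ on $C\times R_{\Res}$ with its induced action $\bigwedge^r\Phi$. Taking determinants within this bounded family gives a morphism $R_{\Res}\to Z$ to a parameter space $Z$ of rank-one $\lambda$-connections of degree $\deg(\xi)$ over $\mathbb{A}^1$, and $\lambda\mapsto(\xi,\lambda\cdot\nabla''_{\xi,\alpha})$ defines a section $\sigma\colon\mathbb{A}^1\to Z$; I would set $R_{\op{Hod}}$ to be the fibre product. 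Concretely $R_{\op{Hod}}$ is the locus where $\bigwedge^r\SE$ is fibrewise isomorphic to the pullback of $\xi$ (a closed condition, $\Pic$ being separated) and where, after trivializing the relevant relative $\Hom$ line bundle over an \'etale cover, the two induced actions on this rank-one sheaf agree, which is the vanishing of a morphism of coherent sheaves and hence closed by \cite[Lemma 4.3]{Yokogawa93}. Thus $R_{\op{Hod}}$ is a closed subscheme of $R_{\Res}$; by naturality of the determinant it is $\op{SL}_{P(N)}(\CC)$-invariant, with the central $\CC^*$ acting compatibly on both determinant conditions, so $R_{\op{Hod}}^{ss}:=R_{\Res}^{ss}\cap R_{\op{Hod}}$ and $R_{\op{Hod}}^s:=R_{\Res}^s\cap R_{\op{Hod}}$ are invariant. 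I would then put
\[
M_{\op{Hod}}(\xi,\alpha,\overline{r})=R_{\op{Hod}}^{ss}\gitq\op{SL}_{P(N)}(\CC),\qquad M_{\op{Hod}}^s(\xi,\alpha,\overline{r})=R_{\op{Hod}}^s/\op{SL}_{P(N)}(\CC),
\]
which, being the GIT quotient of a closed invariant subscheme, is a closed subscheme of $M(\Lambda^{\op{DR},\log D,R},\overline{R}^\lambda,P,\alpha,\overline{r})$, hence quasi-projective over $\mathbb{A}^1$, with $M_{\op{Hod}}^s$ open in it.

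Assertions (1)--(3) then follow by restricting the corresponding statements of Theorems~\ref{thm:LambdaModModuli} and \ref{thm:ResLambdaModModuli}: universal corepresentability passes to the closed invariant subscheme, so $M_{\op{Hod}}(\xi,\alpha,\overline{r})$ corepresents $\SM_{\op{Hod}}(\xi,\alpha,\overline{r})$ and $M_{\op{Hod}}^s$ corepresents $\SM_{\op{Hod}}^s$ with a local universal family in the \'etale topology inherited from $M^s(\Lambda^{\op{DR},\log D,R},\overline{R}^\lambda,P,\alpha,\overline{r})$; the closed points of $M_{\op{Hod}}(\xi,\alpha,\overline{r})$ are the closed $\op{SL}_{P(N)}(\CC)$-orbits in $R_{\op{Hod}}^{ss}$, which the Jordan--H\"older argument of Theorem~\ref{thm:LambdaModModuli} identifies with $S$-equivalence classes, using that the graded object of a semistable parabolic $\lambda$-connection again has determinant $\xi$ and satisfies (3.d). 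For (4), Theorem~\ref{thm:fineModuli} with $d=\deg(\xi)$ yields, when $\gcd(\{\deg(\xi),m_{x,i}\})=1$, a universal family over $M^s(\Lambda^{\op{DR},\log D,R},\overline{R}^\lambda,P,\alpha,\overline{r})$ whose restriction to the closed subscheme $M_{\op{Hod}}^s$ is the sought universal parabolic $\lambda$-connection over $C\times M_{\op{Hod}}^s(\xi,\alpha,\overline{r})\times\mathbb{A}^1$; a full-flag $\overline{r}$ has all $m_{x,i}=1$, so the coprimality is automatic and $M_{\op{Hod}}^s$ is fine, giving Corollary~\ref{cor:fineModuli} in this setting.

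I expect the main difficulty to be the second step --- promoting the isomorphism-class condition $\bigwedge^rE\cong\xi$ together with the compatibility (3.d) to a genuine closed $\op{SL}_{P(N)}(\CC)$-invariant subscheme of $R_{\Res}$. This needs the rank-one parameter space $Z$ over $\mathbb{A}^1$ to be constructed and shown representable, the determinant datum to vary in a bounded family with cohomology commuting with base change, and the interaction of the ``$\cong\xi$'' condition with the sheaf-morphism-vanishing realization of (3.d) to be handled carefully on an \'etale cover; once this is set up, everything else is a transcription of the constructions of the preceding sections.
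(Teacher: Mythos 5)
Your proposal is correct and follows essentially the same route as the paper: impose the fixed-determinant and determinant-compatibility conditions as closed $\op{SL}_{P(N)}(\CC)$-invariant conditions on the residual parameter scheme of Theorem \ref{thm:ResLambdaModModuli} (the paper does this via the determinant morphism to $\op{Jac}(C)$ plus a vanishing condition handled by \cite[Lemma 4.3]{Yokogawa93}, which is just a different packaging of your fibre product over a rank-one parameter space), take the GIT quotient, identify closed points with $S$-equivalence classes using that the Jordan--H\"older graded object retains determinant $\xi$ and condition (3.d), and obtain (4) by restricting the universal family furnished by Theorem \ref{thm:fineModuli}. The step you flag as the main difficulty is precisely the one the paper also treats with these tools, so no essential idea is missing.
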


\begin{proof}
The parabolic structure $\overline{r}$ fixes the rank of the parabolic $\lambda$-connection and $\xi$ fixes its degree. As $C$ is a curve, this data uniquely determines the Hilbert polynomial $P$ of the $\lambda$-module.
Let $R_\lambda$ be the scheme constructed in the proof of Theorem \ref{thm:ResLambdaModModuli} for the given Hilbert polynomial and parabolic structure, taking $\Lambda=\Lambda^{\op{DR},\log D,R}$ over $C\times \mathbb{A}^1$ over $\mathbb{A}^1$ and the residual condition $\overline{R}^\lambda$. Let us consider the determinant morphism $\det:R_\lambda \to Jac(C)$ sending each geometric point of $R_\lambda$ $(E,E_\bullet,\varphi,\alpha)$ to $\det(E)=\bigwedge^r E$. We denote by $R_\lambda^{\xi}$ the pre-image of the point $\xi$ by this morphism. Therefore, it is a closed subscheme of $R_\lambda$ parameterizing locally free elements of $R_\lambda$ whose determinant is $\xi$. 

Let $f:T\to \mathbb{A}^1$ be a scheme. We say that a $T$-point $(E,E_\bullet,\varphi,\alpha)$ of $R_\lambda^{\xi}$,  satisfies condition $\det$ if the morphism
$$\tilde{\nabla}'': f^*\Lambda^{\op{DR},\log D,R} \otimes \bigwedge^r E \to \bigwedge^r E$$
induced by $\nabla'':\Lambda^{\op{DR},\log D,R}\otimes E \to E$ coincides with the morphism
$$f \cdot f^*\nabla''_{\xi,\alpha} : \pi^* \Lambda^{\op{DR},\log D, R} \otimes \pi_C^*\xi \to \pi_C^*\xi$$
under the isomorphism $\bigwedge^r E\cong \xi$.

Let $(\SE,\SE_\bullet)$ be the $\lambda$-residual parabolic $\Lambda^{\op{DR},\log D,R}$-module underlying the universal object in $R_\lambda^{\xi}$. Let $\pi:R_\lambda^{\xi} \to \mathbb{A}^1$. The action of $\Lambda^{\op{DR},\log D,R}$ on the universal object induces a morphism
$$\tilde{\nabla}''^{\op{univ}} : \pi^*\Lambda^{\op{DR},\log D,R} \otimes \bigwedge^r \SE \to \bigwedge^r \SE$$
On the other hand, by tanking the pullback to $R_\lambda^{\xi}$, we have a fixed morphism
$$\pi \cdot \pi^*\nabla''_{\xi,\alpha} : \pi^* \Lambda^{\op{DR},\log D,R} \otimes \pi_C^*\xi \to \pi_C^*\xi$$
Under the isomorphism $\bigwedge^r \SE\cong \pi_C^*\xi$, this induces a morphism
$$\pi \cdot \pi^*\nabla''_{\xi,\alpha} : \pi^* \Lambda^{\op{DR},\log D,R} \otimes \bigwedge^r \SE \to \bigwedge^r \SE$$

Then a $T$-point of $R_\lambda^{\xi}$ given by $e:T\to R_\lambda^{\xi}$ satisfies condition $\det$ if the pullback of
$$\tilde{\nabla}''^{\op{univ}}-\pi \cdot \pi^*\nabla''_{\xi,\alpha}  : \pi^* \Lambda^{\op{DR},\log D,R} \otimes \bigwedge^r \SE \to \bigwedge^r \SE$$
by $e$ is zero. By \cite[Lemma 4.3]{Yokogawa93}, there is a closed subscheme $R_\lambda^{\xi,\det}$ of $R_\lambda^{\xi}$ such that the pullback is zero if and only if $e$ factors through $R_\lambda^{\xi,\det}$.

Clearly, $R_\lambda^{\xi,\det}$ is a $\op{SL}_{P(N)}(\CC)$-invariant sub-scheme of $R_\lambda$. Let $R_\lambda^{\xi,\det,ss}=R_\lambda^{\xi,\det,ss}\cap R^{ss}$ and $R_\lambda^{\xi,\det,s}=R_\lambda^{\xi,\det,ss}\cap R^{s}$. Then the quotient of $R_\lambda^{ss}$ by $\op{SL}_{P(N)}(\CC)$ restricts to a quotient of $R_\lambda^{\xi,\det,ss}$. By Theorem \ref{thm:ResLambdaModModuli}, $R_\lambda \gitq \op{SL}_{P(N)}(\CC)$ corepresents $\SM(\Lambda,R,\lambda,P,\alpha, \overline{r})$, so $M_{\op{Hod}}(\xi,\alpha,\overline{r})=R_\lambda^{LF,\xi,det,ss}\gitq \op{SL}_{P(N)}(\CC)$ corepresents the sub-functor of $\SM(\Lambda,R,\lambda,P,\alpha, \overline{r})$ corresponding to locally free families with fixed determinant $\xi$ such that diagram \eqref{eq:ConnectionDeterminantDiagram} commutes. By the previous equivalent definition, this subfunctor coincides with $\SM_{\op{Hod}}(\xi,\alpha,\overline{r})$.

For each parabolic $\lambda$-connection $(E,E_\bullet,\varphi)$ in $R_\lambda^{ss}$, the closure of the orbit by the $\op{SL}_{P(N)}(\CC)$ action coincides with the set of S-equivalent parabolic $\lambda$-connections. The closure of the orbit always contain as a representative the graduate of its Jordan-H\"older filtration $\Gr(E)$, which is locally free by construction and has the same determinant bundle. Therefore, the local closures of two $\op{SL}_{P(N)}(\CC)$-orbits in $R_\lambda^{ss}$ intersect if and only if their closures intersect, because the intersection has at least a point in $R_\lambda^{\xi,\det,ss}$.

This proves that the set of closed points in $M_{\op{Hod}}(\xi,\alpha,\overline{r})$ is in correspondence with the desired set of isomorphism classes of parabolic $\lambda$-connections modulo S-equivalence.

By construction $M^s_{Hod}(\xi,\alpha,\overline{r})\subseteq M^s(\Lambda^{\op{DR},\log D,R},\overline{R}^\lambda,P,\alpha,\overline{r})$. If the coprimality condition of part (4) of the theorem holds, then by Theorem \ref{thm:fineModuli}, there exists a universal family on $M^s(\Lambda^{\op{DR},\log D,R},\overline{R}^\lambda,P,\alpha,\overline{r})$. Restricting it to $\SM^s _{Hod}(\xi,\alpha,\overline{r})$ we obtain the desired universal family.
\end{proof}

Now we will focus in developing the structure of the fibers of the moduli over $\mathbb{A}^1$. Let $\lambda$ be a closed point in $\mathbb{A}^1$. Let $f:T\to \mathbb{A}^1$ be any $\mathbb{A}^1$-scheme. The stability condition for parabolic $\Lambda$-modules over $S$ is stated point-wise on the base scheme. Therefore, by the base change formula for $\Lambda$-modules, any family of semistable parabolic $\Lambda^{\op{DR},\log D,R}$-modules which lie over $\lambda$, i.e., any $T$-point of $M_{\op{Hod}}(P,\alpha,\overline{r})$ over $\mathbb{A}^1$ such that the map $T\to \mathbb{A}^1$ is constant and identical to $\lambda$, is a family of semistable parabolic $\Lambda^{\op{DR},\log D,R}_\lambda \cong \Lambda^{\op{DR},\log D,\lambda}$-modules.

Therefore, the fiber of $M_{\op{Hod}}(P,\alpha,\overline{r})$ over $\lambda$ coincides with the moduli space of $\Lambda^{\op{DR},\log D,\lambda}$-modules. Moreover, if we restrict the right action \eqref{eq:deformationGraduateProduct} of $\SO_X \cong \SO_C \otimes_{\CC} \SO_{\mathbb{A}^1}$ on $\Lambda^{\op{DR},\log D,R}$ to $\SO_{\mathbb{A}^1}$, it induces an action of $\CC^*$ on the moduli which, by construction, preserves the fibers over $\mathbb{A}^1$. From equation \eqref{eq:deformationGraduateProduct}, this action coincides with the $\CC^*$ action
$$(E,E_\bullet,\nabla,\lambda) \cdot \mu = (E,E_\bullet ,\mu\nabla, \mu \lambda)$$
The action gives an explicit isomorphism between each fiber over $\lambda\ne 0$ and the fiber over $1\in \mathbb{A}^1$.

Let $\lambda=0$. Then, the $\SO_C$ bimodule structure \eqref{eq:lambdaModProduct} on $\Lambda^{\op{DR},\log D,0}$ reduces to
$$(g, v)\cdot f = (fg, fv)=f\cdot (g,v)$$
so the left and right $\SO_C$-actions are equal. As expected, $\Lambda^{\op{DR},\log D,0}$ coincides with the graduate of $\Lambda^{\op{DR},\log D}$, which is simply $\Lambda^{\op{Higgs},\log D}$. The residue restricts to $X_0\cong C$ as the null section, so the fiber of the moduli over $\lambda=0$ is a family of $0$-residual parabolic $\Lambda^{\op{Higgs},\log D}$-modules satisfying condition Therefore, the fiber over $\lambda=0$ coincides with the moduli space of parabolic Higgs bundles over $C$. On the other hand, from \eqref{eq:lambdaModProduct} it is clear that $\Lambda^{\op{DR},\log D,1}\cong \Lambda^{\op{DR},\log D}$, so the fiber over $\lambda=1$ (and therefore, over any nonzero $\lambda$) is isomorphic to the moduli space of vector bundles with a parabolic connection.

\bibliographystyle{alpha}
\bibliography{ParabolicLambdaModules}

\end{document}